\documentclass[11pt,final]{article}

\usepackage[margin=1.2in]{geometry}

\usepackage[dvipsnames, svgnames]{xcolor}
\definecolor{vert}{RGB}{15,120,5}
\definecolor{gris}{RGB}{128,128,128}
\definecolor{bleu}{RGB}{0,50,150}
\definecolor{rouge}{RGB}{149,24,24}
\usepackage[colorlinks = true,linkcolor=bleu,citecolor=vert]{hyperref}

\usepackage{lmodern}
\usepackage[T1]{fontenc}
\usepackage{thmtools}

\usepackage{comment}

\usepackage[colorinlistoftodos,obeyFinal]{todonotes}
\usepackage{xpatch}
\usepackage[explicit]{titlesec}


\def\thissectiontitle{}
\def\thissectionnumber{}
\def\thissubsectiontitle{}
\def\thissubsectionnumber{}

\newtoggle{todoSection}
\newtoggle{todoSubsection}

\titleformat{\section}
  {\normalfont\Large\bfseries\sffamily}
  {\thesection}
  {0.5em}
  {\gdef\thissectiontitle{#1}\gdef\thissectionnumber{\thesection}#1}

\titleformat{\subsection}
  {\normalfont\large\bfseries\sffamily}
  {\thesubsection}
  {0.5em}
  {\gdef\thissubsectiontitle{#1}\gdef\thissubsectionnumber{\thesubsection}#1}

  \pretocmd{\section}{\global\toggletrue{todoSection}}{}{}
  \pretocmd{\subsection}{\global\toggletrue{todoSubsection}}{}{}

\AtBeginDocument{%
  \xpretocmd{\todo}{%
    \iftoggle{todoSubsection}{
     \addtocontents{tdo}{\protect\contentsline{subsection}%
        {\protect\numberline{\thissubsectionnumber}{\thissubsectiontitle}}{}{} }
      \global\togglefalse{todoSubsection}
        }{}
    }{}{}%
  }
\AtBeginDocument{%
  \xpretocmd{\todo}{%
    \iftoggle{todoSection}{
     \addtocontents{tdo}{\protect\contentsline{section}%
        {\protect\numberline{\thissectionnumber}{\thissectiontitle}}{}{} }
      \global\togglefalse{todoSection}
        }{}
    }{}{}%
  }

\usepackage[english]{babel} 
\usepackage[utf8]{inputenc}
\usepackage{mathtools}
\usepackage{pict2e}

\usepackage{mathrsfs} 
\usepackage{amsfonts}
\usepackage{marvosym} 
\usepackage{amsmath,amssymb,amsthm}

\usepackage{bm}

\usepackage[nameinlink]{cleveref}
\crefname{equation}{}{}
\usepackage{bbm}

\usepackage{epsfig}

\usepackage{tikz-cd} 

\usepackage{stmaryrd}
\usepackage{manfnt}
\usepackage{multicol}
\usepackage{array}
\usepackage{multirow}

\headsep = 10 mm 
\parindent = 8 mm 

\usepackage{fancyhdr} 
\usepackage{fancybox} 

\usepackage{calligra}
\title{Pro-étale motives and solid rigidity}
\author{Raphaël Ruimy, Swann Tubach and Sebastian Wolf}
\date{}

\usepackage{tikz}
\usetikzlibrary{calc, intersections}

\usepackage{adjustbox}


\theoremstyle{plain}
\newtheorem{thm}{Theorem}[section]
\newtheorem{prop}[thm]{Proposition}

\newtheorem{introthm}{Theorem}

\newtheorem{introprop}[introthm]{Proposition}

\newtheorem{lem}[thm]{Lemma}
\newtheorem{cor}[thm]{Corollary}

\theoremstyle{definition}
\newtheorem{defi}[thm]{Definition}
\newtheorem{emptypar}[thm]{}
\newtheorem{constr}[thm]{Construction}
\newtheorem{rec}[thm]{Recollection}

\theoremstyle{remark}

\newtheorem{rem}[thm]{Remark}

\newtheorem{ex}[thm]{Example}

\numberwithin{thm}{subsection}
\newcommand{\Sr}{\mathrm{S}}

\newcommand{\Q}{\mathbb{Q}}
\newcommand{\Z}{\mathbb{Z}}
\newcommand{\N}{\mathbb{N}}

\newcommand{\Spec}{\operatorname{Spec}}
\newcommand{\A}{\mathbb{A}}

\newcommand{\HH}{\mathrm{H}}
\newcommand{\D}{\mathrm{D}}

\newcommand{\ocal}{\mathcal{O}}
\newcommand{\oscr}{\mathscr{O}}

\newcommand{\acal}{\mathcal{A}}
\newcommand{\ucal}{\mathcal{U}}
\newcommand{\bcal}{\mathcal{B}}

\newcommand{\fcal}{\mathcal{F}}

\newcommand{\tcal}{\mathcal{T}}
\newcommand{\ccal}{\mathcal{C}}
\newcommand{\gcal}{\mathcal{G}}
\newcommand{\dcal}{\mathcal{D}}
\newcommand{\mcal}{\mathcal{M}}
\newcommand{\ncal}{\mathcal{N}}

\newcommand{\mc}{\mathcal}

\newcommand{\Ani}{\mathrm{Ani}}
\newcommand{\mb}{\mathbb}
\newcommand{\proet}{\mathrm{pro\acute{e}t}}

\newcommand{\restr}[2]{\ensuremath{\left.{#1}\right|_{#2}}}

\newcommand{\Hom}{\mathrm{Hom}}

\newcommand{\Gm}{{\mathbb{G}_m}}

\DeclareMathOperator{\sHom}{\mathscr{H}\text{\kern -3pt {\calligra\large om}}\,}
\DeclareMathOperator{\ProFin}{ProFin}
\DeclareMathOperator{\cond}{cond}
\DeclareMathOperator{\Cond}{Cond}
\DeclareMathOperator{\Sp}{Sp}
\DeclareMathOperator{\Sing}{Sing}
\DeclareMathOperator{\pr}{pr}
\DeclareMathOperator{\mot}{mot}
\newcommand{\Ext}{\mathrm{Ext}}

\newcommand{\Fun}{\mathrm{Fun}}
\newcommand{\CAlg}{\mathrm{CAlg}}

\newcommand{\DM}{\mathrm{DM}}

\newcommand{\et}{\mathrm{\acute{e}t}}
\newcommand{\Mod}{\mathrm{Mod}}
\newcommand{\un}{\mathbbm{1}}
\newcommand{\Sh}{\mathrm{Sh}}

\newcommand{\WSm}{\mathrm{WSm}}

\newcommand{\Sch}{\mathrm{Sch}}
\newcommand{\op}{\mathrm{op}}

\newcommand{\catinfty}{\mathrm{Cat}_\infty}

\newcommand{\colim}{\operatorname{\mathrm{colim}}}
\newcommand{\aff}{\operatorname{\mathrm{aff}}}

\newcommand{\In}{\operatorname{\mathrm{Ind}}}

\newcommand{\PSh}{\operatorname{\mathrm{PSh}}}

\newcommand{\heart}{\heartsuit}

\newcommand{\rar}{\to}
\newcommand{\PrL}{\mathrm{Pr}^\mathrm{L}}

\newcommand{\Sm}{\mathrm{Sm}}

\newcommand{\id}{\mathrm{Id}}
\DeclareMathOperator{\fp}{fp}

\DeclareMathOperator{\SH}{SH}

\newcommand{\ZhatP}{\widehat{\Z}_\mc{P}}

\usepackage{appendix}
\makeatletter
  \newcommand{\adjunction}{\@ifstar\named@adjunction\normal@adjunction}
  \newcommand{\normal@adjunction}[4]{%
    #1\colon #2%
    \mathrel{\vcenter{%
      \offinterlineskip\m@th
      \ialign{%
        \hfil$##$\hfil\cr
        \longrightharpoonup\cr
        \noalign{\kern-.3ex}
        \smallbot\cr
        \longleftharpoondown\cr
      }%
    }}%
    #3 \noloc #4%
  }
  \newcommand{\named@adjunction}[4]{%
    #2%
    \mathrel{\vcenter{%
      \offinterlineskip\m@th
      \ialign{%
        \hfil$##$\hfil\cr
        \scriptstyle#1\cr
        \noalign{\kern.1ex}
        \longrightharpoonup\cr
        \noalign{\kern-.3ex}
        \smallbot\cr
        \longleftharpoondown\cr
        \scriptstyle#4\cr
      }%
    }}%
    #3%
  }
  \newcommand{\longrightharpoonup}{\relbar\joinrel\rightharpoonup}
  \newcommand{\longleftharpoondown}{\leftharpoondown\joinrel\relbar}
  \newcommand\noloc{%
    \nobreak
    \mspace{6mu plus 1mu}
    {:}
    \nonscript\mkern-\thinmuskip
    \mathpunct{}
    \mspace{2mu}
  }
  \newcommand{\smallbot}{%
    \begingroup\setlength\unitlength{.15em}%
    \begin{picture}(1,1)
    \roundcap
    \polyline(0,0)(1,0)
    \polyline(0.5,0)(0.5,1)
    \end{picture}%
    \endgroup
  }
\makeatother

\begin{document}

\maketitle
\begin{abstract}
	We introduce coefficient systems of pro-étale motives and pro-étale motivic spectra with coefficients in any condensed ring spectrum and show that they afford the six operations. 
	Over locally étale bounded schemes, étale motivic spectra embed into pro-étale motivic spectra. 
	We then use the framework of condensed category theory to define a solidification process for any $\widehat{\Z}$-linear condensed category.
	Pro-étale motives naturally enhance to a condensed category and we show that their solidification is very close to the category of solid sheaves defined by Fargues-Scholze, suitably modified to work on schemes: this is a rigidity result. As a consequence, we obtain that in contrast with the rigid-analytic setting, solid sheaves on schemes afford the six operations, and we obtain a solid realization functor of motives, extending the $\ell$-adic realization functor. The solid realization functor is compatible with change of coefficients, which allows one to recover the $\Q_\ell$-adic realization functor while remaining in a setting of presentable categories.
\end{abstract}
\tableofcontents



\section*{Introduction}
\subsection*{Coefficient systems for \texorpdfstring{$\ell$}{l}-adic cohomology}
Betti cohomology can be expressed in terms of sheaves on the topological space of complex points; likewise de Rham cohomology is best understood through the theory of $\mathscr{D}$-modules; finally torsion étale cohomology is defined through torsion sheaves on the small étale site. In all of the above examples, cohomology is expressed by means of a \emph{coefficient system}: in Grothendieck’s vision, this is a category fibered over the category of schemes, equipped with additional functoriality (such as the six operations) allowing for the definition of cohomology with coefficients in such systems.

Finding well suited coefficient systems for $\ell$-adic cohomology is more challenging. Let us first remark that there should actually be two such coefficient systems: the \emph{big} (or more accurately \emph{presentable}) one and the \emph{small} one that should lie inside of it as the subcategory of constructible objects. The theory of constructible $\ell$-adic sheaves is well understood. Its initial construction was done for finite-type schemes over a field in \cite{MR0751966} by taking a (homotopy) limit:
\[\D^b_c(-,\Z_\ell)=\lim_n \D^b_c(-,\Z/\ell^n\Z)\]
with the right-hand side given by constructible sheaves on the small étale site. Ekedahl then constructed a presentable coefficient system over noetherian schemes in \cite{zbMATH00016091}. The latter can actually be recovered by taking the $\ell$-completion $\D((-)_{\et},\Z)^\wedge_\ell$ of the derived category of the abelian category of étale sheaves (this is the approach of \cite{MR3477640}).
One downside of Ekedahl's coefficient system is that it does not interact well with rationalization, in fact
\[
	\D((-)_{\et},\Z)^\wedge_\ell \otimes \D(\Q) \simeq 0.
\]
In order to obtain a well behaved theory one needs to keep track of the natural topologies on the rings $\Z_\ell$ and $\Q_\ell$.
The now classical idea from \cite{zbMATH06479630} is that instead of $\ell$-completing in the categorical sense as in Ekedahl's theory, one can keep track of these topologies by using the \emph{pro-étale site} of a scheme.
Indeed, one can see both $\Z_\ell$ and $\Q_\ell$ as sheaves on the pro-étale site in a manner that takes into account their topology.
A natural candidate for a coefficient system is now the derived category $\D((-)_{\proet},\Z_\ell)$ of the abelian category of $\Z_\ell$-modules in the pro-étale site.
Indeed, in this case one has
\[
	\D((-)_{\proet},\Z_\ell) \otimes \D(\Q) \simeq \D((-)_{\proet},\Q_\ell),
\]
as expected.
This system is however far from having all the desired functoriality properties.
For instance, the proper base change theorem already fails for this coefficient system \cite[Remark 7.15]{bastietal}.
One of the goals of this work is to provide a theory that has all the expected functoriality properties and is still compatible with rationalization in the above sense.

\subsection*{Voevodsky motives and their \texorpdfstring{$\ell$}{l}-adic realization}
Another motivation comes from the theory of Voevodsky motives and especially $\ell$-adic realization functors.
Voevodsky étale motives form the universal étale-local coefficient system for schemes $\DM_{\et}$; the latter (in particular all the key functorialities) was constructed in \cite{MR2687724,MR3205601,MR3477640}. The theory of $\ell$-adic cohomology is connected to the theory of étale motives via $\ell$-adic realization functors.
A powerful tool in understanding these realization functors is the \emph{rigidity theorem} (see \textit{e.g.} \cite{bachmannrigidity} for the most general such theorems): left Kan-extension from \'etale to smooth $X$-schemes gives a natural map 
\[\D((-)_{\et},\Z)\to \DM_{\et}\]
which becomes an equivalence after $\ell$-completion.
Thus $\ell$-completion provides a natural map to Ekedahl's coefficient system
\[\DM_{\et}\to (\DM_{\et})^\wedge_\ell \simeq \D((-)_{\et},\Z)^\wedge_\ell,\]
which we call the \emph{$\ell$-adic realization}.
Another way of thinking about the incompatibility of $\D((-)_{\et},\Z)^\wedge_\ell$ with rationalization mentioned above, is the fact $\Q$ is sent to $0$ by this functor. 
Furthermore, one can deduce from the above rigidity theorem that $\ell$-adic cohomology is representable in $\DM_{\et}$ by the $\ell$-completion $\Z^\wedge_\ell$ of the unit. This motive has the unpleasant feature that it is not stable under base change in mixed characteristic (see \cite{arXiv:2503.24033}): it is shown in loc. cit. that such stability would imply
independence-of-$\ell$ results, which are actually too strong to be true.

The other main objective of this paper is to build a new framework in which $\ell$-adic realization functors are exceptionally well-behaved: the idea is to construct a coefficient system $\DM_{\proet}$ of \emph{pro-étale motives}, which is an enlargement of $\DM_{\et}$ and allows one to consider motives with coefficients in topological rings.
One then again has a canonical map 
\[\D((-)_\proet,\Z_\ell)\to \DM_\proet(-,\Z_\ell)\]
which we would very much like to be an equivalence: $\ell$-adic realization functors would then simply be given by extending scalars to the topological ring $\Z_\ell$ which would thus represent $\ell$-adic cohomology and be stable under base change.
This is however far too optimistic as the right-hand side is a coefficient system while the left-hand side is not as it does not have proper base change.
To salvage this situation, we introduce a way of \emph{solidifying} the above categories, building on ideas of \cite{condensedpdf} and \cite{Fargues-Scholze}.

\subsection*{Solid sheaves on schemes}
Condensed abelian groups (sheaves of abelian groups on the pro-étale site of a point) can be thought of as a model for topological abelian groups.
Solidity then aims to describe a version of non-archimedean completeness: solid abelian groups form a full subcategory of condensed abelian groups and for instance contain $p$-adic Banach spaces as a full subcategory. In the rigid-analytic world, Fargues and Scholze developed the theory of solid sheaves with $\widehat{\Z}$-coefficients in \cite[Chapter~VII]{Fargues-Scholze}. They are subcategories of pro-étale sheaves of $\widehat{\Z}$-modules that are small enough to afford some nice functorialities but large enough to be closed under all limits and colimits. Indeed, in the analytic setting, solid sheaves can be endowed with a so-called 5 functors formalism: the main feature is that any pullback functor admits a left adjoint and that arbitrary base change is true. It is however not possible to endow solid sheaves with a six functors formalism  in this setting as the proper projection formula is not true in general (\cite[Warning~VII.2.5]{Fargues-Scholze}).

The first step of this paper is to adapt the framework of solid sheaves to schemes. Let $X$ be a quasi-compact and quasi-separated (qcqs) scheme, let $\mathcal{P}$ be a subset of the set of prime numbers and let $\widehat{\Z}_\mathcal{P}=\prod_{p\in \mathcal{P}}\Z_p$.
If $j\colon U\to X$ is a pro-étale map that can be written as a cofiltered limit of étale maps $j_\alpha \colon U_\alpha \to X$ with each $U_\alpha$ affine, we set \[j_\sharp^\blacksquare\ZhatP\coloneqq \lim_\alpha (j_\alpha)_\sharp \ZhatP.\]  The abelian category of \emph{solid sheaves} on $X$ is then defined as the localization $\Sh(X,\ZhatP)^{\blacksquare}$ of $\Sh(X_{\proet},\ZhatP)$ with respect to the natural maps $j_\sharp \ZhatP\to j_\sharp^\blacksquare \ZhatP$. We then say that a complex of pro-étale sheaves of $\ZhatP$-modules is solid if its cohomology sheaves are solid. We denote by $\D(X,\ZhatP)^{\blacksquare}$ the full subcategory of $\D(X_\proet,\ZhatP)$ spanned by solid objects. Finally, if $\Lambda$ is a solid $\widehat{\Z}_\mathcal{P}$-algebra, we set \[\D(X,\Lambda)^{\blacksquare}\coloneqq \Mod_\Lambda(\D(X,\ZhatP)^{\blacksquare}).\]

We prove that these categories have all the basic properties that one expects from a good category of solid sheaves.
\begin{introthm}(\Cref{thm:solid}, \Cref{prop:derivedSolid_Lambda}, \Cref{prop:cons_are_solid} and \Cref{prop:cons_are_solid_rational}) Let $X$ be a qcqs scheme.

\begin{enumerate}
\item The subcategory $\Sh(X,\ZhatP)^{\blacksquare}$ of $\Sh(X_\proet,\ZhatP)$ is abelian and closed under extensions, small limits, colimits and base change. It is compactly generated by the pro-category of constructible étale sheaves of $\mc{P}$-torsion. 
\item Constructible complexes in the sense of \cite{MR4609461} are solid, \textit{i.e.} we have an inclusion $\D^b_c(X,\ZhatP)\subseteq \D(X,\ZhatP)^{\blacksquare}$. This is also true with $\Q_\ell$-coefficients. 
\item Let $\Lambda$ be a solid $\widehat{\Z}_\mathcal{P}$-algebra. The inclusion
\[\D(X,\Lambda)^{\blacksquare}\subseteq \D(X_\proet,\Lambda)\]
admits a left adjoint $(-)^\blacksquare$ and is compatible with colimits and base change. There is a unique symmetric monoidal structure $\otimes^\blacksquare$ on $\D(X,\Lambda)^{\blacksquare}$ making the functor $(-)^\blacksquare$ symmetric monoidal. 
\end{enumerate}
\end{introthm}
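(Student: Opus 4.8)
The plan is to carry over the theory of solid sheaves of \cite[Chapter~VII]{Fargues-Scholze}, and behind it the theory of solid abelian groups of \cite{condensedpdf}, from the rigid-analytic to the algebraic setting, reducing structural statements to the case of a point whenever possible. By construction $\Sh(X,\ZhatP)^{\blacksquare}$ is the full subcategory of $\blacksquare$-local objects of $\Sh(X_\proet,\ZhatP)$, so it is reflective: the inclusion has a left adjoint $(-)^\blacksquare$ carrying $j_\sharp\ZhatP$ to $j_\sharp^\blacksquare\ZhatP$, and is therefore automatically closed under all small limits. For part~(1) the real content is closure under colimits and extensions, the abelian structure, and the generation statement. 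I would first prove that the objects $j_\sharp^\blacksquare\ZhatP$, for $j$ a cofiltered limit of affine étale maps, are compact and, in the appropriate sense, \emph{projective} generators of $\Sh(X,\ZhatP)^{\blacksquare}$; here one uses that the pro-étale site of a qcqs scheme has a basis of such (even weakly contractible) affines, following \cite{zbMATH06479630}, together with the identification of $\lim_\alpha (j_\alpha)_\sharp\ZhatP$ with a pro-object of constructible étale $\mc{P}$-torsion sheaves (via the presentation $\ZhatP=\lim_n\ZhatP/n$). Granting this, closure under colimits and extensions and the description of $\Sh(X,\ZhatP)^{\blacksquare}$ as compactly generated by the pro-category of constructible étale $\mc{P}$-torsion sheaves follow from the Gabriel--Popescu / analytic-ring formalism exactly as in \cite{condensedpdf}: one checks that $\mathrm{Ext}^i(j_\sharp^\blacksquare\ZhatP,-)$ commutes with the relevant colimits, which over a point is precisely Clausen--Scholze's verification that $\ZhatP$ underlies an analytic ring structure, and in general is reduced to a point by pro-étale descent. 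Closure under base change is formal: for $f\colon Y\to X$ the functor $f^*$ carries each localising map $j_\sharp\ZhatP\to j_\sharp^\blacksquare\ZhatP$ to the corresponding one over $Y$, since $f^*$ commutes with $(-)_\sharp$ and with the defining cofiltered limits; hence $f^*$ commutes with $(-)^\blacksquare$ and in particular preserves solid objects.

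For part~(2), solidity of a complex is tested on its cohomology sheaves, so after reducing to $X$ noetherian (writing a general qcqs $X$ as a cofiltered limit of finite-type $\Z$-schemes) and running a dévissage on the support, it suffices to prove that a lisse $\ZhatP$-sheaf on a connected scheme is solid. Such a sheaf is $\lim_n$ of lisse $\ZhatP/n$-sheaves; each of the latter becomes constant with finite value after a finite étale Galois cover $g$, hence is a finite limit of sheaves of the form $g_*\underline{M}=g_\sharp\underline{M}$ with $\underline{M}$ finite discrete (using that $g_*=g_\sharp$ for $g$ finite étale). Since finite discrete sheaves are solid and $(-)_\sharp$ preserves solidity, each $g_\sharp\underline{M}$ is solid; finite limits of solid sheaves are solid, and so is $\lim_n$, whence the claim --- which is, at bottom, the statement that finitely generated $\ZhatP$-modules with their natural topology are solid condensed abelian groups. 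The case of $\Q_\ell$-coefficients is identical, using that lisse $\Q_\ell$-sheaves are pro-étale locally of the form $G\otimes_{\Z_\ell}\Q_\ell$ for $G$ a lisse $\Z_\ell$-sheaf and that $\Q_\ell$ is a solid $\Z_\ell$-algebra, so that the claim reduces to finite-dimensional $\Q_\ell$-Banach spaces being solid, which is in \cite{condensedpdf}.

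For part~(3), take first $\Lambda=\ZhatP$. The reflective localisation of abelian categories from part~(1), being closed under all limits and colimits in $\Sh(X_\proet,\ZhatP)$, derives to a reflective localisation $\D(X,\ZhatP)^{\blacksquare}\subseteq\D(X_\proet,\ZhatP)$ of the associated derived $\infty$-categories, with $(-)^\blacksquare$ the left-derived solidification; compatibility of the inclusion with colimits is then exactly the colimit-closure, and compatibility with base change is inherited from the heart. For a general solid $\ZhatP$-algebra $\Lambda$ one sets $\D(X,\Lambda)^{\blacksquare}=\Mod_\Lambda(\D(X,\ZhatP)^{\blacksquare})$ and transports the adjunction and the base-change compatibility along the monadic free--forgetful adjunction over $\D(X,\ZhatP)^{\blacksquare}$, using that $\Lambda$ is itself solid. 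For the symmetric monoidal structure one verifies that the class of $\blacksquare$-local equivalences in $\D(X_\proet,\ZhatP)$ is a $\otimes$-ideal: since it is generated under colimits and shifts by the maps $j_\sharp\ZhatP\to j_\sharp^\blacksquare\ZhatP$, it is enough to see that tensoring such a map with a generator $j'_\sharp\ZhatP$ again yields such a map, which follows from the projection formula and smooth base change for $(-)_\sharp$ together with the compatibility of $(-)^\blacksquare$ with $(-)_\sharp$ and with pullback from part~(1) --- tensoring turns $j_\sharp\ZhatP\to j_\sharp^\blacksquare\ZhatP$ into $k_\sharp\ZhatP\to k_\sharp^\blacksquare\ZhatP$ for a suitable $k$. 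The standard construction of monoidal localisations then produces the unique symmetric monoidal $\otimes^\blacksquare$ making $(-)^\blacksquare$ symmetric monoidal, and the $\Lambda$-linear version follows by passing to $\Lambda$-modules.

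The main obstacle is part~(1), specifically the closure of $\Sh(X,\ZhatP)^{\blacksquare}$ under colimits inside $\Sh(X_\proet,\ZhatP)$, equivalently the compact-projectivity of the solidifying generators $j_\sharp^\blacksquare\ZhatP$: in the analytic case this is the technical heart of the theory of analytic rings, and over a general scheme it additionally requires controlling how these generators interact with $(-)_\sharp$, pullback and finite pushforward at the level of the abelian heart. A closely related difficulty, which is what makes part~(2) work, is the precise comparison of the two avatars of ``topological finiteness'' --- pro-objects of constructible étale $\mc{P}$-torsion sheaves on the one hand, finitely generated $\ZhatP$-modules with their natural topology on the other.
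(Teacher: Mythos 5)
Your overall strategy mirrors the paper's (transporting the Fargues--Scholze theory to schemes and reducing to the condensed case), but you wave your hands over two genuinely nontrivial steps. First, in part (1) you assert that the verification that $\mathrm{Ext}^i(j_\sharp^\blacksquare\ZhatP,-)$ commutes with the relevant colimits ``is reduced to a point by pro-étale descent.'' This does not hold as stated: descent reduces at best to $w$-contractible affines, where the pro-étale topos is condensed anima over a possibly infinite profinite set $\pi_0(W)$ rather than a point, and the generators are genuine pro-systems of constructible sheaves. The paper's argument is structurally different: it first proves the key exactness lemma (\Cref{limIsRlim}) that $R^q\lim$ vanishes for cofiltered systems of constructible $\mathcal{P}$-torsion sheaves --- the scheme-theoretic analogue of Fargues--Scholze's Proposition VII.1.6 --- and then establishes full faithfulness of $\mathrm{Pro}(\Sh_{\mathcal{P}}^{\mathrm{cons}}(X_\et))\hookrightarrow\Sh(X_\proet,\ZhatP)$ via Lurie's identification of $\Sh(X_\proet,\mathrm{Sets})$ with sheaves for the coherent topology on $\mathrm{Pro}(X_\et^{\mathrm{cons}})$. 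Neither step is subsumed by a ``reduce to a point'' slogan, and without them the compactness, colimit-closure, and generation claims of part (1) are unsupported.

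Second, in part (3) you claim that the abelian-level reflective localisation ``derives to a reflective localisation'' of the derived $\infty$-categories because the solid sheaves are closed under limits and colimits. This is not formal. There are three a priori distinct categories --- complexes whose cohomology sheaves are solid (the actual definition of $\D(X,\ZhatP)^\blacksquare$ in \Cref{defi:solid}), the derived category of the abelian category of solid sheaves, and the Bousfield localisation of $\D(X_\proet,\ZhatP)$ at the maps $j_\sharp\ZhatP\to j_\sharp^\blacksquare\ZhatP$ --- and identifying them is exactly the content of part 5 of \Cref{thm:solid} together with \cite[Lemma 5.9]{condensedpdf}, proved via dévissage along the Postnikov tower and a Breen--Deligne spectral-sequence argument; you need to invoke this machinery rather than assert the identification. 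Your route to part (2) via Galois covers is a legitimate alternative to the paper's shorter one (write a dualizable $K$ as $\lim_n K/n$, invoke that each $K/n$ is a bounded complex of constructible torsion sheaves, and descend the general case along the surjection from the disjoint union of strata using \Cref{prop:pullack-detects-solid}); however your initial noetherian reduction is unnecessary and would itself require justification, since solidity is not a priori compatible with cofiltered limits of base schemes. Your tensor-ideal argument in part (3) is essentially correct; the cleanest way to close it is to observe that for solid $F$ one has $\underline{\Hom}(j'_\sharp\ZhatP,F)\simeq j'_*j'^*F$, which is again solid because both $j'^*$ and $j'_*$ preserve solid objects.
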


One of the main results of this paper is that, unlike in the analytic setting, the categories $\D(-,\Lambda)^{\blacksquare}$ support a full six functor formalism over schemes on which all primes in $\mc{P}$ are invertible (see \Cref{Main_Thm}).

\subsection*{Pro-étale motives}
We will construct the six-operations on solid sheaves by comparing them to a suitable category of pro-\'etale motives.
Note that if we know that $\DM_\et$ has the six functors, the rigidity theorem implies that over schemes such that $n$ is invertible, the same holds for $\D((-)_{\et},\Z/n\Z)$. We want to adapt this approach for solid sheaves.
The first step is to define a pro-étale version of motives and motivic spectra. 

Let us now recall the (étale-local version of) Morel and Voevodsky's construction of $\A^1$-invariant motivic spectra over a scheme $X$: we start from the category $\Sm_X$ of smooth $X$-schemes endowed with the étale topology and build the category $\Sh_\et(\Sm_X,\mathrm{Sp})$ of hypersheaves of spectra over $\Sm_X$; the category $\SH_{\et}(X)$ of étale motivic spectra is then the $\mb{P}^1$-stabilization of the subcategory of $\A^1$-local sheaves. For our purposes, the category of $\Sm_X$ is too small: we want to replace the étale topology with the pro-étale topology. 
To that end, we define the category $\WSm_X$ of \emph{weakly smooth} $X$-schemes as the category of $X$-schemes whose structure morphism is a finite composition of smooth morphisms and weakly étale morphisms. As for smooth maps, the local structure of weakly smooth maps is simple: Zariski-locally on the source they can be written as a composition 
\[Y\xrightarrow{f} \A^n_X \to X\] 
with $f$ weakly étale (this is \Cref{lem:locWL}). 
The category $\SH_\proet(X)$ is then defined as the $\mb{P}^1$-stabilization of the subcategory $\SH_\proet^{\mathrm{S}^1}(X)$ of $\A^1$-local objects in the category $\Sh_\proet(\WSm_X,\mathrm{Sp})$ of pro-étale hypersheaves of spectra on $\WSm_X$. 
The main result is that pro-étale motivic spectra afford the six functors in a slightly weaker form than the six functors on usual motivic spectra (we need finite presentation assumptions to get exceptional functors and not just the usual assumption that they are of finite type); furthermore étale motivic spectra embed into pro-étale motivic spectra under some finiteness assumptions:

\begin{introthm}(\Cref{cor:6FF_proet_mot} and \Cref{thm:embedding_etale_motives})
     We have a functor \[\mathrm{SH}_\proet\colon \Sch^\op\to \CAlg(\PrL)\] that is part of a six functors formalism. More precisely, 
     \begin{enumerate}
    \item We have pairs of adjoint functors given by 
        \begin{itemize}
            \item For any morphism of schemes $f$ we have an adjunction $(f^*,f_*)$ with $f^*$ the functor $\SH_\proet(f)$.
            \item For a finitely presented $f$ between qcqs schemes, we have an adjunction $(f_!,f^!)$. There is a natural transformation $f_!\to f_*$ which is an equivalence if $f$ is furthermore proper.
            \item Over any $X$, we have a closed monoidal structure $(\otimes_X,\underline{\Hom}_X)$ on $\mathrm{SH}_\proet(X)$.
        \end{itemize} 
    
    \item Let $f\colon Y\to X$ be a finitely presented morphism between qcqs schemes. Then the formation of $f_!$ is compatible with pullbacks.
    \item For any finitely presented morphism $f\colon X\rar Y$ of qcqs schemes, we have a natural isomorphism $$(f_!M)\otimes N \rar f_!(M\otimes f^*N).$$
    \item In fact one can do better and obtain a six functors formalism as a lax symmetric monoidal functor on a category of correspondences that encode most of the wanted coherence of 1., 2. and 3. (see \Cref{thm:motivic_6FF}).
    \item For any $f\colon X\to S$ finitely presented smooth morphism of qcqs schemes, then with the notations of \Cref{def:Sigma^M}, there is a natural equivalence $ f^! \simeq \Sigma^{\Omega_f} \circ f^*$.
    \item For any closed immersion $i\colon F\rar X$ with qcqs open complement $j\colon X\setminus F\rar X$, we have exact triangles of natural transformations $$j_!j^!\rar \id \rar i_*i^*$$
    $$i_!i^!\rar \id \rar j_* j^*$$ given by the unit and counit of the appropriate adjunctions.
\end{enumerate}
Finally, if $X$ is a locally étale bounded (\Cref{def:etale_bounded}) qcqs scheme, then the canonical functor
	\[
		\SH_\et(S)\to \SH_\proet(S)
	\]
	is fully faithful.
\end{introthm}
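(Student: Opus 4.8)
The plan is to deal with the six-functor formalism (items (1)--(6)) and the full faithfulness of $\SH_\et\to\SH_\proet$ by quite different arguments. For the six operations the strategy is \emph{not} to reprove everything by hand, but to check that $X\mapsto\SH_\proet(X)$ satisfies the axioms of a motivic $\infty$-category on the site $\WSm$ and then feed this into the standard machine producing the six operations (in the correspondence-category formulation this is the input to an abstract six-functor-formalism existence theorem of Liu--Zheng/Mann type; in the triangulated formulation it is the Ayoub and Cisinski--Déglise machine). The four axioms to verify are: (a) pro-étale descent, immediate since $\SH_\proet(X)$ is built from pro-étale hypersheaves of spectra; (b) $\A^1$-invariance, built into $\SH_\proet^{\Sr^1}$; (c) $\mathbb P^1$-stability, built into the stabilization; (d) the \emph{localization property}: for a closed immersion $i\colon F\to X$ with quasi-compact open complement $j\colon U\to X$, the functor $i_*$ is fully faithful and $j_\sharp j^*\to\id\to i_*i^*$ is a cofiber sequence. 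Since the conclusion is stated for qcqs schemes and finitely presented morphisms one also needs (e) \emph{continuity}: $\SH_\proet$ sends a cofiltered limit of qcqs schemes with affine transition maps to the colimit of the $\SH_\proet(X_i)$.

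The substance of the first part is the verification of (d) and (e) on the enlarged site $\WSm$. For (d) one follows the Morel--Voevodsky/Cisinski--Déglise localization argument, the point being that by \Cref{lem:locWL} a weakly smooth $X$-scheme is Zariski-locally of the form $Y\xrightarrow{f}\A^n_X\to X$ with $f$ weakly étale, and weakly étale maps are pro-étale-locally cofiltered limits of étale maps; since the categories of pro-étale hypersheaves only see the pro-étale topology, the extra weakly étale maps do not obstruct the usual deformation-to-the-normal-cone argument, and one reduces to the case of a closed immersion into affine space. For (e) one uses that the generators $\Sigma^\infty_+Y$, $Y\in\WSm_X$, together with hypercompleteness, make $\SH_\proet^{\Sr^1}$ and hence $\SH_\proet$ compatible with such limits of schemes; this step needs care because the pro-étale topos is not coherent, so one must choose the generating family and exploit the boundedness appropriately, but it is of the same nature as continuity for $\SH_\et$.

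Granting (a)--(e): $f^*\dashv f_*$ and the closed monoidal structure $(\otimes_X,\underline{\Hom}_X)$ are tautological from the presentable symmetric monoidal structure on hypersheaves, and $j_\sharp\dashv j^*$ for $j$ an open (or even weakly étale) immersion, with smooth base change and the $\sharp$-projection formula, is formal from the site. For $f$ finitely presented between qcqs schemes one factors $f$ (after Nagata compactification, which is why finite presentation rather than finite type is imposed) as an open immersion $j$ followed by a proper map $p$, sets $f_!\coloneqq p_*j_\sharp$, and checks that this is independent of the factorization and composes correctly; the key technical input is \emph{proper base change}, i.e.\ that $p_*$ commutes with arbitrary base change for $p$ proper, which one proves in the usual way, reducing via the projective bundle formula and deformation to the normal cone to $\mathbb P^n$-bundles and closed immersions — both controlled by (b), (c), (d). This yields items (1)--(3); packaging the coherences into a lax symmetric monoidal functor on correspondences is then the bookkeeping of \Cref{thm:motivic_6FF}. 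Relative purity $f^!\simeq\Sigma^{\Omega_f}f^*$ for $f$ smooth finitely presented (item (5)) and the two localization triangles (item (6)) are formal outputs of the formalism once one has the Morel--Voevodsky purity equivalence in $\SH_\proet$, itself again a consequence of deformation to the normal cone and the projective bundle formula.

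For the full faithfulness of $\SH_\et(S)\to\SH_\proet(S)$ when $S$ is locally étale bounded and qcqs: the functor is induced by the map of sites $(\Sm_S,\et)\hookrightarrow(\WSm_S,\proet)$, compatibly with $\A^1$-localization and $\mathbb P^1$-stabilization, so it suffices (on generators, and before stabilization) to show that for $X,Y\in\Sm_S$ the natural map $\map_{\SH_\et(S)}(\Sigma^\infty_+X,\Sigma^\infty_+Y)\to\map_{\SH_\proet(S)}(\Sigma^\infty_+X,\Sigma^\infty_+Y)$ is an equivalence; equivalently, that the unit of the adjunction is an equivalence on the essential image of $\SH_\et(S)$. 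This decomposes into two comparisons: replacing the étale topology by the pro-étale topology on $\Sm_S$, and extending from $\Sm_S$ to $\WSm_S$. For the first, étale and pro-étale cohomology of a bounded complex of étale sheaves agree, and local étale boundedness of $S$ ensures the relevant Postnikov towers converge so that hypercompleteness costs nothing. For the second, a weakly étale $S$-scheme is a cofiltered limit of étale $S$-schemes and a continuity argument shows the sections are unchanged; here again étale boundedness is what legitimises interchanging the limit with cohomology. The main obstacle in the whole statement — and the one place where the hypotheses are genuinely used rather than formal — is precisely this control of the pro-étale (hyper)sheafification on $\WSm_S$ against étale cohomology on $\Sm_S$; pinning down exactly how finite/bounded étale cohomological dimension enters (convergence of Postnikov towers, behaviour under cofiltered limits) is the delicate part, the rest of items (1)--(6) being an application of established motivic machinery once the axioms (a)--(e) are in place.
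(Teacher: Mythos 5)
Your outline of the six-functor part follows essentially the paper's route (localization for closed immersions with qcqs open complement via the local structure of weakly smooth maps, then compactification, proper base change and the correspondence-category packaging), but two points are off. First, the localization theorem is not proved by deformation to the normal cone — that is purity; the actual content is that the comparison presheaf $\Phi_X(Y,t)$ is invariant under pro-\'etale maps (\Cref{Lemma:Phiinvariantunderproetale}), which rests on the universal property of the henselization $\widetilde{(-)}$ along the closed immersion and on the small-site decomposition \Cref{prop:loc-proet-small-site}; this is where the quasi-compactness of the open complement is genuinely used (cf.\ \Cref{rem:counterex_to_loc}). Second, your axiom (e) (continuity of $\SH_\proet$ in the base) is neither needed nor established in the paper: finite presentation is imposed because of the qcqs restriction in localization and in order to have finitely presented compactifications via Noetherian approximation (\Cref{par:factoSeparated}), not because the coefficient system is continuous. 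Asserting continuity for the pro-\'etale theory is a substantial unproved claim and should be removed. You should also note that Nagata compactification only handles separated $f$; the paper extends $f_!$ to non-separated finitely presented maps by Zariski descent on correspondences.

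The genuine gap is in the embedding theorem. You reduce full faithfulness of $\SH_\et(S)\to\SH_\proet(S)$ to a statement ``on generators, and before stabilization''. This reduction is not valid: the $\mathbb{P}^1$-stabilization is a limit along $\underline{\Hom}(\mathbb{S}[\Gm],-)$, and a fully faithful functor on the $S^1$-stable level induces a fully faithful functor on $\mathbb{P}^1$-spectra only if it commutes with $\underline{\Hom}(\mathbb{S}[\Gm],-)$, equivalently with $\pi_*\pi^*$ for the projection $\pi\colon\Gm\times X\to X$. Establishing this commutation is the core of the paper's argument (\Cref{prop:big-proet-f_*-compatible} and \Cref{prop:hypothesis_big-proet-f_*-compatible_are_satisfied}): one must show that $\pi_*$ of a pulled-back \'etale sheaf of spectra again has \emph{classical} homotopy sheaves, and this is exactly where the finite cohomological dimension in \Cref{def:etale_bounded} is used, via the eventual constancy of the Postnikov tower of $\pi_*\nu^*\tau_{\leq n}\fcal$. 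Your proposal uses \'etale boundedness only for Postnikov convergence and for a continuity argument on $\WSm_S$, and never addresses the $\Gm$-loop compatibility; as written, the argument does not yield full faithfulness after $\mathbb{P}^1$-inversion. (Relatedly, ``a weakly \'etale $S$-scheme is a cofiltered limit of \'etale $S$-schemes'' is not literally true; one only has an equivalence of topoi with the site of cofiltered limits of affine \'etale schemes.)
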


Note in the above theorem we do in almost all statements need to assume that the morphisms in question are of finite presentation.
The reason that this is necessary lies within the sixth point, which is the crucial ingredient to prove all the other properties above.
For a non-finitely presented closed immersion of qcqs schemes, the open complement might not be quasi-compact and there are no localization sequences, see \Cref{rem:counterex_to_loc}.
This also explains the different behavior in the rigid analytic setting of \cite[\S VII]{Fargues-Scholze}.
In analytic geometry it is rather rare for the open complement of a closed immersion to be quasi-compact and thus the arguments in this paper do not apply in that setting.

\subsection*{Condensed category theory}
Let $\Lambda$ be a condensed $\ZhatP$-algebra. The coefficient system $\DM_\proet(-,\Lambda)$ of pro-étale motives with $\Lambda$-coefficients is then defined as the category of $H\Lambda$-modules in $\SH_\proet$. By abstract nonsense, it also affords a six functor formalism.
Left Kan extension defines a functor
\[\D((-)_\proet,\Lambda)\to \DM_\proet(-,\Lambda).\]
As we have seen above this map can not be an equivalence since the domain does not have the six operations while the codomain does.
The idea is that in oder to fix this, we need to solidify the above map.
We have seen what solidifying the left-hand side means and we need to provide an analog of this procedure for the right-hand side. To that end, we use internal category theory as developed in \cite{MYoneda,MWColimits,MWPresentable}.
The basic idea is that a \emph{condensed $\infty$-category} is a sheaf of $\infty$-categories on the site of profinite sets and that a condensed functor is simply a map of sheaves. A condensed $\infty$-category is \emph{presentable} if it is pointwise presentable and some squares are adjointable (see \Cref{def:cond_presentable}, this in fact amounts to having colimits in a condensed sense). A condensed functor has a left or right adjoint whenever it has an adjoint pointwise and some squares are adjointable (see \Cref{prop:cond_left_adj}, again this amounts to compatibility with condensed colimits). The category $\PrL_\mathrm{cond}$ is then defined as the category of presentable condensed categories with condensed left adjoint functors. As for the usual category $\PrL$, this category was endowed with a symmetric monoidal structure $-\otimes^\mathrm{cond}-$ in \cite{MWPresentable}. The latter is a condensed version of the Lurie tensor product. Given a condensed category $\underline{\ccal}$, one can ``forget the topology'' and obtain the underlying category $\underline{\ccal}(*)$ by taking global sections.

The basic examples of condensed $\infty$-categories that are used throughout the paper are 
\[\underline{\D}(X_\proet,{\Lambda})\colon K \mapsto \D((X\times K)_\proet, \Lambda)\]
and \[\underline{\mathrm{Solid}}_{\Lambda}\colon K\mapsto \D(K,{\Lambda})^{\blacksquare},\] where $K$ ranges through the category of profinite sets. We also set 
$\underline{\Mod}_{\Lambda}\coloneqq \underline{\D}(*_\proet,{\Lambda})$. The \emph{solidification} of a $\underline{\Mod}_{\Lambda}$-linear condensed presentable $\infty$-category $\underline{\ccal}$ is then defined as $\underline{\ccal}^\blacksquare\coloneqq \underline{\ccal} \otimes^{\mathrm{cond}}_{\underline{\Mod}_{\Lambda}} \underline{\mathrm{Solid}}_{\Lambda}$. We can recover solid sheaves on schemes using these tools:
\begin{introprop}(\Cref{LikeFS})
    Let $X$ be a qcqs scheme. Then \[\D(X,\Lambda)^{\blacksquare}=\underline{\D}(X_\proet,\Lambda)^\blacksquare(*).\]
\end{introprop}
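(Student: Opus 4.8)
The plan is to first reduce to the case $\Lambda=\ZhatP$, then to realise the solidification of the condensed category $\underline{\D}(X_\proet,\ZhatP)$ as an explicit reflective localisation whose global sections recover $\D(X,\ZhatP)^{\blacksquare}$. For the reduction one uses that $\D(X,\Lambda)^{\blacksquare}=\Mod_\Lambda(\D(X,\ZhatP)^{\blacksquare})$ by definition, that likewise $\underline{\mathrm{Solid}}_\Lambda=\Mod_\Lambda(\underline{\mathrm{Solid}}_{\ZhatP})$ and $\underline{\D}(X_\proet,\Lambda)=\Mod_\Lambda(\underline{\D}(X_\proet,\ZhatP))$, and that forming $\Lambda$-modules commutes with the condensed tensor product defining solidification (base change can be performed in stages) and with the global sections functor $(-)(*)$ (both are limits); so the general statement follows formally from the case $\Lambda=\ZhatP$, which I treat from now on.

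The first substantial step is to show that $\underline{\mathrm{Solid}}_{\ZhatP}$ is an \emph{idempotent} $\underline{\Mod}_{\ZhatP}$-algebra in $\PrL_\mathrm{cond}$, i.e.\ that the multiplication $\underline{\mathrm{Solid}}_{\ZhatP}\otimes^{\mathrm{cond}}_{\underline{\Mod}_{\ZhatP}}\underline{\mathrm{Solid}}_{\ZhatP}\to\underline{\mathrm{Solid}}_{\ZhatP}$ is an equivalence. Pointwise, the inclusion $\D(K,\ZhatP)^{\blacksquare}\hookrightarrow\D(K_\proet,\ZhatP)$ is a symmetric monoidal reflective localisation with reflector $(-)^{\blacksquare}$ (\Cref{prop:derivedSolid_Lambda}), and these assemble into a condensed functor because, for every map $K'\to K$ of profinite sets, the associated base-change functor preserves solidity and commutes with $(-)^{\blacksquare}$ --- the generating maps $j_\sharp\ZhatP\to j_\sharp^{\blacksquare}\ZhatP$ pulling back to maps of the same shape (\Cref{thm:solid}). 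Idempotence then follows because solidifying an already solid complex does nothing, and this persists after the condensed tensor product by the same base-change stability.

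Granted idempotence, the condensed analogue of the standard theory of idempotent algebras --- available in $\PrL_\mathrm{cond}$ by \cite{MWPresentable} --- shows that for any $\underline{\Mod}_{\ZhatP}$-linear $\underline{\ccal}\in\PrL_\mathrm{cond}$ the unit map exhibits $\underline{\ccal}^{\blacksquare}=\underline{\ccal}\otimes^{\mathrm{cond}}_{\underline{\Mod}_{\ZhatP}}\underline{\mathrm{Solid}}_{\ZhatP}$ as a reflective condensed subcategory of $\underline{\ccal}$. Taking $\underline{\ccal}=\underline{\D}(X_\proet,\ZhatP)$ and applying $(-)(*)\colon\PrL_\mathrm{cond}\to\PrL$, which --- being a limit over $\ProFin^{\op}$ --- carries a reflective condensed localisation to a reflective localisation (full faithfulness of the inclusion and the existence of its left adjoint being pointwise conditions preserved by that limit), I obtain that $\underline{\D}(X_\proet,\ZhatP)^{\blacksquare}(*)$ is a reflective full subcategory of $\D(X_\proet,\ZhatP)$, with reflector induced by $-\otimes^{\mathrm{cond}}_{\underline{\Mod}_{\ZhatP}}\underline{\mathrm{Solid}}_{\ZhatP}$. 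Since two reflective subcategories of $\D(X_\proet,\ZhatP)$ coincide as soon as their reflectors invert the same morphisms, and $\D(X,\ZhatP)^{\blacksquare}$ is by construction the localisation at the maps $j_\sharp\ZhatP\to j_\sharp^{\blacksquare}\ZhatP$, it remains to identify the class of maps inverted by that reflector. Unwinding the $\underline{\Mod}_{\ZhatP}$-action on $\underline{\D}(X_\proet,\ZhatP)$ --- levelwise $(N,M)\mapsto\pi_K^*N\otimes M$ with $\pi_K\colon X\times K\to K$, so that $\underline{\D}(X_\proet,\ZhatP)^{\blacksquare}$ is morally ``pro-étale sheaves on $X$ valued in solid modules'' --- this reflector is seen to be the localisation of $\D(X_\proet,\ZhatP)$ at the ``constant profinite'' maps $(p_S)_\sharp\ZhatP\to(p_S)_\sharp^{\blacksquare}\ZhatP$ for $p_S\colon X\times S\to X$, $S$ profinite; these are precisely the $j_\sharp\ZhatP\to j_\sharp^{\blacksquare}\ZhatP$ with $U=X\times S$, so one concludes by checking that inverting this subclass already inverts every $j_\sharp\ZhatP\to j_\sharp^{\blacksquare}\ZhatP$. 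That last point is the scheme-theoretic counterpart of the generation statement in \cite[Chapter~VII]{Fargues-Scholze}: using the local structure of pro-étale maps (cf.\ \Cref{lem:locWL}; Zariski-locally on source and target, $j$ factors as $U_0\times S\to U_0\xrightarrow{\et}X$ with $U_0$ affine étale over $X$ and $S$ profinite), the identity $j_\sharp^{\blacksquare}\ZhatP=\lim_\alpha(j_\alpha)_\sharp\ZhatP$, and the fact that $(j_0)_\sharp$ commutes with the relevant cofiltered limit, the map $j_\sharp\ZhatP\to j_\sharp^{\blacksquare}\ZhatP$ is obtained from ``constant profinite'' generators by applying the colimit-preserving functor $(j_0)_\sharp$ and gluing over a Zariski cover, hence lies in the saturated class generated by those.

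I expect the identification of the local objects to be the main obstacle. Its two delicate inputs are: (i) making the global sections of the condensed relative tensor product concrete enough to describe its reflector by generators --- which amounts either to showing that $(-)(*)$ is monoidal enough here to reduce to the ordinary relative tensor product $\D(X_\proet,\ZhatP)\otimes_{\D(*_\proet,\ZhatP)}\D(*,\ZhatP)^{\blacksquare}$, or to a direct argument inside $\PrL_\mathrm{cond}$; and (ii) the geometric reduction to ``constant profinite'' generators, which hinges on controlling the interaction of the pushforwards $(j_0)_\sharp$ along affine étale maps with the cofiltered limits defining $(-)_\sharp^{\blacksquare}$. By contrast, the reduction to $\ZhatP$-coefficients and the idempotent-algebra formalism are formal once the condensed category theory of \cite{MWPresentable} is in place.
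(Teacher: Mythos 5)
Your opening moves (reduce to $\Lambda=\widehat{\Z}_\mathcal{P}$, observe that $\underline{\mathrm{Solid}}_{\widehat{\Z}_\mathcal{P}}$ is an idempotent $\underline{\Mod}_{\widehat{\Z}_\mathcal{P}}$-algebra, and conclude that $\underline{\D}(X_\proet,\widehat{\Z}_\mathcal{P})^\blacksquare(*)$ is a reflective subcategory of $\D(X_\proet,\widehat{\Z}_\mathcal{P})$) are sound and in the spirit of the paper, which also records that both candidate categories are reflective subcategories of $\D(X_\proet,\widehat{\Z}_\mathcal{P})$.

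However, your key step --- identifying the class of maps inverted by the condensed-tensor reflector with the class generated by $j_\sharp\widehat{\Z}_\mathcal{P}\to j_\sharp^\blacksquare\widehat{\Z}_\mathcal{P}$ --- rests on a false structural claim. You assert that Zariski-locally on source and target, an affine pro-\'etale map $j\colon U\to X$ factors as $U_0\times S\to U_0\xrightarrow{\et}X$ with $U_0$ affine \'etale over $X$ and $S$ profinite, citing \Cref{lem:locWL}. That lemma gives a local structure for \emph{weakly smooth} maps (a weakly \'etale map composed with an affine projection), not a product decomposition of the pro-\'etale part. In fact the claimed factorization fails already for $X=\Spec(\F_p)$ and $U=\Spec(\overline{\F}_p)$: this is an affine pro-\'etale map with connected source, so $\pi_0(U)=*$ and there is no nontrivial profinite $S$ splitting off, yet $U$ is not \'etale over $X$. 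The "constant profinite" generators $(p_S)_\sharp\widehat{\Z}_\mathcal{P}\to(p_S)_\sharp^\blacksquare\widehat{\Z}_\mathcal{P}$ (for $p_S\colon X\times S\to X$) therefore do not generate the relevant saturated class by the mechanism you describe, and the final identification of local objects is left unsubstantiated.

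The paper avoids this problem entirely by never attempting to describe the generators of $\underline{\D}(X_\proet,\widehat{\Z}_\mathcal{P})^\blacksquare(*)$ explicitly. Instead it shows that membership in each of the two reflective subcategories can be checked after $f^*$ for $f\colon E\to X$ a surjection from a reduced $0$-dimensional affine $E$ with separably closed residue fields (\Cref{lem:strictly_profinite_cover}): for $\D(X,\widehat{\Z}_\mathcal{P})^\blacksquare$ this is \Cref{prop:pullack-detects-solid}, and for the condensed solidification one checks that the square relating $f^*$ to the two reflectors is right-adjointable (via \Cref{lem:f-sharp-is-condensed} and bifunctoriality of $-\otimes^{\cond}-$), so that $f^*$ is conservative and commutes with the comparison maps $K\to K^{\blacksquare'}$. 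This reduces the whole statement to $X=E$, where $\Sh(E_\proet)\simeq\Cond(\Ani)_{/\pi_0(E)}$ and both categories are, essentially by definition, $\underline{\mathrm{Solid}}_{\widehat{\Z}_\mathcal{P}}(\pi_0(E))$. If you want to salvage your argument you would need to replace the false factorization claim by this kind of conservativity-of-pullback reduction, at which point the idempotent-algebra formalism is no longer load-bearing.
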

\subsection*{The rigidity theorem}
We are now armed to prove the rigidity theorem. For it to be true, we assume all primes in $\mathcal{P}$ to be invertible on our schemes, because otherwise étale cohomology may not be $\A^1$-invariant. The most natural idea is to look at the condensed presentable $\infty$-category defined by \[\underline{\DM}_\proet(X,\Lambda)\colon K\mapsto \DM_\proet(X\times K,\Lambda),\] to solidify it and take global sections. This yields a presentable $\infty$-category that we denote by $\D^{\A^1,\Gm}(\WSm_X,\Lambda)^\blacksquare$.\footnote{Note that since the abstract solidification we introduced above is only defined for coefficients in a $\ZhatP$-algebra, we do not yet have a good category of solid motives integrally. Our methods also don't immediately generalize to the setting of integral coefficients: The main obstacle is that \Cref{limIsRlim} does not hold integrally. We plan to further address this point in future work.}
We automatically get a map  
\[\D(-,\Lambda)^\blacksquare\to \D^{\A^1,\Gm}(\WSm_{(-)},\Lambda)^\blacksquare\]
that we expect to be an equivalence.
At the moment however we do not know how to prove this.
The main obstacle is that we do not know if the unit in the codomain is complete.
To circumvent this issue, we further localize this category. Let us first take a step back and start from the effective case. We let $\D^{\A^1}_\proet(\WSm_X,\Lambda)$ be the category of $\Lambda$-modules in $\mathrm{SH}^{S^1}_\proet(X)$; we naturally get a condensed presentably monoidal $\infty$-category
\[\underline{\D}^{\A^1}_\proet(\WSm_X,\Lambda)\colon K \mapsto \D^{\A^1}_\proet(\WSm_{X\times K},\Lambda)\]
The $\infty$-category $\D^{\A^1}(\WSm_X,\Lambda)^\blacksquare$ of \emph{solid $\A^1$-invariant sheaves} over $X$ is then defined as $\underline{\D}^{\A^1}_\proet(\WSm_X,\Lambda)^\blacksquare(*).$ The following result essentially amounts showing that solid sheaves are homotopy invariant which we bootstrap from the torsion étale case:
\begin{introthm}(\Cref{thm:embedding_rigidity})
	Assume that each prime number in $\mc{P}$ is invertible on $X$. The functor
	\[\rho_\sharp^\blacksquare\colon \D(X,\Lambda)^\blacksquare\to\D^{\A^1}(\WSm_X,\Lambda)^\blacksquare\] obtained by Kan extension along the inclusion $X_\proet\to \WSm_X$ followed by $\A^1$-localization is fully faithful, commutes with all small limits and colimits, all pullbacks and with $f_\sharp$ for $f$ weakly étale.
\end{introthm}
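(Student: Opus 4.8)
The functor $\rho_\sharp^\blacksquare$ is obtained by applying $-\otimes^{\mathrm{cond}}_{\underline{\Mod}_\Lambda}\underline{\mathrm{Solid}}_\Lambda$ to the condensed left adjoint $\rho_\sharp\colon\underline{\D}(X_\proet,\Lambda)\to\underline{\D}^{\A^1}_\proet(\WSm_X,\Lambda)$ and taking global sections (\Cref{LikeFS}). Since the restriction functor $\rho^*$ along $X_\proet\hookrightarrow\WSm_X$ followed by the inclusion of $\A^1$-local objects also preserves condensed colimits, the adjunction $\rho_\sharp\dashv\rho^*$ lives in $\PrL_{\mathrm{cond}}$; applying the symmetric monoidal functor $-\otimes^{\mathrm{cond}}_{\underline{\Mod}_\Lambda}\underline{\mathrm{Solid}}_\Lambda$ produces an adjunction $\rho_\sharp^\blacksquare\dashv(\rho^*)^\blacksquare$ in which both functors preserve all small colimits. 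Compatibility of $\rho_\sharp^\blacksquare$ with colimits, with pullbacks, and with $f_\sharp$ for weakly étale $f$ is then inherited from the corresponding --- easily verified --- properties of $\rho_\sharp$ before solidification, by functoriality of the construction. Thus only full faithfulness and commutation with small limits remain to be checked. Moreover, since $\D(-,\Lambda)^\blacksquare=\Mod_\Lambda(\D(-,\ZhatP)^\blacksquare)$ and $\rho_\sharp^\blacksquare$ over $\Lambda$ is obtained from the one over $\ZhatP$ by applying $\Mod_\Lambda(-)$, an equivalence $(\rho^*)^\blacksquare\rho_\sharp^\blacksquare\simeq\id$ over $\ZhatP$ base-changes to the same statement over $\Lambda$; so we may assume $\Lambda=\ZhatP$.

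\textbf{Rigidity modulo $n$.}
For every $\mathcal{P}$-number $n$ --- which is invertible on $X$ by hypothesis --- I claim $\rho_\sharp$ induces an equivalence $\D(X_\proet,\Z/n)\simeq\D^{\A^1}_\proet(\WSm_X,\Z/n)$. Indeed, with $\Z/n$-coefficients the pro-étale and small étale topoi carry the same hypersheaves, so the source identifies with $\D(X_\et,\Z/n)$; on the target, continuity of pro-étale hypersheaves along the (ind-étale, cf. \Cref{lem:locWL}) presentations of weakly étale maps identifies $\D^{\A^1}_\proet(\WSm_X,\Z/n)$ with the $\Z/n$-linear $S^1$-stable étale motivic category over $X$; and the rigidity theorem \cite{bachmannrigidity} identifies this last category with $\D(X_\et,\Z/n)$, compatibly with $\rho_\sharp$. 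Running the same chain with $X$ replaced by $X\times K$ for $K$ any profinite set shows that the condensed functor $\rho_\sharp$ is a pointwise equivalence on $\Z/n$-coefficients.

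\textbf{Reassembly.}
By \Cref{thm:solid}, $\D(X,\ZhatP)^\blacksquare$ is compactly generated by pro-objects of constructible $\mathcal{P}$-torsion étale sheaves, say $P=\lim_i M_i$ with the limit computed in $\D(X,\ZhatP)^\blacksquare$. Since $M_i$ and its image $\rho_\sharp M_i$ are constructible and $\mathcal{P}$-torsion, hence solid (\Cref{prop:cons_are_solid}), one has $\rho_\sharp^\blacksquare M_i=\rho_\sharp M_i$, the constructible object attached to $M_i$ by rigidity modulo $n$. The key structural input --- that $-\otimes^{\mathrm{cond}}_{\underline{\Mod}_{\ZhatP}}\underline{\mathrm{Solid}}_{\ZhatP}$ turns the cofiltered limits of compact objects defining solid sheaves into genuine limits, which ultimately rests on \Cref{limIsRlim} --- gives $\rho_\sharp^\blacksquare P=\lim_i\rho_\sharp M_i$ in the target solid category, as a pro-object of compacts. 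As $\rho_\sharp^\blacksquare$ and $(\rho^*)^\blacksquare$ preserve colimits, full faithfulness reduces to the case of two such generators $P=\lim_i M_i$, $P'=\lim_k M'_k$, where the mapping-space formula for pro-objects of compacts together with rigidity modulo $n$ gives
\[
\map(\rho_\sharp^\blacksquare P',\rho_\sharp^\blacksquare P)=\lim_i\colim_k\map(\rho_\sharp M'_k,\rho_\sharp M_i)=\lim_i\colim_k\map(M'_k,M_i)=\map(P',P).
\]
Commutation with small limits is obtained by the same mechanism: modulo each $n$ the functor $\rho_\sharp$ is an equivalence, hence preserves limits, and one reassembles by writing a limit in $\D(X,\ZhatP)^\blacksquare$ as an iterated (cofiltered, then $n$-indexed) limit on which $\rho_\sharp^\blacksquare$ acts through the solidification machinery of \Cref{limIsRlim}.

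\textbf{Main obstacle.}
The delicate point is this last reassembly step: one must control precisely how solidification interacts both with $\rho_\sharp$ and with $\A^1$-localization so that it commutes with the cofiltered limits that define solid sheaves. This is exactly where an integral statement would break down, because \Cref{limIsRlim} --- which makes solidification compute as a derived limit --- is available only $\mathcal{P}$-adically; it is also why the argument proceeds through the $S^1$-stable effective category rather than the full $\DM_\proet$.
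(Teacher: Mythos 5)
Your proposal takes a genuinely different route from the paper's, but two of its main steps do not hold up.

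The central ``rigidity modulo $n$'' claim --- that $\rho_\sharp$ induces an equivalence $\D(X_\proet,\Z/n)\simeq\D^{\A^1}_\proet(\WSm_X,\Z/n)$ --- is false. First, $\D(X_\proet,\Z/n)$ does \emph{not} identify with $\D(X_\et,\Z/n)$: already over a geometric point the former is the derived category of condensed $\Z/n$-modules, which is far larger than $\D(\Z/n)$; what \cite[Proposition~5.3.2]{zbMATH06479630} gives is only a fully faithful embedding of the bounded-below classical part, after left completion. Likewise $\D^{\A^1}_\proet(\WSm_X,\Z/n)$ is built from pro-\'etale hypersheaves on weakly smooth $X$-schemes and is not the $\Z/n$-linear $S^1$-stable \'etale motivic category --- the whole condensed direction is extra. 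Second, and more decisively, the non-solidified $\rho_\sharp$ does not even land in $\A^1$-invariant objects with torsion coefficients prime to the characteristic: this is exactly the point of the remark following \Cref{prop:solid_BC}, which cites \cite[\S 7.1]{bastietal}. The $\A^1$-invariance of the image is what solidification and \Cref{cor:A1invrho} buy you; you cannot have it for free before solidifying. The correct torsion statement in this paper, \Cref{prop:rig-torsion}, is about the \emph{solid} categories, and its proof explicitly invokes \Cref{thm:embedding_rigidity}, so appealing to it here would be circular.

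The ``reassembly'' step has a second, independent gap. The identity $\rho_\sharp^\blacksquare P = \lim_i \rho_\sharp M_i$, together with compactness of the $\rho_\sharp M_i$ in the target, is precisely the assertion that $\rho_\sharp^\blacksquare$ commutes with the cofiltered limits defining solid sheaves --- but that is the substance of what has to be proven. Asserting it as a ``key structural input'' resting on \Cref{limIsRlim} is not a proof; \Cref{limIsRlim} concerns the source category and does not by itself transport across $\rho_\sharp^\blacksquare$.

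The paper's argument is structurally different and avoids both problems. It shows in \Cref{lem:small-to-big-preserves-limits} that the condensed functor $\rho_\sharp \colon \underline{\Sh}(X_{\proet}) \to \underline{\Sh}_{\proet}(\WSm_X)$ is not merely a condensed left adjoint (which you observe) but a condensed \emph{right} adjoint: it has a further left adjoint, because it preserves limits pointwise (\Cref{lem:pullbackslimits}) and the relevant squares are adjointable (integral base change). Then \Cref{lem:tensoring_adjoints_fully_faithful} transports an adjunction with fully faithful right adjoint across $-\otimes^{\cond}\underline{\mathrm{Solid}}_\Lambda$, giving full faithfulness and limit preservation of $\rho_\sharp^\blacksquare$ essentially formally --- no torsion rigidity enters at this stage. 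Torsion rigidity and the invertibility of $\mathcal{P}$ are used only in \Cref{cor:A1invrho} to check that the image is $\A^1$-invariant, via Postnikov completeness and reduction to constructible torsion \'etale sheaves. To fix your argument you would need to prove that $\rho_\sharp$ has a condensed left adjoint, rather than attempting to deduce full faithfulness from a rigidity statement that does not exist in the non-solid setting.
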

Observe now that if $\ZhatP(1)$ is the free sheaf over the pointed scheme $(\Gm_X,1)$, we have a natural map $\ZhatP(1)\to \lim_{n\in \Pi\mc{P}} \Z/n\Z(1)\simeq \mu_{\infty,\mathcal{P}}$ where $\Pi\mc{P}$ is the set of those $n$ which are only divisible by primes in $\mathcal{P}$ and with 
$\mu_{\infty,\mathcal{P}}\coloneqq\lim_{n\in \Pi\mc{P}}\mu_n$. The usual torsion rigidity theorem implies that torsion étale sheaves are local with respect to this map. Thus, so are pro-(torsion étale) sheaves. A bit more work shows that both the source and the target of this map are compact (see the proof of \Cref{thm:solid_rigidity_V1}) and thus we see that $\rho_\sharp^\blacksquare$ lands in the subcategory of local objects. This is a very good sign as $\mu_{\infty,\mathcal{P}}$ is $\otimes$-invertible, so that localizing with respect to this map also enforces Tate-stability. This motivates the following definitions: we let $\mathrm{DM}^\mathrm{eff}(X,\Lambda)^\blacksquare$ (resp. $\mathrm{DM}(X,\Lambda)^\blacksquare$) be the localization of $\D^{\A^1}(\WSm_X,\Lambda)^\blacksquare$ (resp. $\D^{\A^1,\Gm}(\WSm_X,\Lambda)^\blacksquare$) at the maps
\[M(1)\to M\otimes_{\widehat{\Z}_\mathcal{P}}\mu_{\infty,\mathcal{P}}\] for all $M$. We can now finally state our rigidity result:

\begin{introthm}\label{Main_Thm}(\Cref{thm:solid_rigidity_V1} and \Cref{prop:solid_rigidity_V2})
	Assume that each prime number in $\mathcal{P}$ is invertible on $X$.
	The functors \[\D(X,\Lambda)^\blacksquare \to \D^{\mathbb{A}^1}(\WSm_X,\Lambda)^\blacksquare\to \D^{\A^1,\Gm}(\WSm_X,\Lambda)^\blacksquare\] induce equivalences
	\[
		\D(X,\Lambda)^\blacksquare \to \mathrm{DM}^\mathrm{eff}(X,\Lambda)^\blacksquare\to \mathrm{DM}(X,\Lambda)^\blacksquare.
	\]
	In particular, the $\D(-,\Lambda)^\blacksquare$ are endowed with the six functors on schemes $X$ such that each prime number in $\mathcal{P}$ is invertible on $X$. If $R$ is a commutative (discrete) ring, this yields a \emph{solid $\mathcal{P}$-adic realization functor}:
\[
		\rho_\blacksquare\colon \DM_{\et}(X,R)  \to\mathrm{DM}_\proet(X,R\otimes_\Z \ZhatP) \to \mathrm{DM}(X,R\otimes_\Z \ZhatP)^\blacksquare\simeq\mathrm{D}(X,R\otimes_\Z \ZhatP)^\blacksquare
	\]
\end{introthm}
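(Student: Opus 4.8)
The plan is to prove the two displayed equivalences one after the other, then deduce the six functors by transport of structure, and finally observe that $\rho_\blacksquare$ is just a composite of already-constructed functors. Consider first $\D(X,\Lambda)^\blacksquare\to\mathrm{DM}^\mathrm{eff}(X,\Lambda)^\blacksquare$. By \Cref{thm:embedding_rigidity} the functor $\rho_\sharp^\blacksquare\colon\D(X,\Lambda)^\blacksquare\to\D^{\A^1}(\WSm_X,\Lambda)^\blacksquare$ is fully faithful and preserves all small colimits, so its essential image is a localizing subcategory. The first point is to check that this image consists of objects local with respect to the maps $M(1)\to M\otimes_{\ZhatP}\mu_{\infty,\mathcal{P}}$: granting this, composing with the Bousfield localization $L\colon\D^{\A^1}(\WSm_X,\Lambda)^\blacksquare\to\mathrm{DM}^\mathrm{eff}(X,\Lambda)^\blacksquare$ still gives a fully faithful functor $L\circ\rho_\sharp^\blacksquare$. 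Since $\rho_\sharp^\blacksquare$ preserves colimits and $\D(X,\Lambda)^\blacksquare$ is compactly generated by the pro-objects of constructible étale $\mathcal{P}$-torsion sheaves (\Cref{thm:solid}), it is enough to verify locality on those generators; for the genuinely torsion ones this is the classical rigidity theorem — which is exactly where the assumption that the primes of $\mathcal{P}$ be invertible on $X$ is used — and one then passes to cofiltered limits. For this reduction to generators to be legitimate one needs the localization to be smashing, i.e. the source and target of $\ZhatP(1)\to\mu_{\infty,\mathcal{P}}$ to be compact; proving this compactness is a separate bookkeeping step, already flagged in the introduction.

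It then remains to prove that $L\circ\rho_\sharp^\blacksquare$ is essentially surjective, i.e. that its essential image $\mathcal{E}$ — which is colimit-closed, hence localizing — contains a generating family of $\mathrm{DM}^\mathrm{eff}(X,\Lambda)^\blacksquare$. Such a family is given by the $L$-images of the motives $M_X(Y)$ of weakly smooth $Y\to X$ together with their Tate twists $M_X(Y)(k)$, $k\in\Z$ (all of $\Z$, since after the localization $\un(1)\simeq\mu_{\infty,\mathcal{P}}$ is $\otimes$-invertible). I would whittle these down: by Zariski descent in $\D^{\A^1}(\WSm_X,\Lambda)^\blacksquare$ and the local structure of weakly smooth morphisms (\Cref{lem:locWL}), one reduces to the case $Y\to\A^n_X$ weakly étale; homotopy invariance collapses the affine-space direction, and the only remaining ``non-étale'' content is carried by the Tate twists, i.e. by the sheaves $\mu_{\infty,\mathcal{P}}^{\otimes k}$, which are pro-(constructible étale) and hence — with their inverse, available after the localization — already lie in $\D(X,\Lambda)^\blacksquare$. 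In this way every generator is built, using colimits, twists by $\mu_{\infty,\mathcal{P}}^{\otimes k}$, and the functors $f_\sharp$ for $f$ weakly étale, out of objects of the form $\rho_\sharp^\blacksquare(f_\sharp\un)$; all of these lie in $\mathcal{E}$ because $\rho_\sharp^\blacksquare$ commutes with $f_\sharp$ for weakly étale $f$ (\Cref{thm:embedding_rigidity}) and, being symmetric monoidal, has $\otimes$-closed essential image. This reduction — in effect the assertion that the $\mu_{\infty,\mathcal{P}}$-stable, $\A^1$-invariant solid category over $X$ is generated by the weakly-étale $f_\sharp$ of twisted units — is the main obstacle: it is where homotopy invariance, \Cref{lem:locWL}, and torsion rigidity all have to be combined, and where the monoidality of $\rho_\sharp^\blacksquare$ is essential.

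For the second equivalence $\mathrm{DM}^\mathrm{eff}(X,\Lambda)^\blacksquare\to\mathrm{DM}(X,\Lambda)^\blacksquare$, the target is the same localization applied to the $\Gm$-stabilization $\D^{\A^1,\Gm}(\WSm_X,\Lambda)^\blacksquare$; but $\Gm$-stabilization is the universal way of inverting $\un(1)$, and after localizing at $M(1)\to M\otimes\mu_{\infty,\mathcal{P}}$ that object is already invertible. Concretely the localized stabilization is a colimit of $\mathrm{DM}^\mathrm{eff}(X,\Lambda)^\blacksquare\xrightarrow{-\otimes\un(1)}\mathrm{DM}^\mathrm{eff}(X,\Lambda)^\blacksquare\to\cdots$ along equivalences, so the canonical functor is an equivalence — one only checks that the localization commutes with this filtered colimit, which is formal after matching generators.

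Finally, the six-functor formalism on $\mathrm{DM}(-,\Lambda)^\blacksquare$ is obtained by transporting that of $\DM_\proet$ (\Cref{thm:motivic_6FF}) through the localization: $\otimes^\blacksquare$, $f^*$ and $f_!$ (for finitely presented $f$) preserve the subcategory of $\mu_{\infty,\mathcal{P}}$-local solid objects — $f^*$ because $f^*\mu_{\infty,\mathcal{P}}\simeq\mu_{\infty,\mathcal{P}}$ whenever the primes of $\mathcal{P}$ are invertible on the source, the other two by monoidality and generators — and $f_*$, $f^!$, $\underline{\Hom}$ descend by adjunction, with all coherences inherited; via the equivalence $\D(X,\Lambda)^\blacksquare\simeq\mathrm{DM}(X,\Lambda)^\blacksquare$ this equips $\D(-,\Lambda)^\blacksquare$ with the six operations over schemes on which $\mathcal{P}$ is invertible. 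The solid $\mathcal{P}$-adic realization $\rho_\blacksquare$ is then by construction the composite of the change-of-coefficients functor $\DM_\et(X,R)\to\DM_\proet(X,R\otimes_\Z\ZhatP)$ (available because $\SH_\et\to\SH_\proet$ is already constructed and one extends scalars along $R\to R\otimes_\Z\ZhatP$), the Bousfield localization $\DM_\proet(X,R\otimes_\Z\ZhatP)\to\mathrm{DM}(X,R\otimes_\Z\ZhatP)^\blacksquare$, and the equivalence with $\D(X,R\otimes_\Z\ZhatP)^\blacksquare$ just established; monoidality of each step gives the compatibility with change of coefficients, and its agreement with the classical $\ell$-adic realization on constructible objects follows from the inclusion $\D^b_c\subseteq\D(-,\Lambda)^\blacksquare$ of \Cref{prop:cons_are_solid}.
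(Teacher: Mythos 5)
Your devissage for full faithfulness --- checking locality on the compact generators of $\D(X,\Lambda)^\blacksquare$, which by \Cref{thm:solid} are pro-objects of constructible $\mathcal{P}$-torsion étale sheaves, reducing to the genuinely torsion case by passing to cofiltered limits (legitimate since local objects are closed under limits and $\rho_\sharp^\blacksquare$ commutes with limits), and then passing back to arbitrary objects by colimits --- is a legitimate alternative route to what the paper does (a truncation-and-colimit devissage of $\mathcal{F}$ directly). The compactness/dualizability of the Tate twist and of $\lim_{n}\Z/n\Z[\Gm]$ that you flag is indeed established in the paper (via \Cref{rem:connection_to_mu} and \Cref{lem:muinfty_invertible}), so this part is sound modulo that input. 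The treatment of the second equivalence $\mathrm{DM}^{\mathrm{eff}}\to\mathrm{DM}$ and of the transport of the six operations is also essentially correct.

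However, the essential surjectivity argument has a genuine gap. After reducing, by Zariski descent and \Cref{lem:locWL}, to generators $f_\sharp(\un)$ with $f\colon Y\to\A^n_X\to X$ and $Y\to\A^n_X$ weakly étale, you assert that ``homotopy invariance collapses the affine-space direction'' and that what remains is carried by Tate twists. This does not work. $\A^1$-invariance does not identify $p_\sharp g_\sharp(\un_Y)$, for $p\colon\A^n_X\to X$ the projection and $g\colon Y\to\A^n_X$ weakly étale, with anything assembled from weakly-étale $X$-schemes and Tate twists, because $Y$ is generally not an affine bundle over a weakly-étale $X$-scheme (the fibers of $Y$ over $\A^n_X$ vary). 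The essential image of $\rho_\sharp^\blacksquare$ is closed under colimits, $\otimes$, and $h_\sharp$ for $h$ weakly étale --- but not a priori under $p_\sharp$ for $p$ the $\A^n$-projection, and closure under $p_\sharp$ is precisely what your generation argument requires. The paper's proof of \Cref{prop:rig-torsion} (to which the proof of \Cref{thm:solid_rigidity_V1} defers for essential surjectivity) resolves this by a different mechanism: compose $g$ with the open immersion $\A^n_X\hookrightarrow\mathbb{P}^n_X$ to rewrite $f_\sharp(\un)$ as $p_\sharp h_\sharp(\un)$ with $h$ still weakly étale and $p\colon\mathbb{P}^n_X\to X$ proper; invoke ambidexterity (\Cref{ambidextry_stable}) to replace $p_\sharp$ by $p_*(n)[2n]$; and then use proper base change on both sides (\Cref{prop:solid_BC}, \Cref{prop:PBC}) to show that $p_*$ commutes with $\rho_\sharp^\blacksquare$. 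Ambidexterity and proper base change --- not homotopy invariance --- are what close the generation argument, and you correctly identified this as the main obstacle but proposed the wrong tool to overcome it.
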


We can then compare our realization with the classical $\ell$-adic realization.

\begin{introprop}
	(\Cref{cor:MagicSquare})
	We have a commutative square 
	\[\begin{tikzcd}
	{\mathrm{DM}_\et(X,\Z)} & {\D(X,\Z_\ell)^\blacksquare} \\
	{\mathrm{DM}_\et(X,\Q)} & {\D(X,\Q_\ell)^\blacksquare}
	\arrow["{\rho_\blacksquare}", from=1-1, to=1-2]
	\arrow["{\otimes\Q}"', from=1-1, to=2-1]
	\arrow["{\otimes\Q}", from=1-2, to=2-2]
	\arrow["{\rho_\blacksquare}"', from=2-1, to=2-2]
\end{tikzcd}\] of symmetric monoidal left adjoints which restricts to the classical square 
\[\begin{tikzcd}
	{\mathrm{DM}_{\et,\mathrm{gm}}(X,\Z)} & {\D^b_c(X_\et,\Z_\ell)} \\
	{\mathrm{DM}_{\et,\mathrm{gm}}(X,\Q)} & {\D^b_c(X_\et,\Q_\ell)}
	\arrow["{\rho_\ell}", from=1-1, to=1-2]
	\arrow["{\otimes\Q}"', from=1-1, to=2-1]
	\arrow["{\otimes\Q}", from=1-2, to=2-2]
	\arrow["{\rho_{\Q_\ell}}"', from=2-1, to=2-2]
\end{tikzcd},\] where $\rho_\ell$ is the $\ell$-adic realization functor of \cite{MR3477640} and $\rho_{\Q_\ell}$ is the $\Q_\ell$-adic realization functor of \cite[2.1.2]{MR4061978}.
\end{introprop}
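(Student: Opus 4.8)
The plan is to first establish the commutative square of presentable $\infty$-categories and then to cut it down to geometric motives and constructible complexes.

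\emph{The square of big categories.} By the construction of $\rho_\blacksquare$ recalled in \Cref{Main_Thm}, each horizontal arrow factors as the change-of-coefficients functor $\DM_\et(X,R)\to\DMp(X,R\otimes_\Z\ZhatP)$ — the fully faithful embedding $\SH_\et(X)\hookrightarrow\SH_\proet(X)$ of \Cref{thm:embedding_etale_motives} followed by extension of scalars along $H\Z\to H(R\otimes_\Z\ZhatP)$ — composed with the solidification equivalence $\mathrm{DM}(X,R\otimes_\Z\ZhatP)^\blacksquare\simeq\D(X,R\otimes_\Z\ZhatP)^\blacksquare$. Each factor is a symmetric monoidal left adjoint, as are the vertical rationalisations $-\otimes_{H\Z}H\Q$ and $-\otimes_{\Z_\ell}\Q_\ell$ (recall $\D(X,\Q_\ell)^\blacksquare=\Mod_{\Q_\ell}(\D(X,\Z_\ell)^\blacksquare)$ by definition, and that $\Q_\ell$ is a solid $\Z_\ell$-algebra). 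Commutativity is seen by observing that along either path the composite $\DM_\et(X,\Z)\to\D(X,\Q_\ell)^\blacksquare$ is extension of scalars from $H\Z$ to the condensed ring spectrum $\Q_\ell$ inside the ambient category $\DMp(X,-)$ (or its condensed enhancement), followed by solidification: since $H\Z\to H\Z_\ell\to\Q_\ell$ and $H\Z\to H\Q\to\Q_\ell$ agree as maps of $E_\infty$-rings, functoriality of $\Lambda\mapsto\Mod_\Lambda$ and of the solidification give the claim. The one step needing care is that solidification commutes with the base change $\Z_\ell\to\Q_\ell$, which follows from \Cref{prop:derivedSolid_Lambda} (the functor $(-)^\blacksquare$ is symmetric monoidal and compatible with base change) together with the fact that $\mathrm{DM}(X,\Lambda)^\blacksquare$ is defined by a localisation functorial in the solid $\ZhatP$-algebra $\Lambda$.

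\emph{Restriction to the classical square.} The category $\mathrm{DM}_{\et,\mathrm{gm}}(X,\Z)$ is the thick subcategory of $\DM_\et(X,\Z)$ generated by the twisted relative motives $p_\sharp\un_Y(n)$ of smooth finitely presented $X$-schemes $p\colon Y\to X$. As $\rho_\blacksquare$ commutes with $f^*$, $\otimes$ and $f_!$ (being built from change of coefficients and the rigidity equivalence, both of which do; cf. \Cref{thm:motivic_6FF}, \Cref{Main_Thm}), it commutes with $p_\sharp$ and Tate twists for smooth $p$ and sends the unit motive to the constant sheaf $\Z_\ell$; hence it carries each generator to $p_\sharp\Z_\ell(n)$, which lies in $\D^b_c(X_\et,\Z_\ell)$ because the latter is stable under the six operations \cite{MR4609461}. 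Using $\D^b_c(X_\et,\Z_\ell)\subseteq\D(X,\Z_\ell)^\blacksquare$ (\Cref{prop:cons_are_solid}) and $\D^b_c(X_\et,\Q_\ell)\subseteq\D(X,\Q_\ell)^\blacksquare$ (\Cref{prop:cons_are_solid_rational}), this yields the restricted functors; the restricted square commutes because it is obtained by restricting the big one, the vertical rationalisations preserve geometric (resp. constructible) objects, and $\D^b_c(-,\Q_\ell)$ is the essential image of $\D^b_c(-,\Z_\ell)$ under $-\otimes\Q$. It remains to identify these restrictions with $\rho_\ell$ of \cite{MR3477640} and $\rho_{\Q_\ell}$ of \cite{MR4061978}: over the schemes on which $\ell$ is invertible, $\D^b_c(-_\et,\Z_\ell)$ (resp. $\D^b_c(-_\et,\Q_\ell)$) is $\A^1$-invariant by rigidity, étale-local and Tate-stable and carries the six operations, hence receives an essentially unique symmetric monoidal morphism of six-functor formalisms out of $\mathrm{DM}_{\et,\mathrm{gm}}$ sending the unit to the unit (Robalo, Khan, Cisinski--Déglise, Drew--Gallauer); both the restriction of $\rho_\blacksquare$ and the classical realisation are such, so they coincide.

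\emph{Main obstacle.} The delicate point is the coherence of the big square, i.e. that the abstract solidification — a condensed Lurie tensor product followed by a Bousfield localisation — is natural in the coefficient algebra and therefore compatible with the motivic rationalisation from $\Z_\ell$ to $\Q_\ell$; concretely, one must check that the equivalence $\mathrm{DM}(X,\Lambda)^\blacksquare\simeq\D(X,\Lambda)^\blacksquare$ of \Cref{Main_Thm} is natural in the solid $\ZhatP$-algebra $\Lambda$. This is where \Cref{prop:derivedSolid_Lambda} and the functoriality of the defining localisations are essential; once it is available, the rest reduces to formal manipulations with symmetric monoidal left adjoints, the generation of geometric motives by twisted relative motives of smooth schemes, stability of $\D^b_c$ under the six operations, and the universal property of $\mathrm{DM}_{\et,\mathrm{gm}}$.
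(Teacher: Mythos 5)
Your construction of the big square and the verification that $\rho_\blacksquare$ carries geometric motives into $\D^b_c$ are along the lines of what the paper does (the paper's own phrasing — ``the first square exists formally'' — is terser, and it proves the inclusion into $\D^b_c$ by stratifying to the dualizable case and using $\ell$-completeness of dualizable solid objects rather than by tracking generators, but these are minor variations). The place where you diverge substantively, and where there is a real gap, is the identification of the restricted horizontal arrows with $\rho_\ell$ and $\rho_{\Q_\ell}$.

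You propose to get this from an ``essentially unique symmetric monoidal morphism of six-functor formalisms out of $\mathrm{DM}_{\et,\mathrm{gm}}$ sending the unit to the unit.'' The universal properties you cite (Robalo, Drew--Gallauer, etc.) characterize symmetric monoidal left adjoints out of the \emph{presentable} categories $\SH_\et$ or $\DM_\et$ into presentable symmetric monoidal targets; they do not directly give a uniqueness statement for functors out of the small geometric subcategory $\mathrm{DM}_{\et,\mathrm{gm}}(X,\Z)$ into the non-presentable category $\D^b_c(X_\et,\Z_\ell)$. To run such an argument you would have to transpose to the big categories, and then the issue is that $\rho_\ell$ of \cite{MR3477640} naturally has target the Ekedahl-style $\ell$-complete category $\D(X_\et,\Z)^\wedge_\ell$, whereas $\rho_\blacksquare$ has target $\D(X,\Z_\ell)^\blacksquare$; these are a priori different presentable categories, and producing a comparison between them (and showing both maps factor through it compatibly) is essentially the content one is trying to prove, not a formal consequence of the universal property. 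In other words, the uniqueness argument is circular without an independent identification of the two presentable targets on constructible objects.

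The paper avoids this by an explicit reduction mod $\ell^n$: it first shows $\rho_\blacksquare(M)$ is $\ell$-complete for $M$ geometric, so that $\rho_\blacksquare(M)$ is determined by the system $\rho_\blacksquare(M)/\ell^n$; then, using rigidity for étale motives (Bachmann) and the torsion solid rigidity \Cref{prop:torsion etale is solid derived}, it identifies $\rho_\blacksquare(M)/\ell^n$ with the classical image of $M/\ell^n$ in $\D^b_c(X_\et,\Z/\ell^n\Z)$; finally the comparison $\D^b_c(X_\et,\Z_\ell)\simeq\lim_n\D^b_c(X_\et,\Z/\ell^n\Z)$ finishes the integral case, and the rational case follows from the definition of $\rho_{\Q_\ell}$ together with the fact (\cite[Corollary 2.4]{zbMATH07751006}) that every object of $\D^b_c(X_\et,\Q_\ell)$ descends to $\Z_\ell$. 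If you want to keep a universal-property flavour in your argument you would still need to supply this mod-$\ell^n$ comparison (or an equivalent identification of the two big $\ell$-adic categories) as a separate step; as written, your proof does not establish that the restriction of $\rho_\blacksquare$ agrees with $\rho_\ell$.
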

Over schemes of characteristic zero, the above result also holds for $\widehat{\Z}$ and the ring of finite adeles $\A_{\Q,f}$ instead of $\Z_\ell$ and $\Q_\ell$. This allows for another definition of the algebra $\mathscr{N}\in\mathrm{DM}^{\et}(\Spec(\Q),\Z)$ such that modules over it are Nori motives, as considered in \cite{integralNori}. In forthcoming work, we plan to study a spectral analogue of the constructions of this paper. This would lead to a definition of spectral Nori motives.

\subsection*{Acknowledgements}
RR thanks Frédéric Déglise, Fabrizio Andreatta and Jean Fasel for their support and their interest in this project. ST thanks Sophie Morel for her support.
SW would like to thank his advisor Denis-Charles Cisinski for suggesting that he should think about this topic and many helpful discussions.
SW would also like to think Remy van Dobben de Bruyn for helpful discussions related to the contents of this paper.

RR acknowledges support from the Prin 2022, \emph{The arithmetic of
motives and L-functions} and the ANR-21-CE40-0015 HQDIAG. ST acknowledges support by the European Research Council (ERC) under
the European Union’s Horizon 2020 research and innovation programme ``EMOTIVE'' (grant agreement
no. 101170066) and the ANR-21-CE40-0015 HQDIAG.
SW gratefully acknowledges support from the SFB 1085 Higher Invariants in Regensburg, funded
by the DFG and was also supported by the Danish National Research Foundation through the Copenhagen Centre for
Geometry and Topology (DNRF151).
\subsection*{Notations and Conventions}
We denote by $\mathrm{Ani}$ the category of anima\footnote{also known as spaces, $\infty$-groupoids, Kan complexes, sets...}.
Let $\ccal$ be a site.
We denote by $\PSh(\ccal)$ (resp. $\Sh(\ccal)$) the category of presheaves (resp. hypersheaves) of anima over $\mc{C}$.
We let $\Sh(\ccal,\mathrm{Sets})$ be the full subcategory of sheaves of sets. 
If $R$ is a ring in $\Sh(\mc{C},\mathrm{Sets})$, we let $\Sh(\mc{C},R)$ be the category of $R$-modules. When dealing with presheaves on a site with values in a $\infty$-category, we will always use hypersheaves and never sheaves.
For the sake of readability, we will sometime drop the "$\infty$" and simply refer to $\infty$-categories as categories (and the same for their condensed analogue defined in \Cref{part:condensed}).
Finally, all schemes we consider are qcqs.

\section{Solid sheaves on schemes.}\label{sec:solid}
In this section, we adapt \cite[Sections VII.1 \& VII.2]{Fargues-Scholze} to schemes. 
We begin by recalling a few preliminaries on pro-\'etale sheaves in \Cref{sec:proet_rec}.
In \Cref{sec:abelianproet} we define the abelian category of solid pro-\'etale sheaves and prove its main properties.
Then, in \Cref{sec:derivedsolid} we study the derived category of solid sheaves on a scheme.
Both of these sections closely follow the material in \cite[\S VII]{Fargues-Scholze}.
Finally in \Cref{sec:solidcats} we use ideas from condensed category theory to provide a different perspective on solid sheaves.
This approach will be used in \Cref{sec:rigidity} to solidify the categories of pro-\'etale motives constructed in the \Cref{sec:proetmotives}.

\subsection{Recollections on the pro-étale topos}\label{sec:proet_rec}
If $X$ is a scheme, we let $X_\proet$ denote the small pro-étale site of $X$ consisting of weakly \'etale $X$-schemes and by $X_\proet^{\aff}$ the subcategory made of cofiltered limits with affine transition maps of étale $X$-schemes which are also affine as schemes. Recall from \cite[\S 2]{zbMATH06479630} that an affine scheme is said to be \emph{$w$-contractible} if any pro-étale cover of it admits a section. It is said to be \emph{$w$-local} if any open cover splits and the subset of its closed points is closed. We often use \cite[Theorem 1.8]{zbMATH06479630}: an affine scheme is $w$-contractible if and only if it is $w$-local, all of its local rings at
closed points are strictly henselian, and its $\pi_0$ is an extremally disconnected profinite set.
Finally, a morphism of $w$-local affine schemes is \emph{$w$-local} if it maps closed points to closed points.

\begin{emptypar}
	\label{notation-tilde}
	Let $X = \Spec(R)$ be an affine scheme and $i \colon Z = \Spec(R/I) \rightarrow X$ be a closed immersion.
	Recall that by \cite[Lemma 2.2.12]{zbMATH06479630} the base change functor
	\[
		-\otimes_R R/I \colon \In(R_{\et}) \rightarrow \In((R/I)_{\et})
	\]
	admits a fully faithful right adjoint
	\[
		 \operatorname{Hens}_R(-) \colon  \In((R/I)_{\et}) \rightarrow \In(R_{\et}).
	\]
	As in loc. cit. we often simplify notation a bit and simply write
	\begin{align*}
		\widetilde{(-)} \colon Z_{\proet}^{\operatorname{aff}} &\rightarrow X_{\proet}^{\operatorname{aff}} \\
		V & \mapsto \widetilde{V}
	\end{align*}
	for the corresponding functor on schemes (that we also denote by $\mathrm{Hens}_X$).
	This functor is fully faithful and left adjoint to the base-change functor.
	It follows that for a pro-\'etale sheaf $F$ on $X$, the sheaf $i^*F$ may be described as the sheaf associated to the presheaf
	\[
		V \mapsto F(\widetilde{V}).
	\]
\end{emptypar}

\begin{prop}
    \label{prop:W-contractible_gives_hens_pair}
	Let $ W $ be a $ w $-contractible affine scheme and let $ i \colon W^c \to W $ be the inclusion of its subscheme of closed points.
	Then $ \mathrm{Hens}_{W}(W^c) = W $.
\end{prop}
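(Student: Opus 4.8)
The plan is to reduce, via the adjunction defining $\mathrm{Hens}$, to the statement that $(R,I)$ is a henselian pair, where $W=\Spec(R)$ and $W^c=\Spec(R/I)$ is the subscheme of closed points. Recall that $\mathrm{Hens}_W=\widetilde{(-)}$ is fully faithful and left adjoint to the base change functor $i^*$, so the counit gives a canonical map $c_U\colon\widetilde{i^*U}\to U$ for $U\in W_\proet^\aff$, and full faithfulness means precisely that $c_U$ is an isomorphism exactly when $U$ lies in the essential image of $\widetilde{(-)}$, which (by \cite{zbMATH06479630}) consists of the ind-étale $R$-algebras $S$ for which $(S,IS)$ is a henselian pair. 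Since $i^*W=W\times_W W^c=W^c$, the map $c_W$ is exactly the comparison $\widetilde{W^c}=\mathrm{Hens}_W(W^c)\to W$, so it is an isomorphism if and only if $(R,I)$ is henselian. (Equivalently, and more concretely, the adjunction shows that $\mathrm{Hens}_W(W^c)$ and $W$ both corepresent functors on $W_\proet^\aff$ --- namely $U\mapsto\Hom_{W^c}(W^c,i^*U)$ and $U\mapsto\Hom_W(W,U)$ --- and, after writing $\mathcal{O}(U)$ as a filtered colimit of étale $R$-algebras, the natural base-change map between these functors is a bijection precisely because $(R,I)$ is a henselian pair; this uses the standard fact that for $B$ étale over $R$ the map $\Hom_R(B,R)\to\Hom_{R/I}(B/IB,R/I)$ is bijective when $(R,I)$ is henselian.)

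It remains to check that $(R,I)$ is a henselian pair. By \cite[Theorem 1.8]{zbMATH06479630}, the $w$-contractible scheme $W$ is $w$-local with strictly henselian local rings at its closed points. Since the closed points of $W$ form a closed subset and the closed points of an affine scheme are the maximal ideals, $V(I)$ is the maximal spectrum of $R$; in particular $I\subseteq\mathfrak{m}$ for every maximal ideal $\mathfrak{m}$, so $I\subseteq\mathrm{Jac}(R)$. The remaining lifting property for $(R,I)$ --- equivalently, that idempotents lift along $R\to R/I$ for every finite $R$-algebra, or that coprime monic factorizations lift modulo $I$ --- then follows from the henselianity of the local rings $\mathcal{O}_{W,x}$ at the closed points $x\in W^c$: one produces a lift after localizing at each closed point, and the local solutions patch because $W$ is $w$-local, so that the retraction $W\to W^c$ induces a bijection on clopen subsets. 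This is the standard fact that a $w$-local affine scheme whose local rings at closed points are henselian forms a henselian pair with the ideal cutting out its closed points.

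I expect the only real content to be this second step --- verifying that $(R,I)$ is a henselian pair --- while the first step is formal manipulation of the $\mathrm{Hens}$-adjunction. This step also clarifies why the hypothesis cannot be weakened to mere $w$-locality: henselianity of the local rings at the closed points is exactly what makes the lifting possible, and $w$-contractibility supplies it (indeed in the strong, strictly henselian, form, although plain henselianity of those local rings would already be enough).
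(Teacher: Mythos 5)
Your proof is correct in substance but takes a genuinely different route from the paper's, and it is worth comparing the two.

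Your approach reduces the proposition to the purely ring-theoretic statement that $(R,I)$ is a henselian pair, and then invokes the fact that a $w$-local affine scheme with henselian local rings at its closed points is henselian along the ideal cutting out the closed points. Two small comments on this route. First, you don't actually need the full characterization of the essential image of $\mathrm{Hens}_W$: the map $R\to\mathrm{Hens}_R(R/I)$ is, by the very universal property of the right adjoint, the henselization of the pair $(R,I)$ (the filtered colimit of \'etale $R$-algebras $A'$ equipped with a retraction $A'/IA'\to R/I$), so it is an isomorphism if and only if $(R,I)$ was already henselian --- no description of the whole essential image is needed. Second, the patching step in your ``standard fact'' is where the real work hides, and the mechanism is slightly different from what you wrote: given an idempotent $\bar e\in B/IB$ for $B$ finite over $R$, henselianity of each $\mathcal{O}_{W,x}$ plus finite presentation spreads out a lift of $\bar e$ to an open neighbourhood $U_x\ni x$; since $W^c$ is the set of closed points, $\{U_x\}_{x\in W^c}$ covers $W$; and $w$-locality then lets you refine this open cover by a \emph{finite clopen partition} $\{V_i\}$, over which the local lifts can be glued outright because the $V_i$ are disjoint. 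It is this ``open covers refine to clopen partitions'' feature of $w$-local schemes (not quite the bijection $\pi_0(W)\cong\pi_0(W^c)$ on clopens, though that is what makes the cover exist) that does the patching. You should also observe that strict henselianity is not needed here --- plain henselianity of the local rings suffices, exactly as you note.

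The paper's proof does something different and arguably closer to the toolbox of \cite{zbMATH06479630}: it shows directly that $\widetilde{W^c}$ is $w$-contractible by lifting splittings of covers through the adjunction, identifies $W^c\hookrightarrow\widetilde{W^c}$ as the subscheme of closed points (via the $\pi_0$-invariance of henselian thickenings), and then concludes that the pro-\'etale, $w$-local map $\widetilde{W^c}\to W$ inducing an isomorphism on closed points is an isomorphism by the argument of \cite[Lemma 2.3.8]{zbMATH06479630}. This route avoids ever naming the henselian pair $(R,I)$ and instead leans on the rigidity of $w$-local pro-\'etale maps. Your version makes the henselian-pair content explicit, which is pleasant and clarifies the role of the hypotheses (as you point out); the trade-off is that you must supply or locate the ``standard fact'' about $w$-local schemes, which is true but whose proof is precisely the patching argument sketched above rather than an off-the-shelf citation.
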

\begin{proof}
	Let us write $ \widetilde{W^c} = \mathrm{Hens}_{W}(W^c) $. First recall that $W^c$ is $w$-contractible: it is $w$-local by \cite[Lemma 2.1.3]{zbMATH06479630}, its local rings are algebraically closed field and its $\pi_0$ is the same as that of $W$.
	By full faithfulness of the henselisation functor, we have $\widetilde{W^c}\times_W W^c=W^c$, whence a map $W^c \to \widetilde{W^c}$. Given a pro-étale cover $W'\to \widetilde{W^c}$,
	its pullback $W'\times_{\widetilde{W^c}} W^c\to W^c$ splits. As $W'\times_{\widetilde{W^c}} W^c=W'\times_{\widetilde{W^c}} (\widetilde{W^c}\times_W W^c)=W'\times_W W^c$, the splitting yields a map $\widetilde{W^c}\to W'$ by adjunction. It is a splitting of our pro-étale cover (by adjunction again).
	Thus $ \widetilde{W^c} $ is also $ w $-contractible.

	Furthermore the canonical map $ W^c \to \widetilde{W^c} $ is a closed immersion as it is the pullback of $W^c\to W$ along the map $\widetilde{W^c}\to W$ and $ W^c \to \widetilde{W^c} $ is a henselian pair (indeed, this follows from the universal property of $\mathrm{Hens}_W$ together with \cite[\href{https://stacks.math.columbia.edu/tag/09XI}{Tag~09XI (2)}]{stacks-project}).
	We claim that $ W^c \to \widetilde{W^c} $ is exactly the subscheme of closed points.
	Indeed, by \cite[Corollary 4.4]{MR4609461}, it induces an isomorphism on $ \pi_0 $ and the claim follows because any connected component of $ \widetilde{W^c} $ has a unique closed point since $ \widetilde{W^c} $ is $ w $-local.
	Since the canonical map
	\[
		\widetilde{W^c} \to W
	\]
	is pro-\'etale, $ w $-local and induces an isomorphism on closed points, it follows from the proof of \cite[lemma 2.3.8]{zbMATH06479630} that it is an isomorphism.
\end{proof}


\begin{rem}[Size issues]\label{rem:set_theory_continued}
    Since the category of weakly étale $ X $-schemes is not small this induces some set-theoretic issues.
    In the end, one can always circumvent these issues and they do not have any serious effect on our results.
    For the more cautious reader, we suggest one of the following two ways of reading this paper:
    \begin{enumerate}
        \item Fix once and for all two strongly inaccessible cardinals $ \delta < \epsilon $.
        All schemes are then assumed to be $\delta$-small and all categorical constructions, such as taking sheaves on a site, are taken with respect to the larger universe determined by $\epsilon$.
        In particular $ \Sh(X_{\proet}) $ always means hypersheaves of $\epsilon$-small anima on $\delta$-small weakly étale $ X $-schemes, and similarly for the $\infty$-category of condensed anima $\Cond(\Ani)$.
        
        \item If the reader does not  want to work with universes, they may proceed as follows.
        For a scheme $ X $, choose a strong limit cardinal $ \kappa $ such that $ X $ is $ \kappa $-small.
        Write $X_{\proet,\kappa}$ for the category of $ \kappa $-small weakly étale $ X $-schemes.
        The assumption that $ \kappa $ is a strong limit cardinal guarantees that there are enough $w$-contractible schemes in $X_{\proet,\kappa} $.
        We then define
        \begin{equation*}
           \Sh(X_{\proet}) \coloneqq  \colim_\kappa  \Sh(X_{\proet,\kappa}) 
        \end{equation*}
        and similarly for the category of condensed anima.
        This is the approach taken by Clausen and Scholze in \cite{condensedpdf}.
    \end{enumerate}
\end{rem}

\begin{lem}
	\label{lem:strictly_profinite_cover}
	Let $X$ be a quasi-compact scheme.
	Then there is a surjective morphism of schemes $ f \colon E \to X$ where $ E $ is reduced, $0$-dimensional, affine and all its local rings are separably closed fields.
	Furthermore for a such a scheme $E$ we have a canonical equivalence
	\[
		\Sh(E_{\proet}) \simeq \Cond(\Ani)_{/\pi_0(E)}.
	\]
\end{lem}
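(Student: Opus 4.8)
The plan is to treat the two assertions separately.

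For the existence of $f$, I would first reduce to the affine case: cover $X$ by finitely many affine opens $U_1,\dots,U_n$ and take the disjoint union of covers of the $U_k$ (a finite disjoint union of reduced $0$-dimensional affine schemes with separably closed local rings is again of this type). So assume $X=\Spec(A)$. For each prime $\mathfrak{p}$ of $A$ fix a separable closure $\kappa(\mathfrak{p})^{\mathrm{sep}}$ of the residue field, set $B\coloneqq\prod_{\mathfrak{p}\in\Spec(A)}\kappa(\mathfrak{p})^{\mathrm{sep}}$, and let $f\colon E=\Spec(B)\to X$ be induced by the diagonal $A\to B$. The composite $A\to B\to\kappa(\mathfrak{p})^{\mathrm{sep}}$ has kernel $\mathfrak{p}$, so $f$ is surjective. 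The ring $B$ is reduced (a subring of a product of fields) and von Neumann regular: for $b=(b_\mathfrak{p})\in B$ one has $b=b^2c$ where $c_\mathfrak{p}=b_\mathfrak{p}^{-1}$ if $b_\mathfrak{p}\neq 0$ and $c_\mathfrak{p}=0$ otherwise. Hence $\Spec(B)$ is $0$-dimensional and $B_{\mathfrak{q}}=B/\mathfrak{q}$ is a field for every prime $\mathfrak{q}$; moreover $B/\mathfrak{q}$ is an ultraproduct of the separably closed fields $\kappa(\mathfrak{p})^{\mathrm{sep}}$, hence is separably closed (concretely: a monic polynomial over $B/\mathfrak{q}$ with invertible discriminant lifts to a family of separable polynomials over the $\kappa(\mathfrak{p})^{\mathrm{sep}}$, each of which has a root, and these assemble to a root over $B/\mathfrak{q}$). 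This produces the cover.

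For the second assertion, let $E=\Spec(B)$ be any reduced, $0$-dimensional, affine scheme with separably closed local rings, so $B$ is von Neumann regular. Then $|E|=\Spec(B)$ is a profinite set — the Stone space of the Boolean algebra of idempotents of $B$ — and, being totally disconnected, it coincides with $T\coloneqq\pi_0(E)$. By \cite{zbMATH06479630} the inclusion $E_\proet^{\aff}\hookrightarrow E_\proet$ induces an equivalence on hypersheaves, so it suffices to identify $E_\proet^{\aff}$, with its induced topology, with the category $\ProFin_{/T}$ of profinite sets over $T$ with the topology of finite jointly surjective families; indeed $\Sh(\ProFin_{/T})\simeq\Sh(\ProFin)_{/T}=\Cond(\Ani)_{/T}$. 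The comparison functor is $Y\mapsto(\pi_0(Y)\to\pi_0(E)=T)$, with quasi-inverse sending $(g\colon S\to T)$, written as a cofiltered limit $S=\lim_i S_i$ of finite sets, to $E_S\coloneqq\lim_i E_{S_i}$, where a map $S_i\to T$ with $S_i$ finite factors through a finite quotient of $T$, i.e. through a finite clopen partition $E=\coprod_t E_t$, and $E_{S_i}\coloneqq\coprod_{s\in S_i}E_{g_i(s)}$ is the corresponding finite disjoint union of clopens of $E$.

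The key geometric input — and the main obstacle — is the claim that any affine étale $E$-scheme becomes, after refining to a suitable finite clopen partition of $E$, a finite disjoint union of clopen subschemes of $E$. To see this one works Zariski-locally and reduces to a standard étale algebra $B'[x]_h/(p)$ with $p$ monic and $p'$ invertible modulo $p$; then $p$ restricts over each residue field $\kappa(t)$ to a separable polynomial, which splits into $\deg(p)$ distinct linear factors since $\kappa(t)$ is separably closed, and as $\Spec(B')$ is profinite these roots are locally constant, so after passing to a finite clopen partition $p$ factors globally into distinct monic linear factors and $\Spec(B'[x]_h/(p))$ is a finite disjoint union of clopens. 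Granting this, every object of $E_\proet^{\aff}$ has the form $E_S$ (write it as a cofiltered limit of affine étale $E$-schemes, apply the claim to each, and take $S=\lim\pi_0$), the comparison functor is an equivalence of categories (fully faithfulness reducing, via the usual limit argument for morphisms out of a cofiltered limit of schemes, to the evident statement that maps between finite disjoint unions of clopens over $E$ are maps of finite sets over a common finite quotient of $T$), and a morphism of $E_\proet^{\aff}$ is a pro-étale cover exactly when the corresponding map of profinite sets over $T$ is surjective. Hence
\[
  \Sh(E_\proet)\simeq\Sh(E_\proet^{\aff})\simeq\Sh(\ProFin_{/T})=\Cond(\Ani)_{/\pi_0(E)},
\]
as claimed. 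Besides the structural statement above, the only point requiring care is matching the two Grothendieck topologies through the correspondence, which is routine once one knows that covers on both sides are detected by surjectivity of finite families.
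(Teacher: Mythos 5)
Your proof is correct and genuinely more self-contained than the paper's, which handles both halves by citing Bhatt--Scholze: the existence of $E$ is deduced from \cite[Remark~2.1.11, Lemma~2.2.3, Lemma~2.2.7]{zbMATH06479630} (take the closed points of the $w$-localization, then pass to a pro-finite-\'etale cover), and the identification $\Sh(E_\proet)\simeq\Cond(\Ani)_{/\pi_0(E)}$ is immediate from \cite[Lemma~2.3.8]{zbMATH06479630}, which says that over a scheme with strictly henselian local rings weakly \'etale maps are pro-(Zariski localizations), combined with the homeomorphism $|E|\simeq\pi_0(E)$. You instead construct $E$ explicitly as $\Spec\bigl(\prod_{\mathfrak p}\kappa(\mathfrak p)^{\mathrm{sep}}\bigr)$, checking von Neumann regularity and separable closedness of the residue fields by hand via the ultraproduct argument, and you reprove the relevant special case of \cite[Lemma~2.3.8]{zbMATH06479630} directly by splitting standard \'etale algebras over clopens. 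The trade-off is exactly what one expects: your argument is elementary and makes the structure visible (the reader sees why everything reduces to profinite combinatorics), while the paper's is shorter and leans on results that are needed elsewhere anyway. Two tiny places you should tighten if you keep your version: in the ultraproduct step, say explicitly that the lifted polynomials are separable for a set of indices in the ultrafilter (not necessarily for all indices); and in the spreading-out step, replace the informal phrase ``these roots are locally constant'' with the filtered-colimit argument $B_{\mathfrak q}=\colim_e B_e$ over idempotents $e\notin\mathfrak q$, which is what actually produces the clopen on which the factorization holds.
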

\begin{proof}
	Since $X$ is quasi-compact, we can assume $X$ to be affine and reduced. By \cite[Remark 2.1.11]{zbMATH06479630} there exist maps 
	\[(X^Z)^c\xrightarrow{a} X^Z\xrightarrow{b} X\] with $b$ a pro-open covering and $a$ a  closed immersion, with $X^Z$ a $w$-local scheme and $(X^Z)^c$ its subscheme of closed points. In particular by \cite[Lemma 2.2.3]{zbMATH06479630} the scheme $(X^Z)^c$ is absolutely flat (\emph{i.e.} reduced and $0$-dimensional).
	
	The existence of $E$ then follows from \cite[Lemma~2.2.7]{zbMATH06479630}. Now, the pro-étale site of $E$ is the same as the pro-Zariski site of $E$ by \cite[Lemma~2.3.8]{zbMATH06479630} and as $E$ is $0$-dimensional, the map $E\to \pi_0(E)$ is a homeomorphism so that the lemma follows.
\end{proof}

\begin{cor}
    \label{cor:global_sections_on_wcontr}
    Let $ W $ be a $ w $-contractible scheme and let $ i \colon W^c \to W $ be the inclusion of its subscheme of closed points.
    Then for any $F \in \Sh(W_\proet)$ the canonical map
    \[
        \eta \colon i^*F(W^c) \to F(W)
    \]
    is an equivalence.
\end{cor}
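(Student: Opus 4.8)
The plan is to deduce this from \Cref{notation-tilde} and \Cref{prop:W-contractible_gives_hens_pair}, the only additional input being that, on the small pro-étale site of a $w$-contractible scheme, sections over the whole scheme are insensitive to hypersheafification.

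First I would unwind the definitions. Recall from the proof of \Cref{prop:W-contractible_gives_hens_pair} that $W^c$ is itself $w$-contractible; in particular it is affine, hence it is an object — indeed the terminal one — of $(W^c)^{\operatorname{aff}}_\proet$. By \Cref{notation-tilde}, $i^*F$ is the hypersheafification $aP$ of the presheaf $P\colon V\mapsto F(\widetilde V)$ on $(W^c)^{\operatorname{aff}}_\proet$; evaluating at $W^c$ and invoking \Cref{prop:W-contractible_gives_hens_pair} — which identifies $\widetilde{W^c}=\mathrm{Hens}_W(W^c)$ with $W$ — gives $P(W^c)=F(W)$. Under this identification $i^*F(W^c)=(aP)(W^c)$, and the map $\eta$ is an equivalence if and only if the unit map $P(W^c)\to(aP)(W^c)$ of hypersheafification is one. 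So it remains to prove the latter.

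For this I would use that $W^c$ is $w$-contractible: on a $w$-contractible scheme $U$ every pro-étale covering family admits a section, so every covering sieve of $U$ is the maximal sieve, and likewise every pro-étale hypercover $U_\bullet\to U$ splits (one builds a simplicial section inductively, lifting at each stage against a pro-étale cover, which is possible since $U$ is $w$-contractible). Hence the functor of sections over $U$ inverts the local equivalences for the hypercomplete pro-étale topology, i.e. commutes with hypersheafification; see \cite[\S 4]{zbMATH06479630}. Applying this with $U=W^c$ concludes. Alternatively, since $W^c$ is reduced, $0$-dimensional and affine with separably closed local rings, \Cref{lem:strictly_profinite_cover} identifies $\Sh((W^c)_\proet)$ with $\Cond(\Ani)_{/\pi_0(W^c)}$, under which sections over $W^c$ become the global-sections functor; as $\pi_0(W^c)\simeq\pi_0(W)$ is extremally disconnected, the latter preserves all small limits and colimits and hence commutes with hypersheafification. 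The only non-formal ingredient — and the main point to be careful about — is precisely this commutation of sections over $W^c$ with hypersheafification; everything else is a direct assembly of \Cref{notation-tilde}, \Cref{prop:W-contractible_gives_hens_pair} and the construction of $i^*$.
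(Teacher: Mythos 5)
The reduction to showing that the unit $P(W^c)\to (aP)(W^c)$ is an equivalence, for the presheaf $P\colon V\mapsto F(\widetilde V)$, is a sensible reformulation, and you use \Cref{prop:W-contractible_gives_hens_pair} exactly as the paper does. But the general claim you rest the argument on — that taking sections over a $w$-contractible scheme commutes with hypersheafification, or equivalently inverts all pro-\'etale local equivalences of presheaves — is false, and the gap is not cosmetic. The scheme $W^c$ is almost never connected (its $\pi_0$ is the profinite set $\pi_0(W)$), so the functor $\Gamma(W^c,-)$ on sheaves does not preserve finite coproducts: a section $W^c\to F_1\amalg F_2$ over the sheaf coproduct corresponds to a clopen decomposition $W^c=W_1\amalg W_2$ together with sections over the pieces, which is strictly more data than a section of $F_1$ or of $F_2$. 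On the other hand $ev_{W^c}$ on presheaves does preserve coproducts. Taking $P=P_1\amalg P_2$ the presheaf coproduct of two sheaves already gives a local equivalence $P\to aP$ whose evaluation at $W^c$ is not an equivalence. So "$\Gamma(W^c,-)$ commutes with hypersheafification" cannot be true at the level of generality you invoke it.

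Your proposed justification also fails on its own terms. For a hypercover $U_\bullet\to W^c$, the inductive construction of a simplicial map $\mathrm{const}_{W^c}\to U_\bullet$ only requires a single lift at level $0$ and does exist; but such a map is merely a section of the augmentation, which exhibits $P(W^c)$ as a retract of $\lim_\Delta P(U_\bullet)$ and does not force them to coincide. What one would need is an extra degeneracy (a contraction) of the augmented simplicial object, and constructing $s_{-1}\colon U_n\to U_{n+1}$ for $n\ge 0$ requires solving lifting problems against the covers $U_{n+1}\to(\cos k_n U_\bullet)_{n+1}$ whose source $U_n$ need not be $w$-contractible. The alternative via $\Cond(\Ani)_{/\pi_0(W^c)}$ runs into the same obstruction: $\Gamma(\pi_0(W^c),-)$ preserves sifted colimits but not coproducts, so it does not "commute with hypersheafification" in the sense needed.

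The paper's proof is engineered precisely to avoid this: it never touches the presheaf $P$ or discusses sheafification. Instead it works entirely inside the categories of hypersheaves, observes that $i^*$, $\Gamma(W^c,-)$ and $\Gamma(W,-)$ preserve \emph{sifted} colimits — the class of colimits that avoids the coproduct obstruction — and that every pro-\'etale sheaf is a sifted colimit of pro-affine-\'etale representables (finite coproducts of such being again representable). This reduces the statement to $F$ representable, where it is a direct consequence of the subcanonical topology and \Cref{prop:W-contractible_gives_hens_pair}. You should replace the hypersheafification argument by this reduction, or else prove a commutation statement restricted to the specific presheaf $P$ coming from a sheaf $F$ (which does hold, but is exactly what the paper's sifted-colimit argument establishes).
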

\begin{proof}
    By \Cref{prop:W-contractible_gives_hens_pair}, the map $\eta$ is an equivalence if $F$ is representable by a pro-affine-étale $W$-scheme.
    Since $i^*$, $\Gamma(W^c,-)$ and $\Gamma(W,-)$ preserves sifted colimits, and an arbitrary pro-étale sheaf is a sifted colimit of pro-affine-étale representables, the claim follows.
\end{proof}


\begin{lem}
    \label{lem:conservativity-of-pullbacks}
    Let $f \colon X \to Y$ be a surjective morphism of schemes.
    Then the pullback functor $f^{*} \colon \Sh(Y_{\proet}) \to \Sh(X_{\proet})$ is conservative.
\end{lem}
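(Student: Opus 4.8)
The plan is to reduce in two steps to the case where both $X$ and $Y$ are $w$-contractible affine schemes, and then to read the statement off from the structure of the pro-étale topos of such a scheme together with the fact that extremally disconnected profinite sets are projective.

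First I would reduce the target. By \cite{zbMATH06479630} the qcqs scheme $Y$ admits a pro-étale cover $p\colon W\to Y$ with $W$ a $w$-contractible affine scheme, and one may take $p$ to be affine; in particular $p^*$ is conservative, since $p$ is a pro-étale cover (the higher terms of its Čech nerve are pulled back from $W$, so hyperdescent detects equivalences on $W$). As $p$ is affine, $X':=X\times_Y W$ is again qcqs, hence by \cite{zbMATH06479630} it admits a pro-étale cover $q\colon W'\to X'$ with $W'$ $w$-contractible affine. Let $g\colon W'\to W$ be the resulting morphism; it is surjective, being the composite of $q$ with the base change $X'\to W$ of $f$. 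Now if $\alpha$ is a morphism of $\Sh(Y_\proet)$ with $f^*\alpha$ an equivalence, then since pullbacks compose one identifies $g^*(p^*\alpha)$ with the pullback of $f^*\alpha$ along $W'\to X'\to X$, which is an equivalence; so once the statement is known for the surjection $g$ we obtain that $p^*\alpha$, and hence $\alpha$, is an equivalence. Thus it suffices to treat a surjection $f\colon X\to Y$ with $X$ and $Y$ both $w$-contractible affine.

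In that case I would use that for a $w$-contractible scheme $W$ there is an equivalence $\Sh(W_\proet)\simeq\Cond(\Ani)_{/\pi_0(W)}$, natural in $W$ through the functor $\pi_0$: this is a form of a result of Bhatt--Scholze \cite{zbMATH06479630}, and one can also see it by combining \Cref{cor:global_sections_on_wcontr} (which relates $\Sh(W_\proet)$ to sheaves on the subscheme $W^c$ of closed points) with \Cref{lem:strictly_profinite_cover} (which identifies $\Sh((W^c)_\proet)$ with $\Cond(\Ani)_{/\pi_0(W^c)}=\Cond(\Ani)_{/\pi_0(W)}$, since $W^c$ is reduced, $0$-dimensional, affine, with separably closed local rings). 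Under these equivalences the functor $f^*$ becomes the restriction functor $\pi_0(f)^*\colon\Cond(\Ani)_{/\pi_0(Y)}\to\Cond(\Ani)_{/\pi_0(X)}$ along the continuous map $\pi_0(f)\colon\pi_0(X)\to\pi_0(Y)$, which is a surjection of profinite sets because $f$ is surjective. Since $Y$ is $w$-contractible, $\pi_0(Y)$ is extremally disconnected, hence projective in the category of profinite sets, so $\pi_0(f)$ admits a section $\sigma$ with $\pi_0(f)\circ\sigma=\id$; then $\sigma^*\circ\pi_0(f)^*=(\pi_0(f)\circ\sigma)^*=\id$, which forces $\pi_0(f)^*$, and therefore $f^*$, to be conservative.

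The main obstacle is the input of the second step: the identification $\Sh(W_\proet)\simeq\Cond(\Ani)_{/\pi_0(W)}$ for $w$-contractible $W$ together with its naturality in $W$ (note that $f$ need not send closed points to closed points, so this naturality cannot simply be checked on the subschemes of closed points, even though the categories themselves are computed through them). Everything else is formal: the first step is a routine descent-and-composition manipulation, and the final reduction to a split surjection of profinite sets is immediate once the topos identification is in hand.
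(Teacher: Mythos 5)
Your reduction in the first step is fine, and your final observation—that if $\pi_0(Y)$ is extremally disconnected then $\pi_0(f)$ admits a section, so $\pi_0(f)^*$ has a retraction and is therefore conservative—is a valid alternative to the paper's use of effective epimorphisms in $\Cond(\Ani)$. However, the middle step contains a genuine gap: the claimed equivalence $\Sh(W_\proet)\simeq\Cond(\Ani)_{/\pi_0(W)}$ is \emph{false} for a general $w$-contractible affine scheme $W$. Consider $W=\Spec(R)$ with $R$ a strictly henselian discrete valuation ring; this is $w$-contractible (its $\pi_0$ is a point, it is $w$-local, and its unique local ring is strictly henselian), but its pro-\'etale site contains the generic point $\Spec(K)\hookrightarrow W$, which is a nonempty weakly \'etale object whose pullback along $i\colon W^c\to W$ is empty. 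In particular $i^*$ is not conservative, so $\Sh(W_\proet)$ cannot agree with $\Sh((W^c)_\proet)\simeq\Cond(\Ani)_{/\pi_0(W)}$. The identification $\Sh(E_\proet)\simeq\Cond(\Ani)_{/\pi_0(E)}$ in \Cref{lem:strictly_profinite_cover} crucially requires $E$ to be $0$-dimensional (so that $E\to\pi_0(E)$ is a homeomorphism and the pro-\'etale and pro-Zariski topologies coincide), which $w$-contractible schemes need not be.

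The resource you invoke, \Cref{cor:global_sections_on_wcontr}, only says that \emph{global sections} over $W$ agree with global sections over $W^c$ of the restriction; it does not assert that $i^*$ is an equivalence of topoi, and as the DVR example shows it cannot. What the paper does with that corollary is conclude that the family of functors ``pull back to $W^c$ and take global sections'', as $W$ ranges over $w$-contractible weakly \'etale $Y$-schemes, is jointly conservative on $\Sh(Y_\proet)$—a much weaker statement, but the right one. That is how the paper reduces $Y$ not merely to a $w$-contractible scheme but to a reduced $0$-dimensional affine scheme with separably closed residue fields, and then reduces $X$ to the same form via \Cref{lem:strictly_profinite_cover}; only at that point is the topos identification available. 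To repair your argument you would need this further descent to the subscheme of closed points, and it is exactly here that you cannot avoid the content of the paper's intermediate step.
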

\begin{proof}
	We may immediately reduce to the case where both $X$ and $Y$ are affine.
	Since $w$-contractible schemes form a basis for the topology the collection of functors given by evaluating at any $w$-contractible $W$ is conservative.
	Using \Cref{cor:global_sections_on_wcontr} it follows that the collection of functors given by pulling back along the  maps
	\[
		W^c \to W \to X
	\]
	for any $w$-contractible $W$ is conservative.
	Hence, we may assume that $W^c = Y$ to reduce to the case where $Y$ is reduced of Krull dimension 0 with separably closed residue fields.
	By \Cref{lem:strictly_profinite_cover}, we may choose another such scheme $E$ with a surjection $E \to X$ to reduce to the case $X = E$.
	Again by \Cref{lem:strictly_profinite_cover} the functor $f^*$ identifies with the functor
	\[
		\Cond(\Ani)_{/\pi_0(X)} \to \Cond(\Ani)_{/\pi_0(Y)}
	\]
	induced by pulling back along $\pi_0(f) \colon \pi_0(X) \to \pi_0(Y)$.
	But by construction the map $\pi_0(f)$ is surjective and thus an effective epimorphim in condensed anima and so pulling back along it is conservative.
\end{proof}

\begin{lem}
    \label{lem:pullbackslimits}
    Let $f \colon X \to Y$ be a morphism of schemes.
    Then the pullback functor $f^{*} \colon \Sh(Y_{\proet}) \to \Sh(X_{\proet})$ is compatible with limits.
\end{lem}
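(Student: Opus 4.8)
My approach is to compute $f^{*}G$ on $w$‑contractible test schemes and to observe that there it is given by evaluating $G$ at a \emph{single} object of $Y_{\proet}$; compatibility with all limits is then immediate, since limits of hypersheaves are computed sectionwise.

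\textbf{Step 1 (reduction to $w$‑contractible test objects).} The weakly étale $X$‑schemes which are $w$‑contractible — all of them affine — form a basis of $X_{\proet}$, so the family of evaluation functors $\Gamma(W,-)\colon \Sh(X_{\proet})\to \Ani$, indexed by $w$‑contractible $X$‑schemes $W$, is jointly conservative. Each of these functors preserves all small limits, because limits in $\Sh(X_{\proet})$ are computed sectionwise (hypersheafification being a left adjoint). Hence it suffices to prove that for every $w$‑contractible $X$‑scheme $W$ the functor $G\mapsto \Gamma(W,f^{*}G)$ on $\Sh(Y_{\proet})$ preserves small limits.

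\textbf{Step 2 (identifying the test value).} Fix such a $W$. Since every pro‑étale cover of a $w$‑contractible scheme splits, the sheafification built into $f^{*}$ does not affect its value on $W$, and the presheaf formula for pullback along the morphism of sites $Y_{\proet}\to X_{\proet}$, $V\mapsto V\times_{Y}X$, yields a natural equivalence $\Gamma(W,f^{*}G)\simeq \colim_{(V,s)}G(V)$, the colimit running over pairs $(V,s)$ with $V$ a weakly étale $Y$‑scheme (which we may take affine) and $s\colon W\to V$ a $Y$‑morphism. This indexing category is cofiltered — weakly étale $Y$‑schemes are stable under fibre products over $Y$, and the equalizer of two parallel $Y$‑maps is an open subscheme, hence again weakly étale over $Y$ — with affine transition maps, so $\widehat{W}:=\lim_{(V,s)}V$ is an affine weakly étale $Y$‑scheme, i.e. an object of $Y_{\proet}$, carrying a $Y$‑morphism $W\to\widehat{W}$. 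A pro‑étale hypersheaf carries a cofiltered limit of schemes with affine transition maps to the corresponding colimit of its sections (a basic property of the pro‑étale site, see \cite{zbMATH06479630}), so $\colim_{(V,s)}G(V)\simeq G(\widehat{W})$, naturally in $G$. (When $W=\Spec R$ is connected, $R$ is strictly henselian local with separably closed residue field and $\widehat{W}$ is the strict henselization of $Y$ at the geometric point through which $W\to Y$ factors; the general case is assembled from this over $\pi_{0}(W)$.) Since $\widehat{W}\in Y_{\proet}$ depends only on $W$ and $f$, the functor $G\mapsto \Gamma(W,f^{*}G)$ is, up to natural equivalence, evaluation at $\widehat{W}$, which preserves all small limits; combined with Step 1, this gives the claim.

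The only non‑formal ingredient, and the main point, is the identification $\Gamma(W,f^{*}G)\simeq G(\widehat{W})$ with $\widehat{W}$ an honest object of $Y_{\proet}$: it rests on the fact, special to the pro‑étale topology, that strict henselizations — and, over a general $w$‑contractible $W$, the cofiltered limits entering the construction of $\widehat{W}$ — are weakly étale \emph{schemes} and not merely pro‑objects. In the étale topology the analogous colimit is only filtered, and $f^{*}$ preserves no more than finite limits. Everything else (sectionwise limits, conservativity of the $w$‑contractible test family, continuity of pro‑étale sheaves) is standard and recorded in \cite{zbMATH06479630}.
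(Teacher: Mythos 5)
Your strategy---reducing to $w$-contractible test objects $W$ and showing $\Gamma(W,f^*(-))$ is corepresented by a single object $\widehat{W}\in Y_{\proet}$---is an attractive hands-on alternative to the paper's proof, which simply cites an external result of Wolf (\cite[Cor.~4.18]{zbMATH07671238}, based on realizing $\Sh(X_{\proet})$ inside a condensed/pyknotic framework). Step~1 and the reduction to the presheaf pullback at a $w$-contractible $W$ are fine. The problem is the pivotal step: you derive $\colim_{(V,s)} G(V)\simeq G(\widehat{W})$ from the asserted principle that ``a pro-\'etale hypersheaf carries a cofiltered limit of schemes with affine transition maps to the corresponding colimit of its sections,'' attributed to \cite{zbMATH06479630}. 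No such statement appears there, and it is \emph{false} for pro-\'etale sheaves. For example, over $Y=\Spec\bar k$ take $G=h_{V'}$ represented by $V'=\Spec(\mathrm{Cont}(S',\bar k))$ with $S'=\lim_i S'_i$ an infinite profinite set, $S'_i$ finite, and the system $V_i=\Spec(\mathrm{Cont}(S'_i,\bar k))$: then $G(V')=\mathrm{Cont}(S',S')$ contains $\mathrm{id}_{S'}$, which does not factor through any finite quotient, so the natural map $\colim_i G(V_i)\to G(\lim_i V_i)$ is not surjective. Continuity along cofiltered limits with affine transitions is a theorem for the \emph{\'etale} site (where the limit is not itself an object, so the colimit \emph{defines} the stalk); on the pro-\'etale site it fails precisely because the limit is in the site, so there is an honest, generally larger, value there.

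The equality you need is nevertheless true, but for a purely formal reason that has nothing to do with sheaf-theoretic continuity: once one knows $\widehat W\in Y_{\proet}$ and $\hat s\colon W\to\widehat W$ is a $Y$-map, the pair $(\widehat W,\hat s)$ is itself an object of the indexing category $\mathcal{I}_W$, and the limit cone legs $p_{(V,s)}\colon\widehat W\to V$ give a cocone $\{G(p_{(V,s)})\colon G(V)\to G(\widehat W)\}$ under the colimit diagram $\mathcal{I}_W^{\op}\to\Ani$. One checks directly (using that $p_{(\widehat W,\hat s)}=\mathrm{id}$ by joint monicity of the cone legs, and that $p_{(V,s)}$ is itself a morphism $(\widehat W,\hat s)\to(V,s)$ in $\mathcal{I}_W$) that the resulting map $\colim G(V)\to G(\widehat W)$ and the colimit inclusion $G(\widehat W)\to\colim G(V)$ at $(\widehat W,\hat s)$ are inverse to one another. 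This is the honest content of your closing remark that $\widehat W$ is ``a scheme and not merely a pro-object''; the collapse of the colimit is a categorical consequence of that fact, not an analytic property of the sheaf $G$. Two further points deserve a sentence each: that the affine $(V,s)$ are cofinal in $\mathcal{I}_W$ (needed both for the transition maps to be affine and for $\widehat W$ to be affine) requires an argument using $w$-locality of $W$ to split the preimage of an affine open cover of $V$ into a finite clopen decomposition; and the existence of $\widehat W$ as the limit over $\mathcal{I}_W$ implicitly uses the set-theoretic conventions of \Cref{rem:set_theory_continued} to keep the indexing category small. With those repairs the argument goes through and gives a genuinely more explicit proof than the paper's citation.
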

\begin{proof}
	This is \cite[Corollary~4.18]{zbMATH07671238}.
\end{proof}

\subsection{Abelian categories.}\label{sec:abelianproet}
Let $X$ be a qcqs scheme. We fix a set $\mathcal{P}$ of prime numbers in this section. Recall the ring $\ZhatP$ from the introduction. 
The ring $\ZhatP$ is a topological ring in the obvious way and thus we may consider it as a condensed ring.
Via the canonical morphism of topoi
\[
	\Cond(\rm{Set}) \to \Sh(X_{\proet},\rm{Set}),
\]
see for example \cite[Remark 3.5]{bastietal}, we may therefore view $\ZhatP$ as a ring object in $\Sh(X_{\proet},\rm{Set})$.
We write $\Sh(X_{\proet},\ZhatP) = \Mod_{\ZhatP}(\Sh(X_{\proet},\rm{Ab}))$ and $\D(X_\proet,\ZhatP)$ for the derived category of this abelian category.
\begin{rec}
	\label{rec:proet-VS-pro-aff-et}
Recall that by \cite[Exposé VII, Théorème 5.7]{sga4} we have a fully faithful map:
\[\mathrm{Pro}(X_{\et}^{\mathrm{aff},{\fp}})\to X_\proet\]
where the left-hand side denotes the full subcategory of $\mathrm{Pro}(X_{\et})$ spanned by cofiltered limits of finite presentation étale $X$-schemes which are affine. Moreover, this induces an equivalence between the associated topoi by \cite[Theorem 2.3.4]{zbMATH06479630}.
\end{rec}

If $j\colon U\to X$ is a pro-étale map, the pullback $j^*$ on pro-étale sheaves has a left adjoint that we denote by $j_\sharp$.

\begin{defi}
	\label{defi:solid-generator}
	If $j\colon U\to X$ is a pro-étale map that can be written as a cofiltered limit of étale maps $j_\alpha \colon U_\alpha \to X$ with each $U_\alpha$ affine, we set \[j_\sharp^\blacksquare\ZhatP\coloneqq \lim (j_\alpha)_\sharp \ZhatP \in \Sh_{\proet}(X,\ZhatP).\]
	We have a map $j_\sharp \ZhatP\to j_\sharp^\blacksquare \ZhatP$.
\end{defi}

\begin{defi}
	\label{defi:solidSheaves}
	\label{defi:solid}
	Let $F$ be an abelian pro-étale sheaf of $\ZhatP$-modules. We say that $F$ is \emph{solid}, if for every $j$ as in \Cref{defi:solid-generator}, the natural map \[\Hom_{\Sh(X_\proet,\ZhatP)}(j_\sharp^\blacksquare\ZhatP,F)\to \Hom_{\Sh(X_\proet,\ZhatP)}(j_\sharp\ZhatP,F)=F(U)\] is an equivalence.
	We denote by $\Sh(X,\ZhatP)^{\blacksquare}$ the full subcategory of $\Sh(X_\proet,\ZhatP)$ spanned by solid objects.

	Finally, we say that a complex of pro-étale sheaves of $\ZhatP$-modules is \emph{solid} if its cohomology sheaves are solid. We denote by $\D(X,\ZhatP)^{\blacksquare}$ the full subcategory of $\D(X_\proet,\ZhatP)$ spanned by the solid objects.
\end{defi}

\begin{rem}
Equivalently $\Sh(X,\ZhatP)^{\blacksquare}$ is the reflective localization at the maps $j_\sharp\ZhatP \to  j_\sharp^\blacksquare\ZhatP$.
	In particular the inclusion $ \Sh(X,\ZhatP)^{\blacksquare} \to \Sh(X_\proet,\ZhatP)$ admits a left adjoint that we denote by $(-)^\blacksquare$ and call the \emph{solidification functor}.
\end{rem}

We now want to understand the category of solid sheaves. Our main result is Theorem~\ref{thm:solid} and to that end, we first need a few technical lemmas.

\begin{emptypar}
	Recall that an étale sheaf of sets $F$ is \emph{constructible} if there is a stratification of $X$ by quasi-compact locally closed subschemes such that $F$ is locally constant with finite stalks over each stratum; we let $X_{\et}^\mathrm{cons}$ be the full subcategory of $\Sh(X_\et,\mathrm{Sets})$ made of constructible étale sheaves. We denote by $\nu \colon \Sh(X_\proet,\mathrm{Sets})\to \Sh(X_\et,\mathrm{Sets})$ the morphism of sites induced by the inclusion and by $\ZhatP[-]$ the left adjoint to the forgetful functor $\Sh(X_\proet,\ZhatP)\to \Sh(X_\proet,\mathrm{Sets})$.
\end{emptypar}
In many reductions, we need the following result on schemes.

\begin{lem}
    \label{lem:ZhatconstrIslimTors}
    Let $F$ be a constructible étale sheaf of sets. Then in $\Sh(X_\proet,\ZhatP)$ the map
    \[\ZhatP[\nu^*F]\to \lim_{n\in \Pi\mc{P}}\Z/n\Z[\nu^*F],\] where $\Pi\mc{P}$ is the set of positive integers, ordered by divisibility, that can be written as products of primes in $\mc{P}$, is an equivalence.
\end{lem}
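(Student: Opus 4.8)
The plan is to run the usual pro-\'etale d\'evissage all the way down to a point-set identity about continuous functions on profinite sets, the main care being never to commute the limit $\lim_{n\in\Pi\mc P}$ with a colimit or with a functor that fails to preserve it.

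First I would reduce to $w$-contractible affines. Since the $w$-contractible affine schemes weakly \'etale over $X$ form a basis of $X_\proet$ \cite{zbMATH06479630}, it is enough to check that the map of the statement induces an isomorphism on sections over each such $W$; after restricting $F$ along $W\to X$ (which is pro-\'etale, hence commutes with $\nu^*$ and with the free-module functors) and using that $\Gamma(W,-)$ is corepresentable, hence preserves $\lim_n$, this amounts to showing that
\[
\Gamma(W,\ZhatP[\nu^*F])\ \longrightarrow\ \lim_{n\in\Pi\mc P}\Gamma(W,\Z/n\Z[\nu^*F])
\]
is an isomorphism. Now I would apply \Cref{cor:global_sections_on_wcontr} to write $\Gamma(W,-)=\Gamma(W^c,i^*(-))$, $i\colon W^c\to W$ being the inclusion of the closed points. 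The functor $i^*$ commutes with $\nu^*$ (inverse images of topoi compose) and with the free-module functors $\Z/n\Z[-]$ and $\ZhatP[-]$, because it fixes the coefficient sheaves: $i^*\Z/n\Z=\Z/n\Z$ since pullbacks preserve constant sheaves, and $i^*\ZhatP=\ZhatP$ since $i^*\ZhatP$ is, on affines, the sheafification of $V\mapsto\ZhatP(\widetilde V)=\mathrm{Cont}(\pi_0\widetilde V,\ZhatP)$, with $\widetilde V\to V$ a henselian pair so that $\pi_0\widetilde V=\pi_0 V$ \cite[Corollary~4.4]{MR4609461}, \cite[Lemma~2.2.12]{zbMATH06479630}. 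Since $i^*F$ is again constructible, we are reduced to proving: \emph{for $Y$ reduced, $0$-dimensional, affine with separably closed residue fields and $G$ constructible on $Y$, the natural map $\Gamma(Y,\ZhatP[\nu^*G])\to\lim_n\Gamma(Y,\Z/n\Z[\nu^*G])$ is an isomorphism.} Crucially, the limit now sits on the \emph{outside} of the global sections.

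For this reduced statement I would use \Cref{lem:strictly_profinite_cover}: $\Sh(Y_\proet)\simeq\Cond(\Ani)_{/K}$ with $K=\pi_0(Y)$, and under this equivalence $\nu^*G$ corresponds to a sheaf which, over the pieces of a suitable \emph{finite} clopen partition $K=\bigsqcup_j K_j$, is the constant sheaf on a \emph{finite} set $S_j$ (this is precisely constructibility together with $\dim Y=0$). Hence for $\Lambda\in\{\Z/n\Z\}\cup\{\ZhatP\}$ the sheaf $\Lambda[\nu^*G]$ restricts to the finite free sheaf $\Lambda^{\oplus S_j}$ on $K_j$, so that
\[
\Gamma(Y,\Lambda[\nu^*G])\ =\ \prod_j\Gamma(K_j,\Lambda)^{\oplus S_j}\ =\ \prod_j\mathrm{Cont}(K_j,\Lambda)^{\oplus S_j},
\]
a finite product of finite powers. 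Since finite products commute with limits, $\mathrm{Cont}(K_j,-)$ commutes with limits, and $\ZhatP=\lim_n\Z/n\Z$ as topological (equivalently condensed) rings, applying $\lim_n$ to the $\Z/n\Z$-version recovers the $\ZhatP$-version; unwinding the identifications, this is precisely the map of the statement, which is therefore an isomorphism.

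The step I expect to be the true obstacle is not any single computation but exactly the bookkeeping just described: $\ZhatP[-]$ and all the pullbacks are left adjoints and do \emph{not} commute with the infinite limit $\lim_{n\in\Pi\mc P}$ — for a non-constructible sheaf of sets the statement is outright false, e.g. $\lim_n\bigoplus_{\N}\Z/n\Z\neq\bigoplus_{\N}\ZhatP$. One must therefore arrange the argument so that the limit is pulled out past global sections at the very beginning and is only resolved at the end, where constructibility (finiteness of the stalks $S_j$) turns the potentially infinite direct sums into finite ones, after which $\ZhatP=\lim_n\Z/n\Z$ closes the gap.
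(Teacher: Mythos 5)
Your proof is correct, and it takes a genuinely different (though ultimately parallel) route from the paper's. Both proofs end at the same place: over a reduced, $0$-dimensional affine with separably closed residue fields, the constructible sheaf becomes locally constant with finite stalks over a finite clopen partition of $\pi_0$, at which point the limit $\lim_{n\in\Pi\mc P}$ passes through finite direct sums of $\mathrm{Cont}(K_j,-)$'s, and $\ZhatP=\lim_n\Z/n\Z$ finishes the job. The difference is entirely in how one gets there. The paper first reduces to $F$ \'etale-locally constant by restricting to the (quasi-compact) strata of a defining stratification — these restrictions are jointly conservative and commute with limits by \Cref{lem:pullbackslimits} — and then pulls back along a single surjection $g\colon E\to X$ to a $0$-dimensional base, using that $g^*$ is conservative (\Cref{lem:conservativity-of-pullbacks}) and limit-preserving (\Cref{lem:pullbackslimits}); a final Zariski localization makes $F$ actually constant. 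You instead evaluate on the generating family of $w$-contractibles and push the $\lim_n$ outside the corepresentable functor $\Gamma(W,-)$ from the start, then use \Cref{cor:global_sections_on_wcontr} and the henselization identification $\Gamma(W,-)=\Gamma(W^c,i^*(-))$ to descend directly to the $0$-dimensional scheme $W^c$, where you invoke the explicit model $\Sh(W^c_\proet)\simeq\Cond(\Ani)_{/\pi_0(W^c)}$ from \Cref{lem:strictly_profinite_cover}. What the paper's route buys is uniform reuse of the conservativity/limit-preservation lemmas already assembled earlier in the section (so fewer new verifications); what your route buys is transparency — the limit literally never meets a left adjoint, and the role of finiteness of the stalks is made completely explicit at the final point-set computation. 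The one place you should spell out a bit more is the claim ``$i^*$ commutes with $\ZhatP[-]$'': as you note this reduces to $i^*\ZhatP\cong\ZhatP$, which holds because $\ZhatP$ is pulled back from the condensed point and pullbacks of topoi compose, rather than via the sheafification computation with $\widetilde V$; the latter is correct but a heavier argument than needed. Your remark at the end correctly identifies the pitfall (infinite direct sums do not commute with $\lim_n$, so the limit must be kept outside), which is exactly the reason the paper's argument also keeps $\lim_n$ adjacent to limit-preserving pullbacks throughout.
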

\begin{proof}
	We can assume that $F$ is étale-locally constant: indeed by \cite[Lemma 6.1.11]{zbMATH06479630}, given a finite stratification of $X$ with quasi-compact strata, the family of restrictions to the strata is jointly conservative and these restrictions commute with limits by Lemma~\ref{lem:pullbackslimits}.

    Take a surjective map $g\colon E\to X$ with $E$ affine reduced of Krull dimension $0$ with separably closed residue fields using \Cref{lem:strictly_profinite_cover}.
    Since $g^*$ is conservative and commutes with limits by Lemma~\ref{lem:conservativity-of-pullbacks} and Lemma~\ref{lem:pullbackslimits}, we may replace $X$ by $E$.

    In that case, the Zariski and the étale site coincide by \cite[Corollary~2.5]{Schroer} and thus $F$ is Zariski-locally constant with finite stalks.
    Thus we may assume it is actually constant, in which case the claim is obvious, since cofiltered limits commute with finite sums.
\end{proof}

An étale sheaf of abelian groups is said to be \emph{$\mc{P}$-torsion constructible} if it is $\mc{P}$-torsion and constructible as an étale sheaf of sets. We let $\Sh_\mc{P}^\mathrm{cons}(X_\et)$ be the full subcategory of $\Sh(X_\et,\Z)$ made of $\mc{P}$-torsion constructible étale sheaves.

We have the following analogue of \cite[Proposition VII.1.6]{Fargues-Scholze} in our setting:
\begin{lem}
\label{limIsRlim}
	Let $(F_i)$ be a cofiltered inverse system of
$\mc{P}$-torsion constructible étale sheaves. Then for all $q>0$, the $R^q\lim F_i$ vanishes when taken in the category $\Sh(X_\proet,\ZhatP)$. 
\end{lem}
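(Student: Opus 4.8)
The plan is to follow the structure of the proof of \cite[Proposition~VII.1.6]{Fargues-Scholze}: since the statement is local on $X$, reduce it first to a computation of sections over $w$-contractible schemes, and then to an explicit statement about cofiltered limits of compact abelian groups. First I would observe that $R^q\lim_i F_i$ is a pro-étale sheaf, so to see it vanishes for $q>0$ it is enough to check that its sections over every $w$-contractible affine $W\to X$ vanish, as these form a basis of $X_\proet$. For such a $W$ the functor $\Gamma(W,-)$ is exact on $\Sh(X_\proet,\ZhatP)$ — being $w$-contractible, $W$ is acyclic; concretely, by \Cref{cor:global_sections_on_wcontr} it agrees with $\Gamma(W^c,i^*(-))$, where $i\colon W^c\to W$ is the subscheme of closed points, which is a profinite set by \Cref{lem:strictly_profinite_cover} — and it obviously preserves products, so it commutes with $R\lim$. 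Hence one is reduced to proving $R^q\lim_i F_i(W)=0$ for $q>0$, and by \Cref{cor:global_sections_on_wcontr} the system $(F_i(W))_i$ is canonically the system $(G_i(W^c))_i$ where $G_i\coloneqq i^*F_i$ is a cofiltered system of $\mc{P}$-torsion constructible sheaves on $W^c$.

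The next step is to analyse $G_i(W^c)$. The scheme $W^c$ is reduced, $0$-dimensional with separably closed residue fields (it is the subscheme of closed points of a $w$-local scheme, cf. \cite[Lemma~2.2.3]{zbMATH06479630}), so by \cite[Corollary~2.5]{Schroer} its étale and Zariski topologies coincide; consequently each $G_i$ decomposes, along a finite clopen partition of $W^c$, as a finite direct sum of constant sheaves with finite $\mc{P}$-torsion values. Therefore $G_i(W^c)$ is a finite direct sum of groups of continuous maps from a profinite set to a finite $\mc{P}$-torsion abelian group; each such group is profinite, so $G_i(W^c)$ is a compact Hausdorff abelian group, and any map of constructible sheaves induces (by a check on finite quotients of the relevant profinite sets) a continuous homomorphism on these sections. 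Thus $(G_i(W^c))_i$ is a cofiltered inverse system of compact Hausdorff abelian groups with continuous transition maps.

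It then remains to prove that $R^q\lim_i M_i=0$ for all $q>0$, for any cofiltered inverse system $(M_i)$ of compact Hausdorff abelian groups. This holds because $\lim$ is exact on such systems: it is always left exact, and for a short exact sequence of such systems the map $\lim_i M_i\to \lim_i M_i''$ is surjective since the fibres form a cofiltered system of nonempty compact spaces; equivalently, Pontryagin duality turns the cosimplicial replacement computing $R\lim$ into the simplicial replacement of a filtered system of discrete abelian groups, where homotopy colimits are ordinary (exact) colimits, so $R\lim_i M_i$ is concentrated in degree $0$. Assembling the three reductions gives the lemma. (One can also arrange the dévissage in the second step so that the residual constant case is literally \Cref{lem:ZhatconstrIslimTors}.)

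The main obstacle — and the precise place the hypotheses are used — is the passage from sections of the $G_i$ to \emph{compact} abelian groups: this is exactly what $\mc{P}$-torsion constructibility provides (finiteness of stalks together with $\mc{P}$-torsion forces the sections to be profinite), and without it the derived limit of even a single tower of pro-étale sheaves need not be discrete, which is why the integral analogue of the statement fails. A secondary point to be careful about is that the indexing category is only assumed cofiltered and need not be a tower; the argument above does not require this extra hypothesis precisely because exactness of $\lim$ on cofiltered systems of compact abelian groups (dually, of $\colim$ on filtered systems of discrete abelian groups) holds over arbitrary such index categories.
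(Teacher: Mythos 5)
Your proposal is correct and follows the same reduction as the paper's proof: reduce to sections over $w$-contractible schemes, pass to the subscheme of closed points (a $0$-dimensional scheme with separably closed residue fields whose étale site is the Zariski site of an extremally disconnected space), and conclude by the vanishing of higher derived limits for cofiltered systems of compact abelian groups. The only difference is that you spell out this last step, which the paper delegates to a citation of the end of Fargues--Scholze's Proposition~VII.1.6.
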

\begin{proof}
    It suffices to prove that for any $w$-contractible weakly étale $X$-scheme $W$ and any $q$, the abelian group $[R^q\lim F_i](W)$ vanishes.
    By \Cref{cor:global_sections_on_wcontr} and \Cref{lem:pullbackslimits}, we may replace $X$ by the set of closed points $W^c$ of $W$ and therefore assume that $X$ is affine reduced of Krull dimension $0$ with separably closed residue fields.
    By \cite[Corollary~2.5]{Schroer}, we get that $\Sh(X_\et) = \Sh(|X|)$ where $|X|$ the underlying topological space of $X$ which is extremally disconnected.
    Now the same proof as in the end of \cite[Proposition VII.1.6]{Fargues-Scholze} applies.
\end{proof}
\begin{thm} \label{thm:solid} Keep the above notations.
	\begin{enumerate}
		\item The subcategory $\Sh(X,\ZhatP)^{\blacksquare}$ of $\Sh(X_\proet,\ZhatP)$ is weak Serre and closed under small limits and colimits. For $j\colon U \to X$  as above, the object $j_\sharp^\blacksquare \ZhatP$  is the solidification of $j_\sharp\ZhatP$. The collection of the $j_\sharp^\blacksquare \ZhatP$ generates the abelian category $\Sh(X,\ZhatP)^{\blacksquare}$.
		\item Let $F$ be an object of $\Sh(X_\proet,\ZhatP)$. The following are equivalent
		\begin{enumerate}
			\item The sheaf $F$ is compact in $\Sh(X_\proet,\ZhatP)$ and solid.
			\item The sheaf $F$ is solid and compact in $\Sh(X,\ZhatP)^{\blacksquare}$.
			\item The sheaf $F$ can be written as a cofiltered limit of torsion constructible étale sheaves.
		\end{enumerate}
		\item 
		The subcategory of $\Sh(X_\proet,\ZhatP)$ spanned by objects satisfying the above equivalent conditions is weak Serre. As an abelian category it is equivalent to the pro-category $\mathrm{Pro}(\Sh_\mc{P}^\mathrm{cons}(X_\et))$. 
		\item The category $\Sh(X,\ZhatP)^{\blacksquare}$ is equivalent to $\In (\mathrm{Pro}(\Sh_\mc{P}^\mathrm{cons}(X_\et)))$.
		\item Let $K$ be a complex of pro-étale sheaves of $\ZhatP$-modules.
		Then, it is solid if and only if the natural maps \[\underline{\Hom}_{\D(X_\proet,\ZhatP)}(j_\sharp^\blacksquare\ZhatP,K)\to j_*j^*K\] are equivalences for any $(j \colon U = \lim_\alpha U_\alpha\to X) \in \mathrm{Pro}(X_{\et}^{\mathrm{aff},{\fp}})$.
	\end{enumerate}
\end{thm}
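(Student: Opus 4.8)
The plan is to transpose the arguments of \cite[\S VII.1]{Fargues-Scholze} to the scheme setting, the role of profinite sets being played by $w$-contractible schemes and the role of the ``discrete'' case by reduced $0$-dimensional affine schemes with separably closed residue fields; the only genuinely new inputs are \Cref{lem:ZhatconstrIslimTors} and \Cref{limIsRlim}. First I would record that $\Sh(X_\proet,\ZhatP)$ is compactly generated, with generators the free modules $(j_W)_\sharp\ZhatP=\ZhatP[W]$ attached to $w$-contractible $X$-schemes $W$: these generate because $w$-contractible schemes form a basis of the topology, they are compact because such $W$ is qcqs, hence a coherent object of the site (so $F\mapsto F(W)$ commutes with filtered colimits), and they are even projective because $W$ is $w$-contractible (so $F\mapsto F(W)$ is exact). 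The crucial computation is that of the solid generators: for $j\colon U=\lim_\alpha U_\alpha\to X$ as in \Cref{defi:solid-generator} one may, using \Cref{rec:proet-VS-pro-aff-et}, refine the presentation so that the $U_\alpha$ are finitely presented affine étale; then each $\nu^*U_\alpha$ is a constructible étale sheaf of sets, so \Cref{lem:ZhatconstrIslimTors} gives $(j_\alpha)_\sharp\ZhatP=\ZhatP[\nu^*U_\alpha]\simeq\lim_{n\in\Pi\mc{P}}\Z/n\Z[\nu^*U_\alpha]$, whence
\[
	j_\sharp^\blacksquare\ZhatP\;=\;\lim_{(\alpha,n)}\Z/n\Z[\nu^*U_\alpha],
\]
a cofiltered limit of $\mc{P}$-torsion constructible étale sheaves; by \Cref{limIsRlim} this limit agrees with the derived limit in $\D(X_\proet,\ZhatP)$, and in particular $j_\sharp^\blacksquare\ZhatP$ depends only on $U$, not on the chosen presentation.

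The technical heart is to establish two facts: (i) every cofiltered limit $\lim_i\nu^*F_i$ of $\mc{P}$-torsion constructible étale sheaves is a compact object of $\Sh(X_\proet,\ZhatP)$; and (ii) the induced functor $\mathrm{Pro}(\Sh_\mc{P}^\mathrm{cons}(X_\et))\to\Sh(X_\proet,\ZhatP)$, $(F_i)_i\mapsto\lim_i\nu^*F_i$, is fully faithful. As $\nu^*$ is fully faithful, (ii) reduces to the identity $\Hom_{\Sh(X_\proet,\ZhatP)}(\lim_i\nu^*F_i,\nu^*G)=\colim_i\Hom_{\Sh(X_\proet,\ZhatP)}(\nu^*F_i,\nu^*G)$ for $G$ $\mc{P}$-torsion constructible, which together with (i) identifies the essential image of the functor in (ii) with the full subcategory of compact solid objects. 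I would prove both (i) and (ii) by reducing, via conservativity and limit-compatibility of pullbacks (\Cref{lem:conservativity-of-pullbacks}, \Cref{lem:pullbackslimits}) together with \Cref{cor:global_sections_on_wcontr} (which allows one to replace a $w$-contractible $W$ by its subscheme of closed points $W^c$), to the case where $X$ is reduced, $0$-dimensional, affine, with separably closed residue fields. There $\Sh(X_\proet)\simeq\Cond(\Ani)_{/\pi_0(X)}$ by \Cref{lem:strictly_profinite_cover} and the étale and Zariski topologies coincide, so $\mc{P}$-torsion constructible étale sheaves become $n$-torsion ``constructible'' condensed $\ZhatP$-modules over $\pi_0(X)$, and (i), (ii) become exactly the $\Z/n$-coefficient analogues of the facts about solid free modules on profinite sets proved in \cite[\S VII.1]{Fargues-Scholze}.

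Granting (i) and (ii), the structural statements become formal. Since $\Sh(X_\proet,\ZhatP)$ is compactly generated and $\Sh(X,\ZhatP)^\blacksquare$ is its reflective localization at the set of maps $j_\sharp\ZhatP\to j_\sharp^\blacksquare\ZhatP$ between compact objects, the localization is again compactly generated — in particular abelian, with all small limits and colimits, and the $j_\sharp^\blacksquare\ZhatP$ form a set of compact generators; combined with the fact that solid sheaves are closed under kernels, cokernels and extensions inside $\Sh(X_\proet,\ZhatP)$, this gives (1). Moreover each $\Z/n\Z[\nu^*U_\alpha]$ is solid by (ii), hence so is the limit $j_\sharp^\blacksquare\ZhatP$, and the latter is the solidification of $j_\sharp\ZhatP$ because the fibre and cofibre of $j_\sharp\ZhatP\to j_\sharp^\blacksquare\ZhatP$ are killed by $(-)^\blacksquare$. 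The equivalence (a)$\Leftrightarrow$(b) of (2) is the standard compatibility of compact objects with the localization; (a),(b)$\Leftrightarrow$(c) together with (3) is the identification of the compact solid objects with $\mathrm{Pro}(\Sh_\mc{P}^\mathrm{cons}(X_\et))$ coming from (i)--(ii); and (4) follows because a compactly generated abelian category is the Ind-completion of its subcategory of compact objects.

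Finally, for (5) one uses that the solidification functor on the abelian category is exact — which I would again check by reduction to the $0$-dimensional case, where it is the corresponding statement of \cite[\S VII.1]{Fargues-Scholze} — so that it derives to a Bousfield localization of $\D(X_\proet,\ZhatP)$ whose local objects are precisely the complexes with solid cohomology sheaves, i.e.\ $\D(X,\ZhatP)^\blacksquare$. Combined with the fact (from \Cref{limIsRlim}) that $j_\sharp^\blacksquare\ZhatP$ is a derived cofiltered limit of objects compact in $\D(X_\proet,\ZhatP)$, being local with respect to the maps $j_\sharp\ZhatP\to j_\sharp^\blacksquare\ZhatP$ internally — that is, also after restriction along any $(j\colon V\to X)\in\mathrm{Pro}(X_\et^{\mathrm{aff},\fp})$ — translates into the assertion that $\underline{\Hom}_{\D(X_\proet,\ZhatP)}(j_\sharp^\blacksquare\ZhatP,K)\to\underline{\Hom}_{\D(X_\proet,\ZhatP)}(j_\sharp\ZhatP,K)=j_*j^*K$ is an equivalence, with no $\lim^1$-discrepancy between the derived and abelian pictures. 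I expect the main obstacle to be exactly the reduction of (i) and (ii) to the $0$-dimensional case and the ensuing control of $\Hom$ and $\Ext$ groups in the large category $\Sh(X_\proet,\ZhatP)$ of cofiltered limits of torsion constructible sheaves, together with the exactness of solidification used in (5); the rest is formal bookkeeping with Bousfield localizations.
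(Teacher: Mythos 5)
The structural skeleton of your proposal is right, and you have correctly identified the key players: the solid generators $j_\sharp^\blacksquare\ZhatP$ are cofiltered limits of $\mc{P}$-torsion constructible \'etale sheaves (via \Cref{lem:ZhatconstrIslimTors}), such limits should be shown to be compact and to embed fully faithfully via $\mathrm{Pro}(\Sh_\mc{P}^\mathrm{cons}(X_\et))\to\Sh(X_\proet,\ZhatP)$, and then parts (1)--(4) follow formally from general Ind/localization bookkeeping. Parts (1)--(4) of your write-up match what the paper does once (i) and (ii) are in hand.

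However, your proposed mechanism for (i) and (ii) has a genuine gap. You want to reduce both compactness of $\lim_i\nu^*F_i$ and the full-faithfulness identity $\colim_i\Hom(\nu^*F_i,\nu^*G)\xrightarrow{\sim}\Hom(\lim_i\nu^*F_i,\nu^*G)$ to the $0$-dimensional case via \Cref{lem:conservativity-of-pullbacks}, \Cref{lem:pullbackslimits} and \Cref{cor:global_sections_on_wcontr}. But these tools only allow you to check that a given \emph{morphism of pro-\'etale sheaves} is an isomorphism; they are the right tool for \Cref{lem:ZhatconstrIslimTors} and \Cref{limIsRlim}, which are exactly such statements. Compactness of a fixed object $F$ (commutation of $\Hom(F,-)$ with filtered colimits) and full faithfulness (an isomorphism of external $\Hom$-\emph{groups}) are not assertions about a map of sheaves over $X$; they are assertions about global mapping spaces, and these are not preserved by $g^*$ for a surjective $g\colon E\to X$ (pullback is conservative and limit-preserving but not fully faithful). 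So the ``reduce to $E$'' step does not land. The paper instead proves (i) and (ii) \emph{directly}, by identifying $\Sh(X_\proet,\mathrm{Sets})$ with sheaves for the coherent topology on $\mathrm{Pro}(X_\et^{\mathrm{cons}})$ via \cite[Example~7.1.7, Remark~7.1.1, Corollary~B.5.6]{ultracategories}: full faithfulness of $\mathrm{Pro}(X_\et^{\mathrm{cons}})\hookrightarrow\Sh(X_\proet,\mathrm{Sets})$ comes from subcanonicity of the coherent topology, compactness from the fact that filtered colimits of coherent sheaves are computed pointwise, and both are then lifted from sets to $\ZhatP$-modules by an explicit equalizer argument. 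This is a genuinely different (and, as far as the paper is concerned, necessary) route.

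Your account of (5) is also thinner than what is actually needed. The paper proves (5) \emph{before} the derived-category statements of \Cref{prop:derivedSolid}, not as a consequence of them, and its proof is a real piece of work: a d\'evissage to the heart, then a Breen--Deligne resolution of $j_\sharp^\blacksquare\ZhatP$ combined with \cite[Corollary~5.1.6]{zbMATH06479630} and \cite[\href{https://stacks.math.columbia.edu/tag/09YQ}{Theorem 09YQ}]{stacks-project} to compare the $E_1$-pages of two spectral sequences, establishing first that $\underline{\Hom}(j_\sharp^\blacksquare\ZhatP,-)$ commutes with filtered colimits of objects in the heart, and then that the comparison map is an equivalence for torsion constructible $K$. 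Your ``exactness of solidification implies a Bousfield localization, hence (5)'' sketch inverts this logical dependency and hides the place where the actual analysis happens; in particular the ``if'' direction of (5) (local objects in the derived Bousfield picture have solid cohomology sheaves) needs the explicit reflective-subcategory argument given at the end of the paper's proof, not merely exactness of $(-)^\blacksquare$ on the abelian category.
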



\begin{proof}
We mostly follow the proof of \cite[Theorem VII.1.3]{Fargues-Scholze}.
The category $\mathrm{Pro}(\Sh_\mc{P}^\mathrm{cons}(X_\et))$ is abelian by \cite[Proposition 8.6.7]{MR2182076}. Furthermore, \Cref{limIsRlim} ensures that the functor \begin{equation}\label{eq:ProCons-in-Proet}\mathrm{Pro}(\Sh_\mc{P}^\mathrm{cons}(X_\et)) \to \Sh(X_\proet,\ZhatP)\end{equation} is exact. We want to show that it is fully faithful. Let $(F_i)$ be a cofiltered inverse system of
$\mc{P}$-torsion constructible étale sheaves and let $G$ be a $\mc{P}$-torsion constructible étale sheaf, we have to prove that the natural map:
\[\colim\Hom_{\Sh(X_\et,\Z)}(F_i,G)\to \Hom_{\Sh(X_\proet,\ZhatP)}(\lim \nu^*F_i,\nu^*G)\] is an equivalence.

 Recall the category $X_{\et}^\mathrm{cons}$ of set-valued constructible étale sheaves.
 We consider the category $\Sh(\mathrm{Pro}(X_{\et}^\mathrm{cons}),\mathrm{Sets})$ of sheaves on $\mathrm{Pro}(X_{\et}^\mathrm{cons})$ for the coherent topology, \cite[Remark~7.1.1, Definition B.5.3]{ultracategories}.
 By \cite[Example 7.1.7]{ultracategories} the natural map \[\mathrm{Pro}(X_{\et}^\mathrm{cons})\to \Sh(X_\proet,\mathrm{Sets})\] induces an equivalence $\Sh(\mathrm{Pro}(X_{\et}^\mathrm{cons}),\mathrm{Sets})\to \Sh(X_\proet,\mathrm{Sets})$ by Kan extension.
 In particular, it is fully faithful and therefore, the map \[\colim\Hom_{\Sh(X_\et,\mathrm{Sets})}(F_i,G)\to \Hom_{\Sh(X_\proet,\mathrm{Sets})}(\lim \nu^*F_i,\nu^*G)\] is an equivalence, since the coherent topology is subcanonical  \cite[Corollary B.5.6]{ultracategories}.
 Enforcing compatibility with addition on the right hand side amounts to saying that two different maps $\lim \nu^* F_i \times \lim \nu^* F_i\to G$ coincide (and similarly on the left hand side); as before, we have \[\colim\Hom_{\Sh(X_\et,\mathrm{Sets})}(F_i \times F_i,G)\xrightarrow{\sim} \Hom_{\Sh(X_\proet,\mathrm{Sets})}(\lim \nu^*F_i\times \lim \nu^*F_i,\nu^*G)\]
 and taking equalizers, we get \[\colim\Hom_{\Sh(X_\et,\Z)}(F_i,G)\xrightarrow{\sim} \Hom_{\Sh(X_\proet,\Z)}(\lim \nu^*F_i,\nu^*G).\]
 Furthermore, any map between cofiltered limits of constructible $\mc{P}$-torsion sheaves that is $\Z$-linear is $\ZhatP$-linear.
Hence, the category $\mathrm{Pro}(\Sh_\mc{P}^\mathrm{cons}(X_\et))$ is equivalent to an abelian subcategory of $\Sh(X_\proet,\ZhatP)$ that we denote by $\mathcal{C}(X)$.

By definition, the category $\mc{C}(X)$ is spanned by the objects that satisfy condition (c). We now show that its objects are solid. It suffices to prove that $\mc{P}$-torsion constructible sheaves are solid since solid objects are closed under limits. Let $F$ be a $\mc{P}$-torsion constructible étale sheaf and let $(U_\alpha)$ be a cofiltered system of étale $X$-schemes which are affine. Denote by $j\colon\lim_\alpha U_\alpha\to X$ the canonical pro-étale map. In the commutative diagram
\[\begin{tikzcd}
	{\Hom_{\Sh(X_\proet,\ZhatP)}(j_\sharp^\blacksquare\ZhatP,\nu^*F)} & {\Hom_{\Sh(X_\proet,\ZhatP)}(j_\sharp\ZhatP,\nu^*F)} \\
	{\colim_\alpha F(U_\alpha)} & {(\nu^*F)(\lim_\alpha U_\alpha)}
	\arrow[from=1-1, to=1-2]
	\arrow[from=1-1, to=2-1]
	\arrow[from=1-2, to=2-2]
	\arrow[from=2-1, to=2-2]
\end{tikzcd},\] the right vertical arrow is invertible by the Yoneda lemma,
the bottom horizontal arrow is invertible because of \cite[Lemma 5.2]{zbMATH06479630},
and one sees that the left vertical arrow is invertible by using \Cref{lem:ZhatconstrIslimTors} to write $j_\sharp^\blacksquare\ZhatP$ as an object of $\mc{C}(X)$, and the fact that \Cref{eq:ProCons-in-Proet} is fully faithful. Thus, the top horizontal arrow is an equivalence and this proves that $\nu^*F$ is solid. Hence $\ccal(X)$ is contained in $\Sh(X,\ZhatP)^{\blacksquare}$.

We now prove that the objects of $\ccal(X)$ are also compact in $\Sh(X_\proet,\ZhatP)$. First note that the objects of $\mathrm{Pro}(X_{\et}^\mathrm{cons})$ are compact in $\Sh(\mathrm{Pro}(X_{\et}^\mathrm{cons}),\mathrm{Sets})\simeq \Sh(X_\proet,\mathrm{Sets})$; this is because filtered colimits of sheaves are computed in the category of set-valued presheaves over $\mathrm{Pro}(X_{\et}^\mathrm{cons})$ by \cite[Remark 7.1.1]{ultracategories}. Let $F$ be in $\ccal(X)$ and let $G$ be a pro-étale sheaf of $\ZhatP$-modules.
As above, one can describe $\Hom_{\Sh(X_\proet,\ZhatP)}(F, G)$ as the subset of $\Hom_{\Sh(X_\proet)}(F, G)$ made of maps satisfying additivity and $\ZhatP$-linearity, \textit{i.e} certain maps $F\times F\to G$ and $F\times \ZhatP \to G$ agree; as the functor $\Hom_{\Sh(X_\proet)}(H,-)$ is compatible with filtered colimits for any $H$ in $\mathcal{C}(X)$, this description also commutes with filtered colimits and therefore $F$ is compact in $\Sh(X_\proet,\ZhatP)$. Hence, we proved that condition (c) implies condition (a).

We can also deduce that solid objects are closed under filtered colimits: indeed, if $j\colon U\to X$ is a pro-étale map that can be written as a cofiltered limit of étale maps from affine schemes, the sheaf $j_\sharp^\blacksquare \ZhatP$ is compact in $\Sh(X_\proet,\ZhatP)$ as we already know that it belongs to $\ccal(X)$. This also proves that condition (a) implies condition (b).

We now prove 4. \textit{i.e.}, we describe the category of solid sheaves.
Since solid objects are closed under filtered colimits, we have a functor \[\In \ccal(X)\to \Sh(X,\ZhatP)^{\blacksquare}.\]
As all objects of $\ccal(X)$ are compact, it is fully faithful.
It is also an exact functor of abelian categories by \cite[Proposition 8.6.7]{MR2182076}.

As $j_\sharp^\blacksquare \ZhatP$ is in $\ccal(X)$, it is solid. 
Thus, the Yoneda lemma and the definition of solid objects yield
\[
      (j_\sharp\ZhatP)^\blacksquare\xrightarrow{\sim}j_\sharp^\blacksquare \ZhatP.
\]
Thus any solid sheaf $F$ admits a surjection from a direct sum of $j_\sharp^\blacksquare \ZhatP$ for various $j$ as before.
As the kernel is also solid, we get a presentation \[\bigoplus (j_\beta)_\sharp^\blacksquare \ZhatP\to \bigoplus (j_\gamma)_\sharp^\blacksquare \ZhatP\to F \to 0\] which implies that $F$ belongs to $\In \ccal(X)$, since the latter is closed under cofibers.
From this, we also deduce that condition (b) implies condition (c) because $\ccal(X)$ is idempotent complete and using that taking compact objects of the Ind-category is the same as taking the idempotent-completion by \cite[Lemma~5.4.2.4]{MR2522659}. Thus we proved 2.

So far we proved 2. and 4., and everything in points 1. and 3. except for the stabilities under extensions.
This follows from 5., which we now prove.
We begin by showing that any solid complex $K$ satisfies the condition of 5.
By dévissage along the Postnikov tower (note that the formula commutes with limits in $K$ as $j^*$ commutes with limits by \Cref{lem:pullbackslimits}) and a spectral sequence argument, we can reduce to $K=F[0]$ being in the heart. 
The strategy is as follows: we prove that the equation of 5. commutes with filtered colimits so that we may assume that $F$ is a constructible torsion étale sheaf, and then we prove the claim for those specific sheaves. Both steps use Breen-Deligne resolutions:

By \cite[Lemma 5.3]{MR4609461} the right-hand side of 5. commutes with filtered colimits of objects in the heart.
We claim that the left-hand side $\underline{\Hom}_{\D(X_\proet,\ZhatP)}(j_\sharp^\blacksquare\ZhatP,-)$ also commutes with filtered colimits in $\Sh(X,\ZhatP)^{\blacksquare}$.
To prove this claim, note that the $p_\sharp \ZhatP$ for $p\colon V\to X$ weakly étale and $V$ $w$-contractible are compact generators of $\D(X_\proet,\ZhatP)$ (compactness follows from \cite[Lemma 4.1]{MR4609461}) and therefore, it suffices to show that the $\Hom_{\D(X_\proet,\ZhatP)}(p_\sharp\ZhatP,\underline{\Hom}_{\D(X_\proet,\ZhatP)}(j_\sharp^\blacksquare\ZhatP,-))$ are compatible with filtered colimits.
The latter coincides with $\Hom_{\D(X_\proet,\ZhatP)}(p_\sharp\ZhatP\otimes j_\sharp^\blacksquare\ZhatP,-)$.
Note that $p_\sharp\ZhatP\otimes j_\sharp^\blacksquare\ZhatP$ is the non-derived tensor product because $p_\sharp\ZhatP$ is projective.
We now use a Breen-Deligne resolution of $j_\sharp^\blacksquare\ZhatP$ which implies as in \cite[Corollary 4.8]{condensedpdf} that we have a functorial converging spectral sequence $E_*^{p,q}(j_\sharp^\blacksquare\ZhatP^{r_{p,i}}\times V,-)$ whose first page is of the form
\[
E_1^{p,q}=\prod_{i=1}^{n_p}\Ext^q_{\Sh(X_\proet,\ZhatP)}(\ZhatP[j_\sharp^\blacksquare\ZhatP^{r_{p,i}}\times V],-)\Rightarrow \HH^{p+q}\Hom_{\D(X_\proet,\ZhatP)}(p_\sharp\ZhatP\otimes j_\sharp^\blacksquare\ZhatP,-),
\] with $n_p$ and $r_{p,i}$ some universal constants.
Note that the sheaf $j_\sharp^\blacksquare\ZhatP^{r_{p,i}}\times V$ is a cofiltered limit of affine étale $X$-schemes by \Cref{lem:ZhatconstrIslimTors}.
Thus, the left-hand side commutes with filtered colimits by \cite[\href{https://stacks.math.columbia.edu/tag/0739}{Tag 0739}]{stacks-project} and thus the abutment also commutes with filtered colimits, which proves the claim.
Hence, we can assume that $F$ is compact.
We can then further reduce to $F$ being a $\mc{P}$-torsion constructible étale sheaf as both sides of 5. commute with limits (which are derived in our case by \Cref{limIsRlim}).

We now will prove that 5. is an equivalence for $K=F[0]$ and $F$ a $\mc{P}$-torsion constructible étale sheaf. Again it suffices to check that the map is an equivalence after applying $\mathrm{Hom}_{\D(X_\proet,\ZhatP)}(p_\sharp \ZhatP,-)$ for all $p\colon V\to X$ pro-étale affine with $V$ $w$-contractible. Note that the right hand-side of 5. then becomes $\mathrm{Hom}_{\D(X_\proet,\ZhatP)}(p_\sharp\ZhatP \otimes j_\sharp\ZhatP,F[0])$ by using adjunctions and the projection formula for $j_\sharp$. Using again Breen-Deligne resolutions to get simultaneous resolutions of $j_\sharp^\blacksquare\ZhatP$ and $(j_\alpha)_\sharp\ZhatP$ we obtain as above a map 
$$\colim_\alpha E_*^{p,q}((j_\alpha)_\sharp\ZhatP^{r_{p,i}}\times V,F)\to E_*^{p,q}(j_\sharp^\blacksquare\ZhatP^{r_{p,i}}\times V,F)$$
of spectral sequences which on the $E_1$-page is of the form

\[\colim_\alpha\prod_{i=1}^{n_p}\Ext^q_{\Sh(X_\proet,\ZhatP)}(\ZhatP[(j_\alpha)_\sharp\ZhatP^{r_{p,i}}\times V],F)\to \prod_{i=1}^{n_p}\Ext^q_{\Sh(X_\proet,\ZhatP)}(\ZhatP[j_\sharp^\blacksquare\ZhatP^{r_{p,i}}\times V],F).\] 
This map is an equivalence for all $p,q\in\Z$. Indeed, since $j_\sharp^\blacksquare\ZhatP^{r_q}\times V = \lim_{\alpha} (j_\alpha)_\sharp\ZhatP^{r_q}\times V$, this follows from \cite[Corollary~5.1.6]{zbMATH06479630} because the $j_\sharp^\blacksquare\ZhatP^{r_q}\times V$ and $(j_\alpha)_\sharp\ZhatP^{r_q}\times V$ are all limits of affine schemes by \Cref{lem:ZhatconstrIslimTors}. As the two spectral sequences are equivalent, so are their abutments, thus the map 
\[\colim_\alpha \HH^n\Hom_{\D(X_\proet,\ZhatP)}(p_\sharp\ZhatP\otimes (j_\alpha)_\sharp\ZhatP,F) \to \HH^n\Hom_{\D(X_\proet,\ZhatP)}(p_\sharp\ZhatP\otimes j_\sharp^\blacksquare\ZhatP,F)\] is an equivalence. 
Using the projection formula for $(j_\alpha)_\sharp$, the left-hand side can be written 
\[\colim_\alpha \HH^n\Hom_{\D(X_\proet,\ZhatP)}((u_\alpha)_\sharp\ZhatP,F) \simeq \colim_\alpha \HH^n_\et(W_\alpha,F_{\mid W_\alpha})\] with the notation of the following cartesian diagram: \[\begin{tikzcd}
	{W_\alpha} & V \\
	{U_{\alpha}} & X
	\arrow[from=1-1, to=1-2]
	\arrow[from=1-1, to=2-1]
	\arrow["{u_\alpha}"{description}, from=1-1, to=2-2]
	\arrow["p", from=1-2, to=2-2]
	\arrow["{j_\alpha}"', from=2-1, to=2-2]
\end{tikzcd}.\] By \cite[\href{https://stacks.math.columbia.edu/tag/09YQ}{Theorem 09YQ}]{stacks-project} we have 
\[\colim_\alpha \HH^n_\et(W_\alpha,F_{\mid W_\alpha})\simeq \HH^n_\et(W,F_{\mid W})\] with $W$ the base change of $p$ along $j$, thus doing the previous operations in reverse we find 
\[\HH^n\Hom_{\D(X_\proet,\ZhatP)}(p_\sharp\ZhatP\otimes j_\sharp^\blacksquare\ZhatP,F) \xrightarrow{\sim} \HH^n\Hom_{\D(X_\proet,\ZhatP)}(p_\sharp\ZhatP\otimes j_\sharp\ZhatP,F)\]
which is what we wanted to prove.

We now prove the converse of 5.
Let us write $\D(X,\ZhatP)^{L\blacksquare}$ for the full subcategory spanned by those complexes of pro-étale sheaves of $\ZhatP$-modules $K$ such that for every $j$ as before, the map
\[\Hom_{\D(X_\proet,\ZhatP)}(j_\sharp^\blacksquare\ZhatP,K)\to \Gamma(U,K) \]
is an equivalence.
Clearly all complexes satisfying the conclusion of 5. are in $\D(X,\ZhatP)^{L\blacksquare}$.
In particular we have $\D(X,\ZhatP)^{\blacksquare} \subseteq \D(X,\ZhatP)^{L\blacksquare}$ by what we have just shown.
To show that they agree we observe that the inclusion $\D(X,\ZhatP)^{L\blacksquare} \hookrightarrow \D(X_\proet,\ZhatP)$ admits a left adjoint $L$.
Note that $D(X,\ZhatP)^{\blacksquare}$ is closed under colimits in $\D(X_\proet,\ZhatP)$.
Thus we are done if we prove that the natural map $L(j_\sharp \ZhatP) \to  j_\sharp^\blacksquare \ZhatP$ is an equivalence, since the latter is contained in $D(X,\ZhatP)^{\blacksquare}$.
Since $j_\sharp^\blacksquare \ZhatP$ is in $\D(X,\ZhatP)^{L\blacksquare}$ it suffices to show that for any $ K \in \D(X,\ZhatP)^{L\blacksquare}$ the canonical map
\[
    \Hom_{\D(X,\ZhatP)^{L\blacksquare}}(j_\sharp^\blacksquare \ZhatP,K) \to \Hom_{\D(X,\ZhatP)}(j_\sharp \ZhatP,K)
\]
is an equivalence, which is true by definition.
\end{proof}

\begin{cor}\label{torsion are solid}
	If $F$ is an abelian étale sheaf of $\Z/n\Z$-modules on $X_{\et}$ with $n\in \Pi\mathcal{P}$ (see \Cref{lem:ZhatconstrIslimTors} for this notation), the sheaf $\nu^*F$ is solid.
\end{cor}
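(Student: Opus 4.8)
\emph{Proof sketch.} The plan is to bootstrap from the fact, established inside the proof of \Cref{thm:solid}, that $\mathcal{P}$-torsion constructible étale sheaves are solid, together with the stability of solid sheaves under filtered colimits. Since $n\in\Pi\mathcal{P}$, the ring $\Z/n\Z$ is canonically a quotient of $\ZhatP$, so that $\nu^*$ restricts to a functor $\Sh(X_\et,\Z/n\Z)\to\Sh(X_\proet,\ZhatP)$; being a left adjoint, it preserves all colimits, in particular filtered ones.

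The next step is to write $F$ as a filtered colimit of its constructible abelian subsheaves. As $n\in\Pi\mathcal{P}$, the sheaf $F$ is $n$-torsion, hence a torsion abelian étale sheaf on the qcqs scheme $X$; such a sheaf is the filtered union of its constructible abelian subsheaves (this is classical, see \cite[Exposé IX]{sga4} or the Stacks project). Each such subsheaf $G\hookrightarrow F$ is again a sheaf of $\Z/n\Z$-modules, hence $\mathcal{P}$-torsion, and it is constructible as an étale sheaf of sets since forgetting the group structure on its finite stalks leaves them finite; thus $G\in\Sh_\mathcal{P}^{\mathrm{cons}}(X_\et)$. By the argument in the proof of \Cref{thm:solid} showing that $\mathcal{P}$-torsion constructible étale sheaves are solid, each $\nu^*G$ is solid. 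Since $\nu^*$ commutes with filtered colimits we have $\nu^*F=\colim_G\nu^*G$ in $\Sh(X_\proet,\ZhatP)$, and solid sheaves are closed under filtered colimits by \Cref{thm:solid}, so $\nu^*F$ is solid.

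The only non-formal input is the classical fact that a torsion abelian étale sheaf on a qcqs scheme is the filtered colimit of its constructible abelian subsheaves; this is also the step I would expect to be the mildest possible ``obstacle'', though it is entirely standard. Granting it, the statement is a direct consequence of \Cref{thm:solid}. (One could alternatively try a more hands-on argument, computing $\mathrm{Hom}_{\Sh(X_\proet,\ZhatP)}(j_\sharp^\blacksquare\ZhatP,\nu^*F)$ via $j_\sharp^\blacksquare\ZhatP\otimes\Z/n\Z$ and \Cref{lem:ZhatconstrIslimTors}, but this requires checking that $-\otimes\Z/n\Z$ commutes with the relevant cofiltered limits and is strictly less clean than the dévissage above.)
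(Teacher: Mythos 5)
Your proof is correct and follows essentially the same route as the paper: reduce to the constructible case (handled inside the proof of \Cref{thm:solid}), then use that any sheaf of $\Z/n\Z$-modules on $X_\et$ is a filtered colimit of constructible subsheaves, that $\nu^*$ preserves colimits, and that solid sheaves are closed under colimits. The paper's proof cites the same Stacks project lemma for the filtered-colimit step and is otherwise word-for-word the dévissage you describe.
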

\begin{proof}
\Cref{thm:solid} shows that this is true if $F$ is constructible. As solid sheaves are closed under colimits and $\nu^*$ is compatible with colimits, the result is true because any sheaf of $\Z/n\Z$-modules is a colimit of constructible sheaves by \cite[\href{https://stacks.math.columbia.edu/tag/03SA}{Lemma 03SA}]{stacks-project}.
\end{proof}

\begin{rem}
	\label{rem:diff-generators}
	In \Cref{defi:solid-generator} we have chosen to work with inverse limits of affine \'etale $X$-schemes.
	We could have also used other generators of the pro-\'etale site instead.
	To see this we need to make some preliminary observations. We consider the functor
	\[
		\ZhatP[-] \colon X^{\rm{cons}}_{\et} \to \mathrm{Pro}(\Sh_\mc{P}^\mathrm{cons}(X_\et))
	\]
	sending a constructible sheaf of sets $\mathcal{F}$ to the free $\ZhatP$-module on it.
	Note that this functor does indeed land in the full subcategory $ \mathrm{Pro}(\Sh_\mc{P}^\mathrm{cons}(X_\et)) \subseteq \Sh(X,\ZhatP)^{\blacksquare}$ by \Cref{lem:ZhatconstrIslimTors}.
	Furthermore it preserves finite colimits.
	Since cofiltered limits commute with finite colimits in $\mathrm{Pro}(\Sh_\mc{P}^\mathrm{cons}(X_\et))$, it follows that the extension to pro-objects
	\[
		\rm{Pro}(\ZhatP[-]) \colon \rm{Pro}(X^{\rm{cons}}_{\et}) \to \mathrm{Pro}(\Sh_\mc{P}^\mathrm{cons}(X_\et))
	\]
	also commutes with finite colimits.
	Now by \Cref{limIsRlim} the obvious functor \begin{equation*}\mathrm{Pro}(\Sh_\mc{P}^\mathrm{cons}(X_\et)) \to \Sh(X_\proet,\ZhatP)\end{equation*} is exact so in particular it also preserves finite colimits.
	It follows that the composite
	\[
		\rm{Pro}(X^{\rm{cons}}_{\et}) \xrightarrow{\rm{Pro}(\ZhatP[-])} \mathrm{Pro}(\Sh_\mc{P}^\mathrm{cons}(X_\et)) \to \Sh(X_\proet,\ZhatP)
	\]
	is exact as well.

	Now observe that if $\mathcal{F} = \lim_i \mathcal{F}_i \in \rm{Pro}(X^{\rm{cons}}_{\et}) $ this functor, by definition sends $\mathcal{F}$ to $\lim_i  \ZhatP[\mathcal{F}_i]$.
	In particular if $\mathcal{F}$ is represented by some $j \colon U \to  X$ in $\mathrm{Pro}(X_{\et}^{\mathrm{aff},{\fp}})$, it agrees with $j_\sharp^\blacksquare \ZhatP$.
	It is a consequence of \cite[Remark 7.1.7]{ultracategories} that all objects in $\rm{Pro}(X^{\rm{cons}}_{\et})$ are qcqs objects of the topos $\Sh(X_{\proet},\rm{Set})$.
	In particular we may write them as a finite colimit of representables $j \colon U \to X$ in $\mathrm{Pro}(X_{\et}^{\mathrm{aff},{\fp}})$.
	This shows that the canonical comparison map
	\[
		\ZhatP[\mathcal{F}]^\blacksquare \to \lim_i  \ZhatP[\mathcal{F}_i] \in \Sh(X,\ZhatP)^\blacksquare
	\]
	writes as a finite colimits of maps of the form $ (j_\sharp \ZhatP)^\blacksquare \to j_\sharp^\blacksquare \ZhatP $ and is thus an equivalence.
	This shows that the full subcategory $\Sh(X,\ZhatP)^\blacksquare \subseteq \Sh(X_{\proet},\ZhatP) $ is equivalently the reflective localization at the larger class of maps of the form \[\ZhatP[\fcal]\to \ZhatP[\fcal]^\blacksquare\simeq \lim_i\ZhatP[\fcal_i].\]

	The above considerations also show that we could have worked with a smaller class of maps instead.
	Since $w$-contractible schemes which are in  $\mathrm{Pro}(X_{\et}^{\aff,\fp})$ also generate the topos $\Sh(X_{\proet},\rm{Set})$ under colimits, the same arguments as above show that $\Sh(X,\ZhatP)^\blacksquare$ is also equivalent to the reflective localization at maps of the form
	\[
		  j_\sharp \ZhatP \to j_\sharp^\blacksquare \ZhatP 
	\]
	for $j \colon U \to X$ in $\mathrm{Pro}(X_{\et}^{\aff,\fp})$ and $w$-contractible.
\end{rem}

\begin{prop}
	\label{lem:pullback-preserves-solid}
	\label{lem:f*-commutes-with-iL}
	\label{prop:pullback_of_solid_remains_solid}

	Let $f\colon Y\to X$ be a map of qcqs schemes. Then \[f^*\colon \Sh(X_\proet,\ZhatP)\to\Sh(Y_\proet,\ZhatP)\] preserves solid sheaves. In fact, if $j\colon U\to X$ is affine pro-étale and if $j_Y\colon V=U\times_X Y\to Y$ is its base change along $f$, we have \[f^*(j_\sharp^\blacksquare\ZhatP)\simeq (j_Y)_\sharp^\blacksquare\ZhatP.\] As a consequence, if we let $L^\blacksquare$ be the composition \[\Sh(X_\proet,\ZhatP)\xrightarrow{(-)^\blacksquare} \Sh^{\blacksquare}(X_\proet,\ZhatP)\xhookrightarrow{i} \Sh(X_\proet,\ZhatP),\] then the natural transformation \[L^\blacksquare\circ f^*\to f^*\circ L^\blacksquare\] is an equivalence.
\end{prop}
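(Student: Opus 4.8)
The plan is to prove the displayed formula $f^*(j_\sharp^\blacksquare\ZhatP)\simeq(j_Y)_\sharp^\blacksquare\ZhatP$ first, then to deduce that $f^*$ preserves solidity from the structural description of $\Sh(X,\ZhatP)^{\blacksquare}$ in \Cref{thm:solid}, and finally to obtain the compatibility $L^\blacksquare\circ f^*\simeq f^*\circ L^\blacksquare$ as a formal consequence by comparing two colimit-preserving functors on generators. The only genuine work lies in the first step, and there the point is to commute $f^*$ past the cofiltered limit defining $j_\sharp^\blacksquare\ZhatP$ — something that fails for pullbacks in a general topos but holds here thanks to the special property of the pro-étale topos recorded in \Cref{lem:pullbackslimits}.

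For the formula, write $j=\lim_\alpha j_\alpha$ with $j_\alpha\colon U_\alpha\to X$ étale and $U_\alpha$ affine over $X$, so $j_\sharp^\blacksquare\ZhatP=\lim_\alpha(j_\alpha)_\sharp\ZhatP$ with the limit taken in $\Sh(X_\proet,\ZhatP)$, and set $V_\alpha\coloneqq U_\alpha\times_X Y$, $j_{Y,\alpha}\colon V_\alpha\to Y$, so that $V=\lim_\alpha V_\alpha$. For each $\alpha$ the sheaf $(j_\alpha)_\sharp\ZhatP$ is the free $\ZhatP$-module on the representable sheaf of $U_\alpha$; as a topos inverse image $f^*$ commutes with the formation of free $\ZhatP$-modules and sends the representable of $U_\alpha$ to that of $V_\alpha$ (this is étale base change), so $f^*((j_\alpha)_\sharp\ZhatP)\simeq(j_{Y,\alpha})_\sharp\ZhatP$. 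For the limit: each $(j_\alpha)_\sharp\ZhatP$ is $0$-truncated as a sheaf of anima, hence so is $\lim_\alpha(j_\alpha)_\sharp\ZhatP$, which may therefore be computed in $\Sh(X_\proet)$, where $f^*$ preserves all limits by \Cref{lem:pullbackslimits}; since the forgetful functors $\Sh(-,\ZhatP)\to\Sh(-,\mathrm{Ab})\to\Sh(-)$ detect limits and commute with $f^*$, the same holds in $\Sh(X_\proet,\ZhatP)$. (Alternatively, by \Cref{lem:ZhatconstrIslimTors} one has $j_\sharp^\blacksquare\ZhatP=\lim_{(\alpha,n)}(j_\alpha)_\sharp(\Z/n\Z)$, a cofiltered limit of $\mc{P}$-torsion constructible étale sheaves, which is even a derived limit by \Cref{limIsRlim}.) Combining, $f^*(j_\sharp^\blacksquare\ZhatP)\simeq\lim_\alpha(j_{Y,\alpha})_\sharp\ZhatP=(j_Y)_\sharp^\blacksquare\ZhatP$.

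Next, $f^*\colon\Sh(X_\proet,\ZhatP)\to\Sh(Y_\proet,\ZhatP)$ is exact (a topos inverse image) and preserves all colimits (a left adjoint). By the proof of \Cref{thm:solid}, every solid sheaf $F$ on $X$ fits into a presentation $\bigoplus_\beta(j_\beta)_\sharp^\blacksquare\ZhatP\to\bigoplus_\gamma(j_\gamma)_\sharp^\blacksquare\ZhatP\to F\to0$ with all $j_\beta,j_\gamma$ affine pro-étale over $X$. Applying $f^*$ and the formula above, $f^*F$ is the cokernel of a map between direct sums of sheaves $(j_{Y,\beta})_\sharp^\blacksquare\ZhatP$ and $(j_{Y,\gamma})_\sharp^\blacksquare\ZhatP$; these are solid by \Cref{defi:solid}, and $\Sh(X,\ZhatP)^{\blacksquare}$ is closed under colimits by \Cref{thm:solid}, so $f^*F$ is solid.

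Finally, for $G\in\Sh(X_\proet,\ZhatP)$ the unit $G\to G^\blacksquare$ pulls back to a map $f^*G\to f^*(G^\blacksquare)$ whose target is solid by the previous step; the universal property of $(-)^\blacksquare$ then yields the natural transformation $L^\blacksquare\circ f^*\Rightarrow f^*\circ L^\blacksquare$. Both functors preserve colimits ($(-)^\blacksquare$ is a left adjoint, $i$ preserves colimits since $\Sh(X,\ZhatP)^{\blacksquare}$ is closed under colimits, and $f^*$ preserves colimits), so it suffices to check the transformation on the sheaves $j_\sharp\ZhatP$ for $j\colon U\to X$ affine pro-étale, which generate $\Sh(X_\proet,\ZhatP)$ under colimits. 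There $L^\blacksquare f^*(j_\sharp\ZhatP)=((j_Y)_\sharp\ZhatP)^\blacksquare\simeq(j_Y)_\sharp^\blacksquare\ZhatP$ by \Cref{thm:solid}, while $f^*L^\blacksquare(j_\sharp\ZhatP)=f^*(j_\sharp^\blacksquare\ZhatP)\simeq(j_Y)_\sharp^\blacksquare\ZhatP$ by the formula, and these identifications are intertwined by the comparison map, so it is an equivalence. The single step that is not purely formal is the limit-commutation used in the proof of the formula.
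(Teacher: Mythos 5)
Your argument follows essentially the same route as the paper: prove the base-change formula for $j_\sharp^\blacksquare\ZhatP$ by commuting $f^*$ past the cofiltered limit (using \Cref{lem:pullbackslimits}), then deduce preservation of solidity from the colimit presentation in \Cref{thm:solid}, and finally check the exchange transformation on generators. The limit-commutation discussion (passing through $0$-truncated objects, or alternatively \Cref{lem:ZhatconstrIslimTors} and \Cref{limIsRlim}) is a legitimate way of justifying the step the paper takes more tersely.

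One small gap: you identify $\lim_\alpha (j_{Y,\alpha})_\sharp\ZhatP$ with $(j_Y)_\sharp^\blacksquare\ZhatP$ without comment, but the schemes $V_\alpha = U_\alpha\times_X Y$ need not be affine when $Y$ is merely qcqs, so $j_Y$ is not directly covered by \Cref{defi:solid-generator}. One has to either interpret $(j_Y)_\sharp^\blacksquare\ZhatP$ as the solidification of $(j_Y)_\sharp\ZhatP$ and then appeal to \Cref{rem:diff-generators} to see that this agrees with $\lim_\alpha (j_{Y,\alpha})_\sharp\ZhatP$ (this is what the paper does), or adapt the argument to the broader class of generators from that remark. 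Also, a minor citation slip: the solidity of $(j_{Y,\beta})_\sharp^\blacksquare\ZhatP$ is a consequence of \Cref{thm:solid}, not of \Cref{defi:solid} which merely defines the notion.
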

\begin{proof}
	The inclusion $\Sh^{\blacksquare}(Y_\proet,\ZhatP)\subset \Sh(Y_\proet,\ZhatP)$ preserves colimits by \Cref{thm:solid}, and the same is true over $X$; using that $\Sh^{\blacksquare}(X_\proet,\ZhatP)$ is generated under colimits by the $j_\sharp^\blacksquare\ZhatP$ for $j\colon U\to X$ affine pro-étale, to prove that $f^*$ preserves solid sheaves, it suffices to check that each $f^*(j_\sharp^\blacksquare\ZhatP)$ is solid. Thus the first part of the lemma follows form the second.

	By commutation of the functor $f^*$ with limits (\Cref{lem:pullbackslimits}), étale base change and \Cref{rem:diff-generators} we have $f^*(j_\sharp^\blacksquare\ZhatP)\simeq (j_Y)_\sharp^\blacksquare\ZhatP$ : we have proven the second part.

	Now, as we have just proven that $f^*$ preserves solid sheaves, there is a natural transformation $\mathrm{Ex}^{\blacksquare,*}\colon L^\blacksquare\circ f^*\to f^*\circ L^\blacksquare$. As noted above, the functor $i$ preserves colimits, thus the functor $L^\blacksquare$ also commutes with colimits. This implies that it suffices to check that $\mathrm{Ex}^{\blacksquare,*}$ is an equivalence when evaluated at the $j_\sharp \ZhatP$ for $j\colon U\to X$ affine pro-étale (\Cref{rec:proet-VS-pro-aff-et}). This then follows from the second part already proven, and the first point of \Cref{thm:solid}.
\end{proof}

\begin{cor}
	\label{prop:pullack-detects-solid}
Let $f\colon X\to Y$ be a surjective morphism of qcqs schemes, and let $F$ be a pro-étale sheaf of $\ZhatP$-modules on $X$ such that $f^*F$ is solid, then $F$ is solid.
\end{cor}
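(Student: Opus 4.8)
The plan is to reduce everything to the compatibility of solidification with pullback recorded in \Cref{prop:pullback_of_solid_remains_solid}, together with the conservativity of $f^*$. (Here I read the statement with $F\in\Sh(Y_\proet,\ZhatP)$, so that $f^*F$ makes sense.)

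First I would recall, from the remark following \Cref{defi:solid}, that the inclusion $\Sh(Y,\ZhatP)^{\blacksquare}\hookrightarrow\Sh(Y_\proet,\ZhatP)$ is a reflective localization with left adjoint $(-)^\blacksquare$. Consequently a pro-\'etale sheaf $F$ of $\ZhatP$-modules is solid if and only if the unit map $\eta_F\colon F\to F^\blacksquare$ is an isomorphism, and the same over $X$. So the whole point is to show that $\eta_F$ is an isomorphism under the hypothesis that $\eta_{f^*F}$ is.

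Next I would apply $f^*$ to $\eta_F$. By \Cref{prop:pullback_of_solid_remains_solid} the exchange transformation $L^{\blacksquare}\circ f^*\to f^*\circ L^{\blacksquare}$ is an equivalence; unwinding the definition of this exchange transformation (it is built out of the units of the two solidification adjunctions), it identifies $f^*(\eta_F)\colon f^*F\to f^*(F^{\blacksquare})$ with the unit $\eta_{f^*F}\colon f^*F\to (f^*F)^{\blacksquare}$. Since $f^*F$ is solid by hypothesis, $\eta_{f^*F}$ is an isomorphism, hence so is $f^*(\eta_F)$. Finally, as $f$ is surjective, the pullback functor on pro-\'etale sheaves of anima is conservative by \Cref{lem:conservativity-of-pullbacks}; since the forgetful functor $\Sh(Y_\proet,\ZhatP)\to\Sh(Y_\proet)$ is conservative and commutes with $f^*$, the functor $f^*\colon\Sh(Y_\proet,\ZhatP)\to\Sh(X_\proet,\ZhatP)$ is conservative as well. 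Therefore $\eta_F$ is an isomorphism and $F$ is solid.

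There is essentially no real obstacle here: the argument is formal once one has \Cref{prop:pullback_of_solid_remains_solid}. The only mildly delicate point, which is however already contained in that proposition, is the identification of $f^*(\eta_F)$ with $\eta_{f^*F}$, i.e.\ that the exchange transformation $\mathrm{Ex}^{\blacksquare,*}$ is an equivalence; everything else (membership in a reflective subcategory being detected by the unit, conservativity passing to module categories) is routine.
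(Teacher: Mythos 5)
Your proof is correct and takes essentially the same route as the paper: apply $f^*$ to the unit $F\to F^\blacksquare$, use the exchange equivalence of \Cref{prop:pullback_of_solid_remains_solid} to identify $f^*(\eta_F)$ with $\eta_{f^*F}$, and conclude by the conservativity of $f^*$ (\Cref{lem:conservativity-of-pullbacks}). Your write-up is if anything a touch more careful than the paper's one-line argument, since you spell out the identification of the exchange map and the passage of conservativity from anima-valued sheaves to $\ZhatP$-modules.
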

\begin{proof}
	By \Cref{lem:pullbackslimits} the map $F\to L^\blacksquare F$ is sent to an isomorphism by $f^*$, thus is an equivalence because $f^*$ is conservative by \Cref{lem:conservativity-of-pullbacks}.
\end{proof}

\begin{rem}\label{rem:quotients_of_Zhat}
	It is clear from the definition of solid sheaves that for a $\ZhatP$-module, being solid as a $\ZhatP$-module is the same as being solid as a $\widehat{\Z}$-module. We can in fact define solid $R$-modules for $R$ any quotient of $\widehat{\Z}$ in the same fashion as above. All of the above results hold in that setting (in particular, they hold for $R=\Z/n\Z$).
\end{rem}
The above remark motivates the following definition:
\begin{defi}
	\label{defi:SolidAnyCoeff}
	Let $\Lambda$ be a solid $\ZhatP$-algebra and let $X$ be a scheme. We define the derived category of solid schemes on $X$ with $\Lambda$ coefficients to be 
	\[\D(X,\Lambda)^{\blacksquare}:=\mathrm{Mod}_\Lambda(\D(X,\ZhatP)^{\blacksquare}).\] 
\end{defi}

\subsection{Derived categories}\label{sec:derivedsolid}
We keep $X$ and $\mathcal{P}$ from the above section. Recall that by 5. of \Cref{thm:solid}. a complex $K$ in $\D(X_\proet,\ZhatP)$ belongs to $\D(X,\ZhatP)^{\blacksquare}$ if and only if for each $j\colon U\to X$ in $\mathrm{Pro}(X_{\et}^{\mathrm{aff},\mathrm{fp}})$, the natural map
\[\underline{\Hom}_{\D(X_\proet,\ZhatP)}(j_\sharp^\blacksquare\ZhatP,K)\to j_*j^*K\] is an equivalence.
\begin{prop}
	\label{prop:derivedSolid}
We have the following:
\begin{enumerate}
    \item The inclusion
\[\D(X,\ZhatP)^{\blacksquare}\subseteq \D(X_\proet,\ZhatP)\]
admits a left adjoint $(-)^\blacksquare$.
    \item $\D(X,\ZhatP)^{\blacksquare}$ identifies with the derived category of solid sheaves of $\ZhatP$-modules and $(-)^\blacksquare$ with the derived functor of the solidification.
\item The kernel of $K\mapsto K^\blacksquare$ is a tensor ideal. In particular, there is a unique symmetric monoidal structure $\otimes^\blacksquare$ on $\D(X,\ZhatP)^{\blacksquare}$ making the functor $(-)^\blacksquare$ symmetric monoidal. It is the left derived functor of the induced symmetric monoidal structure the abelian category of solid sheaves. This symmetric monoidal structure commutes with small colimits in each variable and pullbacks.
\item The above symmetric monoidal structure is closed and its internal $\Hom$ are induced by the ones on $\D(X_\proet,\ZhatP)$.
\item The t-structure on $\D(X,\ZhatP)^{\blacksquare}$ is compatible with colimits, left complete and right complete. Hence the inclusion $\D(X,\ZhatP)^{\blacksquare}\subseteq \D(X_\proet,\ZhatP)$ is compatible with colimits.
\end{enumerate}
\end{prop}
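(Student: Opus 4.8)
The plan is to prove the five assertions in the order (5), (1), (2), (3), (4): the structural facts in (5) and the existence of the reflector in (1) are used to bootstrap the comparison in (2), which then feeds the monoidal statements (3) and (4).

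\emph{Points (5) and (1).} Because a complex is solid exactly when all of its cohomology sheaves are solid, and because $\Sh(X,\ZhatP)^{\blacksquare}$ is a weak Serre subcategory of $\Sh(X_\proet,\ZhatP)$ closed under all small limits and colimits (\Cref{thm:solid}), a long exact sequence argument shows that $\D(X,\ZhatP)^{\blacksquare}$ is closed under truncations and under all small limits and colimits inside $\D(X_\proet,\ZhatP)$; for products one uses that $\Sh(X_\proet,\ZhatP)$ has exact products (computed objectwise), so that $\HH^n$ commutes with products. Hence the standard $t$-structure restricts, the inclusion preserves limits and colimits, and the $t$-structure is compatible with colimits. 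Since $\D(X_\proet,\ZhatP)$ is right complete (being the derived category of a Grothendieck abelian category), so is $\D(X,\ZhatP)^{\blacksquare}$; for left completeness one notes that exact products make $\Sh(X_\proet,\ZhatP)$, hence $\Sh(X,\ZhatP)^{\blacksquare}$, satisfy (AB4*), so both derived categories are left complete and $\lim_n \tl{n}K$ of a solid complex $K$ is again solid as the tower is eventually constant in each degree. Finally, right completeness shows that $\D(X,\ZhatP)^{\blacksquare}$ is generated under colimits and shifts by its heart $\Sh(X,\ZhatP)^{\blacksquare}=\In(\mathrm{Pro}(\Sh_\mc{P}^{\mathrm{cons}}(X_\et)))$, which is itself generated under colimits by the small set $\mathrm{Pro}(\Sh_\mc{P}^{\mathrm{cons}}(X_\et))$; so $\D(X,\ZhatP)^{\blacksquare}$ is a presentable subcategory of $\D(X_\proet,\ZhatP)$ that is stable under limits, and the adjoint functor theorem yields the left adjoint $(-)^\blacksquare$, which is exact as a left adjoint between stable categories.

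\emph{Point (2).} By \Cref{thm:solid} the abelian category $\Sh(X,\ZhatP)^{\blacksquare}$ is Grothendieck. The crucial observation is that for $p\colon V\to X$ weakly étale with $V$ a $w$-contractible scheme in $\mathrm{Pro}(X_{\et}^{\aff,\fp})$ --- such $V$ generate by \Cref{rem:diff-generators} --- the object $p_\sharp^\blacksquare\ZhatP$ is a compact projective generator of $\Sh(X,\ZhatP)^{\blacksquare}$: it is compact by \Cref{thm:solid}; for $F$ solid one has $\Hom_{\Sh(X,\ZhatP)^{\blacksquare}}(p_\sharp^\blacksquare\ZhatP,F)=\Hom_{\Sh(X_\proet,\ZhatP)}(p_\sharp\ZhatP,F)=F(V)$ by the definition of solidity, and $F\mapsto F(V)$ is exact on $\Sh(X,\ZhatP)^{\blacksquare}$ since the inclusion into $\Sh(X_\proet,\ZhatP)$ is exact and covers of the $w$-contractible $V$ split; and it generates because $F=F^\blacksquare$ admits a surjection $\bigoplus p_\sharp^\blacksquare\ZhatP\simeq\big(\bigoplus p_\sharp\ZhatP\big)^\blacksquare\twoheadrightarrow F$. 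The exact inclusion of abelian categories induces a $t$-exact, colimit-preserving functor $\Phi\colon\D(\Sh(X,\ZhatP)^{\blacksquare})\to\D(X_\proet,\ZhatP)$ between compactly generated categories (the source generated by the $p_\sharp^\blacksquare\ZhatP$, which stay compact in the target by \Cref{thm:solid}), and full faithfulness is checked on the generators: both $\map_{\D(\Sh(X,\ZhatP)^{\blacksquare})}(p_\sharp^\blacksquare\ZhatP,q_\sharp^\blacksquare\ZhatP)$ and $\map_{\D(X_\proet,\ZhatP)}(p_\sharp^\blacksquare\ZhatP,q_\sharp^\blacksquare\ZhatP)$ compute $\Gamma(V,p^*q_\sharp^\blacksquare\ZhatP)$ in degree $0$ --- on the source by projectivity, on the target by \Cref{thm:solid}(5), \Cref{prop:pullback_of_solid_remains_solid}, and $w$-contractibility of $V$. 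Being $t$-exact, fully faithful and colimit-preserving, $\Phi$ has essential image the localizing subcategory generated by solid sheaves, which by right completeness equals $\D(X,\ZhatP)^{\blacksquare}$; comparing the two localizations of $\D(X_\proet,\ZhatP)$ then identifies $(-)^\blacksquare$ with the left derived functor of the abelian solidification.

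\emph{Points (3) and (4).} To see that $\ker((-)^\blacksquare)$ is a tensor ideal it suffices, as $\otimes$ and $(-)^\blacksquare$ preserve colimits, to check that $C^\blacksquare=0$ forces $(C\otimes p_\sharp\ZhatP)^\blacksquare=0$ for $p$ as above. But $C\otimes p_\sharp\ZhatP\simeq p_\sharp p^*C$ by the projection formula, $p^*C$ again lies in the kernel since $(p^*C)^\blacksquare\simeq p^*(C^\blacksquare)=0$ by the derived form of \Cref{prop:pullback_of_solid_remains_solid} (valid because $p^*$ is $t$-exact and carries the generating $\blacksquare$-equivalences to $\blacksquare$-equivalences), and $p_\sharp$ carries $\blacksquare$-equivalences to $\blacksquare$-equivalences because its right adjoint $p^*$ preserves solid objects. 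Existence and uniqueness of $\otimes^\blacksquare$ with $(-)^\blacksquare$ symmetric monoidal, the formula $K\otimes^\blacksquare L\simeq(K\otimes L)^\blacksquare$, compatibility with colimits and (again via \Cref{prop:pullback_of_solid_remains_solid}) with pullback, and the identification of $\otimes^\blacksquare$ with the left derived functor of the corresponding solidified tensor product on the abelian category of solid sheaves (using point (2)) then follow formally. For (4), the key point is that $\underline{\Hom}_{\D(X_\proet,\ZhatP)}(A,L)$ is solid for \emph{every} $A$ and every solid $L$: by \Cref{thm:solid}(5) one must see that $\underline{\Hom}(j_\sharp^\blacksquare\ZhatP,\underline{\Hom}(A,L))\to j_*j^*\underline{\Hom}(A,L)$ is an equivalence, and rewriting both sides by the tensor--internal-hom adjunction as $\underline{\Hom}(A,\underline{\Hom}(j_\sharp^\blacksquare\ZhatP,L))$ and $\underline{\Hom}(A,\underline{\Hom}(j_\sharp\ZhatP,L))$ reduces this to the equivalence $\underline{\Hom}(j_\sharp^\blacksquare\ZhatP,L)\xrightarrow{\sim}\underline{\Hom}(j_\sharp\ZhatP,L)\simeq j_*j^*L$, which holds because $L$ is solid. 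Hence for $K,L$ solid and any solid $M$ one has $\map_{\D(X,\ZhatP)^{\blacksquare}}(M,\underline{\Hom}(K,L))=\map(M\otimes K,L)=\map((M\otimes K)^\blacksquare,L)=\map_{\D(X,\ZhatP)^{\blacksquare}}(M\otimes^\blacksquare K,L)$, so $\underline{\Hom}_{\D(X_\proet,\ZhatP)}(K,L)$ is the internal hom of $\D(X,\ZhatP)^{\blacksquare}$.

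The step I expect to be the main obstacle is point (2), the identification of $\D(X,\ZhatP)^{\blacksquare}$ with the derived category of the abelian category of solid sheaves: the decisive inputs are the projectivity of the generators $p_\sharp^\blacksquare\ZhatP$ in $\Sh(X,\ZhatP)^{\blacksquare}$ and the computation of their derived endomorphisms through point (5) of \Cref{thm:solid}. Once (2) is available, points (3) and (4) are essentially formal manipulations with the reflective localization and the projection formula.
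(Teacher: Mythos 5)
Your proposal takes a genuinely different route from the paper's: the paper's proof is essentially a two-line citation of \cite[Lemma 5.9, Theorem 6.2]{condensedpdf} together with references to \cite{SAG} and \cite{zbMATH06479630} for the completeness statements, whereas you reconstruct the argument from scratch via compact projective generators of $\Sh(X,\ZhatP)^{\blacksquare}$, their derived $\Hom$-computations (which you correctly extract from \Cref{thm:solid}~5 and $w$-contractibility), and a direct comparison of derived categories. The overall strategy is sound and matches what the cited lemma establishes. Points (1), (5), (3) and (4) as you write them are essentially correct (the AB4* argument using repleteness for (5), the projection formula and localization-theory argument for (3), and the tensor–Hom adjunction argument for (4) all work).

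There is, however, a gap in your argument for point (2). After computing $\map(p_\sharp^\blacksquare\ZhatP,q_\sharp^\blacksquare\ZhatP)$ on both sides you conclude full faithfulness of $\Phi\colon\D(\Sh(X,\ZhatP)^{\blacksquare})\to\D(X_\proet,\ZhatP)$ because, you say, the generators ``stay compact in the target by \Cref{thm:solid}''. But \Cref{thm:solid} (points 2--4) only asserts that $p_\sharp^\blacksquare\ZhatP$ is compact in the \emph{abelian} category $\Sh(X_\proet,\ZhatP)$; it says nothing about compactness in the derived category $\D(X_\proet,\ZhatP)$, and I see no reason for the latter to hold (the objects $p_\sharp^\blacksquare\ZhatP$ are cofiltered limits of the actual compact generators $j_\sharp\ZhatP$, not perfect complexes built from them). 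Without compactness of $\Phi(p_\sharp^\blacksquare\ZhatP)$ one cannot reduce the comparison $\map_{\D(\Sh^\blacksquare)}(P,N)\to\map_{\D(X_\proet,\ZhatP)}(P,N)$ for arbitrary $N$ to the case of generators. The fix is within reach from what you already have: either observe that one only needs compactness of $p_\sharp^\blacksquare\ZhatP$ with respect to filtered colimits landing inside $\D(X,\ZhatP)^{\blacksquare}$, i.e.\ compactness in $\D(X,\ZhatP)^{\blacksquare}$, which does hold because $p_\sharp\ZhatP$ is compact in $\D(X_\proet,\ZhatP)$ and the reflector $(-)^\blacksquare$ preserves compact objects (its right adjoint preserves filtered colimits, by your point (5)); or, more in the spirit of deriving an exact inclusion of abelian categories, use the fact (contained in your generator computation) that $\Ext^{i}_{\D(X_\proet,\ZhatP)}(p_\sharp^\blacksquare\ZhatP,N)=0$ for $i>0$ and $N$ solid in the heart, so that a projective resolution of $M\in\Sh(X,\ZhatP)^{\blacksquare}$ by the $p_\sharp^\blacksquare\ZhatP$ is acyclic for $\RHom_{\D(X_\proet,\ZhatP)}(-,N)$, and then pass to unbounded complexes via the Postnikov tower. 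As written, the appeal to \Cref{thm:solid} for derived-level compactness in $\D(X_\proet,\ZhatP)$ is not justified.
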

\begin{proof}
	By  \Cref{thm:solid}~5 and \Cref{rem:diff-generators}, we may apply \cite[Lemma 5.9]{condensedpdf} to our situation (with compact generators of $\Sh(X_\proet,\ZhatP)$ the $\ZhatP[W]$ for $W\to X$ a pro-étale affine map with $W$ $w$-contractible) and this proves the first two points. 
	The third point is proven following the same proof as \cite[Theorem 6.2]{condensedpdf}. The fourth point amounts to proving that $\underline{\Hom}$ sends pairs of $\mathrm{FS}$-solid objects to $\mathrm{FS}$-solid objects; this is true by adjunction.
	The compatibility with colimits and right completeness of the last point follow from the second point and the general theory of derived categories from \cite[Remark C.5.4.11]{SAG}, the left completeness follows from the fact that the inclusion into $\D(X_\proet,\ZhatP)$ is t-exact, compatible with limits, and the fact that the latter has a left-complete t-structure by \cite[Proposition 3.3.3, Proposition 4.2.8 and Proposition 3.2.3]{zbMATH06479630}.

\end{proof}

For the rest of this section, we fix a solid $\ZhatP$-algebra $\Lambda$.
\begin{cor}
	\label{prop:derivedSolid_Lambda} 
We have the following:
\begin{enumerate}
        \item The inclusion
\[\D(X,\Lambda)^{\blacksquare}\subseteq \D(X_\proet,\Lambda)\]
admits a left adjoint $(-)^\blacksquare$ and commutes with colimits.
\item The formation of $K\mapsto K^\blacksquare$, for $K$ in $\D(X_\proet,\Lambda)$ commutes
with any base change of qcqs schemes.
\item The kernel of $K\mapsto K^\blacksquare$ is a tensor ideal. In particular, there is a unique symmetric monoidal structure $\otimes^\blacksquare$ on $\D(X,\Lambda)^{\blacksquare}$ making the functor $(-)^\blacksquare$ symmetric monoidal. This symmetric monoidal structure commutes with small colimits in each variable and pullbacks, is closed and its internal $\Hom$ are induced by the ones on $\D(X_\proet,\Lambda)$.
\end{enumerate}
\end{cor}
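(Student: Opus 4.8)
The plan is to reduce every assertion to its $\ZhatP$-coefficient counterpart proved in \Cref{prop:derivedSolid}, using crucially that $\Lambda$ is a \emph{solid} commutative algebra object of $\D(X_\proet,\ZhatP)$, so that it lies in the reflective subcategory $\D(X,\ZhatP)^{\blacksquare}$ and the unit $\Lambda\to\Lambda^\blacksquare$ is an equivalence. Throughout I identify $\D(X_\proet,\Lambda)=\Mod_\Lambda(\D(X_\proet,\ZhatP))$ and $\D(X,\Lambda)^{\blacksquare}=\Mod_\Lambda(\D(X,\ZhatP)^{\blacksquare})$, the forgetful functors to $\ZhatP$-modules being conservative and compatible with the inclusion $\D(X,\ZhatP)^{\blacksquare}\hookrightarrow\D(X_\proet,\ZhatP)$.

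For point (1), the idea is that a symmetric monoidal reflective localization descends to modules over a local algebra. By \Cref{prop:derivedSolid} the functor $(-)^\blacksquare\colon\D(X_\proet,\ZhatP)\to\D(X,\ZhatP)^{\blacksquare}$ is symmetric monoidal, hence it induces a symmetric monoidal functor
\[
  \D(X_\proet,\Lambda)=\Mod_\Lambda(\D(X_\proet,\ZhatP))\longrightarrow\Mod_{\Lambda^\blacksquare}(\D(X,\ZhatP)^{\blacksquare})=\D(X,\Lambda)^{\blacksquare}.
\]
I claim this is left adjoint to the functor induced by the inclusion. Since the forgetful functors are conservative, a $\Lambda$-module is solid exactly when its underlying complex is; and since $(-)^\blacksquare$ is strong monoidal, the solidification of a $\Lambda$-module acquires a canonical $\Lambda$-module structure. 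To check the universal property it suffices, using that every $\Lambda$-module is a colimit of free modules $\Lambda\otimes_{\ZhatP}K$ and that all functors in play preserve colimits, to treat free modules; for these $(\Lambda\otimes_{\ZhatP}K)^\blacksquare\simeq\Lambda\otimes^\blacksquare K^\blacksquare$ reduces the statement to the adjunction of \Cref{prop:derivedSolid}(1) for $\D(X,\ZhatP)^{\blacksquare}$. Commutation with colimits then follows because the forgetful functors create colimits and the inclusion $\D(X,\ZhatP)^{\blacksquare}\hookrightarrow\D(X_\proet,\ZhatP)$ preserves them by the last point of \Cref{prop:derivedSolid}.

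For point (2), let $f\colon Y\to X$ be a morphism of qcqs schemes; since $f^*\ZhatP\simeq\ZhatP$ and $f^*\Lambda\simeq\Lambda$, the pullback on $\ZhatP$-sheaves induces $f^*\colon\D(X_\proet,\Lambda)\to\D(Y_\proet,\Lambda)$ compatibly with the forgetful functors, and all functors involved are symmetric monoidal so their comparison natural transformations are $\Lambda$-linear. Hence it is enough to establish $f^*(K^\blacksquare)\simeq(f^*K)^\blacksquare$ on underlying $\ZhatP$-complexes, where it is exactly \Cref{prop:pullback_of_solid_remains_solid}. For point (3), the kernel of $(-)^\blacksquare\colon\D(X_\proet,\Lambda)\to\D(X,\Lambda)^{\blacksquare}$ is a tensor ideal because this functor is strong symmetric monoidal by point (1); this produces the unique symmetric monoidal structure $\otimes^\blacksquare$ making $(-)^\blacksquare$ symmetric monoidal, namely the relative tensor product over $\Lambda$ computed inside $(\D(X,\ZhatP)^{\blacksquare},\otimes^\blacksquare)$. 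Colimit-compatibility in each variable follows from \Cref{prop:derivedSolid}(3) since the relative tensor over an algebra is computed by a bar construction; pullback-compatibility follows from point (2), or directly from $f^*$ being symmetric monoidal. Finally, for the closed structure one writes the internal hom of $\Lambda$-modules as a cobar limit $\lim_{\Delta}\underline{\Hom}_{\ZhatP}(\Lambda^{\otimes\bullet}\otimes M,N)$; each term is solid because $\underline{\Hom}_{\ZhatP}(-,N)$ takes values in solid objects whenever $N$ is solid (a formal consequence of the solidification being monoidal, \Cref{prop:derivedSolid}(3)--(4)) and solid objects are stable under limits (\Cref{thm:solid}), so this limit already lies in $\D(X,\ZhatP)^{\blacksquare}$ and agrees with the internal hom of $\D(X_\proet,\Lambda)$.

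I expect the only genuine point requiring care to be the descent of the localization to $\Lambda$-modules in point (1): one must be sure that the solidification of a $\Lambda$-module really carries a $\Lambda$-module structure and satisfies the correct universal property, and it is precisely here that the hypothesis that $\Lambda$ is solid (so $\Lambda^\blacksquare\simeq\Lambda$) and the strong monoidality of $(-)^\blacksquare$ from \Cref{prop:derivedSolid} are used. Once this is in place, points (2) and (3) are formal consequences of the $\ZhatP$-coefficient statements.
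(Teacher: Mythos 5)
Your argument is correct in substance, but it takes a genuinely different route from the paper. Where the paper derives the corollary in one stroke from the tensor–product formula $\D(X,\Lambda)^{\blacksquare}\simeq\D(X,\ZhatP)^{\blacksquare}\otimes_{\D(*_\proet,\ZhatP)}\D(*_\proet,\Lambda)$ (via Lurie's HA 4.8.4.6) together with Ramzi's localization criterion, you instead descend the Bousfield localization $(-)^{\blacksquare}$ to $\Lambda$-modules directly, in the spirit of HA~2.2.1.9: since $\Lambda$ is solid (so $\Lambda^\blacksquare\simeq\Lambda$) and $(-)^\blacksquare$ is strong monoidal, solidification carries $\Lambda$-modules to $\Lambda$-modules, the adjunction is checked on free modules $\Lambda\otimes_{\ZhatP}K$, and the closed structure is verified by a cobar resolution. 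Both routes are valid; the paper's is shorter because the external results absorb all the bookkeeping, while yours makes explicit exactly which properties of the $\ZhatP$-localization are used at each step (strong monoidality, stability under limits, base change). The main advantage of your version is that it works even if one does not yet know the tensor-product description of $\D(X,\Lambda)^\blacksquare$; the price is that the cobar step for the internal hom and the free-module reduction require a bit more coherence bookkeeping than you spell out.

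One small imprecision worth flagging: in point (2), after reducing to $\ZhatP$-coefficients you invoke \Cref{prop:pullback_of_solid_remains_solid}, but that statement is about the \emph{abelian} solidification $L^\blacksquare$ on $\Sh(X_\proet,\ZhatP)$, not the derived one on $\D(X_\proet,\ZhatP)$. The derived commutation does hold, but the cleanest way to see it is to observe (using \Cref{thm:solid}~(5), \Cref{rem:diff-generators} and \Cref{prop:derivedSolid}~(1)--(2)) that $\D(X,\ZhatP)^{\blacksquare}$ is the reflective localization of $\D(X_\proet,\ZhatP)$ at the maps $j_\sharp\ZhatP\to j_\sharp^\blacksquare\ZhatP$, and then use the second part of \Cref{prop:pullback_of_solid_remains_solid} to see that $f^*$ carries these localizing maps to the corresponding ones over $Y$, hence preserves local equivalences and local objects. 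With this correction the reduction is complete, and the $\Lambda$-linear case follows by conservativity of the forgetful functor as you say.
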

\begin{proof}
		We begin by noting that by \cite[Theorem 4.8.4.6]{lurieHigherAlgebra2022} we have that \begin{equation}\label{eq:solidLabdaTensor}\D(X,\Lambda)^{\blacksquare} \simeq \D(X,\ZhatP)^{\blacksquare}\otimes_{\D(*_\proet,\ZhatP)}\D(*_\proet,\Lambda).\end{equation} The same is true for $\D(X_\proet,\Lambda)$. Thus we may use \cite[Corollary 1.46]{RamziDualizable} to conclude that the functor $$(-)^\blacksquare\otimes \Lambda\colon \D(X_\proet,\Lambda)\to \D(X,\Lambda)^{\blacksquare}$$ is a localization functor and that there is a canonical inclusion\[\D(X,\Lambda)^{\blacksquare}\subseteq \D(X_\proet,\Lambda).\] The commutation of latter inclusion with colimits follows from the fact in the commutative square of right adjoints \[\begin{tikzcd}
	{\D(X,\Lambda)^{\blacksquare}} & {\D(X_\proet,\Lambda)} \\
	{\D(X,\ZhatP)^{\blacksquare}} & {\D(X_\proet,\ZhatP)}
	\arrow[from=1-1, to=1-2]
	\arrow[from=1-1, to=2-1]
	\arrow[from=1-2, to=2-2]
	\arrow[from=2-1, to=2-2]
\end{tikzcd}\] the vertical functors are conservative and compatible with colimits by \cite[Corollary 4.2.3.5 and Corollary 4.2.3.2]{lurieHigherAlgebra2022}, and the bottom horizontal map is compatible with colimits by \Cref{prop:derivedSolid}. The second and third points are direct applications of formula \ref{eq:solidLabdaTensor}.

\end{proof}
The following lemma is well-known and can be assembled from pieces in Martini-Wolf' work on internal category theory.
 \begin{lem}
	\label{lem:derived-cat-of-ring-topoi}
	Let $(\mathfrak{X},\mathscr{O})$ be a ringed $\infty$-topos. 
		 There exists a functor \[\mathrm{Mod}_{\mathscr{O}}\colon \mathfrak{X}^\op\to \mathrm{CAlg}(\PrL)\] that sends $U\in\mathfrak{X}^\op$ to $\mathrm{Mod}_{\mathscr{O}_{\mid U}}(\Sh(\mathfrak{X}_{/U},\Sp))$ and a map in $\mathfrak{X}^\op$ to the restriction functor.
\end{lem}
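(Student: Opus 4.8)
The plan is to produce $\Mod_{\mathscr{O}}$ as a composite of three functorial operations, each of which is either elementary or recorded in \cite{MYoneda,MWColimits,MWPresentable} or \cite{lurieHigherAlgebra2022}; this is why the statement can be called well-known. Throughout I regard $\mathscr{O}$ as a commutative algebra object of $\Sh(\mathfrak{X},\Sp)$ (applying the Eilenberg--MacLane embedding first if $\mathscr{O}$ is only a sheaf of ordinary rings), and I read $\Sh(\mathfrak{X}_{/U},\Sp)$ as the stabilisation $\Sp(\mathfrak{X}_{/U})\simeq\mathfrak{X}_{/U}\otimes\Sp$.

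\emph{Slices.} Since $\mathfrak{X}$ is an $\infty$-topos, each slice $\mathfrak{X}_{/U}$ is an $\infty$-topos, colimits in $\mathfrak{X}$ are universal, and for $f\colon V\to U$ the base-change functor $f^{*}\colon\mathfrak{X}_{/U}\to\mathfrak{X}_{/V}$ is colimit-preserving; endowing each $\mathfrak{X}_{/U}$ with its cartesian (presentably symmetric monoidal) structure makes the $f^{*}$ symmetric monoidal. This gives a functor $\mathfrak{X}^{\op}\to\CAlg(\PrL)$, $U\mapsto\mathfrak{X}_{/U}$, $f\mapsto f^{*}$ — a standard fact, see \cite{MYoneda,MWColimits,MWPresentable} or \cite{SAG}. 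I would then post-compose with stabilisation: $\Sp$ is an idempotent commutative algebra object of $(\PrL,\otimes)$ by \cite{lurieHigherAlgebra2022}, so $\ccal\mapsto\ccal\otimes\Sp$ underlies a symmetric monoidal localisation $\CAlg(\PrL)\to\CAlg(\PrL)$ with value $\Sp(\mathfrak{Y})=\Sh(\mathfrak{Y},\Sp)$ on a presentable $\infty$-topos $\mathfrak{Y}$. The result is a functor $\mathscr{S}\colon\mathfrak{X}^{\op}\to\CAlg(\PrL)$, $U\mapsto\Sh(\mathfrak{X}_{/U},\Sp)$, whose transition maps are the symmetric monoidal pullbacks.

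\emph{Modules.} Finally I would pass to $\mathscr{O}$-modules via base change. By \cite[Theorem~4.8.4.6]{lurieHigherAlgebra2022}, for a colimit-preserving symmetric monoidal functor $\ccal\to\dcal$ and $A\in\CAlg(\ccal)$ there is a natural equivalence $\Mod_{A}(\dcal)\simeq\dcal\otimes_{\ccal}\Mod_{A}(\ccal)$; applied to $A=\mathscr{O}$ and the pullback $\Sh(\mathfrak{X},\Sp)\to\Sh(\mathfrak{X}_{/U},\Sp)$ this reads $\Mod_{\mathscr{O}_{\mid U}}(\Sh(\mathfrak{X}_{/U},\Sp))\simeq\Sh(\mathfrak{X}_{/U},\Sp)\otimes_{\Sh(\mathfrak{X},\Sp)}\Mod_{\mathscr{O}}(\Sh(\mathfrak{X},\Sp))$, with $\mathscr{O}_{\mid U}$ the image of $\mathscr{O}$ under the pullback. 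Since the terminal object $1_{\mathfrak{X}}$ of $\mathfrak{X}$ is initial in $\mathfrak{X}^{\op}$ and $\mathfrak{X}_{/1_{\mathfrak{X}}}=\mathfrak{X}$, the identity of $\Sh(\mathfrak{X},\Sp)=\mathscr{S}(1_{\mathfrak{X}})$ determines a natural transformation from the constant functor, hence refines $\mathscr{S}$ to a functor $\mathfrak{X}^{\op}\to\CAlg(\PrL)_{\Sh(\mathfrak{X},\Sp)/}$. Post-composing this with base change of algebras along $\Sh(\mathfrak{X},\Sp)\to\Mod_{\mathscr{O}}(\Sh(\mathfrak{X},\Sp))$, i.e. with $-\otimes_{\Sh(\mathfrak{X},\Sp)}\Mod_{\mathscr{O}}(\Sh(\mathfrak{X},\Sp))$, yields the desired functor $\Mod_{\mathscr{O}}\colon\mathfrak{X}^{\op}\to\CAlg(\PrL)$ with $U\mapsto\Mod_{\mathscr{O}_{\mid U}}(\Sh(\mathfrak{X}_{/U},\Sp))$ and restriction functors as transition maps.

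The steps are individually routine, so I do not expect a serious obstacle; the only genuine points of friction are bookkeeping — locating the cited statements about $\Sp$ being idempotent, about $\Mod_{A}(-)$ base-changing, and about the monoidal functoriality of $U\mapsto\mathfrak{X}_{/U}$ — together with the set-theoretic caveat that $\mathfrak{X}^{\op}$ is large, which is dealt with exactly as in \Cref{rem:set_theory_continued}.
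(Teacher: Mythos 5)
Your proposal is correct, and it takes a genuinely different route from the paper's.

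The paper constructs the functor by staying inside the Martini--Wolf internal category theory: it begins with the initial presentably symmetric monoidal $\mathfrak{X}$-category $\Omega^\otimes$ (whose unstraightening is $U\mapsto \mathfrak{X}_{/U}$), passes to the internal stabilisation $\mathrm{Sp}(\Omega)$, regards $\mathscr{O}$ as a commutative ring in this $\mathfrak{X}$-spectrum object, takes internal modules $\mathrm{Mod}_{\mathscr{O}}(\mathrm{Sp}(\Omega))$ — which \cite[Proposition 2.5.3.8]{MWPresentable} guarantees is a presentably symmetric monoidal $\mathfrak{X}$-category — and then externalises to obtain the functor to $\CAlg(\PrL)$. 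In other words, all the coherence (stabilisation, modules) is done once and for all internally, and the statement drops out by unwinding.

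You instead work entirely externally in $\CAlg(\PrL)$: start with the codomain-fibration functor $U\mapsto\mathfrak{X}_{/U}$, post-compose with $-\otimes\Sp$ using the idempotence of $\Sp$ in $(\PrL,\otimes)$ to get $U\mapsto\Sh(\mathfrak{X}_{/U},\Sp)$, then use \cite[Theorem~4.8.4.6]{lurieHigherAlgebra2022} to identify $\Mod_{\mathscr{O}_{\mid U}}(\Sh(\mathfrak{X}_{/U},\Sp))$ as a base change $\Sh(\mathfrak{X}_{/U},\Sp)\otimes_{\Sh(\mathfrak{X},\Sp)}\Mod_{\mathscr{O}}(\Sh(\mathfrak{X},\Sp))$, and finally implement this base change functorially via the factorisation of $\mathscr{S}$ through $\CAlg(\PrL)_{\Sh(\mathfrak{X},\Sp)/}$ (using that $1_{\mathfrak{X}}$ is initial in $\mathfrak{X}^\op$). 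Both routes are sound; yours has the advantage of needing only Lurie's HA machinery and nothing from \cite{MWPresentable}, while the paper's fits naturally with the condensed/internal framework used throughout and packages the coherence through a single internal construction. One small remark: the paper additionally asserts that the resulting functor preserves limits (needed later for sheaf conditions) via \cite[Remark~2.1.0.5]{SAG}; your construction supports the same argument — the forgetful functor to $\PrL$ sends each building block to a limit-preserving diagram — but since the lemma as stated does not require it, its absence is not a gap.
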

\begin{proof}
	First, as a consequence of \cite[Proposition 2.6.3.7]{MWPresentable}, the functor 
	$U\mapsto \mathfrak{X}_{/U}$ can be endowed with the structure of the initial presentable category internal to the topos $\mathfrak{X}$, that it denoted by $\Omega^\otimes$. The objet $\mathscr{O}\in\mathrm{CAlg}(\mathrm{Shv}(\mathfrak{X},\Sp))$ defines (see the paragraph just after \cite[Definition 2.5.2.1]{MWPresentable}) a commutative $\mathfrak{X}$-spectrum (\cite[Definition 2.5.3.13]{MWPresentable}, this is a commutative ring objet in the $\mathfrak{X}$-category $\mathrm{Sp}(\Omega)$). In particular, we can take modules internal to $\mathfrak{X}$ and by \cite[Proposition 2.5.3.8]{MWPresentable}, the $\mathfrak{X}$-category 
	$\mathrm{Mod}_\oscr(\mathrm{Sp}(\Omega))$ is a presentably symmetric monoidal $\mathfrak{X}$-category. Unravelling the definitions, this provides a functor 
	\[\mathfrak{X}^\op \to\mathrm{CAlg}(\PrL)\] sending $U$ to  $\mathrm{Mod}_{\mathscr{O}_{\mid U}}(\Sh(\mathfrak{X}_{/U},\Sp))$, which is what we wanted.
	The preservation of limits can be checked after composition with the forgetful functor to $\PrL$ where we can use \cite[Remark 2.1.0.5]{SAG}.
\end{proof}
\begin{lem}
	\label{lem:some-operations-on-Dsolid}
	Let $f\colon Y\to X$ be a map of qcqs schemes.

	\begin{enumerate}
	\item The restriction of the functor $f^*\colon \D(X_\proet,\Lambda)\to \D(Y_\proet,\Lambda)$ to $\D(X,\Lambda)^{\blacksquare}$ lands in $\D(X,\Lambda)^{\blacksquare}$. When $f$ varies, these functors assemble to give a functor
	\[\D(-,\Lambda)^{\blacksquare}\colon \Sch^\op\to \CAlg(\PrL).\]
	\item The restriction of the functor $f_*\colon \D(Y_\proet,\Lambda)\to \D(X_\proet,\Lambda)$ to $\D(Y,\Lambda)^{\blacksquare}$ lands in $\D(Y,\Lambda)^{\blacksquare}$. In particular, this shows that right adjoint $f_*^\blacksquare$ to the functor $f^*$ of 1. is simply $f_*$.
	\item If $f$ is weakly étale, then there exists a functor $f_\sharp^\blacksquare\colon \D(Y,\Lambda)^{\blacksquare}\to \D(X,\Lambda)^{\blacksquare}$, left adjoint to $f^*$.
	\end{enumerate}
	Moreover, if $F$ is $f^*$ or $f_\sharp$, and $F^\blacksquare$ its solid counterpart, the natural transformation $(-)^\blacksquare\circ F\to F^\blacksquare\circ (-)^\blacksquare$ is an equivalence.
\end{lem}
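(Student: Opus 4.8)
The plan is to reduce all four assertions to a single computation — that $f_*$ preserves solidity — and to get everything else by formal adjunction manipulations, using \Cref{thm:solid}, \Cref{prop:pullback_of_solid_remains_solid} and \Cref{prop:derivedSolid_Lambda}. First a uniform reduction: since $\Lambda$ is a condensed ring pulled back from the point, the functors $f^*$, $f_*$ and (for $f$ weakly étale) $f_\sharp$ on $\D(-_\proet,\Lambda)$ all commute with restriction of scalars along $\ZhatP\to\Lambda$; as $\D(X,\Lambda)^\blacksquare=\Mod_\Lambda(\D(X,\ZhatP)^\blacksquare)$ is the full subcategory of $\D(X_\proet,\Lambda)$ spanned by the objects whose underlying $\ZhatP$-complex is solid, it suffices to treat 1, 2, 3 for $\Lambda=\ZhatP$. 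For 1: the functor $f^*$ on abelian pro-étale sheaves of $\ZhatP$-modules is exact (it is the inverse image of a morphism of topoi), hence $t$-exact on derived categories, so $H^n(f^*K)\cong f^*H^nK$; since $f^*$ preserves solid abelian sheaves by \Cref{prop:pullback_of_solid_remains_solid}, it preserves complexes with solid cohomology. For the functoriality, recall $\D(-_\proet,\Lambda)\colon\Sch^\op\to\CAlg(\PrL)$ (with transition maps the pullbacks; this follows e.g. from \Cref{lem:derived-cat-of-ring-topoi} applied to a big pro-étale topos with structure sheaf $\Lambda$); by \Cref{prop:derivedSolid_Lambda} the solidification is compatible with every pullback, so the reflective localizations $(-)^\blacksquare$ are natural in $X$ and $\D(-,\Lambda)^\blacksquare$ inherits the structure of a functor $\Sch^\op\to\CAlg(\PrL)$ whose transition maps $f^{*,\blacksquare}$ are the (symmetric monoidal, colimit-preserving) restrictions of the $f^*$.

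For 2, the heart of the matter: let $K\in\D(Y,\ZhatP)^\blacksquare$. By \Cref{thm:solid}(5) it suffices to show that for every $j\colon U=\lim_\alpha U_\alpha\to X$ in $\mathrm{Pro}(X_\et^{\aff,\fp})$ the map $\underline{\Hom}_{\D(X_\proet,\ZhatP)}(j_\sharp^\blacksquare\ZhatP,f_*K)\to j_*j^*f_*K$ is an equivalence. Form the cartesian square with $j_Y\colon V=U\times_X Y\to Y$ and $g\colon V\to U$ the base change of $f$, so that $j_Y$ is qcqs pro-étale and $f\circ j_Y=j\circ g$. Using the projection-type formula $\underline{\Hom}_X(A,f_*B)\simeq f_*\underline{\Hom}_Y(f^*A,B)$ and then \Cref{prop:pullback_of_solid_remains_solid}, we get
\[
	\underline{\Hom}_{\D(X_\proet,\ZhatP)}(j_\sharp^\blacksquare\ZhatP,f_*K)\simeq f_*\,\underline{\Hom}_{\D(Y_\proet,\ZhatP)}(f^*j_\sharp^\blacksquare\ZhatP,K)\simeq f_*\,\underline{\Hom}_{\D(Y_\proet,\ZhatP)}((j_Y)_\sharp^\blacksquare\ZhatP,K).
\]
As $K$ is solid, the extension of \Cref{thm:solid}(5) to arbitrary qcqs pro-étale test maps (a consequence of \Cref{rem:diff-generators}, writing such a map as a limit of finite colimits of affine finitely presented étale ones) identifies the last term with $f_*(j_Y)_*(j_Y)^*K\simeq j_*g_*(j_Y)^*K$; and pro-étale localization commutes with pushforward, so $g_*(j_Y)^*K\simeq j^*f_*K$. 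Tracing through, this is exactly the comparison map of \Cref{thm:solid}(5), so $f_*K$ is solid. Hence $f_*$ restricts to $\D(Y,\Lambda)^\blacksquare\to\D(X,\Lambda)^\blacksquare$, and for $M\in\D(X,\Lambda)^\blacksquare$, $N\in\D(Y,\Lambda)^\blacksquare$ the chain
\[
	\Hom_{\D(Y,\Lambda)^\blacksquare}(f^{*,\blacksquare}M,N)\simeq\Hom_{\D(Y_\proet,\Lambda)}(f^*M,N)\simeq\Hom_{\D(X_\proet,\Lambda)}(M,f_*N)\simeq\Hom_{\D(X,\Lambda)^\blacksquare}(M,f_*N)
\]
(using that $f^*M$ and $f_*N$ are solid, and full faithfulness of the inclusions) shows the right adjoint $f_*^\blacksquare$ of $f^{*,\blacksquare}$ is $f_*$.

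For 3: when $f$ is weakly étale it is an object of $X_\proet$, so $f^*$ admits a left adjoint $f_\sharp$ on $\D(-_\proet,\Lambda)$; set $f_\sharp^\blacksquare:=(-)^\blacksquare\circ f_\sharp\circ\iota_Y$, with $\iota_Y\colon\D(Y,\Lambda)^\blacksquare\hookrightarrow\D(Y_\proet,\Lambda)$ the inclusion. For $M\in\D(Y,\Lambda)^\blacksquare$ and $N\in\D(X,\Lambda)^\blacksquare$,
\[
	\Hom_{\D(X,\Lambda)^\blacksquare}(f_\sharp^\blacksquare M,N)\simeq\Hom_{\D(X_\proet,\Lambda)}(f_\sharp\iota_Y M,\iota_X N)\simeq\Hom_{\D(Y_\proet,\Lambda)}(\iota_Y M,f^*\iota_X N)\simeq\Hom_{\D(Y,\Lambda)^\blacksquare}(M,f^{*,\blacksquare}N),
\]
where the first step uses $(-)^\blacksquare\dashv\iota_X$, the second $f_\sharp\dashv f^*$, and the last that $f^*\iota_X N$ is solid (by 1) together with full faithfulness of $\iota_Y$; thus $f_\sharp^\blacksquare\dashv f^{*,\blacksquare}$. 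Finally, for the ``moreover'' statement, let $F$ be $f^*$ or $f_\sharp$ and $G$ the corresponding $f_*$ or $f^*$. If $C$ is $\blacksquare$-acyclic, i.e.\ $\map(C,S)\simeq 0$ for every solid $S$, then for solid $S'$ on the target $\map(FC,S')\simeq\map(C,GS')\simeq 0$, since $GS'$ is solid (by 2, resp.\ 1); hence $F$ preserves $\blacksquare$-acyclics. Applying this to the cofiber of the solidification unit $K\to K^\blacksquare$ shows that $(-)^\blacksquare\circ F$ sends it to an equivalence, which is precisely the identity $(-)^\blacksquare\circ F\simeq F^\blacksquare\circ(-)^\blacksquare$.

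The only genuinely non-formal ingredient is the computation in point 2. Within it, the point to be careful about is that the base-changed test map $j_Y$ need not be a limit of \emph{affine} schemes, so one uses the extension of \Cref{thm:solid}(5) to qcqs pro-étale test objects afforded by \Cref{rem:diff-generators} (alternatively: reduce to $X$ and $Y$ affine via \Cref{prop:pullack-detects-solid}, étale base change for $f_*$, and stability of solid complexes under the relevant \v{C}ech limits). The remaining inputs — the internal-hom adjunction $\underline{\Hom}_X(A,f_*B)\simeq f_*\underline{\Hom}_Y(f^*A,B)$ and pro-étale base change $j^*f_*\simeq g_*(j_Y)^*$ for $j$ pro-étale — are standard topos theory.
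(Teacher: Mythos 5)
Your argument for the key claim — that $f_*$ preserves solid complexes — takes a genuinely different and arguably cleaner route than the paper's. The paper proceeds by dévissage: use left-completeness to reduce to $K$ bounded below, a spectral sequence to reduce to $K$ in degree $0$, commutation of $f_*$ with uniformly bounded-below filtered colimits to reduce to $K$ compact in $\Sh(Y,\ZhatP)^\blacksquare$, then cofiltered limits to reduce to $K$ a torsion constructible étale sheaf, and finally invokes \cite[Lemma 5.4.3]{zbMATH06479630} together with \Cref{torsion are solid}. You instead verify the criterion of \Cref{thm:solid}(5) directly for $f_*K$, via
\[
\underline{\Hom}_X(j_\sharp^\blacksquare\ZhatP,f_*K)\simeq f_*\underline{\Hom}_Y\bigl((j_Y)_\sharp^\blacksquare\ZhatP,K\bigr)\simeq f_*(j_Y)_*(j_Y)^*K\simeq j_*j^*f_*K,
\]
using the $\underline{\Hom}$/$f_*$ adjunction, the identification $f^*j_\sharp^\blacksquare\ZhatP\simeq(j_Y)_\sharp^\blacksquare\ZhatP$ from \Cref{prop:pullback_of_solid_remains_solid}, solidity of $K$, and pro-étale base change. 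This never leaves the derived category and reuses \Cref{thm:solid}(5) as a black box rather than re-running the reduction to constructible sheaves. The one subtlety you flag — that $j_Y$ need not lie in $\mathrm{Pro}(Y_\et^{\aff,\fp})$ when $Y$ is not affine — is genuine but, as you indicate, resolvable: either extend \Cref{thm:solid}(5) to qcqs test objects using \Cref{rem:diff-generators} and the exactness from \Cref{limIsRlim}, or note that the claim is Zariski-local on $X$, then replace $Y$ by an affine Zariski hypercover (solid complexes are closed under limits and $f_*$ descends along the hypercover), after which each $U_\alpha\times_X Y_i$ is affine. Your handling of points 1, 3 and of the ``moreover'' clause (via $\blacksquare$-acyclics and the cofiber of the solidification unit) matches the paper's formal argument in substance, just spelled out; the only real difference is that the paper constructs the $\CAlg(\PrL)$-functoriality coherently from \Cref{lem:derived-cat-of-ring-topoi} and \cite[Proposition~2.2.1.9]{lurieHigherAlgebra2022}, whereas your appeal to naturality of the reflective localizations is a bit informal — a full argument does need the internal-topos machinery (or some equivalent) to package the coherences.
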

\begin{proof}
	We begin by proving the first point.
	That $f^*$ restrict to a functor between the solid categories is immediate from \Cref{prop:pullback_of_solid_remains_solid} in the case $\Lambda = \ZhatP$.
	For a general $\Lambda$, note that the commutative square
\[\begin{tikzcd}
	{ \D(Y,\Lambda)^{\blacksquare}} & { \D(Y,\Lambda)^{\blacksquare}} \\
	{ \D(Y,\ZhatP)^{\blacksquare}} & { \D(Y,\ZhatP)^{\blacksquare}}
	\arrow["{f^*}", from=1-1, to=1-2]
	\arrow["{-\otimes_{\ZhatP} \Lambda}", from=2-1, to=1-1]
	\arrow["{f^*}", from=2-1, to=2-2]
	\arrow["{-\otimes_{\ZhatP} \Lambda}"', from=2-2, to=1-2]
\end{tikzcd}\]
	is vertically right adjointable.
	This follows immediately from the formula $\D(X,\Lambda)^{\blacksquare} \simeq \D(X,\ZhatP)^{\blacksquare}\otimes_{\D(*,\ZhatP)^\blacksquare}\D(*,\Lambda)^\blacksquare$ and the bifunctoriality of the relative tensor product, since the forgetful functor $\D(Y,\Lambda)^{\blacksquare} \to \D(Y,\ZhatP)^{\blacksquare}$ is ${\D(*,\ZhatP)^\blacksquare}$-linear and colimit preserving.
	Thus the case for a general $\Lambda$ follows since the forgetful functor is conservative and preserve limits.

	The highly coherent functor $\D(-,\Lambda)^{\blacksquare}$ is constructed as follows: by \Cref{lem:derived-cat-of-ring-topoi}, we have such a functor for $X\mapsto \Sh((\Sch/X)_\proet,\Sp)$. 
	Note that for each scheme $X$, the functor $\Sh(X_\proet,\Sp)\to \Sh((\mathrm{Sch}/X)_\proet,\Sp)$ is fully faithful, compatible with tensor products and pullbacks. Thus by \cite[Proposition 2.5.1.13 and Proposition 2.5.2.8]{MWPresentable}, we have a functor $\D((-)_\proet,\Lambda)$ on schemes, with values in $\mathrm{CAlg}(\PrL)$. Now we may apply \cite[Proposition 2.2.1.9]{lurieHigherAlgebra2022} to obtain both the functor $\D(-,\Lambda)^{\blacksquare}$ valued in $\mathrm{CAlg}(\PrL)$ and the compatibility with the solidification functor.

	Because the functor $f^*$ preserves solid sheaves, the existence of $f_\sharp^{\blacksquare}$ for weakly étale maps, together with its compatibility with solidification is formal.
	We now prove that $f_*$ preserves solid objects.
	For this note that

	We may assume that $\Lambda=\ZhatP$ because $f_*$ is induced by the functor $f_*$ with $\ZhatP$ coefficients.
	Let $K\in \D(Y,\ZhatP)^{\blacksquare}$. By using left-completeness (\Cref{prop:derivedSolid} 5.) and the fact that the inclusion functor of solid complexes in all complexes is t-exact and commutes with limits, we may assume that $K$ is bounded below. Next, by a spectral sequence argument, we may assume that $K$ is concentrated in degree $0$. In that case using that $f_*$ preserves colimits of uniformly bounded below complexes (\cite[Lemma~5.3]{MR4609461}), that the t-structure on solid sheaves is compatible with colimits, and that the abelian category of solid sheaves is closed under colimits in the abelian category of pro-étale sheaves, we may assume that $K$ is compact in $\Sh(X,\ZhatP)^{\blacksquare}$. In that case, $K$ is a cofiltered limit of torsion étale sheaves thus we may assume that $K$ is a constructible torsion étale sheaf. By \cite[Lemma 5.4.3]{zbMATH06479630} we conclude as then $f_*K$ has cohomology sheaves being torsion étale sheaves which are solid by \Cref{torsion are solid}. 
\end{proof}

\begin{lem}
	\label{lem:solid-pullback-commute-with limits}
	Let $ f \colon X \to Y $ be a morphism of schemes.	
	Then the functor $ f^* \colon \D(Y,\Lambda)^{\blacksquare} \to \D(X,\Lambda)^{\blacksquare} $ commutes with all limits.
\end{lem}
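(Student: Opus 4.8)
The plan is to bootstrap the statement from \Cref{lem:pullbackslimits}, the (non-formal) fact that pullback of pro-\'etale sheaves commutes with all limits. The first step I would carry out is to reduce from the solid categories to the ambient pro-\'etale ones: by \Cref{prop:derivedSolid_Lambda} the subcategory $\D(X,\Lambda)^{\blacksquare}\subseteq\D(X_\proet,\Lambda)$ is a reflective localization, hence its inclusion is a right adjoint and preserves limits, so a limit in $\D(X,\Lambda)^{\blacksquare}$ is computed by the corresponding limit taken in $\D(X_\proet,\Lambda)$, and likewise over $Y$. Since, by the first point of \Cref{lem:some-operations-on-Dsolid}, the functor $f^{*}\colon\D(Y,\Lambda)^{\blacksquare}\to\D(X,\Lambda)^{\blacksquare}$ is the restriction of $f^{*}\colon\D(Y_\proet,\Lambda)\to\D(X_\proet,\Lambda)$, it then suffices to prove that this latter functor commutes with limits.

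For this second step I would peel off the coefficients. The forgetful functors
\[
\D(Y_\proet,\Lambda)\longrightarrow\D(Y_\proet,\ZhatP)\longrightarrow\Sh(Y_\proet,\Sp)
\]
are conservative, preserve limits, and are compatible with the respective pullback functors (the first forgetful functor comes from base change of coefficients along $\ZhatP\to\Lambda$, which is natural in the scheme; the identification of $\D(Y_\proet,\ZhatP)$ with $H\ZhatP$-module spectra in $\Sh(Y_\proet,\Sp)$, so that limits and $f^{*}$ are detected there, uses the repleteness of the pro-\'etale topos, \cite[\S 3]{zbMATH06479630}). Hence the problem reduces to showing that $f^{*}\colon\Sh(Y_\proet,\Sp)\to\Sh(X_\proet,\Sp)$ preserves limits. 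Presenting a sheaf of spectra as a tower of pointed sheaves of anima with the usual bonding equivalences, $f^{*}$ acts level-wise and preserves the terminal object (hence base points), and limits of sheaves of spectra are computed level-wise in pointed sheaves of anima; so the claim follows from the corresponding statement for $\Sh(Y_\proet,\Ani)\to\Sh(X_\proet,\Ani)$, which is exactly \Cref{lem:pullbackslimits}.

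I do not expect a genuine obstacle here: the only ingredient that is not bookkeeping is \Cref{lem:pullbackslimits} itself (which rests on \cite[Corollary~4.18]{zbMATH07671238}), and that result is already available. The one spot that deserves a sentence of care is the coefficient reduction in the previous paragraph, namely detecting limits and $f^{*}$ on $\D(Y_\proet,\ZhatP)$ via underlying sheaves of spectra. If one prefers to avoid passing all the way down to anima, one can instead argue directly inside $\D(Y_\proet,\Lambda)$ by d\'evissage along Postnikov towers, exactly as in the proofs of \Cref{thm:solid} and of \Cref{lem:some-operations-on-Dsolid}: the functor $f^{*}$ is $t$-exact, it commutes with $R\lim$ by (the spectra-level form of) \Cref{lem:pullbackslimits}, and by the left-completeness of the $t$-structure recorded in \Cref{prop:derivedSolid} this is enough to conclude that $f^{*}$ commutes with arbitrary limits.
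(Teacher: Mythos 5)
Your proof is correct and follows the same approach as the paper: both reduce along the commutative square of fully faithful, limit-preserving inclusions $\D(-,\Lambda)^{\blacksquare}\hookrightarrow\D((-)_\proet,\Lambda)$ and conclude from the fact that $f^{*}$ on the ambient pro-\'etale categories commutes with limits, citing \Cref{lem:pullbackslimits}. The paper invokes \Cref{lem:pullbackslimits} directly, leaving the reduction from $\Lambda$-modules to sheaves of anima implicit, whereas you spell that routine coefficient-peeling out (and offer the Postnikov-tower alternative); neither elaboration changes the underlying argument.
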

\begin{proof}
	By the first point of \Cref{lem:some-operations-on-Dsolid} we have a commutative square
	\[\begin{tikzcd}
		{\D(Y_\proet,\Lambda)} & {\D(X_\proet,\Lambda)} \\
		{\D(Y,\Lambda)^\blacksquare} & {\D(X,\Lambda)^\blacksquare}
		\arrow["{f^*}", from=1-1, to=1-2]
		\arrow[hook', from=2-1, to=1-1]
		\arrow["{f^*}", from=2-1, to=2-2]
		\arrow[hook', from=2-2, to=1-2]
	\end{tikzcd}\]
	where the upper horizontal functor preserves limits by \Cref{lem:pullbackslimits} and the vertical functors are fully faithful and preserve limits as they are right adjoint functors.
	Thus so does the lower horizontal functor.
\end{proof}

\begin{prop}\label{prop:solid_BC}
	Consider a cartesian square of schemes
	\[\begin{tikzcd}
		{X'} & X \\
		{Y'} & Y
		\arrow["g", from=1-1, to=1-2]
		\arrow["q"', from=1-1, to=2-1]
		\arrow["p", from=1-2, to=2-2]
		\arrow["f"', from=2-1, to=2-2]
	\end{tikzcd}.\] 
	The canonical map $f^* p_* \to q_* g^* $ of functors $ \D(X,\Lambda)^{\blacksquare} \to  \D(Y',\Lambda)^{\blacksquare} $ is an equivalence if we are in one of the following cases:
	\begin{enumerate}
		\item The morphism $ p $ is proper.
		\item The morphism $ f $ is smooth and all primes in $\mc{P}$ are invertible on $Y$.
	\end{enumerate}
\end{prop}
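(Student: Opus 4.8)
The plan is to strip the statement down, by a dévissage, to the classical proper base change and smooth base change theorems for torsion étale cohomology. The first reduction is to the case $\Lambda=\ZhatP$: the forgetful functor $\D(Y',\Lambda)^{\blacksquare}=\Mod_\Lambda(\D(Y',\ZhatP)^{\blacksquare})\to\D(Y',\ZhatP)^{\blacksquare}$ is conservative and preserves limits, and, by the construction of the functors in \Cref{lem:some-operations-on-Dsolid} together with the relative tensor product description of $\D(-,\Lambda)^{\blacksquare}$ used in the proof of \Cref{prop:derivedSolid_Lambda}, each of $f^*,p_*,q_*,g^*$ over $\Lambda$ is obtained from its $\ZhatP$-counterpart compatibly with the base change transformation. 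So it is enough to treat $\ZhatP$-coefficients.

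Next comes the main dévissage. Both composites $f^*p_*$ and $q_*g^*$ preserve all limits (the pushforwards are right adjoints; $f^*$ and $g^*$ preserve limits by \Cref{lem:solid-pullback-commute-with limits}), and both preserve filtered colimits of objects of the heart (for $p_*$ and $q_*$ by \cite[Lemma 5.3]{MR4609461}; for $f^*$ and $g^*$ by \Cref{prop:derivedSolid}, \Cref{lem:some-operations-on-Dsolid} and \Cref{prop:pullback_of_solid_remains_solid}, since these functors preserve solid objects, commute with $(-)^{\blacksquare}$, and the inclusion into $\D((-)_\proet,\ZhatP)$ preserves colimits). Using that the $t$-structure on $\D(X,\ZhatP)^{\blacksquare}$ is left and right complete (\Cref{prop:derivedSolid}) one writes $M=\lim_n\tau_{\geq-n}M$ to reduce to $M$ bounded below, and then $M=\lim_n\tau_{\leq n}M$ with each $\tau_{\leq n}M$ a finite extension of shifts of heart objects, which reduces to $M=F[0]$ for a solid sheaf $F$. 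Since $\Sh(X,\ZhatP)^{\blacksquare}\simeq\In(\mathrm{Pro}(\Sh_\mc{P}^\mathrm{cons}(X_\et)))$ by \Cref{thm:solid}, $F$ is a filtered colimit of compact solid sheaves; as $g^*F$ stays in the heart ($g^*$ is $t$-exact on sheaves), one reduces to $F$ compact, hence of the form $\lim_i\nu^*F_i$ with $F_i$ constructible $\mc{P}$-torsion étale sheaves (the limit underived by \Cref{limIsRlim}); preservation of limits then reduces us to $F=\nu^*F_0$ for a single such $F_0$, which one may take to be a $\Z/n\Z$-module with $n\in\Pi\mc{P}$.

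Finally one compares with the étale site. The functor $\nu^*$ intertwines $f^*$ and $g^*$ with the étale pullbacks, and the pro-étale pushforward of a bounded-below torsion étale sheaf along a qcqs morphism agrees with its étale pushforward — this is the input used in the proof of \Cref{lem:some-operations-on-Dsolid} via \cite[Lemma 5.4.3]{zbMATH06479630}, and it extends from constructible to arbitrary torsion sheaves by writing the latter as filtered colimits of constructible ones and using that qcqs étale pushforward commutes with such colimits. Hence $p_*\nu^*F_0\simeq\nu^*Rp_{\et,*}F_0$ and $q_*g^*\nu^*F_0\simeq\nu^*Rq_{\et,*}g_{\et}^*F_0$, and, $\nu^*$ being fully faithful on bounded-below torsion, the base change map $f^*p_*\nu^*F_0\to q_*g^*\nu^*F_0$ is the image under $\nu^*$ of the classical étale base change map $f_{\et}^*Rp_{\et,*}F_0\to Rq_{\et,*}g_{\et}^*F_0$. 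This is an equivalence by the proper base change theorem in case (1); in case (2) it is an equivalence by the smooth base change theorem, since $n$ is a product of primes in $\mc{P}$ and hence invertible on $Y$.

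The step I expect to be the main obstacle is the dévissage together with the étale comparison: one must check carefully that $p_*$ and $q_*$ commute with every (co)limit used — they are right adjoints but only commute with colimits of \emph{uniformly} bounded-below objects — and, in case (2), where $p$ is only assumed qcqs and the base change $g$ of $f$ need not be finitely presented, that the pro-étale/étale comparison for the pushforward really does hold in that generality (this is why the filtered-colimit-of-constructibles argument is needed there).
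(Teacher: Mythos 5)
Your proof is correct and is essentially the paper's: reduce to $\ZhatP$-coefficients, then use completeness of the $t$-structure and commutation of the four functors with the relevant limits and filtered colimits (via \Cref{lem:solid-pullback-commute-with limits}, \Cref{prop:derivedSolid} and \cite[Lemma~5.3]{MR4609461}) to reduce to the case of a $\mc{P}$-torsion constructible étale sheaf, and conclude by classical proper/smooth base change in étale cohomology. The worry you raise at the end about case~(2) is unfounded: $g$ is the base change of the smooth morphism $f$ and hence itself smooth (in particular of finite presentation), and the pro-étale/étale comparison for pushforward, \cite[Lemma~5.4.3]{zbMATH06479630}, needs only that $p$ is qcqs, which is automatic here.
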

\begin{proof}
	Because the forgetful functor $\D(X,\Lambda)^{\blacksquare}\to\D(X,\ZhatP)^{\blacksquare}$ is conservative and commutes with all $f^*$ and $p_*$'s, we may assume that $\Lambda = \ZhatP$. In that case, since all of the involved functors commute with limits by \Cref{lem:solid-pullback-commute-with limits} and the t-structure on solid complexes is left complete by the last point of \Cref{prop:derivedSolid}, we may replace some $ K \in \D(X,\ZhatP)^\blacksquare $ by its truncation $ \tau_{\leq n} K $ to assume that $ K $ is bounded below.
	Since $ p_*, q_* $ commute with filtered colimits of uniformly bounded below complexes (\cite[Lemma~5.3]{MR4609461}), we may reduce the case where $ K $ is bounded by writing $ \colim_n \tau_{\geq n} K \simeq K $.
	If $ K $ is bounded we may do dévissage to assume that $ K $ is concentrated in degree $ 0 $.
	Using commutation with filtered colimits again, \Cref{thm:solid} allows us to reduce to the case where $ K $ is a torsion constructible étale sheaf in which case the claim follows from ordinary proper/smooth base change in étale cohomology which is \cite[Théorème~5.1]{sga4} and \cite[\href{https://stacks.math.columbia.edu/tag/0EYU}{Tag~0EYU}]{stacks-project}.
\end{proof}

\begin{rem}
	As a consequence of \Cref{lem:solid-pullback-commute-with limits}, the functor $f^* \colon \D(Y,\Lambda)^{\blacksquare} \to \D(X,\Lambda)^{\blacksquare}$ has a left adjoint $f_\sharp^\blacksquare$, which we can think of as relative solid $\Lambda$-homology.
	Note that this left adjoint also exists for the non-solid categories by \Cref{lem:pullbackslimits}.
	The non-solid version however is not that well behaved.
	For example, it will not be homotopy variant, even with torsion coefficients prime to the characteristic, see \cite[\S 7.1]{bastietal}.
	Relative solid $\Lambda$-homology however is $\A^1$-invariant by the proof of \Cref{cor:A1invrho}.
	Furthermore \Cref{prop:solid_BC} shows that for smooth $f$, it is compatible with base change.
\end{rem}

For later use we also add the following small observation:

\begin{lem}
	\label{lem:muinfty_invertible}Assume that all the primes in $\mc{P}$ are invertible on $X$, then the sheaf $ \mu_{\infty,\mathcal{P}} \coloneqq\lim_{n\in \Pi\mathcal{P}} \mu_n$ is invertible in $\D(X,\ZhatP)^\blacksquare$ (see \Cref{lem:ZhatconstrIslimTors} for the notation $\Pi\mathcal{P}$).
\end{lem}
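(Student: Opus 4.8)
The plan is to produce an explicit $\otimes^\blacksquare$-inverse of $\mu_{\infty,\mathcal{P}}$ and to check that the obvious pairing is an equivalence by testing against a family of generators. First, $\mu_{\infty,\mathcal{P}}=\lim_{n\in\Pi\mc{P}}\mu_n$ is solid: since every prime of $\mc{P}$ is invertible on $X$, each $\mu_n$ (for $n\in\Pi\mc{P}$) is a finite locally constant \'etale sheaf of free rank-one $\Z/n\Z$-modules, hence $\mc{P}$-torsion constructible, so $\mu_{\infty,\mathcal{P}}$ is a cofiltered limit of $\mc{P}$-torsion constructible \'etale sheaves and lies in $\ccal(X)=\mathrm{Pro}(\Sh_\mc{P}^\mathrm{cons}(X_\et))$ by \Cref{thm:solid} (the limit is underived by \Cref{limIsRlim}). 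Set $\mu_{\infty,\mathcal{P}}^\vee\coloneqq\lim_{n\in\Pi\mc{P}}\underline{\Hom}_{\Z/n\Z}(\mu_n,\Z/n\Z)$, which is solid for the same reason, and let $e\colon\mu_{\infty,\mathcal{P}}\otimes^\blacksquare\mu_{\infty,\mathcal{P}}^\vee\to\ZhatP=\un$ be the map assembled from the projections $\mu_{\infty,\mathcal{P}}\to\mu_n$, $\mu_{\infty,\mathcal{P}}^\vee\to\mu_n^\vee$ and the levelwise duality pairings $\mu_n\otimes^\blacksquare\mu_n^\vee\to\Z/n\Z$, using $\lim_n\Z/n\Z=\ZhatP$. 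Since a right $\otimes^\blacksquare$-inverse is automatically a two-sided inverse, it suffices to show that $e$ is an equivalence.

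To check this, I would test $e$ against the functors $\Hom_{\D(X_\proet,\ZhatP)}(p_\sharp\ZhatP,-)\simeq\Gamma(V,p^*(-))$ for $p\colon V\to X$ weakly \'etale with $V$ a $w$-contractible scheme; these jointly detect equivalences, since the $p_\sharp\ZhatP$ are compact generators of $\D(X_\proet,\ZhatP)$ (recalled in the proof of \Cref{thm:solid}) and $\D(X,\ZhatP)^\blacksquare$ is a full subcategory. By \Cref{lem:some-operations-on-Dsolid} the functor $p^*\colon\D(X,\ZhatP)^\blacksquare\to\D(V,\ZhatP)^\blacksquare$ is symmetric monoidal, and by \Cref{lem:solid-pullback-commute-with limits} (together with \Cref{lem:pullbackslimits}) it commutes with the cofiltered limits defining $\mu_{\infty,\mathcal{P}}$ and $\mu_{\infty,\mathcal{P}}^\vee$. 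Over a $w$-contractible scheme $V$ one has $\HH^1_{\et}(V,\underline{(\Z/n\Z)^\times})=0$, so the locally constant sheaf $\mu_n$ — and therefore its $\Z/n\Z$-linear dual — becomes the constant sheaf $\underline{\Z/n\Z}$ on $V$; consequently $p^*\mu_{\infty,\mathcal{P}}\simeq\lim_n\underline{\Z/n\Z}\simeq\un$, likewise $p^*\mu_{\infty,\mathcal{P}}^\vee\simeq\un$, and $p^*e$ is identified with the multiplication equivalence $\un\otimes^\blacksquare\un\iso\un$. Hence $e$ is an equivalence and $\mu_{\infty,\mathcal{P}}$ is invertible, with inverse $\mu_{\infty,\mathcal{P}}^\vee$.

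The main obstacle — and the reason for the detour through $w$-contractible covers rather than a one-line computation — is that $\otimes^\blacksquare$ does not commute with limits, so one cannot simply write $\mu_{\infty,\mathcal{P}}\otimes^\blacksquare\mu_{\infty,\mathcal{P}}^\vee\simeq\lim_n(\mu_n\otimes^\blacksquare\mu_n^\vee)$; in fact $\mu_n\otimes^\blacksquare\mu_n^\vee$ has nonzero cohomology in degrees $0$ and $1$, and it is only after passing to the limit (via \Cref{limIsRlim}) that the degree $1$ part disappears. Restricting to a $w$-contractible base is precisely what lets one push the tensor product past the limit, since there every $\mu_n$ is the unit. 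Relatedly, it is cleaner to build $\mu_{\infty,\mathcal{P}}^\vee$ by hand as a limit of torsion sheaves than as $\underline{\Hom}(\mu_{\infty,\mathcal{P}},\un)$, because the latter is not a priori compatible with pullback. Everything else — constancy of $\mu_n$ over a $w$-contractible scheme, compatibility of $p^*$ with limits and with $\otimes^\blacksquare$, and the fact that the $p_\sharp\ZhatP$ generate — is routine given what is already established.
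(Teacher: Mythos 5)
Your proof is correct, and it hinges on the same core fact as the paper's — that $\mu_{\infty,\mathcal{P}}$ trivializes after pullback along a suitable pro-\'etale cover — but the surrounding mechanism is organized differently. The paper's proof is a three-line reduction: it pulls back along the pro-\'etale cover $\lim_{n\in\Pi\mathcal{P}}\mu_n\to X$, picks a compatible system of primitive roots of unity, and directly produces an isomorphism $\widehat{\Z}_\mathcal{P}\cong\mu_{\infty,\mathcal{P}}$. You instead build the candidate $\otimes^\blacksquare$-inverse $\mu_{\infty,\mathcal{P}}^\vee$ as a limit of locally constant torsion sheaves, assemble the evaluation pairing $e$, and check it is an equivalence after pullback to $w$-contractible covers. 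What your longer route buys is an honest justification of the step that the paper leaves implicit: the sentence ``we may assume a primitive $n$-th root of unity exists'' requires knowing that invertibility can be detected after a conservative pullback, and exhibiting a pairing that commutes with pullback (rather than, say, $\underline{\Hom}(\mu_{\infty,\mathcal{P}},\un)$, which you correctly flag as problematic) is exactly what makes that descent argument work. One small point to tighten: at the step $p^*\mu_{\infty,\mathcal{P}}\simeq\lim_n\underline{\Z/n\Z}\simeq\un$, trivializing each $p^*\mu_n$ individually via vanishing of $\HH^1$ does not by itself make the tower $(p^*\mu_n)_n$ isomorphic to the constant tower, since the trivializations must be compatible with the transition maps $\mu_n\to\mu_m$. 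This is fixed by choosing a single section of the pro-\'etale cover $\mu_{\infty,\mathcal{P}}\times_X V\to V$ (it splits because $V$ is $w$-contractible), giving a compatible system of primitive roots and hence a compatible trivialization — which is the same choice appearing in the paper's proof.
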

\begin{proof}
	Since all primes in $\mathcal{P}$ are invertible on $X$, the map of schemes $\lim_{n\in \Pi\mathcal{P}} \mu_n \to X$ is a pro-\'etale cover.
	Thus we may assume that a primitive $n$-th root of unity exists for every $n$ that is a product of primes in $\mathcal{P}$.
	Picking a compatible system of such roots then defines an isomorphism $\widehat{\Z}_\mathcal{P}\xrightarrow{\cong} \mu_{\infty,\mathcal{P}} $, which shows the claim.
\end{proof}

We end this section with a comparison with usual $\ell$-adic sheaves and complexes of torsion étale sheaves.
\begin{prop}
	\label{prop:torsion etale is solid derived}
	Let $X$ be a qcqs scheme and assume that $\Lambda = \Z/n\Z$. Then the functor 
	\[\D(X_\et,\Z/n\Z)\to \D(X_\proet,\Z/n\Z)\] lands in $\D(X,\Z/n\Z)^{\blacksquare}$ and gives a symmetric monoidal functor 
	\[\D(X_\et,\Z/n\Z)\to \D(X,\Z/n\Z)^{\blacksquare}.\] 
	Moreover, the latter induces a fully faithful functor 
	\[\D(X_\et,\Z/n\Z)^{\mathrm{t}\wedge}\to \D(X,\Z/n\Z)^{\blacksquare},\] where $\D(X_\et,\Z/n\Z)^{\mathrm{t}\wedge}$ is the left completion of $\D(X_\et,\Z/n\Z)$.
\end{prop}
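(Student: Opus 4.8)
\emph{Plan.} I would establish the two assertions in turn.

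\smallskip

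The functor $\D(X_\et,\Z/n\Z)\to\D(X_\proet,\Z/n\Z)$ is the (derived) pullback $\nu^*$, and it is $t$-exact because pullback of abelian sheaves along the morphism of sites $\nu$ is exact. Hence, by \Cref{defi:solid}, $\nu^*K$ is solid as soon as each cohomology sheaf $\nu^*\HH^i(K)$ is solid, which is exactly \Cref{torsion are solid} (recall $n$ is a product of primes in $\mc{P}$); so $\nu^*$ factors through $\D(X,\Z/n\Z)^\blacksquare$. For the symmetric monoidal refinement, $\nu^*\colon\D(X_\et,\Z/n\Z)\to\D(X_\proet,\Z/n\Z)$ is symmetric monoidal, being the pullback along a morphism of ringed $\infty$-topoi with $\nu^*\Z/n\Z=\Z/n\Z$. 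Composing with the symmetric monoidal solidification $(-)^\blacksquare$ of \Cref{prop:derivedSolid}(3) (available with $\Z/n\Z$-coefficients by \Cref{rem:quotients_of_Zhat}) yields a symmetric monoidal functor $\D(X_\et,\Z/n\Z)\to\D(X,\Z/n\Z)^\blacksquare$; since $\nu^*$ already lands in the solid objects, on which $(-)^\blacksquare$ restricts to the identity, this functor agrees with $\nu^*$, which is therefore symmetric monoidal.

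\smallskip

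Next I would extend $\nu^*$ to the left completion. By \Cref{prop:derivedSolid}(5) (and \Cref{rem:quotients_of_Zhat}) the $t$-structure on $\D(X,\Z/n\Z)^\blacksquare$ is left and right complete; the left completion $\D(X_\et,\Z/n\Z)^{\mathrm{t}\wedge}=\lim_m\D^{\leqslant m}(X_\et,\Z/n\Z)$ is also right complete, since truncations are compatible with filtered colimits (so such colimits are computed levelwise) and $\D(X_\et,\Z/n\Z)$ is right complete. As $\nu^*$ is $t$-exact, it induces compatible functors $\D^{\leqslant m}(X_\et,\Z/n\Z)\to\D^{\leqslant m}(X,\Z/n\Z)^\blacksquare$, and passing to the limit over $m$ produces a $t$-exact functor $\widehat{\nu^*}\colon\D(X_\et,\Z/n\Z)^{\mathrm{t}\wedge}\to\D(X,\Z/n\Z)^\blacksquare$ given on objects by $K\mapsto\lim_m\tl{m}\,\nu^*K$.

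\smallskip

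It then remains to prove that $\widehat{\nu^*}$ is fully faithful. Being $t$-exact between left- and right-complete stable categories, $\widehat{\nu^*}$ commutes with the canonical limits $X\simeq\lim_m\tl{m}X$ and colimits $X\simeq\colim_m\tg{-m}X$ on both source and target; chasing the corresponding fibre sequences reduces full faithfulness of $\widehat{\nu^*}$ first to the case of bounded complexes, and then, by dévissage along the $t$-structure, to the statement that for all étale sheaves $F,G$ of $\Z/n\Z$-modules the natural map
\[
\Ext^i_{\Sh(X_\et,\Z/n\Z)}(F,G)\longrightarrow\Ext^i_{\D(X,\Z/n\Z)^\blacksquare}(\nu^*F,\nu^*G)
\]
is an isomorphism for all $i\geqslant 0$. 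For $i=0$ this is full faithfulness of $\nu^*$ on hearts, which holds because, under the identifications $\Sh(X_\et,\Z/n\Z)=\In(\mathcal{A})$ and $\Sh(X,\Z/n\Z)^\blacksquare=\In(\mathrm{Pro}(\mathcal{A}))$ coming from \Cref{thm:solid} (with $\mathcal{A}$ the abelian category of constructible étale sheaves of $\Z/n\Z$-modules), the functor $\nu^*$ is induced by the fully faithful inclusion $\mathcal{A}\hookrightarrow\mathrm{Pro}(\mathcal{A})$. For general $i$, the inclusion $\D(X,\Z/n\Z)^\blacksquare\hookrightarrow\D(X_\proet,\Z/n\Z)$ being fully faithful, the right-hand side computes pro-étale $\Ext$-groups, which by adjunction equal $\Ext^i_{\Sh(X_\et,\Z/n\Z)}(F,\mathrm{R}\nu_*\nu^*G)$; so the claim reduces to the comparison $\mathrm{R}\nu_*\nu^*G\simeq G$ for torsion étale sheaves $G$, i.e.\ to the agreement of étale and pro-étale cohomology with torsion coefficients, which is \cite[Corollary~5.1.6]{zbMATH06479630}.

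\smallskip

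The hard part will be the bookkeeping in this last step: one must check carefully that $\widehat{\nu^*}$ genuinely commutes with the relevant truncation limits and colimits on \emph{both} sides, so that comparing mapping spectra of arbitrary objects really does reduce to the bounded — and hence heart-level — case; and one must make sure the comparison $\mathrm{R}\nu_*\nu^*\simeq\mathrm{id}$ is available in the generality needed here, namely for \emph{all} torsion étale sheaves on a qcqs scheme, not merely the constructible ones (for non-constructible $G$ this means writing $G$ as a filtered colimit of constructible sheaves and checking that $\mathrm{R}\nu_*$ commutes with that colimit, using that $X$ is qcqs).
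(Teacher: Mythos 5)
Your handling of the first two assertions (landing in solid objects and the symmetric monoidal refinement) matches the paper's argument essentially verbatim: $t$-exactness of $\nu^*$ plus \Cref{torsion are solid}, then factoring through the solidification functor. For the full faithfulness of the left-completed functor, however, you take a genuinely different and substantially longer route than the paper. The paper's proof is one sentence: it observes that $\D(X,\Z/n\Z)^\blacksquare$ is left-complete (\Cref{prop:derivedSolid}), so the functor from $\D(X_\et,\Z/n\Z)^{\mathrm{t}\wedge}$ factors through the inclusion of the full subcategory $\D(X,\Z/n\Z)^\blacksquare\subseteq\D(X_\proet,\Z/n\Z)$, and full faithfulness into $\D(X_\proet,\Z/n\Z)$ after left completion is exactly \cite[Proposition~5.3.2]{zbMATH06479630}. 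You instead essentially \emph{re-derive} that result from scratch: dévissage to the heart, the identification of the abelian categories with $\In$- and $\mathrm{Pro}$-categories from \Cref{thm:solid}, and the cohomological comparison $R\nu_*\nu^*G\simeq G$ via \cite[Corollary~5.1.6]{zbMATH06479630}. That is a valid alternative argument, and it does illuminate \emph{why} \cite[Proposition~5.3.2]{zbMATH06479630} is true — but it buys nothing over citing the result directly, and it leaves two real technical steps as acknowledged IOUs: (i) the reduction of full faithfulness on unbounded objects in the left completion to the heart (the bookkeeping with $\lim\tau_{\leqslant m}$ and $\colim\tau_{\geqslant -m}$ is delicate because mapping spectra convert colimits in the source to limits, so the double limit has to be controlled); and (ii) the extension of $R\nu_*\nu^*\simeq\mathrm{id}$ from constructible to arbitrary torsion sheaves, which requires showing $R\nu_*$ commutes with the relevant filtered colimits on a qcqs scheme — true, but not free. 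Since Bhatt--Scholze already did exactly this work in \cite[\S 5]{zbMATH06479630}, the paper's citation is the simpler and safer path.
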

\begin{proof}
	To prove the first two parts of the proposition, as being solid can be checked on cohomology objects, and the functor 
	$\D(X_\et,\Z/n\Z)\to \D(X_\proet,\Z/n\Z)$ is t-exact, it suffices to prove that if $F\in\mathrm{Sh}(X_\et,\Z/n\Z)$, then $\nu^*(F)$ is solid. This is \Cref{torsion are solid}. The last part of the proposition follows from the fact that the t-structure on $\D(X,\Z/n\Z)^{\blacksquare}$ is left-complete, and that $\D(X_\et,\Z/n\Z)\to \D(X_\proet,\Z/n\Z)$ is fully faithful after left-completion by \cite[Proposition 5.3.2]{zbMATH06479630}.
\end{proof}

Recall the following definition:
\begin{defi}[Hemo-Richarz-Scholbach]
	\label{defi:HRS}
	Let $X$ be a qcqs scheme. A complex $K\in\D(X_\proet,\Lambda)$ is called constructible if 
	there exists a finite stratification of $X$ by quasi-compact locally closed subschemes
	such that $K$ is dualizable over each stratum. 
	Denote by $\D_\mathrm{cons}(X,\Lambda)$ the subcategory of such $K$'s.

	Recall also that by \cite[Theorem 7.7]{MR4609461}, there is a canonical equivalence
	\[\D_\mathrm{cons}(X,\Z_\ell)\simeq \D^b_c(X_{\et},\Z_\ell)\] with the classical category of $\ell$-adic constructible complexes and a the same is true with $\Q_\ell$-coefficients. 
\end{defi}
\begin{prop}\label{prop:cons_are_solid}
	 Let $X$ be a qcqs scheme. Let $K\in\D_\mathrm{cons}(X,\ZhatP)$. Then $K$ is solid: we have $K\in\D(X,\ZhatP)^{\blacksquare}$. 
\end{prop}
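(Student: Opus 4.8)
The plan is to induct on the minimal number $N(K)$ of strata in a \emph{good} stratification of $X$ — one for which the closure of each stratum is a union of strata — witnessing the constructibility of $K$; such a good stratification exists, since any finite stratification by qcqs locally closed subschemes can be refined to a good one, and the hypothesis that $K$ is dualizable over each stratum is preserved under restriction to a finer stratum. Two facts are used throughout. First, by point 1 of \Cref{thm:solid} the subcategory $\Sh(X,\ZhatP)^{\blacksquare}$ is weak Serre and closed under all limits and colimits, so, inspecting long exact sequences of cohomology sheaves, $\D(X,\ZhatP)^{\blacksquare}$ is closed under limits, colimits, and the two-out-of-three property in distinguished triangles. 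Second, the unit $\ZhatP=\lim_{n\in\Pi\mathcal{P}}\Z/n\Z$ is solid, because each constant sheaf $\Z/n\Z$ is solid by \Cref{torsion are solid} and solid complexes are closed under limits.

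The base case $N(K)=1$ amounts, since the pro-étale topos is insensitive to nilpotent thickenings, to showing that a dualizable object $K\in\D(X_\proet,\ZhatP)$ is solid. Here I would use that the solidification functor of \Cref{prop:derivedSolid} is a monoidal localization: its kernel is a tensor ideal (point 3 of \Cref{prop:derivedSolid}), so a map $A\otimes C'\to A\otimes C$ is inverted by solidification whenever $C'\to C$ is. It follows formally that the solid complexes form an exponential ideal in $\D(X_\proet,\ZhatP)$, i.e.\ $\underline{\Hom}_{\D(X_\proet,\ZhatP)}(A,B)$ is solid for every $A$ as soon as $B$ is solid. Applying this with $B=\ZhatP$ and $A=K^{\vee}$, the dual of $K$, yields $K\simeq\underline{\Hom}_{\D(X_\proet,\ZhatP)}(K^{\vee},\ZhatP)$ solid.

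For the inductive step, let $N(K)=N\geq 2$, with witnessing good stratification $X=\bigsqcup_{i=1}^{N}Z_i$ into qcqs locally closed subschemes, each $K|_{Z_i}$ dualizable. Pick a stratum $Z_1$ minimal for the closure order; goodness forces $Z_1$ to be closed in $X$, so that $i\colon Z_1\hookrightarrow X$ is a closed immersion with qcqs open complement $j\colon U\coloneqq X\setminus Z_1\hookrightarrow X$, and $\{Z_2,\dots,Z_N\}$ is a good stratification of $U$ witnessing constructibility of $j^*K$. Thus $j^*K$ is solid by the inductive hypothesis, and $i^*K=K|_{Z_1}$ is dualizable, hence solid by the base case. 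I would then invoke the standard recollement triangle
\[
	i_*i^!K\longrightarrow K\longrightarrow j_*j^*K
\]
in $\D(X_\proet,\ZhatP)$ associated with the open-closed decomposition $(U,Z_1)$. Its third term is solid because $j^*$ and $j_*$ preserve solidity (points 1 and 2 of \Cref{lem:some-operations-on-Dsolid}). Applying $i^*$ to the triangle and using $i^*i_*\simeq\id$ exhibits $i^!K$ as the fibre of the map $i^*K\to i^*j_*j^*K$ between solid complexes, whence $i^!K$ is solid ($i^*$ preserves solidity), and therefore so is $i_*i^!K$ ($i_*$ preserves solidity). Being an extension of two solid complexes, $K$ is itself solid, completing the induction.

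The step where the content lies is the base case, and it is short once one knows that the unit is solid and that solidification is a monoidal localization; I do not anticipate real difficulty there. The inductive step is a routine recollement argument whose only subtlety is the legitimacy of the passage to good stratifications, so the main work has in fact already been packaged into \Cref{thm:solid}, \Cref{torsion are solid} and \Cref{prop:derivedSolid}.
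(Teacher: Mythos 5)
Your proof is correct, but it takes a genuinely different route from the paper's, and it is worth comparing the two.

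For the dualizable case, you observe that the solid complexes form an exponential ideal (because the kernel of solidification is a $\otimes$-ideal — this is already recorded in point~4 of \Cref{prop:derivedSolid}, which says the internal $\Hom$ of solid complexes is computed in the ambient category), and then write $K\simeq\underline{\Hom}(K^\vee,\ZhatP)$ with $\ZhatP$ solid. The paper instead uses dualizability to commute $-\otimes K$ past the limit $\ZhatP=\lim_n\Z/n\Z$, writes $K\simeq\lim_n K/n$, and quotes a result of Hemo--Richarz--Scholbach to the effect that each $K/n$ is a bounded complex of torsion constructible \'etale sheaves, hence solid; the limit is then solid. Both arguments are short, and your exponential-ideal observation is a nice alternative that avoids the external reference — though in this category the two are of roughly equal depth, since the ingredients you use (monoidality and closedness of the solidification) are exactly what the paper established in \Cref{prop:derivedSolid}.

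For the reduction from general constructible $K$ to the dualizable case, the two strategies diverge more sharply. You pass to a good stratification and run an induction via the recollement triangle $i_*i^!K\to K\to j_*j^*K$, using that $i^*$, $i_*$, $j^*$, $j_*$ all preserve solidity (\Cref{lem:some-operations-on-Dsolid}) and that solid complexes are stable under fibers and extensions. This works, but it requires (a) the refinement to a good stratification by qcqs locally closed subschemes, which you wave at but which does need a word in the non-noetherian setting, and (b) the existence of the recollement on $\D(X_\proet,\ZhatP)$ for a qcqs open/closed decomposition (existence of $i^!$, fully faithfulness of $i_*$), neither of which the paper explicitly proves for the small pro-\'etale site (it does so for the motivic categories in \S2). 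The paper sidesteps all of this with a one-line reduction: take $g\colon Y=\coprod_i Z_i\to X$ the disjoint union of all strata, note $g$ is surjective and $g^*K$ is dualizable, and apply \Cref{prop:pullack-detects-solid} (pullback along a surjection detects solidity, via conservativity of $f^*$). Your argument is correct but costs more; the paper's reduction is the cleaner way to package the descent and makes the stratification order, recollement, and induction all unnecessary.
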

\begin{proof}
	If $K$ is dualizable, the functor $-\otimes K$ commutes with limits, thus using that $\ZhatP = \lim_n\ZhatP/n\ZhatP$, we see that $K\simeq \lim_n K/n$ and we conclude using \cite[Proposition 7.1]{MR4609461} that implies that each $K/n$ is a bounded complex of torsion étale sheaves.

In the general case, $K$ is dualizable over the strata of some stratification of $X$. Taking the disjoint union of all strata yields a surjective map $g\colon Y\to X$ such that $g^*K$ is dualizable, whence solid. We conclude by \Cref{prop:pullack-detects-solid}.
\end{proof}
\begin{cor}\label{prop:cons_are_solid_rational}
	 Let $X$ be a qcqs scheme. Then every object of $\D^b_c(X_\et,\Q_\ell)\subset \D(X_\proet,\Q_\ell)$ lies in the full subcategory  $\D(X,\Q_\ell)^{\blacksquare}$.
\end{cor}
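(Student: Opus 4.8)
The plan is to reduce the rational statement to the integral one (\Cref{prop:cons_are_solid}) via the fact that $\Q_\ell$ is a filtered colimit of copies of $\Z_\ell$. Note first that here $\mathcal{P}=\{\ell\}$, so $\ZhatP=\Z_\ell$, and that by \Cref{defi:SolidAnyCoeff} we have $\D(X,\Q_\ell)^\blacksquare=\Mod_{\Q_\ell}(\D(X,\Z_\ell)^\blacksquare)$, which sits inside $\D(X_\proet,\Q_\ell)=\Mod_{\Q_\ell}(\D(X_\proet,\Z_\ell))$ as the full subcategory of those $\Q_\ell$-modules whose underlying object of $\D(X_\proet,\Z_\ell)$ is solid. (This uses that $\D(X,\Z_\ell)^\blacksquare\subseteq\D(X_\proet,\Z_\ell)$ is full and closed under colimits by \Cref{prop:derivedSolid}, hence in particular under $-\otimes_{\Z_\ell}\Q_\ell$, so that passing to $\Q_\ell$-modules is compatible with this inclusion.) Therefore it suffices to show that the underlying $\Z_\ell$-complex of any $K\in\D^b_c(X_\et,\Q_\ell)$ is solid.

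To this end I would use the classical description of $\D^b_c(X_\et,\Q_\ell)$ recalled in \Cref{defi:HRS}: by \cite{MR0751966} (and the identification with $\D_\mathrm{cons}(X,\Q_\ell)$ of \cite[Theorem 7.7]{MR4609461}), every object $K$ is isomorphic to $M\otimes_{\Z_\ell}\Q_\ell$ for some $M\in\D^b_c(X_\et,\Z_\ell)\simeq\D_\mathrm{cons}(X,\Z_\ell)$. Since $\Q_\ell\simeq\colim(\Z_\ell\xrightarrow{\ell}\Z_\ell\xrightarrow{\ell}\cdots)$ as a condensed $\Z_\ell$-module and tensor products commute with colimits, the image of $K$ in $\D(X_\proet,\Z_\ell)$ under the forgetful functor is the filtered colimit $\colim(M\xrightarrow{\ell}M\xrightarrow{\ell}\cdots)$. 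Now $M$ is solid by \Cref{prop:cons_are_solid}, and solid complexes are closed under colimits in $\D(X_\proet,\Z_\ell)$ by the last point of \Cref{prop:derivedSolid}; hence this colimit is solid, which is what we wanted.

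Alternatively, and closer to the proof of \Cref{prop:cons_are_solid}, one can first invoke \Cref{prop:pullack-detects-solid} --- applied to underlying $\Z_\ell$-complexes, since pullback commutes with the forgetful functor --- along a surjection splitting the stratification of \Cref{defi:HRS}, to reduce to $K$ dualizable in $\D(X_\proet,\Q_\ell)$; such a $K$ is lisse \'etale-locally and hence admits a dualizable $\Z_\ell$-lattice $M$, and one finishes with the same colimit computation. In both approaches there is essentially no serious obstacle: the only point requiring care --- and the crux of the matter --- is the identification of $\Q_\ell$, viewed as a coefficient object, with the filtered colimit $\colim(\Z_\ell\xrightarrow{\ell}\Z_\ell\to\cdots)$ of condensed $\Z_\ell$-modules, since it is precisely this (together with closure of solid objects under colimits) that transports \Cref{prop:cons_are_solid} to the rational setting, with no new input needed.
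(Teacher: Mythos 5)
Your proposal is correct and follows the same route as the paper: lift $K$ to a $\Z_\ell$-lattice $L\in\D_\mathrm{cons}(X,\Z_\ell)$ (the paper cites \cite[Corollary 2.4]{zbMATH07751006} for this), invoke \Cref{prop:cons_are_solid}, and observe that $K=L\otimes_{\Z_\ell}\Q_\ell$ is solid. You usefully spell out the step the paper leaves implicit --- reducing to underlying $\Z_\ell$-complexes, identifying $\Q_\ell\simeq\colim(\Z_\ell\xrightarrow{\ell}\Z_\ell\to\cdots)$, and invoking closure of solid objects under colimits from \Cref{prop:derivedSolid}.
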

\begin{proof}
	Given $K\in\D_\mathrm{cons}(X,\Q_\ell)$, there exists $L\in\D_\mathrm{cons}(X,\Z_\ell)$ such that $L\otimes_{\mathbb{Z}_l} \Q_l \simeq K$, by \cite[Corollary 2.4]{zbMATH07751006}. 
	Thus the claim follows from \Cref{prop:cons_are_solid}.
\end{proof}

\subsection{Solid sheaves through condensed category theory}\label{sec:solidcats}

We now provide an alternative way of passing to subcategories of solid objects using the machinery of condensed $\infty$-categories.
Condensed $\infty$-categories can be viewed as $ \infty $-categories internal to the $ \infty $-topos $ \Cond(\Ani) $ of condensed anima.
A general theory of $ \infty $-categories internal to an arbitrary $ \infty $-topos has been developed in \cite{MYoneda,MWColimits,MWPresentable}.
We recall some of the basic ideas here.

\begin{defi}
	\label{part:condensed}
	A \emph{condensed $\infty$-category} $\mathcal{C}$ is a hypersheaf of $\infty$-categories on the site $\ProFin=*_\proet$ of profinite sets $\mathcal{C} \colon \ProFin \to \catinfty$.
	A functor of condensed  $\infty$-categories is simply a natural transformation.
	Similarly a symmetric monoidal condensed  $\infty$-category is simply a hypersheaf on profinite sets with values $ \CAlg(\catinfty) $.

	If $\mathcal{C}$ is a condensed category and $s \colon S \to T$ is a map of profinite sets, we often denote the associated transition functor $\mathcal{C}(T) \to \mathcal{C}(S)$ by $s^*$.

\end{defi}

Here are the main examples that we use in this paper:

\begin{ex}
	\label{ex:condensed-cats}
	\begin{enumerate}
		\item Unstraightening the codomain fibration and restricting along $\ProFin^{\op} \to \Cond(\Ani)^{\op}$ gives a functor \[ \ProFin^{\op} \to \catinfty; \;\;\; S \mapsto \Cond(\Ani)_{/S}.\] We denote this condensed $\infty$-category by $\underline{\Cond}(\Ani)$.
		Its role in condensed category theory is analogous to the role of $\infty$-category of $\Ani$ in usual $\infty$-category theory.
		Taking the cartesian monoidal structure at every $S$, $\underline{\Cond}(\Ani)$ canonically refines to a symmetric monoidal condensed category, see \cite[Remark 2.6.2.7]{MWPresentable}.
		\item For any presentable $\infty$-category $\mathcal{C}$ there is a version of the above example with coefficients in $\ccal$ given by \[ \ProFin^{\op} \to \catinfty; \;\;\; S \mapsto \Cond(S,\mathcal{C}) = \Cond(\Ani)_{/S} \otimes \mathcal{C}, \] where $\otimes$ denotes the Lurie tensor product. We denote this condensed $\infty$-category by $\underline{\Cond}(\mathcal{C})$. If furthermore $\mathcal{C} \in \CAlg(\PrL)$, then $\underline{\Cond}(\mathcal{C})$ canonically refines to a symmetric monoidal condensed $\infty$-category.
		\item Let  $\Lambda$ be a condensed $E_\infty$-ring, i.e. a commutative algebra in $\Cond(\Sp)$. For any profinite set $ S $ we still denote by $\Lambda$ the base change of $\Lambda$ to $\Cond(S,\Sp)$.
			Then there is a symmetric monoidal condensed category of $\Lambda$-modules
			\[
				\underline{\Mod}_\Lambda \colon \ProFin^{\op} \to \CAlg(\catinfty); \;\;\; S \mapsto \Mod_{\Lambda}(\Cond(S,\Sp)),
			\]
			see \Cref{lem:derived-cat-of-ring-topoi}.
		\item If $\Lambda$ is a solid $\widehat{\Z}_\mathcal{P}$-algebra as before, we get also get a symmetric monoidal condensed category of solid $\Lambda$-modules 
				\[
					\underline{\mathrm{Solid}}_\Lambda \colon \ProFin^{\op} \to \CAlg(\catinfty); \;\;\; S \mapsto \D(S,\Lambda)^{\blacksquare}.
				\]
		\item Let $X$ be a qcqs scheme, we have a condensed category \[
					\underline{\Sh}(X_\proet) \colon \ProFin^{\op} \to \CAlg(\catinfty); \;\;\; S \mapsto \Sh((X\times S)_\proet).
				\]
			As before, we also have a spectral version $\underline{\Sh}(X_\proet,\mathrm{Sp})$ and for any condensed ring spectrum $\Lambda$, we have a $\Lambda$-linear version $\underline{\D}(X_\proet,\Lambda)\colon S \mapsto \Mod_{\Lambda}(\Sh((X\times S)_\proet,\mathrm{Sp}))$.
	\end{enumerate}
\end{ex}

It turns out that all the examples of condensed categories listed above enjoy some special features. That is, they are \emph{presentable} in the following sense.

\begin{defi}[{\cite[\S 2.4]{MWPresentable}}]\label{def:cond_presentable}
	A condensed $\infty$-category $\mathcal{C}$ is called presentable if the following two conditions are satisfied:
	\begin{enumerate}
		\item The sheaf $\mathcal{C} \colon \ProFin^\op \to \catinfty$ factors through the (non-full) subcategory $\PrL \subseteq \catinfty$.
		\item For any $s \colon S \to T $, the left adjoint $s_!$ to the functor $s^* \colon \mathcal{C}(T) \to \mathcal{C}(S)$ is such that for any other map $ \alpha \colon T' \to T $ the commutative square
\[\begin{tikzcd}
	{\mathcal{C}(T)} & {\mathcal{C}(S)} \\
	{\mathcal{C}(T')} & {\mathcal{C}(S \times_T T')}
	\arrow["{s^*}", from=1-1, to=1-2]
	\arrow["{\alpha^*}"', from=1-1, to=2-1]
	\arrow["{(\alpha \times_T S)^*}", from=1-2, to=2-2]
	\arrow["{(s \times_T T')^*}"', from=2-1, to=2-2]
\end{tikzcd}\]
		is horizontally left adjointable.
	\end{enumerate}
\end{defi}

\begin{rem}
	It is easy to check that all examples appearing in \Cref{ex:condensed-cats} are indeed presentable.
\end{rem}

The $\infty$-category of condensed $\infty$-categories is cartesian closed \cite[Proposition 3.2.11]{MYoneda}.
For two condensed $\infty$-categories $\mathcal{C},\mathcal{D}$ we denote the internal hom by
\[
	\underline{\Fun}^{\cond}(\mathcal{C},\mathcal{D}) \in \Cond(\catinfty)
\]
and call it the condensed category of functors from $\mathcal{C}$ to $\mathcal{D}$.
Taking global sections we obtain an $\infty$-category
\[
	\Fun^{\cond}(\mathcal{C},\mathcal{D}) = \underline{\Fun}^{\cond}(\mathcal{C},\mathcal{D})(\ast)
\]
that we call the $\infty$-category of condensed functors from $\mathcal{C}$ to $\mathcal{D}$.
In particular $\Cond(\catinfty)$ is naturally enriched over $\catinfty$ and thus defines an $(\infty,2)$-category (see \cite[Remark 3.4.3]{MYoneda}).
We define an \emph{adjunction} between condensed categories to simply be an adjunction in the $(\infty,2)$-category $\Cond(\catinfty)$ \cite[Definition 3.1.1]{MWColimits}.
Adjoint functors between condensed categories also admit the following very explicit description:

\begin{prop}[{\cite[Proposition 3.2.9]{MYoneda}}]\label{prop:cond_left_adj}
	\label{prop:charac_of_left_adjoints}
	Let $f \colon \mathcal{C} \to \mathcal{D}$ be a functor of condensed categories.
	Then $f$ admits (resp. is) a left adjoint if and only if for any $S \in \ProFin$ the functor $f(S)$ admits a left (resp. right) adjoint and for any map $s \colon S \to T$ the square
\[\begin{tikzcd}
	{\mathcal{C}(T)} & {\mathcal{D}(T)} \\
	{\mathcal{C}(S)} & {\mathcal{D}(S)}
	\arrow["{f(T)}", from=1-1, to=1-2]
	\arrow["{s^*}"', from=1-1, to=2-1]
	\arrow["{s^*}", from=1-2, to=2-2]
	\arrow["{f(S)}"', from=2-1, to=2-2]
\end{tikzcd}\]
	is horizontally left (resp. right) adjointable.
\end{prop}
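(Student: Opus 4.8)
The plan is to work with condensed categories as hypersheaves $\ProFin^{\op}\to\catinfty$, and to deduce the statement from the adjoint functor theorem for families of $\infty$-categories via (un)straightening. The ``only if'' direction is formal; the ``if'' direction is the substantive one, and the key observation is that the objectwise left adjoints together with the Beck--Chevalley (``adjointability'') condition are exactly the data of a single condensed functor carrying a coherent adjunction.

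For the forward implication, suppose $f$ admits a left adjoint $\ell\colon\mathcal{D}\to\mathcal{C}$ in the $(\infty,2)$-category $\Cond(\catinfty)$. For each $S$ the evaluation functor $\mathrm{ev}_S\colon\Cond(\catinfty)\to\catinfty$ underlies a morphism of $(\infty,2)$-categories --- this is part of the cartesian-closed structure of \cite{MYoneda} --- and morphisms of $(\infty,2)$-categories preserve adjunctions, so $\ell(S)\dashv f(S)$. Since $\ell$ is a functor of condensed categories, for each $s\colon S\to T$ it provides an equivalence $\ell(S)\circ s^\ast\simeq s^\ast\circ\ell(T)$; a standard mates argument identifies this with the mate of the naturality square of $f$, using that the unit and counit of $\ell\dashv f$ are $2$-morphisms of $\Cond(\catinfty)$ and hence evaluate compatibly in $S$ to those of $\ell(S)\dashv f(S)$. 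Thus each square is horizontally left adjointable.

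For the converse, unstraighten. A hypersheaf $\mathcal{C}\colon\ProFin^{\op}\to\catinfty$ corresponds to a cartesian fibration $\widetilde{\mathcal{C}}\to\ProFin$, the functor $f$ to a map $\widetilde{f}\colon\widetilde{\mathcal{C}}\to\widetilde{\mathcal{D}}$ over $\ProFin$ preserving cartesian edges, and an adjunction of condensed functors to an adjunction in the $(\infty,2)$-category of cartesian fibrations over $\ProFin$ in which both functors preserve cartesian edges \cite[\S 3.2]{MR2522659}. The hypothesis that each $f(S)$ admits a left adjoint lets us invoke the relative adjoint functor theorem \cite[\S 7.3.2]{lurieHigherAlgebra2022} (in its form for cartesian fibrations, obtained by passing to opposites) to produce a left adjoint $\widetilde{\ell}$ of $\widetilde{f}$ relative to $\ProFin$, with fibers the $\ell(S)$; and the ``horizontally left adjointable'' hypothesis says exactly that for each $s\colon S\to T$ the canonical comparison $\ell(S)\circ s^\ast\to s^\ast\circ\ell(T)$ is an equivalence, i.e.\ that $\widetilde{\ell}$ preserves cartesian edges. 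Consequently $\widetilde{\ell}$, together with the relative unit and counit (now natural transformations between cartesian-edge-preserving functors, hence restraightening), assembles into an adjunction $\ell\dashv f$ in $\Cond(\catinfty)$; here we use that condensed categories form a full subcategory of presheaves of categories, so that an adjunction of presheaves of categories between two condensed categories is already one in $\Cond(\catinfty)$. The parenthetical ``resp.'' variant is the left--right mirror image: $f$ is a left adjoint iff $f$ admits a right adjoint iff $f^{\op}$ (taken objectwise) admits a left adjoint, and ``horizontally right adjointable'' for $f$ becomes ``horizontally left adjointable'' for $f^{\op}$, so it reduces to the case already treated.

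The main obstacle is precisely this coherence step in the converse: upgrading the level-wise datum --- a left adjoint at each profinite set, together with commutation with restriction up to coherent equivalence --- to one condensed functor carrying a fully coherent adjunction. The relative adjoint functor theorem is the tool that accomplishes this in the fibrational model; the genuine work is the bookkeeping needed to apply it (recognising the adjointability condition as cartesian-edge-preservation of the relative adjoint, and keeping track of the direction of the various mates) together with checking that the passage between the fibrational and the hypersheaf pictures is compatible with the $(\infty,2)$-categorical structures, and hence with adjunctions.
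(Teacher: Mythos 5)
The paper does not actually prove this proposition: it is stated as a citation to \cite[Proposition 3.2.9]{MYoneda}, where Martini proves it directly within the framework of sheaves of $\infty$-categories on a topos, essentially by exhibiting the internal unit via an internal Yoneda/representability argument. Your proposal is therefore a genuinely different, self-contained route: you pass to the fibrational model by unstraightening, invoke the relative adjoint functor theorem to build the adjoint objectwise, and then use the horizontal adjointability hypothesis to recognise the relative adjoint as a cartesian-edge-preserving map, so that it restraightens. This is sound in outline, and the crucial observation --- that ``horizontally left adjointable'' for the naturality squares of $f$ is exactly the Beck--Chevalley condition expressing that the relative left adjoint of $\widetilde{f}$ preserves (co)cartesian edges --- is correct and is indeed where all the content lies.

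Two points deserve sharpening. First, the parenthetical ``obtained by passing to opposites'' is not quite right: opposing a cartesian fibration over $\ProFin$ together with the fibrewise adjunctions turns fibrewise \emph{left} adjoints into fibrewise \emph{right} adjoints, so the dual of \cite[Prop.\ 7.3.2.6]{lurieHigherAlgebra2022} would give the wrong variance. The cleaner move is to unstraighten $\mathcal{C}$ as a \emph{cocartesian} fibration over $\ProFin^{\op}$, whose fibres are still $\mathcal{C}(S)$; then HA~7.3.2.6 applies verbatim with fibrewise left adjoints. Second, the final reduction from an adjunction in $\PSh(\ProFin,\catinfty)$ to one in $\Cond(\catinfty)$ uses not just that the latter is a full subcategory of objects, but that it is a full \emph{sub-}$(\infty,2)$\emph{-category}, i.e.\ that the internal functor $\infty$-category of two condensed categories agrees with the one computed in presheaves. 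This holds because sheafification is left exact, making $\Cond(\catinfty)$ an exponential ideal; worth saying explicitly, since your argument silently uses this identification both here and in the forward direction when asserting that evaluation functors are morphisms of $(\infty,2)$-categories. With these two repairs the proof is complete, and it offers a concrete fibrational account of what Martini establishes more abstractly internally to the ambient topos.
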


\begin{defi}
	We define the $\infty$-category $\PrL_{\cond}$ of presentable condensed categories to be the subcategory of $\Cond(\catinfty)$ spanned by the presentable condensed categories and left adjoint functors between them.
\end{defi}

Evaluating at the point defines a functor $\Gamma \colon \PrL_{\cond} \to \PrL$. For later use we recall the following:

\begin{lem}
	\label{lem:colimits_in_Prlcond}
	The functor $\Gamma \colon \PrL_{\cond} \to \PrL$ preserves colimits.
\end{lem}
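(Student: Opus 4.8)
The plan is to deduce this from the classical fact that colimits in $\PrL$ are computed as limits of the associated diagrams of right adjoints, transported to the condensed setting via the internal dualisation between left and right adjoint presentable condensed categories. First I would recall the classical input: by \cite[\S~5.5.3]{MR2522659} there is an equivalence $\widehat{\PrL}\simeq(\widehat{\PrR})^{\op}$ which is the identity on objects and sends a left adjoint to its right adjoint, and the forgetful functor $\widehat{\PrR}\to\widehat{\catinfty}$ preserves small limits. Hence, for a small diagram $D\colon I\to\PrL$ with associated diagram of right adjoints $D^{R}\colon I^{\op}\to\widehat{\PrR}$, the colimit $\colim_{I}D$ formed in $\PrL$ has underlying $\infty$-category $\lim_{I^{\op}}D^{R}$, the limit being taken in $\widehat{\catinfty}$.

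Next I would transport this to the condensed world using \cite{MWColimits,MWPresentable}: the $\infty$-category $\PrL_{\cond}$ admits all small colimits, it is equivalent to $(\PrR_{\cond})^{\op}$ (where $\PrR_{\cond}$ is the $\infty$-category of presentable condensed categories and \emph{right} adjoint condensed functors) via the identity on objects and passage to right adjoints on morphisms, and limits in $\PrR_{\cond}$ are computed by the forgetful functor to the $\infty$-category of (very large) condensed categories, hence objectwise over $\ProFin$. Combining these, for a small diagram $D\colon I\to\PrL_{\cond}$ the colimit $\colim_{I}D$ is the condensed category
\[
	S\;\longmapsto\;\lim_{i\in I^{\op}}D(i)(S),
\]
where for each profinite set $S$ the limit is formed along the right adjoints of the transition functors of $D$ evaluated at $S$; that these objectwise right adjoints assemble into the transition functors of the dual diagram is precisely the content of \Cref{prop:cond_left_adj}.

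Finally I would evaluate at the point. Since $\Gamma$ is the restriction along $\{\ast\}\hookrightarrow\ProFin$, it commutes with the objectwise limit above, so
\[
	\Gamma(\colim_{I}D)\;\simeq\;\lim_{i\in I^{\op}}D(i)(\ast)\;=\;\lim_{i\in I^{\op}}\Gamma(D(i)),
\]
the limit being taken along the right adjoints of the functors $\Gamma(D(i\to j))$, again by \Cref{prop:cond_left_adj}. By the first paragraph this last limit is exactly $\colim_{I}(\Gamma\circ D)$ computed in $\PrL$, and one checks that the comparison map $\colim_{I}(\Gamma\circ D)\to\Gamma(\colim_{I}D)$ induced by the colimit cocone realises this equivalence; the case $I=\varnothing$ simply records that $\Gamma$ sends the zero condensed category to the zero category. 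This would complete the argument.

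The substantive inputs are the internal dualisation $\PrL_{\cond}\simeq(\PrR_{\cond})^{\op}$ and the fact that limits in $\PrR_{\cond}$ are computed objectwise; the main obstacle is therefore essentially bibliographical, namely locating the precise form of these two statements in \cite{MWColimits,MWPresentable}, after which the argument is routine bookkeeping — which presumably accounts for the authors' description of the lemma as well-known.
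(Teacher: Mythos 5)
Your argument is correct and is the natural unfolding of the cited fact; the paper's own ``proof'' is just a pointer to \cite[Proposition 2.6.3.2]{MWPresentable}, whose argument proceeds along exactly these lines. The chain of reductions you give — colimits in $\PrL$ are computed as limits of right adjoints in $\PrR$, the internal counterpart $\PrL_{\cond}\simeq(\PrR_{\cond})^{\op}$ with limits in $\PrR_{\cond}$ created by the forgetful functor to $\Cond(\catinfty)$ hence computed objectwise over $\ProFin$, and finally the observation that restriction to the point commutes with objectwise limits — is sound, and you correctly identify \Cref{prop:cond_left_adj} as what guarantees that passing to global sections turns condensed right adjoints into right adjoints of the global-sections left adjoints, so that the limit you land in is indeed the $\PrL$-colimit. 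One small point worth spelling out (you gesture at it) is that the objectwise limit of presentable condensed categories along right adjoints is again presentable condensed, i.e.\ that the adjointability squares of \Cref{def:cond_presentable} survive the limit; this is the condensed analogue of HTT 5.5.3.13 and is part of what the Martini--Wolf reference supplies. Since you flagged that the remaining work is locating the precise statements of the duality $\PrL_{\cond}\simeq(\PrR_{\cond})^{\op}$ and of limit-creation in $\PrR_{\cond}$ in \cite{MWColimits,MWPresentable}, and those do appear there, the proof is complete.
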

\begin{proof}
	See the proof of \cite[Proposition 2.6.3.2]{MWPresentable}.
\end{proof}

By \cite[\S 2.6]{MWPresentable} there is a symmetric monoidal structure $ - \otimes^{\cond} -$ on $\PrL_{\cond}$ that can be viewed as a condensed variant of the Lurie tensor product of presentable $\infty$-categories.

\begin{defi}
	A presentably symmetric monoidal condensed category is a commutative algebra $\mathcal{C} \in \CAlg(\PrL_{\cond})$.
\end{defi}

This may sound scary at first sight, but it admits the following more explicit description:

\begin{lem}
	A symmetric monoidal condensed category $\mathcal{C}$ is presentably symmetric monoidal if and only if the following three conditions are satisfied:
	\begin{enumerate}
		\item The underlying condensed category of $\mathcal{C}$ is presentable.
		\item For any $ S \in \ProFin$ the tensor product $ -\otimes - \colon \mathcal{C}(S) \times \mathcal{C}(S) \to \mathcal{C}(S)$ preserves colimits in each variable.
		\item For any map $s \colon S \to T$ of profinite sets, the left adjoint $s_! \colon \mathcal{C}(S) \to \mathcal{C}(T)$ of $s^*$ satisfies the projection formula, that is the canonical map
			\[
				s_!(a \otimes s^* b) \to s_!(a) \otimes b
			\]
			is an equivalence for any $a \in \mathcal{C}(S)$ and $b \in \mathcal{C}(T)$.
	\end{enumerate}
\end{lem}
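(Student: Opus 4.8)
\emph{Proof plan.} The plan is to unwind the definition of $\CAlg(\PrL_{\cond})$ through the universal property of the condensed Lurie tensor product $-\otimes^{\cond}-$ constructed in \cite{MWPresentable}. Recall that, just as in the non-condensed case, $\otimes^{\cond}$ is characterized by the property that for presentable condensed categories $\mathcal{C}$, $\mathcal{D}$, $\mathcal{E}$ the space of $\PrL_{\cond}$-morphisms $\mathcal{C}\otimes^{\cond}\mathcal{D}\to\mathcal{E}$ is naturally equivalent to the space of condensed functors $\mathcal{C}\times\mathcal{D}\to\mathcal{E}$ that preserve condensed colimits separately in each variable. Combining this with the standard manipulations of commutative algebras in a symmetric monoidal $\infty$-category (as carried out for $\PrL$ in \cite{lurieHigherAlgebra2022}), a commutative algebra structure in $\PrL_{\cond}$ on a presentable condensed category $\mathcal{C}$ is precisely the datum of a symmetric monoidal structure on $\mathcal{C}$ as a condensed category --- that is, a factorization of $\mathcal{C}\colon\ProFin^{\op}\to\catinfty$ through $\CAlg(\catinfty)$, which is exactly the hypothesis of the lemma --- such that the multiplication functor $\otimes\colon\mathcal{C}\times\mathcal{C}\to\mathcal{C}$ preserves condensed colimits separately in each variable (the unit map $\underline{\Cond}(\Ani)\to\mathcal{C}$ being automatically compatible, as it selects a single object). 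This already accounts for condition (1), and it remains to show that separate condensed cocontinuity of $\otimes$ is equivalent to the conjunction of (2) and (3).

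For the forward direction I would invoke the explicit description of condensed colimit-preserving functors: by the condensed adjoint functor theorem \cite{MWColimits}, a condensed functor between presentable condensed categories preserves condensed colimits iff it is a condensed left adjoint, and by \Cref{prop:cond_left_adj} this holds iff it is pointwise a left adjoint and all restriction squares are horizontally left adjointable. Spelling this out for $\otimes$ with the second variable restricted along the slice $\mathcal{C}_{|S}\to\mathcal{C}$ at an object $b\in\mathcal{C}(S)$, for varying $S\in\ProFin$: pointwise over a map $T\to S$ the functor in question is $-\otimes t^{*}b\colon\mathcal{C}(T)\to\mathcal{C}(T)$, so its being a left adjoint for all such $b$, $T$ is precisely condition (2) (and symmetrically for the first variable); and horizontal left adjointability of the restriction square along $t\colon T'\to T$ over $S$ says precisely that the canonical mate $t_{!}(x\otimes t^{*}b)\to t_{!}(x)\otimes b$ is an equivalence. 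Since the transition functors $s^{*}$ of a symmetric monoidal condensed category are by definition symmetric monoidal, so that $s^{*}(a\otimes b)\simeq s^{*}a\otimes s^{*}b$, this mate condition is exactly the projection formula of condition (3).

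Conversely, given (1)--(3), I would reassemble the witnessing data: condition (2) supplies the pointwise cocontinuity in each variable, condition (3) supplies the required adjointability of the restriction squares, and symmetric monoidality of the $s^{*}$ provides the coherences relating $x\otimes s^{*}b$ across the site; by \Cref{prop:cond_left_adj} this shows $\otimes\colon\mathcal{C}\times\mathcal{C}\to\mathcal{C}$ is separately condensed cocontinuous, hence factors through a morphism $\mathcal{C}\otimes^{\cond}\mathcal{C}\to\mathcal{C}$ in $\PrL_{\cond}$, which upgrades the given symmetric monoidal condensed category to an object of $\CAlg(\PrL_{\cond})$.

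The main obstacle, and the reason it is worth isolating this as a lemma, is the reduction in the first paragraph: in the condensed setting one cannot naively ``freeze a variable'' of a bifunctor, since objects of $\mathcal{C}$ live over varying profinite sets, so the passage between ``$\otimes$ preserves condensed colimits separately'' as a single condition on a bilinear condensed functor and the pointwise-plus-Beck--Chevalley pair (2)+(3) must be routed through the universal property of $\otimes^{\cond}$ together with the slice formalism $\mathcal{C}\mapsto\mathcal{C}_{|S}$ for condensed categories. Once this bookkeeping is in place, the two remaining identifications --- pointwise cocontinuity $\leftrightarrow$ (2), and the mate condition $\leftrightarrow$ the projection formula (3) --- are immediate from \Cref{prop:cond_left_adj}.
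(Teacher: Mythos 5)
Your proof takes essentially the same route as the paper: the paper's own proof is just a pointer ("unwinding the definitions and \Cref{prop:charac_of_left_adjoints}, see [reference]"), and what you write is exactly that unwinding, via the universal property of $\otimes^{\cond}$, the slice formalism $\mathcal{C}_{|S}$, and \Cref{prop:cond_left_adj}. One small mismatch is worth fixing, though. You cite \Cref{prop:cond_left_adj} as saying that a condensed functor is a left adjoint iff it is pointwise a left adjoint and the restriction squares are \emph{horizontally left} adjointable; but that clause is for a functor that \emph{admits} a left adjoint. For $-\otimes b$ to \emph{be} a condensed left adjoint, the proposition asks for a pointwise right adjoint and horizontal \emph{right} adjointability, which read literally says the internal $\Hom$s assemble: $t^*\underline{\Hom}(b,-)\simeq\underline{\Hom}(t^*b,t^*-)$. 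The mate you then actually compute, $t_!(x\otimes t^*b)\to t_!(x)\otimes b$, is the \emph{vertical}-left mate of the same square. Fortunately the two are equivalent here: both $t_!\dashv t^*$ and $(-\otimes b)\dashv\underline{\Hom}(b,-)$ exist (by (1) and (2) respectively), so transposing via these adjunctions and Yoneda shows the horizontal-right mate is invertible iff the vertical-left one is, and the latter is precisely the projection formula (3). So your argument lands in the right place, but as written there is a terminology slip between the clause of \Cref{prop:cond_left_adj} you invoke and the mate you compute; relabel the adjointability, or insert the one-line equivalence of the two mates, and the proof is complete and matches the paper's intent.
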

\begin{proof}
	This follows from unwinding the definitions and \Cref{prop:charac_of_left_adjoints}, see e.g. \cite[after Definition 2.15]{arXiv:2303.00736}.
\end{proof}

\begin{rem}
	Using the above Lemma it is easy to check that all examples in \Cref{ex:condensed-cats} are presentably symmetric monoidal.
\end{rem}

\begin{rem}
	\label{rem:2-func-of-tensor}
Note that the symmetric monoidal $\infty$-category $\mathcal{C}$ is cartesian closed and for two presentable condensed categories, the internal hom is given by the condensed subcategory
\[
	\underline{\Fun}^{\cond,\mathrm{L}}(\mathcal{C},\mathcal{D}) \subseteq \underline{\Fun}(\mathcal{C},\mathcal{D}) 
\]
spanned by the left adjoint functors (this essentially follows from the definition of the $-\otimes^{\cond} -$ as the universal bilinear functor, see \cite[\S 2.6]{MWPresentable}).
For a fixed presentable condensed category $\mathcal{T}$, we thus obtain a map
\[
	\underline{\Fun}^{\cond,\mathrm{L}}(\mathcal{C},\mathcal{D}) \to \underline{\Fun}^{\cond,\mathrm{L}}(\mathcal{C} \otimes^{\cond} \mathcal{T},\mathcal{D} \otimes^{\cond} \mathcal{T})
\]
that is natural in $\mathcal{C}$ and $\mathcal{D}$ as the transpose of the natural map
\[
	\underline{\Fun}^{\cond,\mathrm{L}}(\mathcal{C},\mathcal{D}) \otimes^{\cond} \mathcal{C} \otimes^{\cond} \mathcal{T} \xrightarrow{\mathrm{ev} \otimes \id} \mathcal{D} \otimes^{\cond} \mathcal{T}.
\]
Evaluating at the point we thus obtain a natural map of $\infty$-categories
\[\Fun^{\cond,\mathrm{L}}(\mathcal{C},\mathcal{D}) \to \Fun^{\cond,\mathrm{L}}(\mathcal{C} \otimes^{\cond} \mathcal{T},\mathcal{D} \otimes^{\cond} \mathcal{T})\]
and one furthermore checks that after passing to maximal subgroupoids, this functor recovers the action of the functor $ - \otimes^{\cond} \mathcal{T} \colon \PrL_{\cond} \to \PrL_{\cond}$ on mapping spaces.

In more coherent fashion, one can show that $ - \otimes^{\cond} -$ upgrades to an $(\infty,2)$-functor. Since we do not actually need the full coherence in this paper we do not spell this out here.
\end{rem}

\begin{lem}
	\label{lem:tensoring_adjoints}
	Let $f \colon \mathcal{C} \to \mathcal{D}$ be a map in $\PrL_{\cond}$ that admits a further left adjoint $g \colon \mathcal{D} \to \mathcal{C}$.
	Then for any $\mathcal{T} \in \PrL_{\cond}$ the functor $g \otimes^{\cond} \mathcal{T}$ is left adjoint to $f \otimes^{\cond} \mathcal{T}$.
\end{lem}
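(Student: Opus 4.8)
The plan is to realize the adjunction $g \dashv f$ inside the $\infty$-categories of left-adjoint condensed functors and then transport it along the action of $- \otimes^{\cond} \mathcal{T}$ on those functor categories provided by \Cref{rem:2-func-of-tensor}. This is the condensed avatar of the standard fact that tensoring $\PrL$ with a fixed presentable category is a functor that preserves adjunctions.

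First I would record that $g$ is itself a morphism of $\PrL_{\cond}$, so that $g \otimes^{\cond} \mathcal{T}$ makes sense. Indeed, $f$ admits $g$ as a left adjoint in $\Cond(\catinfty)$, so by \Cref{prop:cond_left_adj} the functor $g(S)$ is left adjoint to $f(S)$ for every profinite set $S$ and the squares attached to maps $s \colon S \to T$ are adjointable; this exactly says that $g$ is a left-adjoint functor between the presentable condensed categories $\mathcal{D}$ and $\mathcal{C}$. Hence $f$, $g$, their composites and the identities are all objects of the functor $\infty$-categories $\Fun^{\cond,\mathrm{L}}(-,-)$, and the unit $\eta \colon \id_{\mathcal{D}} \to fg$ and counit $\epsilon \colon gf \to \id_{\mathcal{C}}$ of $g \dashv f$ are morphisms in $\Fun^{\cond,\mathrm{L}}(\mathcal{D},\mathcal{D})$ and $\Fun^{\cond,\mathrm{L}}(\mathcal{C},\mathcal{C})$, subject to the two triangle identities, which are equalities of morphisms in $\Fun^{\cond,\mathrm{L}}(\mathcal{D},\mathcal{C})$ and $\Fun^{\cond,\mathrm{L}}(\mathcal{C},\mathcal{D})$ formulated using the composition functors of $\PrL_{\cond}$ and whiskering.

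Next I would apply the functors $\Phi_{\mathcal{A},\mathcal{B}} \colon \Fun^{\cond,\mathrm{L}}(\mathcal{A},\mathcal{B}) \to \Fun^{\cond,\mathrm{L}}(\mathcal{A} \otimes^{\cond} \mathcal{T}, \mathcal{B} \otimes^{\cond} \mathcal{T})$ of \Cref{rem:2-func-of-tensor}, which on objects send $h$ to $h \otimes^{\cond} \mathcal{T}$. The key point, which is where the work lies, is that by their naturality in $\mathcal{A}$ and $\mathcal{B}$ these functors are compatible with the composition bifunctors $\Fun^{\cond,\mathrm{L}}(\mathcal{B},\mathcal{E}) \times \Fun^{\cond,\mathrm{L}}(\mathcal{A},\mathcal{B}) \to \Fun^{\cond,\mathrm{L}}(\mathcal{A},\mathcal{E})$ and with the identity functors; equivalently, $- \otimes^{\cond} \mathcal{T}$ refines to an $(\infty,2)$-endofunctor of $\PrL_{\cond}$, as indicated at the end of \Cref{rem:2-func-of-tensor}. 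Granting this, $\Phi(\eta)$ and $\Phi(\epsilon)$ are transformations $\id_{\mathcal{D} \otimes^{\cond}\mathcal{T}} \to (f \otimes^{\cond}\mathcal{T})(g \otimes^{\cond}\mathcal{T})$ and $(g \otimes^{\cond}\mathcal{T})(f \otimes^{\cond}\mathcal{T}) \to \id_{\mathcal{C}\otimes^{\cond}\mathcal{T}}$, and applying the $\Phi$'s, which are functors compatible with composition and whiskering, to the triangle identities for $(\eta,\epsilon)$ produces the triangle identities for $(\Phi(\eta),\Phi(\epsilon))$. This exhibits $g \otimes^{\cond} \mathcal{T}$ as a left adjoint of $f \otimes^{\cond} \mathcal{T}$, as desired.

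The main obstacle is precisely upgrading the family $\{\Phi_{\mathcal{A},\mathcal{B}}\}$ to the structure used above. One option is to unwind \Cref{rem:2-func-of-tensor}: each $\Phi_{\mathcal{A},\mathcal{B}}$ is the transpose of the evaluation map $\underline{\Fun}^{\cond,\mathrm{L}}(\mathcal{A},\mathcal{B}) \otimes^{\cond} \mathcal{A} \otimes^{\cond} \mathcal{T} \to \mathcal{B} \otimes^{\cond} \mathcal{T}$, and the compatibility with composition and whiskering then follows from the associativity and unit coherences of $\otimes^{\cond}$ together with the naturality in both variables already recorded in that remark. The alternative is to invoke directly the full $(\infty,2)$-functoriality of $- \otimes^{\cond} -$ mentioned there, using that any functor of $(\infty,2)$-categories preserves adjunctions. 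Either way, once this $2$-functorial input is available the lemma is formal.
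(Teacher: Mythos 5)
Your proof is correct and takes essentially the same route as the paper: the paper's entire argument is the one line ``This is immediate from \Cref{rem:2-func-of-tensor}'', and your write-up simply unpacks what ``immediate'' means, namely that the functors $\Phi_{\mathcal{A},\mathcal{B}}$ supplied by that remark, being compatible with composition and identities (the $(\infty,2)$-functoriality the remark alludes to but does not spell out), transport the unit, counit and triangle identities of $g\dashv f$. Your observation that the homotopy-$2$-categorical fragment of this structure suffices, since adjunctions in an $(\infty,2)$-category are detected in its homotopy $2$-category, is a correct and welcome elaboration of the intended argument.
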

\begin{proof}
	This is immediate from \Cref{rem:2-func-of-tensor}.
\end{proof}

\begin{defi}
	We call a functor $f \colon \mathcal{C} \to \mathcal{D}$ of condensed categories fully faithful if for every $S \in \ProFin$ the functor $f(S)$ is fully faithful.
\end{defi}
\begin{lem}
	\label{lem:tensoring_adjoints_fully_faithful}
	Let $f \colon \mathcal{C} \to \mathcal{D}$ be a map in $\PrL_{\cond}$.
	Then for any $\mathcal{T} \in \PrL_{\cond}$, if the right adjoint $g$ of $f$ is fully faithful (\textit{i.e.} $f$ is a \emph{condensed localization}), then so is the right adjoint $g \otimes^{\cond} \mathcal{T}$ of $f \otimes^{\cond} \mathcal{T}$.
\end{lem}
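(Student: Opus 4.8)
The plan is to reformulate the hypothesis ``$g$ fully faithful'' as a property of the morphism $f$ of $\PrL_{\cond}$ alone — that $f$ is a condensed (Bousfield) localization — and then to transport this property across the internal hom of $\otimes^{\cond}$ from \Cref{rem:2-func-of-tensor}. Observe that \Cref{lem:tensoring_adjoints} does not apply directly: there both functors of the adjunction lie in $\PrL_{\cond}$, whereas here the right adjoint $g$ is in general not colimit preserving, so the adjunction $f\dashv g$ is not an adjunction in $\PrL_{\cond}$.

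First I would record the condensed avatar of a classical fact: for a morphism $f\colon\mathcal{C}\to\mathcal{D}$ in $\PrL_{\cond}$ with right adjoint $g$ (which exists by \Cref{prop:cond_left_adj}), the functor $g$ is fully faithful if and only if for every $\mathcal{E}\in\PrL_{\cond}$ the precomposition functor $f^{*}\colon\Fun^{\cond,\mathrm{L}}(\mathcal{D},\mathcal{E})\to\Fun^{\cond,\mathrm{L}}(\mathcal{C},\mathcal{E})$ is fully faithful. The ``only if'' direction is formal: if the counit $\varepsilon\colon fg\to\id_{\mathcal{D}}$ is an equivalence, then for $F,G\in\Fun^{\cond,\mathrm{L}}(\mathcal{D},\mathcal{E})$ an inverse to $f^{*}$ on mapping spaces is given by precomposition with $g$ together with the equivalences induced by $\varepsilon$. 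The ``if'' direction runs as in the non-condensed setting (see \cite{lurieHigherAlgebra2022}); it uses in addition that $f$ — hence also $f\otimes^{\cond}\mathcal{T}$ below — is essentially surjective, which holds because a colimit-preserving functor whose essential image contains a set of generators is essentially surjective. Both directions involve only adjunctions, fully faithful functors and colimits, which are governed pointwise on the site of profinite sets together with adjointability conditions by \cite{MWColimits,MWPresentable}, so the passage to the condensed setting is routine.

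Next, fix $\mathcal{T}\in\PrL_{\cond}$. I would use that $\underline{\Fun}^{\cond,\mathrm{L}}(-,-)$ is the internal hom for $\otimes^{\cond}$ (\Cref{rem:2-func-of-tensor} and \cite[\S 2.6]{MWPresentable}), so that there is a natural equivalence $\underline{\Fun}^{\cond,\mathrm{L}}(\mathcal{A}\otimes^{\cond}\mathcal{T},\mathcal{E})\simeq\underline{\Fun}^{\cond,\mathrm{L}}(\mathcal{A},\underline{\Fun}^{\cond,\mathrm{L}}(\mathcal{T},\mathcal{E}))$. Taking global sections and invoking the naturality spelled out in \Cref{rem:2-func-of-tensor}, the functor $(f\otimes^{\cond}\mathcal{T})^{*}\colon\Fun^{\cond,\mathrm{L}}(\mathcal{D}\otimes^{\cond}\mathcal{T},\mathcal{E})\to\Fun^{\cond,\mathrm{L}}(\mathcal{C}\otimes^{\cond}\mathcal{T},\mathcal{E})$ is identified with $f^{*}\colon\Fun^{\cond,\mathrm{L}}(\mathcal{D},\mathcal{B})\to\Fun^{\cond,\mathrm{L}}(\mathcal{C},\mathcal{B})$ for $\mathcal{B}=\underline{\Fun}^{\cond,\mathrm{L}}(\mathcal{T},\mathcal{E})$. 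The condensed category $\mathcal{B}$ is again presentable (internal homs of presentable condensed categories are presentable, \cite[\S 2.6]{MWPresentable}), so by the ``only if'' direction of the first step applied to the localization $f$, this functor is fully faithful. As $\mathcal{E}\in\PrL_{\cond}$ was arbitrary, the ``if'' direction of the first step applied to $f\otimes^{\cond}\mathcal{T}$ (which lies in $\PrL_{\cond}$ and hence has a right adjoint) shows that this right adjoint, denoted $g\otimes^{\cond}\mathcal{T}$, is fully faithful.

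The crux is the first step — the validity, internally to $\Cond(\Ani)$, of the characterization of reflective localizations by fully faithfulness of precomposition. I expect this to be straightforward given the pointwise nature of all the notions involved, but should one wish to avoid it, an alternative route is to first handle the case where $\mathcal{T}$ is a condensed presheaf category — where $f\otimes^{\cond}\mathcal{T}$ becomes left adjoint to postcomposition with $g$, and postcomposition with a fully faithful functor is fully faithful on functor categories — and then deduce the general case by presenting $\mathcal{T}$ as a condensed localization of such a presheaf category and chasing the resulting square of localizations. Everything else is formal manipulation of the internal hom.
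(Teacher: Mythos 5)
Your argument takes a genuinely different route from the paper, and one step of it as written does not hold up. The paper's proof is very short: it cites the identification $\mathcal{D}\otimes^{\cond}\mathcal{T} \simeq \underline{\Fun}^{\cond,\mathrm{R}}(\mathcal{T}^\op,\mathcal{D})$ (valid for \emph{any} presentable condensed $\mathcal{T}$, not only presheaf categories), observes that $g\otimes^{\cond}\mathcal{T}$ becomes postcomposition with $g$ under this identification, and invokes the internal analogue of the statement that postcomposition with a fully faithful functor is fully faithful. No localization theory is needed.

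Your primary argument instead tries to characterize Bousfield localizations by fully faithfulness of $f^*\colon \Fun^{\cond,\mathrm{L}}(\mathcal{D},\mathcal{E})\to\Fun^{\cond,\mathrm{L}}(\mathcal{C},\mathcal{E})$, and here there is a concrete gap. The ``if'' direction you appeal to requires knowing $f\otimes^{\cond}\mathcal{T}$ is essentially surjective, and the justification you give — ``a colimit-preserving functor whose essential image contains a set of generators is essentially surjective'' — is false: the free abelian group functor $\mathrm{Set}\to\mathrm{Ab}$ is colimit-preserving, its essential image contains the generator $\mathbb{Z}$, yet it is far from essentially surjective. The problem is that the essential image of a colimit-preserving functor need not be closed under colimits. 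Moreover, essential surjectivity of $f\otimes^{\cond}\mathcal{T}$ is essentially equivalent to what you are trying to prove (it holds because the to-be-constructed fully faithful right adjoint is a section), so it cannot be assumed without a separate argument; one correct option is to observe that pointwise at each profinite set one has the classical fact that tensoring an ordinary Bousfield localization in $\PrL$ with any presentable category yields a Bousfield localization, and that fully faithfulness of condensed functors is a pointwise condition.

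Your ``alternative route'' at the end is in fact closer to the paper's proof — it is exactly the postcomposition observation — but you unnecessarily restrict it to the case where $\mathcal{T}$ is a condensed presheaf category and then propose a reduction by presenting $\mathcal{T}$ as a localization of such. This reduction is not needed: the identification $\mathcal{D}\otimes^{\cond}\mathcal{T} \simeq \underline{\Fun}^{\cond,\mathrm{R}}(\mathcal{T}^\op,\mathcal{D})$ in \cite[Proposition~2.6.2.11]{MWPresentable} holds for all $\mathcal{T}\in\PrL_{\cond}$, which lets the argument go through directly and more cleanly than the sketched two-step reduction.
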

\begin{proof}From \cite[Proposition~2.6.2.11]{MWPresentable}, we have identifications:
	\[\ccal \otimes^{\cond} \tcal \simeq \underline{\Fun}^{\cond,\mathrm{R}}(\tcal^\op,\ccal) \hspace{1cm}\text{and}\hspace{1cm}\dcal \otimes^{\cond} \tcal \simeq \underline{\Fun}^{\cond,\mathrm{R}}(\tcal^\op,\dcal)\]
	where $\underline{\Fun}^{\cond,\mathrm{R}}(\acal,\bcal)$ is the condensed subcategory of $\underline{\Fun}^{\cond}(\acal,\bcal)$ spanned by condensed right adjoints for $\acal$ and $\bcal$ two condensed $\infty$-categories.
	The condensed functor $g \otimes^{\cond} \mathcal{T}$ identifies with composition with $g$ through the above equivalences. The result then follows from \cite[Proposition~3.8.4]{MYoneda}.
\end{proof}


\begin{lem}
	\label{lem:f-sharp-is-condensed}
	Let $f\colon Y\to X$ be a map of schemes. Then the functor $f_\sharp \colon \Sh(Y_\proet)\to \Sh(X_\proet)$ naturally enhances to a condensed left adjoint functor
	$f_\sharp \colon \underline{\Sh}(Y_\proet)\to\underline{\Sh}(X_\proet)$. 
\end{lem}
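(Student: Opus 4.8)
The plan is to realise the condensed enhancement of $f_\sharp$ as the left adjoint of the condensed functor $f^*\colon\underline{\Sh}(X_\proet)\to\underline{\Sh}(Y_\proet)$. First I would record that $f^*$ genuinely is a condensed functor: at a profinite set $K$ it is pullback along $f_K\coloneqq f\times\id_K\colon Y\times K\to X\times K$, and naturality in $K$ is base change for pro-étale pullback along the (cartesian) squares of schemes; the full coherence can be produced exactly as in the proof of \Cref{lem:some-operations-on-Dsolid}, by restricting the functoriality of $\Sh((-)_\proet)$ along $K\mapsto X\times K$ and $K\mapsto Y\times K$ and taking the natural transformation induced by $f$.

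Then the strategy is to apply \Cref{prop:cond_left_adj}: the condensed functor $f^*$ admits a left adjoint as soon as each $f_K^*$ does and the naturality squares (with the transition functors $s^*$ as vertical maps) are horizontally left adjointable. The first requirement is immediate: $f_K^*$ preserves all colimits, being a left adjoint of $(f_K)_*$, and preserves all limits by \Cref{lem:pullbackslimits}, so between presentable categories it admits a left adjoint, which is the usual $(f_K)_\sharp$; in particular the value at $K=*$ of the resulting condensed left adjoint is $f_\sharp$, as desired.

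The actual content, and the step I expect to take the most care, is the left adjointability: for $s\colon K'\to K$ a map of profinite sets, with base changes $s_X\colon X\times K'\to X\times K$ and $s_Y\colon Y\times K'\to Y\times K$ (both weakly étale), the Beck--Chevalley map
\[
	(f_{K'})_\sharp\circ s_Y^*\ \longrightarrow\ s_X^*\circ(f_K)_\sharp
\]
should be an equivalence. Since all four functors preserve colimits, it suffices to test this on the representable sheaves $\underline{V}$ attached to weakly étale $(Y\times K)$-schemes $V$, which generate $\Sh((Y\times K)_\proet)$ under colimits. There the verification is formal: $s_Y^*\underline{V}$ is represented by the base change $V\times_{Y\times K}(Y\times K')=V\times_K K'$; and $s_X^*$, being the inverse image of the étale morphism of topoi $\Sh((X\times K')_\proet)\simeq\Sh((X\times K)_\proet)_{/\underline{X\times K'}}\to\Sh((X\times K)_\proet)$, is given by $-\times\underline{X\times K'}$ and hence commutes with the finite-limit-preserving functor sending a scheme over $X\times K$ to its associated pro-étale sheaf; as $V\times_{X\times K}(X\times K')$ is again $V\times_K K'$, both sides of the displayed map are identified with the pro-étale sheaf on $X\times K'$ associated to the scheme $V\times_K K'$. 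With left adjointability in hand, \Cref{prop:cond_left_adj} produces the condensed left adjoint $f_\sharp\colon\underline{\Sh}(Y_\proet)\to\underline{\Sh}(X_\proet)$, completing the argument.
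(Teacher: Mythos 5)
Your overall strategy—realise $f_\sharp$ as the condensed left adjoint of the condensed functor $f^*\colon\underline{\Sh}(X_\proet)\to\underline{\Sh}(Y_\proet)$, pointwise existence of the left adjoints being clear, and apply \Cref{prop:cond_left_adj}—matches the paper's, and the first two steps are fine. The gap is in the Beck--Chevalley verification.

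After reducing to representables $\underline{V}$ with $V$ weakly \'etale over $Y\times K$, you identify both sides of the map $(f_{K'})_\sharp s_Y^*\underline V\to s_X^*(f_K)_\sharp\underline V$ with ``the pro-\'etale sheaf on $X\times K'$ associated to the scheme $V\times_K K'$.'' This rests on the implicit assertion that, for a weakly \'etale $Y\times K$-scheme $V$, the left adjoint $(f_K)_\sharp$ sends $\underline V$ to the sheaf on $(X\times K)_\proet$ obtained by sheafifying $W\mapsto\Hom_{X\times K}(W,V)$ (and similarly for $K'$). That description is \emph{not} what the exotic left adjoint $f_\sharp\dashv f^*$ does on representables of the small site when $f$ is not weakly \'etale. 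Concretely: take $f\colon Y=\Spec k\hookrightarrow X=\Spec k[\epsilon]/(\epsilon^2)$. Since the pro-\'etale topos is insensitive to nilpotents, $f^*$ is an equivalence and hence $f_\sharp=(f^*)^{-1}$; in particular $f_\sharp\underline Y=\underline X$ is the terminal sheaf. On the other hand, for any nonzero weakly \'etale $W\to X$ the ring $\Gamma(W)$ is faithfully flat over $k[\epsilon]/(\epsilon^2)$, so $\epsilon\neq 0$ in $\Gamma(W)$ and $\Hom_X(W,Y)=\emptyset$; the sheaf associated to $Y$ over $X$ is therefore initial. So $(f_K)_\sharp\underline V$ and the ``associated sheaf of $V$ over $X\times K$'' genuinely differ, and both sides of your Beck--Chevalley map are being computed incorrectly (even though the map is in fact an equivalence).

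The paper sidesteps the issue entirely: it checks the commutativity of the square with $(f_K)_\sharp$ horizontal and the condensed transition functors vertical by passing to right adjoints, which reduces the statement to $(f_K)^*(s_X)_*\simeq(s_Y)_*(f_{K'})^*$; this is base change for pushforward along the integral morphism $s_X$ (\cite[Proposition 6.18]{bastietal}), integrality holding because a map of profinite sets is pro-(finite \'etale). To repair your version you should replace the representable computation by this right-adjoint/integral base change argument, or else prove directly the projection formula $f_\sharp(\mathcal F\times f^*\mathcal G)\xrightarrow{\sim}f_\sharp\mathcal F\times\mathcal G$ for the exotic $f_\sharp$, from which the required Beck--Chevalley identity for the \'etale slicings $s_X$, $s_Y$ would follow.
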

\begin{proof}
	To see that we have a condensed functor, we need to prove that if $g\colon K\to S$ is a map of profinite sets, the square
\[\begin{tikzcd}
	{\Sh((Y\times K)_\proet)} & {\Sh((X\times K)_\proet)} \\
	{\Sh((Y\times S)_\proet)} & {\Sh((X\times S)_\proet)}
	\arrow["{f_\sharp^K}", from=1-1, to=1-2]
	\arrow["{g^*_Y}"', from=1-1, to=2-1]
	\arrow["{g_X^*}", from=1-2, to=2-2]
	\arrow["{f_\sharp^S}"', from=2-1, to=2-2]
\end{tikzcd}\] commutes. Passing to right adjoints and noting that  $g_X$ is integral, this follows from integral base change (\cite[Proposition 6.18]{bastietal}). As the above square is horizontally right adjointable, $f_\sharp$ is indeed a condensed left adjoint. 
\end{proof}

From this point on, we let $\Lambda$ be a solid $\widehat{\Z}_\mathcal{P}$-algebra where $\mc{P}$ is as before.
\begin{defi}
	\label{defi:SolidCat}
    Let $\ccal\in \Mod_{\underline{\Mod}_\Lambda}(\PrL_\mathrm{cond})$, the \emph{solidification} of $\ccal$ is the tensor product \[\ccal^\blacksquare \coloneqq  \ccal\otimes^\mathrm{cond}_{\underline{\Mod}_\Lambda}\underline{\mathrm{Solid}}_\Lambda.\]\end{defi}

We now have a second way of defining a category of solid pro-\'etale sheaves, that is we may consider the underlying category of the solidification $\underline{\D}(X_\proet,\Lambda)^\blacksquare $ of $\underline{\D}(X_\proet,\Lambda) $ in the above sense.
The next proposition shows that this agrees with the category of \Cref{defi:SolidAnyCoeff}.

\begin{prop}\label{LikeFS}
    Let $X$ be a qcqs scheme. Then \Cref{defi:SolidCat} does not create a clash of notations: 
	the $\infty$-category $\D(X,\Lambda)^{\blacksquare}$ of \Cref{defi:SolidAnyCoeff} is naturally equivalent to $\underline{\D}(X_\proet,\Lambda)^\blacksquare(*)$.
\end{prop}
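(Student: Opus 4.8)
The plan is to realise $\underline{\D}(X_\proet,\Lambda)^\blacksquare$ as an explicit reflective condensed subcategory of $\underline{\D}(X_\proet,\Lambda)$, whose value at the point will then visibly be the category of \Cref{defi:SolidAnyCoeff}. First I would show that $\underline{\mathrm{Solid}}_\Lambda$ is a symmetric monoidal condensed Bousfield localization of $\underline{\Mod}_\Lambda$. For every profinite set $S$ the inclusion $\D(S,\Lambda)^\blacksquare\hookrightarrow\D(S_\proet,\Lambda)$ is reflective with symmetric monoidal left adjoint $(-)^\blacksquare$ (\Cref{prop:derivedSolid_Lambda}). These inclusions are compatible with the transition functors $s^{*}$ because $s^{*}$ preserves solid objects and commutes with solidification (\Cref{prop:pullback_of_solid_remains_solid}, applied to the maps of qcqs schemes underlying maps of profinite sets); hence by \Cref{prop:cond_left_adj} they assemble into a condensed right adjoint $\underline{\mathrm{Solid}}_\Lambda\to\underline{\Mod}_\Lambda$ whose left adjoint is $(-)^\blacksquare$. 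As $(-)^\blacksquare$ is symmetric monoidal levelwise and compatible with the monoidal transition functors, this is a localization in $\CAlg(\PrL_{\cond})$; equivalently, $\underline{\mathrm{Solid}}_\Lambda$ is an idempotent commutative $\underline{\Mod}_\Lambda$-algebra.

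Next I would base change. The condensed category $\underline{\D}(X_\proet,\Lambda)$ is a commutative algebra in $\Mod_{\underline{\Mod}_\Lambda}(\PrL_{\cond})$ via the evident pullback along $X\to *$, so its solidification $\underline{\D}(X_\proet,\Lambda)^\blacksquare=\underline{\D}(X_\proet,\Lambda)\otimes^{\cond}_{\underline{\Mod}_\Lambda}\underline{\mathrm{Solid}}_\Lambda$ is obtained by base change of the localization just constructed. Because $\underline{\mathrm{Solid}}_\Lambda$ is idempotent over $\underline{\Mod}_\Lambda$, this base change is again a localization: this is the condensed analogue of \cite[Corollary~1.46]{RamziDualizable} used for the non-condensed \eqref{eq:solidLabdaTensor}, and also follows from the idempotent-algebra formalism of \cite[\S4.8.2]{lurieHigherAlgebra2022} in the presentable symmetric monoidal $\infty$-category $\Mod_{\underline{\Mod}_\Lambda}(\PrL_{\cond})$ together with \Cref{lem:tensoring_adjoints_fully_faithful}. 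Thus the unit $\underline{\D}(X_\proet,\Lambda)\to\underline{\D}(X_\proet,\Lambda)^\blacksquare$ is a condensed left adjoint with fully faithful right adjoint, exhibiting $\underline{\D}(X_\proet,\Lambda)^\blacksquare$ as a reflective condensed subcategory of $\underline{\D}(X_\proet,\Lambda)$ whose inclusion moreover preserves colimits (the localization is smashing, as in the pointwise statement of \Cref{prop:derivedSolid_Lambda}). Evaluating at the point yields a reflective localization $\D(X_\proet,\Lambda)\to\underline{\D}(X_\proet,\Lambda)^\blacksquare(*)$; it remains to identify its essential image inside $\D(X_\proet,\Lambda)$.

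For this, I would present the localization of the first step by the explicit morphisms of $\underline{\Mod}_\Lambda$ given at the level of the point by the maps $j_\sharp\Lambda\to j^\blacksquare_\sharp\Lambda$ for $j\colon K\to *$ with $K$ profinite; pulling these back along $S\to *$ gives the generators $j_\sharp\Lambda\to j^\blacksquare_\sharp\Lambda$ of the solidification of $\D(S_\proet,\Lambda)$, by \Cref{prop:pullback_of_solid_remains_solid}, \Cref{thm:solid} and the discussion of alternative generators in \Cref{rem:diff-generators}. Base changing along $X\to *$ and applying \Cref{prop:pullback_of_solid_remains_solid} once more, the localization of the second step is the one at (the images of) the maps $j_\sharp\Lambda\to j^\blacksquare_\sharp\Lambda$ for $j\colon X\times K\to X$. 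Since solid complexes are closed under $\underline{\Hom}(M,-)$ for arbitrary $M$ — a formal consequence of \Cref{prop:derivedSolid} and symmetric monoidality of $(-)^\blacksquare$ — an object $F$ is local for this class if and only if $\underline{\Hom}(M,F)$ is local for the $j_\sharp\Lambda\to j^\blacksquare_\sharp\Lambda$ for every $M$; with $M=\un$ this says $F$ is solid in the sense of \Cref{thm:solid}(5), and conversely when $F$ is solid one computes $\underline{\Hom}(j^\blacksquare_\sharp\Lambda,F)\simeq\underline{\Hom}(j_\sharp\Lambda,F)\simeq j_*j^*F$ from \Cref{thm:solid}(5) and its $\Lambda$-linear consequence. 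To pass from the generators coming from $X\times K\to X$ to all generators of \Cref{defi:solid-generator} when $X$ is not affine, I would use that solidity can be tested after a surjective pullback (\Cref{prop:pullack-detects-solid}), together with the compatibility of the whole picture with pullback (\Cref{lem:some-operations-on-Dsolid}); this reduces to the affine case and identifies $\underline{\D}(X_\proet,\Lambda)^\blacksquare(*)$ with $\D(X,\Lambda)^\blacksquare$, compatibly with the monoidal structures and the solidification functors.

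The main difficulty lies in this last step, and specifically in the fact that condensed tensor products — hence the localizations they produce — are not computed sectionwise: one must be careful to present both localizations by the \emph{same} explicit class of morphisms of $\D(X_\proet,\Lambda)$, and to control how ``being local'' for the base-changed condensed localization translates, at the point, into the pullback-stable (hence genuinely solid) condition, which is where \Cref{prop:pullack-detects-solid} and \Cref{prop:pullback_of_solid_remains_solid} intervene. A secondary technical point is to make the condensed base-change statement of the second step fully precise; the idempotent-algebra argument should carry over to $\PrL_{\cond}$ verbatim, using \Cref{lem:colimits_in_Prlcond} and \Cref{lem:tensoring_adjoints_fully_faithful}.
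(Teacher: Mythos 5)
Your proposal follows the same broad strategy as the paper — both identify the two candidate categories as reflective subcategories of $\D(X_\proet,\Lambda)$ and then try to see that they coincide — but your identification step has a genuine gap. The crux is the claim that, after base change, ``the localization of the second step is the one at (the images of) the maps $j_\sharp\Lambda\to j^\blacksquare_\sharp\Lambda$ for $j\colon X\times K\to X$.'' This is not justified, and in fact cannot be literally true. The condensed tensor product is not computed sectionwise, so $\underline{\D}(X_\proet,\Lambda)^\blacksquare(*)$ is not the localization of $\D(X_\proet,\Lambda)$ at the sectionwise base change of the point-level generators of $\underline{\mathrm{Solid}}_\Lambda$. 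And it is just as well that it is not: localizing only at the constant pro-\'etale covers $X\times K\to X$ would give a strictly bigger category than $\D(X,\Lambda)^\blacksquare$, because for a general (even affine) $X$ the generators of \Cref{defi:solid-generator} involve pro-\'etale affine covers that are not products with a profinite set — for instance $\Spec(\overline{\Q})\to\Spec(\Q)$. So the essential images of the two reflections do not obviously match, and your proposed presentation by ``the same'' class of morphisms fails before the final step even begins. The subsequent ``closure under internal hom'' argument does not fix this: it is circular, since it presupposes that being local to only the $X\times K$ maps already coincides with solidity in the sense of \Cref{thm:solid}(5), which is exactly what needs proof.

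Your secondary plan — testing solidity after a surjective pullback via \Cref{prop:pullack-detects-solid} and ``reducing to the affine case'' — is the right instinct but lands short in two ways. First, the reduction has to go further than affine: one must reduce to a $0$-dimensional reduced $E$ with separably closed residue fields (\Cref{lem:strictly_profinite_cover}), since that is the only setting where $\Sh(E_\proet)\simeq\Cond(\Ani)_{/\pi_0(E)}$ and the generator mismatch disappears. Second, and more substantively, to carry out the reduction one must know that the \emph{condensed} solidification $(-)^{\blacksquare'}$ commutes with $f^*$ along the scheme map $f\colon E\to X$. Your adjointability check only covers transition maps $s\colon S\to T$ between profinite sets; the adjointability needed here is a different one, for pullback squares along scheme morphisms, and the paper establishes it by exhibiting $f_\sharp$ as a condensed left adjoint (\Cref{lem:f-sharp-is-condensed}) and tensoring. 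Once that is in place, the paper does not identify any class of inverted morphisms at all: it reduces to $X=E$ and then directly computes $\underline{\D}(E_\proet,\widehat{\Z}_\mathcal{P})^\blacksquare(*)\simeq\underline{\mathrm{Solid}}_{\widehat{\Z}_\mathcal{P}}(\pi_0(E))$ via the internal-hom formula $\ccal\otimes^{\cond}\tcal\simeq\underline{\Fun}^{\cond,\mathrm{R}}(\tcal^\op,\ccal)$ and \Cref{lem:strictly_profinite_cover}. If you want to keep the ``present both sides as localizations at the same morphisms'' route, you would need to show that the condensed tensor construction actually inverts the full class of pro-\'etale generators at the point, and I do not see a way to do this without essentially re-deriving the $E$-reduction plus the explicit formula.
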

\begin{proof}
	We may assume that $\Lambda = \widehat{\Z}_\mathcal{P}$.
    Note that both categories in discussion are reflective subcategories of $ \D(X_\proet,\widehat{\Z}_\mathcal{P}) $.
	For the second one this follows using \Cref{lem:tensoring_adjoints_fully_faithful} with the observation that $\underline{\D}(X_\proet,\Lambda)^\blacksquare \simeq \underline{\Sh}(X_{\proet}) \otimes^{\cond} \underline{\mathrm{Solid}}_\Lambda$.
	We have two solidification functors
    \[
    	(-)^{\blacksquare} \colon \D(X_\proet,\widehat{\Z}_\mathcal{P}) \to \D(X,\widehat{\Z}_\mathcal{P})^\blacksquare
    \]
    and
    \[
    	(-)^{\blacksquare'} \colon \D(X_\proet,\widehat{\Z}_\mathcal{P}) \to \underline{\D}(X_\proet,\widehat{\Z}_\mathcal{P})^{\blacksquare}(\ast).
    \]
    Our goal is to show that these two reflective subcategories agrees.
	At first suppose that we are given a map of schemes $ f \colon Y \to X $. We claim that the commutative square
	\[\begin{tikzcd}
		{\underline{\D}(X_\proet,\widehat{\Z}_\mathcal{P})} & {\underline{\D}(Y_\proet,\widehat{\Z}_\mathcal{P})} \\
		{\underline{\D}(X_\proet,\widehat{\Z}_\mathcal{P})^\blacksquare} & {\underline{\D}(Y_\proet,\widehat{\Z}_\mathcal{P})^\blacksquare}
		\arrow["{f^*}", from=1-1, to=1-2]
		\arrow["{(-)^{\blacksquare'}}"', from=1-1, to=2-1]
		\arrow["{(-)^{\blacksquare'}}", from=1-2, to=2-2]
		\arrow["{f^*}"', from=2-1, to=2-2]
	\end{tikzcd}\]
	(the fact that it commutes is just bifunctoriality of $ - \otimes^{\cond} - $) is vertically right adjointable.
	This follows because the square that we get after passing to right adjoints vertically is the square that we get from passing to right adjoints (horizontally and vertically) in
	\[\begin{tikzcd}[column sep=large]
		{\underline{\D}(X_\proet,\widehat{\Z}_\mathcal{P})} & {\underline{\D}(Y_\proet,\widehat{\Z}_\mathcal{P})} \\
		{\underline{\D}(X_\proet,\widehat{\Z}_\mathcal{P})^\blacksquare} & {\underline{\D}(Y_\proet,\widehat{\Z}_\mathcal{P})^\blacksquare}
		\arrow["{(-)^{\blacksquare'}}"', from=1-1, to=2-1]
		\arrow["{f_{\sharp}}"', from=1-2, to=1-1]
		\arrow["{(-)^{\blacksquare'}}", from=1-2, to=2-2]
		\arrow["{f_{\sharp} \otimes^{\cond} \underline{\mathrm{Solid}}_{\widehat{\Z}_\mathcal{P}}}", from=2-2, to=2-1]
	\end{tikzcd}\]
	where the bottom line is obtained by tensoring the condensed functor from \Cref{lem:f-sharp-is-condensed} with $\underline{\mathrm{Solid}}_{\widehat{\Z}_\mathcal{P}}$.
	Since this square commutes by bifunctoriality of $ - \otimes^{\cond} - $ we have shown the claim.

	Now pick a surjective morphism $f \colon E \to X $ of schemes where $ E $ is $ 0 $-dimensional and all local rings are separably closed fields using \Cref{lem:strictly_profinite_cover}.
	A complex $K$ in $\D(X_\proet,{\widehat{\Z}_\mathcal{P}})$ is contained in $ \underline{\D}(X_\proet,{\widehat{\Z}_\mathcal{P}})^{\blacksquare}(\ast) $ if and only if $ f^*K$ is contained in $ \underline{\D}(E_\proet,{\widehat{\Z}_\mathcal{P}})^{\blacksquare}(\ast) $: indeed $ f^* $ is conservative by \Cref{lem:conservativity-of-pullbacks} and applying $ f^* $ to the canonical map $ K\to K^{\blacksquare'}$ gives the canonical map $ f^* K \to (f^*K)^{\blacksquare'} $, by the adjointability that we have just shown.
	Since the same is true in $ \D(X,{\widehat{\Z}_\mathcal{P}})^{\blacksquare} $ by \Cref{prop:pullack-detects-solid}, we have reduced the Proposition to $ X =E $. 
	But in this case
	\[
		\D(E_\proet,{\widehat{\Z}_\mathcal{P}})^\blacksquare(\ast) = \underline{\mathrm{Solid}}_{{\widehat{\Z}_\mathcal{P}}}(\pi_0(E)) =  \D(E,{\widehat{\Z}_\mathcal{P}})^{\blacksquare}
	\]
	as full subcategories of $ \D(E_\proet,{\widehat{\Z}_\mathcal{P}}) $.
	Here the first equality follows from \cite[Proposition 2.6.2.11]{MWPresentable}, \cite[Theorem 7.1.1]{MWColimits} and \Cref{lem:strictly_profinite_cover}.
	The second one is true by definition of $ \underline{\mathrm{Solid}}_{{\widehat{\Z}_\mathcal{P}}}(\pi_0(E)) $.
	This completes the proof.
\end{proof}


\section{Pro-étale motives.}\label{sec:proetmotives}
In this section, we construct the category of pro-étale motivic spectra over a base scheme and we prove that they form a coefficient system, affording a full six functors formalism\footnote{The only caveat is that the exceptional functors will only be defined for finitely presented maps and not for arbitrary maps of finite type which is the usual assumption.}. Our construction closely follows classical material from \cite{MR1813224,MR2423375,AyoubPartII,MR3971240,khan,MR3570135}.
We begin in \Cref{sec:weaklysmooth} by defining the desired category of pro-étale motivic spectra based on the notion of a \emph{weakly smooth} scheme, \Cref{def:weaklysmooth}.
In \Cref{sec:functoriality} we study the basic functoriality properties of this construction and especially investigate the push-forward along a closed immersion \Cref{Proposition: i_* pre colimits}.
This is one of the ingredients for the proof of the \emph{localization theorem} in this setting, which we prove in \Cref{sec:localization}.
Given the localization theorem, we can follow the general construction principles of motivic six functor formalisms.
The only difference is that in our setting is the localization theorem only holds for quasi-compact open immersions.
In \Cref{sec:ambidexterity} and \Cref{sec:sixfunctors} we verify that the key steps in the construction of a motivic six functor formalism still go through in that case, if one only aims to to construct the exceptional operations for finitely presented morphisms.
Finally, in \Cref{sec:embeddingetalemotives} we show that under suitable cohomological finiteness assumptions, \'etale motives embed into pro-\'etale motives.

\subsection{Basic definitions and properties}\label{sec:weaklysmooth}
\begin{defi}
	Let $X$ be a scheme.
	We define the \textit{large pro-\'etale site of $S$-schemes} $\Sch_X^{\proet}$ to be the category of all schemes over $X$, equipped with the Grothendieck-topology generated by collections of weakly \'etale morphisms that are coverings in the fpqc topology.
	In particular we may consider the \textit{large pro-\'etale $\infty$-topos} of hypercomplete sheaves of anima on this site.
	We denote this $\infty$-category by $\Sh_{\proet}(X).$
\end{defi}


\begin{defi} \label{def:BigSH} Let $X$ be a scheme.
	\begin{enumerate}
		\item We define $ L_{\A^1}\Sh_{\proet}(X) $ to be the reflective subcategory of $ \Sh_{\proet}(X) $ spanned by $\mb{A}^1$-invariant sheaves. We denote by $L_{\mathbb{A}^1}$ the localization functor.
		\item We define $ L_{\A^1}\Sh_{\proet,\bullet}(X) $ to be the category of pointed objects in $ L_{\A^1}\Sh_{\proet}(X) $.
		 \item We define $L_{\A^1}\Sh_{\proet}^{\mathrm{S}^1}(X)$ to be the stabilization of $ L_{\A^1}\Sh_{\proet,\bullet}(X) $. There is an infinite suspension functor 
		 \[\Sigma^\infty \colon L_{\A^1}\Sh_{\proet,\bullet}(X)\to L_{\A^1}\Sh_{\proet}^{\mathrm{S}^1}(X).\]
	\end{enumerate}
\end{defi}


	By composing left adjoints we get a \emph{motivic localization functor}
	\[
		L_{\mot} \colon \PSh(\Sch_X) \rightarrow L_{\A^1}\Sh_{\proet}(X)
	\]
	which can be described more explicitly following the ideas of Morel and Voevodsky.
	To that end, recall the algebraic $ n$-simplex $$\Delta_X^n\coloneqq \Spec(\Z[t_0,\dots,t_n]/(\sum_{i=0}^n t_i -1))\times_{\Spec(\Z)} X$$
	 over $ X $. When $n$ varies, these assemble to a simplicial $X$-scheme
	\[
		\Delta^\bullet_X \colon \Delta^\op \rightarrow \Sch_X.
	\]
	For any presheaf $ F $ on $ \Sch_X $ we define
	\[
		\Sing(F) = \colim_{n \in \Delta} F(- \times \Delta^n_X).
	\]
	This assembles to a functor $ \Sing \colon \PSh(\Sch_X) \rightarrow \PSh(\Sch_X) $.
	We write $ \PSh^{\mathbb{A}^1}(\Sch_X) $ for the full subcategory spanned by the $ \A^1 $-invariant presheaves.
	Then the same arguments as in \cite[Theorem~4.25]{zbMATH06863810} show:

\begin{lem}
	Let $X$ be a scheme. The functor $ \Sing(-) $ lands in the full subcategory $ \PSh^{\mathbb{A}^1}(\Sch_{X}) $ and defines a left adjoint to the inclusion functor.
\end{lem}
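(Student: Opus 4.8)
The plan is to exhibit $\Sing$ as the reflector onto $\PSh^{\A^1}(\Sch_X)$ by verifying the hypotheses of the standard criterion for reflective localizations, \cite[Proposition 5.2.7.4]{MR2522659}. Write $\eta\colon \id\to\Sing$ for the natural transformation given by the inclusion of the $0$-simplices, and recall that as cosimplicial $X$-schemes one has $\Delta^\bullet_X\simeq\A^\bullet_X$, since $\Delta^n_X=\Spec(\Z[t_0,\dots,t_n]/(\sum_{i=0}^n t_i-1))\times_{\Spec(\Z)}X\cong\A^n_X$. It then suffices to prove: (i) $\Sing(F)$ is $\A^1$-invariant for every presheaf $F$ on $\Sch_X$; and (ii) $\eta_G\colon G\to\Sing(G)$ is an equivalence whenever $G$ is $\A^1$-invariant. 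Granting these, for any $F$ the maps $\Sing(\eta_F)$ and $\eta_{\Sing(F)}$ are both equivalences $\Sing(F)\iso\Sing(\Sing(F))$ — the second by (ii) applied to the $\A^1$-invariant presheaf $\Sing(F)$, the first by the usual idempotency of the singular construction — so \cite[Proposition 5.2.7.4]{MR2522659} identifies $\Sing$, viewed as a functor into $\PSh^{\A^1}(\Sch_X)$, with a left adjoint to the inclusion.

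For (ii): an $\A^1$-invariant presheaf is $\A^n$-invariant for every $n$ by an immediate induction, the projection $Y\times_X\A^n_X\to Y$ factoring through $Y\times_X\A^{n-1}_X$. Hence, for $G$ $\A^1$-invariant and $Y\in\Sch_X$, the map $G(Y)=G(Y\times_X\Delta^0_X)\to G(Y\times_X\Delta^n_X)$ induced by the structure map $\Delta^n_X\to X$ is an equivalence for every $n$. As every structure map of $\Delta^\bullet_X$ lies over $X$, these equivalences are compatible with the cosimplicial structure and assemble into a levelwise equivalence from the constant simplicial presheaf with value $G$ to $[n]\mapsto G(-\times_X\Delta^n_X)$; passing to geometric realizations gives $G\iso\Sing(G)$, and one checks this is $\eta_G$.

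For (i) — the geometric heart of the argument — fix $Y\in\Sch_X$; we must show that $p^*\colon\Sing(F)(Y)\to\Sing(F)(Y\times_X\A^1_X)$ is an equivalence, $p$ being the projection. With $i_0\colon Y\to Y\times_X\A^1_X$ the zero section we have $p\circ i_0=\id_Y$, whence $i_0^*\circ p^*=\id$ on $\Sing(F)(Y)$; it remains to produce a homotopy $p^*\circ i_0^*\simeq\id$ on $\Sing(F)(Y\times_X\A^1_X)$. This uses that $\A^1$ is $\A^1$-contractible: the multiplication $\mu\colon\A^1\times\A^1\to\A^1$ is a naive $\A^1$-homotopy from $\id_{\A^1}$ to the constant map at $0$. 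The functor $\Sing$ turns naive $\A^1$-homotopies into simplicial homotopies — this is the formal mechanism behind \cite[Theorem~4.25]{zbMATH06863810}, and its proof goes through verbatim for presheaves on $\Sch_X$ — so $\id$ and $i_0\circ p$ induce homotopic self-maps of $\Sing(F)(Y\times_X\A^1_X)$, giving the sought homotopy inverse. All of this happens before sheafification, so no topological subtlety enters; the only point beyond \emph{loc.\ cit.} is naturality in $Y\in\Sch_X$, which holds because $\Sch_X$ has fibre products over $X$.

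The step I expect to be the main obstacle is (i): one has to carry the Morel--Voevodsky argument that $\Sing$ inverts naive $\A^1$-homotopy equivalences over to the large pro-\'etale site $\Sch_X$. Since that argument is purely formal once one has the cosimplicial object $\Delta^\bullet_X$ and the multiplicative contraction of $\A^1$ — both available over an arbitrary base — no genuinely new difficulty arises, which is exactly why the excerpt states that the proof is the same as that of \cite[Theorem~4.25]{zbMATH06863810}.
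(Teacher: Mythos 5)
Your proposal is correct and is precisely the argument the paper has in mind: the paper gives no independent proof, stating only that ``the same arguments as in \cite[Theorem~4.25]{zbMATH06863810} show'' the lemma, and your write-up is a faithful expansion of that argument (verifying the reflective-localization criterion of \cite[Proposition~5.2.7.4]{MR2522659} by first showing $\A^1$-invariance of $\Sing(F)$ via the multiplicative contraction of $\A^1$, then checking the unit is an equivalence on $\A^1$-invariant presheaves). The only point you leave slightly compressed is the ``usual idempotency'' showing $\Sing(\eta_F)$ is an equivalence, which follows from the swap symmetry of the double colimit $\Sing\Sing F(Y)\simeq\colim_{n,m}F(Y\times_X\Delta^n_X\times_X\Delta^m_X)$ interchanging $\Sing(\eta_F)$ and $\eta_{\Sing F}$; this is standard and in line with the level of detail the cited reference supplies.
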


\begin{prop}\label{prop:motiviclocpreprod}
	Let $X$ be a scheme. The motivic localization functor $ L_{\mot} \colon \PSh(\Sch_{X}) \rightarrow L_{\A^1}\Sh_{\proet}(X) $ commutes with finite products.
\end{prop}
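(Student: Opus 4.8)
The plan is to factor $L_{\mot}$ as the composite of left adjoints
\[ \PSh(\Sch_X) \xrightarrow{\,L_\proet\,} \Sh_\proet(X) \xrightarrow{\,L_{\A^1}\,} L_{\A^1}\Sh_\proet(X) \]
and to treat the two factors separately. Since $L_{\A^1}\Sh_\proet(X)\subseteq \Sh_\proet(X)\subseteq\PSh(\Sch_X)$ are reflective subcategories, the inclusions preserve all limits, so finite products in every one of these three categories are computed in $\PSh(\Sch_X)$; thus it suffices to show that $L_\proet$ and $L_{\A^1}$ each preserve finite products. For $L_\proet$ there is nothing new: pro-\'etale hypersheafification is a left exact localization of $\infty$-topoi, so it preserves all finite limits, in particular finite products.

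For $L_{\A^1}$ I would use the explicit Morel--Voevodsky model recalled above. First, $\Sing\colon\PSh(\Sch_X)\to\PSh(\Sch_X)$ preserves finite products: evaluated at any $U\in\Sch_X$, $\Sing(F)(U)$ is the geometric realization of the simplicial anima $[n]\mapsto F(U\times\Delta^n_X)$, and geometric realizations (colimits over the sifted category $\Delta^\op$) commute with finite products in $\Ani$, so this can be checked objectwise. Next, by the same argument proving the preceding lemma (cf. \cite[Theorem~4.25]{zbMATH06863810}), the $\A^1$-local and pro-\'etale-local reflections are accessible, so there is a regular cardinal $\kappa$ for which
\[ L_{\mot}(F)\;\simeq\;\colim_{\alpha<\kappa} T^\alpha(F), \qquad T\coloneqq L_\proet\circ\Sing, \]
where $T^{\alpha+1}=T\circ T^\alpha$, $T^\lambda=\colim_{\alpha<\lambda}T^\alpha$ at limit ordinals, and all colimits are formed in $\PSh(\Sch_X)$.

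One then checks by transfinite induction that every $T^\alpha$ preserves finite products: this is clear for $\alpha=0$; at successors it follows from the two previous observations ($T$ being a composite of $\Sing$ and $L_\proet$); and at limit ordinals $T^\lambda$ is a filtered colimit of finite-product-preserving functors, hence preserves finite products, because filtered colimits commute with finite products in $\Ani$ and everything is computed objectwise. The same reasoning applied to the outermost colimit $\colim_{\alpha<\kappa}$ shows that $L_{\mot}\simeq\colim_{\alpha<\kappa}T^\alpha$ preserves finite products, and combining this with the first paragraph (products in $L_{\A^1}\Sh_\proet(X)$ being computed in $\PSh(\Sch_X)$) gives the claim.

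The only step requiring genuine care is the transfinite-iteration model for $L_{\mot}$ together with the existence of the cardinal $\kappa$, i.e. the accessibility of the two localizations involved; this is precisely what the Morel--Voevodsky-style argument invoked for the preceding lemma supplies, and everything else is formal from the facts that hypercomplete sheafification is left exact, that $\Delta^\op$ is sifted, and that filtered colimits commute with finite products in $\Ani$.
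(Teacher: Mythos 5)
Your proof is correct and follows essentially the same strategy as the paper: both hinge on the transfinite-iteration model $L_{\mot}\simeq\colim_{\alpha<\kappa}T^\alpha$ built from $\Sing$ (whose sifted colimit formula gives preservation of finite products objectwise) and pro-\'etale hypersheafification (left exact), together with the fact that (sifted/filtered) colimits in $\Ani$ commute with finite products. One small remark: your opening paragraph announces a plan to treat $L_{\proet}$ and $L_{\A^1}$ separately, but you never actually prove in isolation that $L_{\A^1}\colon\Sh_\proet(X)\to L_{\A^1}\Sh_\proet(X)$ preserves finite products — instead you revert to the combined iteration for all of $L_{\mot}$, which is exactly what the paper does; so that first sentence is a slight misdirection rather than an error, and the only fact from that paragraph actually used later is that finite products in $L_{\A^1}\Sh_\proet(X)$ are computed in $\PSh(\Sch_X)$.
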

\begin{proof}
	We pick a regular cardinal $ \kappa $ such that the inclusion $ \Sh_{\proet}(X) \hookrightarrow \PSh(\Sch_{X}) $ commutes with $ \kappa $-filtered colimits.
	Note that we have an endofunctor
	\[\PSh(\Sch_{X}) \xrightarrow{L_{\mathbb{A}^1} \circ L_{\proet}} \PSh(\Sch_X)\]
	together with a canonical natural transformation $ \id \rightarrow L_{\mathbb{A}^1} \circ L_{\proet} $.
	We may therefore consider the endofunctor $ L_M = \colim_{\tau <\kappa} (L_{\mathbb{A}^1} \circ L_{\proet})^\tau $.
	Then for any presheaf $ F $, the presheaf $ L_M(F) $ is $ \kappa $-filtered colimit of $ \mathbb{A}^1 $-local presheaves and therefore $ \mathbb{A}^1$-local.
	Furthermore the usual cofinality arguments (see the proof of \cite[Lemma 6.3.3.4]{MR2522659}) show that we also have an equivalence
	\[
		L_M \simeq \colim_{\tau < \kappa} (L_{\proet}\circ L_{\mathbb{A}^1})^\tau
	\]
	and thus $ L_M(F) $ is also a pro-\'etale sheaf for any presheaf $F$.
	Thus $ L_M $ defines a functor
	\[
		\PSh(\Sch_X) \rightarrow L_{\A^1}\Sh_{\proet}(X)
	\]
	and it is easy to see that it is a left adjoint to the inclusion and therefore equivalent to $ L_{\mot} $.
	This explicit description proves the claim since sifted colimits commute with finite products.
\end{proof}

	We fix notations for later use:
	\begin{enumerate} \item The cartesian product defines a symmetric monoidal structure on $ L_{\A^1}\Sh_{\proet}(X) $ and by Proposition~\ref{prop:motiviclocpreprod}, the motivic localization functor is symmetric monoidal. 
	Moreover, the Yoneda embedding, composed with the motivic localization functor, induces a functor 
	\[\mathrm{Sch}_X\to L_{\A^1}\Sh_{\proet}(X).\] If $Y\in\mathrm{Sch}_X$, we also denote by $Y$ its image in $L_{\A^1}\Sh_{\proet}(X)$. \item There is also a functor 
	\[(-)_+\colon L_{\A^1}\Sh_{\proet}(X)\to L_{\A^1}\Sh_{\proet,\bullet}(X)\] that freely adds a base point; by \cite[Corollary 5.4]{MR3281141}, there is a unique symmetric monoidal structure on $L_{\A^1}\Sh_{\proet,\bullet}(X)$ such that $(-)_+$ enhances to a symmetric monoidal functor that is the universal functor from $L_{\A^1}\Sh_{\proet}(X)$ to a pointed category. Hence, we obtain a symmetric monoidal functor 
	\[\mathrm{Sch}_X\to L_{\A^1}\Sh_{\proet,\bullet}(X)\] sending a scheme $X$ to $X_+$. Note that by \cite[Proposition 4.10]{MR3281141}, the functor $\Sigma^\infty$ also naturally enhances to a symmetric monoidal functor which is universal among the functors from $L_{\A^1}\Sh_{\proet,\bullet}(X)$ to a stable category.
	\item We also have a symmetric monoidal functor 
	\[\mathrm{Sch}_{X,\bullet}\to L_{\A^1}\Sh_{\proet,\bullet}(X)\] that sends a pointed $X$-scheme $(Y,s)$, where $s\colon X\to Y$ is a section of $Y\to X$, to an object that we also denote by $(Y,s)$. 
	\end{enumerate}

\begin{defi}
	Let $X$ be a scheme and let $p,q\in \N$ be nonnegative integers with $p\geqslant q$. The \emph{motivic sphere} $\mathrm{S}^{p,q}\in L_{\A^1}\Sh_{\proet,\bullet}(X)$ is the 
	objet \[\mathrm{S}^{p,q}\coloneqq  \Sigma^{p-q}(\Gm,1)^{\otimes q}\] obtained by suspending $p-q$ times the $X$-scheme $\Gm\times X$ pointed at $1$. If $u\geqslant v$ we have $\mathrm{S}^{p,q}\otimes \mathrm{S}^{u,v}\simeq \mathrm{S}^{p+u,q+v}$. Note that as in \cite[Lemma 2.15 and Corollary 2.18]{MR1813224} we have that $\mathrm{S}^{2,1}\simeq (\mathbb{P}^1_X,*)$ is the projective space pointed at any point.

	We define the category $L_{\A^1}\Sh^{\mathbb{P}^1}_{\proet}(X)$, to be the stable presentably monoidal category obtained by $\otimes$-inverting the sphere $\mathrm{S}^{2,1}$ in the sense of \cite[Definition~4.8]{MR3281141}. As $\mathrm{S}^{1,0}\simeq \mathrm{S}^1\otimes \un_{L_{\A^1}\Sh_{\proet,\bullet}(X)}$ is inverted, we obtain a stable $\infty$-category. We denote by $\mathbb{S}(q)[p]$ the image of $\mathrm{S}^{p,q}$ in this category.
	We write
	\[
		\mathcal{F}(n) \simeq \mathcal{F} \otimes \mathbb{S}(1)^n
 	\]
 	for any $ \mathcal{F} \in L_{\A^1}\Sh^{\mathbb{P}^1}_{\proet}(X)$ and $ n \in \mathbb{Z} $. Finally we write \[\mathbb{S}[-]\colon \Sch_X\to L_{\mb{A}^1}\Sh_{\proet,\bullet}(X)\to L_{\A^1}\Sh^{\mathbb{P}^1}_{\proet}(X)\] the obvious functor.
\end{defi}


We now define the analogue in our pro-étale setting, of the smooth-étale site.

\begin{defi}\label{def:weaklysmooth}
	We say that a map $f\colon Y\to X$ of schemes is weakly smooth if it can be written as a finite composition of weakly étale maps and smooth maps.
\end{defi}

    \begin{lem}
    \label{lem:locWL}
    Let $f:Y\to X$ be a weakly smooth map. Then Zariski-locally on $Y$ we can write $f$ as a composition $$Y\to \A^n_X\to X$$ with $Y\to \A^n_X$ weakly étale and $\A^n_X\to X$ the canonical projection.
    \end{lem}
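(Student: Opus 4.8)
\textbf{Proof strategy for \Cref{lem:locWL}.}

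The plan is to proceed by induction on the length of a factorization of $f$ into weakly étale and smooth maps, reducing to the case of a single smooth map and a single weakly étale map, and then using the standard local structure theorem for smooth morphisms together with the stability of weakly étale maps under composition and base change.

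First I would record the two basic stability facts I will use: weakly étale morphisms are closed under composition and arbitrary base change (this is part of the definition / \cite[Theorem~2.3.4 and its surroundings]{zbMATH06479630}), and the composition of a weakly étale map with the structure map $\A^n_X \to X$ is again weakly smooth (trivially, by definition). Then I would treat the base case: if $f$ is weakly étale there is nothing to prove (take $n=0$), and if $f$ is smooth, the classical local structure theorem for smooth morphisms (\cite[\href{https://stacks.math.columbia.edu/tag/054L}{Tag~054L}]{stacks-project}) says that Zariski-locally on $Y$ the map $f$ factors as $Y \xrightarrow{g} \A^n_X \to X$ with $g$ \emph{étale}, hence in particular weakly étale.

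For the inductive step, write $f = f_k \circ \cdots \circ f_1$ with each $f_i$ either smooth or weakly étale, and set $Y' = $ the target of $f_1$, so $f = f' \circ f_1$ with $f' \colon Y' \to X$ weakly smooth of shorter length and $f_1 \colon Y \to Y'$ either smooth or weakly étale. Fix a point $y \in Y$ and let $y' = f_1(y)$. By the induction hypothesis, after shrinking $Y'$ around $y'$ we may assume $f' \colon Y' \to X$ factors as $Y' \xrightarrow{h} \A^m_X \to X$ with $h$ weakly étale. Now there are two cases. If $f_1$ is weakly étale, then after this shrinking $Y \to Y'$ is still weakly étale (being the base change of $Y \to Y'$... actually it is unchanged up to restricting to the preimage), so $h \circ f_1 \colon Y \to \A^m_X$ is weakly étale as a composite, and we are done with $n = m$. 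If $f_1$ is smooth, apply the local structure theorem for smooth maps to $f_1 \colon Y \to Y'$: Zariski-locally around $y$ it factors as $Y \xrightarrow{g} \A^r_{Y'} \to Y'$ with $g$ étale. Base-changing the weakly étale map $h \colon Y' \to \A^m_X$ along $\A^r_{Y'} \to \A^r_X$ (note $\A^r_{Y'} = Y' \times_X \A^r_X \to \A^r_X$ is weakly étale, being a base change of $h$), we obtain that $\A^r_{Y'} \to \A^r_X$ is weakly étale; composing with the étale map $g$ gives a weakly étale map $Y \to \A^r_X$, and then composing with the projection $\A^r_X \to \A^{m+r}_X$... wait, rather we identify $\A^r_X \cong \A^r_X$ and the factorization is $Y \to \A^r_X \to X$ already; here the role of $m$ is absorbed since $\A^r_{Y'}\to\A^r_X$ already captures the weakly étale map $h$. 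So $n = r$ works.

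The only mildly delicate point — and the one I expect to require the most care in writing out — is the bookkeeping in the smooth case: one must make sure the base change of the weakly étale map $Y'\to\A^m_X$ along the projection $\A^r_X\to X$ (pulled back to $\A^r_{Y'}$) is formed correctly, so that $\A^r_{Y'}\to\A^r_X$ is genuinely weakly étale, and then that composing with the étale $g\colon Y\to\A^r_{Y'}$ lands us with a weakly étale map into an affine space over $X$ of the right form. This is purely formal once the stability of weakly étale maps under composition and base change is invoked, so there is no real obstacle; the statement is essentially a diagram chase combined with the classical structure theorem for smooth morphisms.
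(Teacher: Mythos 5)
Your overall strategy is the same as the paper's: induction on the length of a factorization of $f$ into weakly étale and smooth maps, the local structure theorem for smooth morphisms, and stability of weakly étale maps under composition and base change. The only structural difference is which end of the chain you peel off — you treat the factor $f_1$ nearest to $Y$, while the paper peels off the factor nearest to $X$. Both orderings work.

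There is, however, a concrete error in your smooth case. You claim that $\A^r_{Y'} = Y' \times_X \A^r_X \to \A^r_X$ is weakly étale, "being a base change of $h$", and conclude $n = r$. This is false in general. The map $\A^r_{Y'}\to\A^r_X$ is the base change of the structure map $Y'\to X$ along $\A^r_X\to X$, and $Y'\to X$ is only weakly \emph{smooth}: it factors as $Y'\xrightarrow{h}\A^m_X\to X$ where the second map is the affine projection, which is not weakly étale unless $m=0$. The map that actually \emph{is} a base change of $h$ is
\[
\A^r_{Y'} = Y'\times_{\A^m_X}\A^{m+r}_X \longrightarrow \A^{m+r}_X,
\]
obtained by pulling $h$ back along the projection $\A^{m+r}_X\to\A^m_X$ (using $Y'\times_X\A^r_X \simeq Y'\times_{\A^m_X}\A^{m}_X\times_X\A^r_X$). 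That map is weakly étale, and composing with the étale $g\colon Y\to\A^r_{Y'}$ gives a weakly étale $Y\to\A^{m+r}_X$. So the correct conclusion is $n=m+r$, not $n=r$; the $m$ is not "absorbed". This is precisely the computation the paper performs (in the opposite peeling order) when it passes from $\A^n_Z$ to $\A^n_{\A^m_X}\simeq\A^{n+m}_X$. With this correction, the rest of your argument goes through.
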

    \begin{proof}
        Assume that $f$ is a composition
        $$ Y=X_{N+1}\xrightarrow{f_N} X_N\to \cdots \to X_1 \xrightarrow{f_0} X_0 = X$$ with each $f_i$ either weakly étale or smooth.  We prove the proposition by induction on $N$.
		If $N=0$, we are looking at a single morphism which is either weakly étale, in which case we are done, or smooth in which case it follows from the structure theorem for smooth morphisms (\cite[\href{https://stacks.math.columbia.edu/tag/039Q}{Theorem 039Q}]{stacks-project}). 

		Let $N\geqslant 1$ and assume that any morphism $f$ that can be written as above with $N-1$ maps is Zariski-locally on the source in the form stated in the lemma. Let $f\colon Y\to X$ that can be written as the composition of $N$ maps as above. We can thus write $f = f_0\circ g$ with $g\colon Y\to Z$ a composition of at most $N-1$ maps as above.
        By our induction hypothesis, locally on $Y$ the map $g$ is of the form $Y\to \A^n_Z\to Z$. Up to replacing $Y$ by one of its Zariski opens we can therefore assume that we are in a situation
        $$Y\to \A^n_Z\to Z \xrightarrow{f_0} X$$ with $Y\to \A^n_Z$ weakly étale. There are two cases. If $f_0$ is weakly étale, then have a commutative diagram \[\begin{tikzcd}
            Y & {\A^n_Z} & Z & X \\
            && {\A^n_X}
            \arrow[from=1-1, to=1-2]
            \arrow[from=1-2, to=1-3]
            \arrow[from=1-3, to=1-4]
            \arrow[from=2-3, to=1-4]
            \arrow[from=1-2, to=2-3]
            \arrow["u"', from=1-1, to=2-3]
        \end{tikzcd}\]
        with $u$ weakly étale as the composition of a weakly étale map and the product with $\A^n$ of a weakly étale map and we are done.
        If $f_0$ is smooth, then locally on $Z$ it is of the form $Z\to \A^m_X\to X$ with $Z\to \A^m_X$ étale. Let $V$ be an open subset of $Z$ on which $f_0$ has this property and let $U$ be its inverse image on $Y$. We have a commutative diagram
        \[\begin{tikzcd}
            U & {\A^n_V} & V & X \\
            & {\A^n_{\A^m_X}\simeq \A^{n+m}_X} & {\A^m_X}
            \arrow["{\mathrm{w-\acute{e}tale}}", from=1-1, to=1-2]
            \arrow[from=1-2, to=1-3]
            \arrow[from=1-3, to=1-4]
            \arrow["{\mathrm{\acute{e}tale}}"', from=1-3, to=2-3]
            \arrow[from=2-3, to=1-4]
            \arrow["{\mathrm{\acute{e}tale}}"', from=1-2, to=2-2]
            \arrow[from=2-2, to=2-3]
        \end{tikzcd}\]
        which proves that we can write $U\to X$ as
        $$U\to \A^{n+m}_X\to X$$ with $U\to \A^{n+m}_X$ weakly étale. This finishes the proof.
    \end{proof}

\begin{defi} \label{def:SmallSH} Let $X$ be a scheme.
	\begin{enumerate}
	\item We define the \emph{pro-\'etale unstable (resp. unstable pointed) motivic homotopy category} $ \HH_{\proet}(X) $ (resp. $ \HH_{\proet,\bullet}(X) $) \emph{of $X$} to be the smallest full subcategory of $ L_{\A^1}\Sh_{\proet}(X) $ (resp. $ L_{\A^1}\Sh_{\proet,\bullet}(X) $) that is closed under small colimits and contains all objects of the form $ U $ (resp. $ U_+$) for any weakly smooth $X$-scheme $U$.

	\item The \emph{$\mathrm{S}^1$-stable pro-\'etale motivic homotopy category} of $ X $ is the smallest stable full subcategory $ \SH^{\mathrm{S}^1}_{\proet}(X) $ of $ L_{\A^1}\Sh^{\mathrm{S}^1}_{\proet}(X) $ that is closed under small colimits and contains all objects of the form $ \Sigma^\infty U_+$ for any weakly smooth $X$-scheme $U$.

	\item The \emph{stable pro-\'etale motivic homotopy category} of $ X $ is the smallest stable full subcategory $ \SH_{\proet}(X) $ of $ L_{\A^1}\Sh^{\mathbb{P}^1}_{\proet}(X) $ that is closed under small colimits and contains all objects of the form $ \mathbb{S}[U](n) $ for any weakly smooth $X$-scheme $U$ and $n\in\Z$.
	\end{enumerate}
\end{defi}

\begin{rem}Let $X$ be a scheme. 
	It is easy to see that $ \HH_{\proet,\bullet}(X) $ is canonically equivalent to the category of pointed objects in $ \HH_{\proet}(X) $.
	In the same way, the category $ \SH_{\proet}(X) $ is given by the $ (\mathbb{P}^1,\infty)$-stabilization of  $ \HH_{\proet,\bullet}(X) $. We denote by \[\Sigma^\infty_{\mathbb{P}^1}\colon \HH_{\proet,\bullet}(X)\to\SH_\proet(X)\] the induced functor.
\end{rem}

\begin{rem}
	Let $X$ be a scheme. The pro-étale topology induces a topology on $\WSm_X$. The map $\WSm_X\to \Sch_X$ therefore induces a map \[L_{\A^1}\Sh_\proet(\WSm_X)\to L_{\A^1}\Sh_{\proet}(X)\] from the category of $\A^1$-invariant pro-étale sheaves on $\WSm_X$. The same proof as \cite[Proposition~2]{zbMATH07799808} (replacing the Nisnevich topology with the pro-étale topology) shows that this functor is fully faithful and that its essential image is $\HH_\proet(X)$. As in \cite[Proposition~4]{zbMATH07799808}, $\SH_\proet(X)$ is then equivalent to the $ \mathbb{P}^1$-stabilization of the subcategory of $\A^1$-invariant objects of $\Sh_{\proet,\bullet}(\WSm_X)$. 
\end{rem}

	Note that since the tensor products on the categories defined in \Cref{def:BigSH} commute with colimits in both variables, we get an induced symmetric monoidal structure on the categories defined in \Cref{def:SmallSH}.

	\subsection{Functoriality}\label{sec:functoriality}

Let us now discuss the basic functoriality of all these constructions:
\begin{emptypar}[Basic Functoriality]
	Let $ f \colon X \rightarrow Y $ be any morphism of schemes.
	There is an induced morphism $ f^* \colon \Sh_{\proet}(Y) \rightarrow \Sh_{\proet}(X) $ of $ \infty $-topoi.

These functor give rise to an adjunction
\[
f^*\colon {\SH_\proet(Y)}\leftrightarrows{\SH_\proet(X)}\colon {f_*}
\]
where the left adjoint is canonically symmetric monoidal and this is also true for all other of the above categories.
\end{emptypar}

\begin{emptypar}[Addtional functorialty]\label{empty:additionaladjoints}
	For any morphism of schemes $ f \colon X \rightarrow Y $, the forgetful functor
	\[
		\Sch_{X} \rightarrow \Sch_{Y}
	\]
	gives rise to a further left adjoint
	\[
		f_\sharp \colon \Sh_{\proet}(X) \rightarrow \Sh_{\proet}(Y)
	\] to $f^*$.
	The functor $ f_\sharp $  descends to a functor
	\[
		f_\sharp \colon L_{\A^1}\Sh^{\mathbb{P}^1}_{\proet}(X) \rightarrow  L_{\A^1}\Sh^{\mathbb{P}^1}_{\proet}(Y)
	\]
	that is left adjoint to $ f^* $.
	Furthermore for any $ \mathcal{F} \in {L_{\A^1}\Sh^{\mathbb{P}^1}_{\proet}(X)} $ and $ \mathcal{G} \in  {L_{\A^1}\Sh^{\mathbb{P}^1}_{\proet}(Y)}$ the unit map $ \mathcal{F} \rightarrow f^* f_\sharp \mathcal{F} $ induces a morphism
	\[
		f_\sharp(\mathcal{F} \otimes f^*\mathcal{G}) \rightarrow f_\sharp(\mathcal{F}) \otimes \mathcal{G.}
	\]
	This map is an equivalence because it reduces to representables where it is clear.
	This shows that the functor $ f_\sharp $ is in fact a morphism of $ L_{\A^1}\Sh^{\mathbb{P}^1}_{\proet}(Y) $-modules.
	Furthermore we note that for \emph{any} cartesian square of schemes
	\[\begin{tikzcd}
		{X'} & X \\
		{Y'} & Y
		\arrow["f"', from=1-2, to=2-2]
		\arrow["q", from=1-1, to=1-2]
		\arrow["g", from=2-1, to=2-2]
		\arrow["p"', from=1-1, to=2-1]
		\arrow["\lrcorner"{anchor=center, pos=0.125}, draw=none, from=1-1, to=2-2]
	\end{tikzcd}\]
	the canonical morphism $ p_\sharp q^* \rightarrow g^*  f_\sharp $ is an isomorphism.
	One can again easily check this on representables.
	All of this discussion is also valid for the unstable and pointed unstable categories.
\end{emptypar}
\begin{emptypar}[Additional smooth functoriality]
	Let $f \colon X \to  Y$ be weakly smooth.
	Then the functor $ f^* \colon \SH_{\proet}(Y) \rightarrow \SH_{\proet}(X) $ admits a left adjoint $ f_{\sharp} $ such that
		\begin{enumerate}
			\item The weakly smooth projection formula is satisfied, i.e. the canonical map
			\[
				f_\sharp (\mathcal{F} \otimes f^*\mathcal{G}) \rightarrow (f_\sharp\mathcal{F}) \otimes \mathcal{G}
			\]
			is an equivalence for any $\fcal \in \SH_{\proet}(X)$ and $\gcal \in \SH_{\proet}(Y)$.
			\item For any cartesian square
			\[\begin{tikzcd}
				{T \times_Y U} & U \\
				T & Y
				\arrow["f", from=1-2, to=2-2]
				\arrow["p"', from=1-1, to=2-1]
				\arrow["g"', from=2-1, to=2-2]
				\arrow["\lrcorner"{anchor=center, pos=0.125}, draw=none, from=1-1, to=2-2]
				\arrow["q", from=1-1, to=1-2]
			\end{tikzcd}\]
			of schemes, the canonical natural transformation $ p_\sharp q^* \rightarrow g^* f_\sharp $ is an equivalence.
		\end{enumerate}
		Indeed since weakly smooth maps are stable under composition $f_\sharp$ is the restriction of the functor constructed above.
		This also works for the unstable and pointed unstable versions.
\end{emptypar}

We now show that there is some additional functoriality in the case of a closed immersion.
This is a first step towards the localization theorem in our context.
The first goal of this section is to show the following proposition.

\begin{prop}\label{Proposition: i_* pre colimits}
	Let $i \colon Z \rightarrow X$ be a closed immersion.
	Then the induced functor
	\[
		i_* \colon \HH_{\proet}(Z) \rightarrow \HH_{\proet}(X)
	\]
	preserves weakly contractible colimits (\textit{i.e.} colimits indexed by weakly contractible simplicial sets) and the induced functor
	\[
		i_* \colon \HH_{\proet,\bullet}(Z) \rightarrow \HH_{\proet,\bullet}(X)
	\]
	preserves all small colimits.
\end{prop}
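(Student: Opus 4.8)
The plan is to reduce the statement to a standard fact about closed immersions of $\infty$-topoi --- the explicit shape of the reflection onto the closed part of a recollement --- the $\A^1$-local refinement being purely formal.

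\smallskip

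\emph{Step 1: reduction to the level of pro-\'etale sheaves.} Inside $L_{\A^1}\Sh_\proet(X)$ and its pointed version, the functor $i_*$, right adjoint to $i^*=\SH_\proet(i)$, is computed by the push-forward on the big pro-\'etale topoi $\Sh_\proet(Z)\to\Sh_\proet(X)$, which sends $\fcal$ to the sheaf $Y\mapsto\fcal(Y\times_X Z)$; in particular $i_*$ sends $\HH_\proet(Z)$ into $\HH_\proet(X)$. Since $i^*$ carries $\A^1_X$ to $\A^1_Z$ it preserves $\A^1$-invariant objects, and the endofunctor $\Sing$ of \Cref{sec:weaklysmooth} commutes with this push-forward on presheaves (using $(Y\times_X\Delta^n_X)\times_X Z\simeq(Y\times_X Z)\times_Z\Delta^n_Z$), so $i_*$ preserves $\A^1$-equivalences and commutes with $L_{\A^1}$. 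As colimits in $\HH_\proet$ and $\HH_{\proet,\bullet}$ are obtained by applying $L_{\A^1}$ to colimits of pro-\'etale sheaves, it therefore suffices to show that $i_*\colon\Sh_\proet(Z)\to\Sh_\proet(X)$ preserves weakly contractible colimits and that its pointed analogue preserves all small colimits.

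\smallskip

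\emph{Step 2: $i$ as a closed immersion of $\infty$-topoi.} Let $j\colon U=X\setminus Z\to X$ be the open complement. The representable $U\in\Sh_\proet(X)$ is $(-1)$-truncated, so it cuts out an open subtopos with its triple $j_!\dashv j^*\dashv j_*$ (here $j_!=j_\sharp$ is the functor attached to the open, hence weakly \'etale, immersion $j$) and a complementary closed subtopos. One checks that $i_*\colon\Sh_\proet(Z)\to\Sh_\proet(X)$ is exactly the inclusion of that closed subtopos: it is fully faithful, $i^*i_*\simeq\id$, and $j^*i_*\fcal\simeq 1$ because $U'\times_X Z=\emptyset$ for every $U'$ over $U$; this can be deduced from the analogous statement on the small pro-\'etale sites together with the henselisation recollections of \Cref{notation-tilde}.

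\smallskip

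\emph{Step 3: the recollement formula and conclusion.} For any closed immersion of $\infty$-topoi $i\colon\mathcal Z\hookrightarrow\mathcal X$ with open complement $j\colon\mathcal U\hookrightarrow\mathcal X$, the reflection onto $\mathcal Z$ is
\[
	i_*i^*\fcal\;\simeq\;\fcal\sqcup_{j_!j^*\fcal}\,j_!1_{\mathcal U}
\]
in the unpointed case and $i_*i^*\fcal\simeq\fcal\sqcup_{j_!j^*\fcal}0$ in the pointed case. Now $j_!$ and $j^*$ are left adjoints and hence preserve all colimits; $\colim_K(\mathrm{const}_{j_!1_{\mathcal U}})\simeq j_!1_{\mathcal U}$ whenever $K$ is weakly contractible; $\colim_K(\mathrm{const}_0)\simeq 0$ for arbitrary $K$; and pushouts commute with colimits. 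It follows that $i_*i^*$ preserves weakly contractible colimits, and all colimits in the pointed case. Consequently, for a colimit $\colim_\alpha\gcal_\alpha$ formed in $\mathcal Z$ one has
\[
	i_*\big(\colim_\alpha\gcal_\alpha\big)\simeq i_*i^*i_*\big(\colim_\alpha\gcal_\alpha\big)\simeq i_*i^*\big(\colim_\alpha i_*\gcal_\alpha\big)\simeq\colim_\alpha i_*i^*i_*\gcal_\alpha\simeq\colim_\alpha i_*\gcal_\alpha,
\]
where the second equivalence uses $i^*i_*\simeq\id$ and the cocontinuity of $i^*$ and the third uses the previous sentence. Applying this to $\mathcal Z=\Sh_\proet(Z)$, $\mathcal X=\Sh_\proet(X)$ and to their pointed analogues completes the proof.

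\smallskip

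The real content is in Steps 1 and 2: the identification of the abstract $i_*$ with the topos-theoretic push-forward, its compatibility with $L_{\A^1}$, and the fact that a closed immersion of schemes yields a closed immersion of pro-\'etale topoi with open complement $U$. The subtlest point is the identification in Step 2, which requires unwinding the big pro-\'etale topology and the henselisation machinery of \Cref{sec:proet_rec}; granting it, the colimit preservation is a formal consequence of the recollement formula above.
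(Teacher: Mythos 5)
Your Step 2 asserts that $i_*\colon\Sh_\proet(Z)\to\Sh_\proet(X)$ identifies $\Sh_\proet(Z)$ with the closed subtopos of $\Sh_\proet(X)$ complementary to the open subtopos cut out by $U$; the recollement formula $i_*i^*\fcal\simeq\fcal\sqcup_{j_!j^*\fcal}j_!1$ in Step 3 depends entirely on this. But the checks you carry out --- $i^*i_*\simeq\id$, $j^*i_*\fcal\simeq 1$, full faithfulness --- only show that $i_*$ \emph{lands inside} the closed subtopos, not that its essential image is all of it. That surjectivity is precisely the content of a localization theorem for the big pro-\'etale topos, and it fails when $U$ is not quasi-compact: \Cref{rem:counterex_to_loc} exhibits a map that becomes an isomorphism after pulling back to both $Z$ and $U$ without being one, so $(i^*,j^*)$ is not jointly conservative and $\Sh_\proet(Z)$ is a \emph{proper} reflective subcategory of the closed subtopos. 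Since \Cref{Proposition: i_* pre colimits} is stated for an arbitrary closed immersion (and, crucially, is an ingredient in the proof of the localization theorem \Cref{thm:loc}, which does carry the qcqs hypothesis), your argument cannot establish it: it proves something only under a hypothesis the statement does not have, and by a route that would in any case be circular.

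The paper avoids this by working at the level of \emph{presheaves} rather than sheaves: $i_*$ on $\PSh_\emptyset(\Sch_Z)$ is given by the explicit formula $(i_*F)(T)=F(T_Z)$, for which preservation of weakly contractible colimits (and of all colimits in the pointed setting) is immediate, since the only failure of pointwise computation occurs when $T_Z=\emptyset$ and the presheaf values are $*$. The substance is then \Cref{lem:i_*moteq}: this presheaf $i_*$ preserves motivic equivalences, which is shown by a cover argument on the small sites using the henselisation functor of \Cref{notation-tilde} and does not need any recollement of the big topoi. Once that is known, deducing colimit preservation for the induced $i_*$ on $\HH_\proet$ and $\HH_{\proet,\bullet}$ is formal (this is the content of the references to Khan \S 7.2 and Hoyois Proposition 5.1). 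Your Step 1 also glosses over the fact that $\HH_\proet(X)$ is a proper colimit-closed subcategory of $L_{\A^1}\Sh_\proet(X)$, so the abstract right adjoint $i_*$ on $\HH_\proet$ is not simply the restriction of the sheaf-level pushforward --- another point the presheaf argument handles automatically.
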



\begin{emptypar}
	For this we recall some terminology first.
	If $ \mathcal{C} $ is any category with an initial object $\emptyset$, we denote by $ \PSh_{\emptyset}(\mathcal{C}) $ the reflective subcategory of $ \PSh(\mathcal{C}) $ spanned by those presheaves $ \mathcal{F} $ such that $ \mathcal{F}(\emptyset) \simeq * $.

	If $ X $ is a scheme we call a morphism $\varphi$ in $ \PSh(\Sch_X) $ a \emph{motivic equivalence} if $L_{\mot}(\varphi)$ is an equivalence.
	Analogously we call a morphism in $ \PSh_\emptyset(\Sch_X) $ a motivic equivalence if it is a motivic equivalence when seen as a map in $ \PSh(\Sch_X) $.
\end{emptypar}

\begin{lem}\label{lem:i_*moteq}
	The functor $i_* \colon \PSh_\emptyset(\Sch_Z) \rightarrow \PSh_\emptyset(\Sch_X)$ preserves motivic equivalences.
\end{lem}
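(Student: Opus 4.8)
The plan is to identify $i_*$ at the level of presheaves with a restriction functor along base change, and then to check that it is compatible with each of the elementary localizations building up $L_{\mot}$. First I would observe that $i_*$ is right adjoint to $i^*\colon\PSh(\Sch_X)\to\PSh(\Sch_Z)$, which is the colimit-preserving extension of $Y\mapsto Y\times_X Z$; hence $i_*$ is precomposition with the base-change functor $\rho\colon\Sch_X\to\Sch_Z$, $Y\mapsto Y\times_X Z$, so explicitly $(i_*F)(Y)=F(Y\times_X Z)$. In particular $i_*$ preserves all limits and colimits, and it sends $\PSh_\emptyset(\Sch_Z)$ into $\PSh_\emptyset(\Sch_X)$ because $\rho$ preserves the initial object. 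Since $L_{\proet}$, and a fortiori $L_{\mot}$, already takes values in $\PSh_\emptyset$, a morphism of $\PSh_\emptyset(\Sch_Z)$ is by definition a motivic equivalence precisely when it is one in $\PSh(\Sch_Z)$, so it is harmless to argue in $\PSh$ throughout.

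Next I would verify the two required compatibilities. For the $\A^1$-part: since $\Sing(-)=\colim_{n\in\Delta}(-\times\Delta^n_X)$, since $\rho(Y\times\Delta^n_X)\cong\rho(Y)\times\Delta^n_Z$, and since $i_*=\rho^p$ preserves colimits, one gets a natural equivalence $i_*\circ\Sing\simeq\Sing\circ i_*$; in particular $i_*$ carries $\A^1$-equivalences to $\A^1$-equivalences. For the pro-étale part I would show that $\rho$ is both a continuous and a cocontinuous morphism of sites, so that $i_*=\rho^p$ both preserves pro-étale hypersheaves \emph{and} commutes with hypersheafification $L_{\proet}$. Continuity is immediate: base change along $i$ preserves weakly étale fpqc covers and all the fibre products entering the descent diagrams. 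Cocontinuity is the one place where the closed immersion hypothesis is genuinely used: given $Y\in\Sch_X$, which we may take affine, and a pro-étale cover $\{V_k\to Y\times_X Z\}$, the henselizations $\widetilde{V_k}=\operatorname{Hens}_Y(V_k)\to Y$ of \Cref{notation-tilde} are weakly étale with $\widetilde{V_k}\times_X Z=V_k$, and together with the open immersion $Y\setminus(Y\times_X Z)\hookrightarrow Y$ they constitute a pro-étale cover of $Y$ whose $\rho$-image refines $\{V_k\}$; hence the pullback sieve covers $Y$.

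Finally I would combine these with the explicit presentation of $L_{\mot}$ as the transfinite iteration $\colim_{\tau<\kappa}(\Sing\circ L_{\proet})^{\tau}$ used in the proof of \Cref{prop:motiviclocpreprod}: since $i_*$ commutes with $\Sing$, with $L_{\proet}$, and with the $\kappa$-filtered colimit (it preserves all colimits), it commutes with $L_{\mot}$, i.e. $L_{\mot}^X\circ i_*\simeq i_*\circ L_{\mot}^Z$; applying this to any motivic equivalence $\varphi$ in $\PSh(\Sch_Z)$ shows $L_{\mot}^X(i_*\varphi)$ is an equivalence, so $i_*\varphi$ is a motivic equivalence. The main obstacle is the cocontinuity of $\rho$ in the previous paragraph — unsurprisingly, since it rests on the same henselization input that makes the localization theorem of \Cref{sec:localization} hold in the pro-étale setting. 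If one prefers to sidestep cocontinuity, an alternative is to check directly that $i_*$ sends the generating motivic equivalences to motivic equivalences, the only delicate case being the projections $h_{\A^1_W}\to h_W$, which one handles by transporting the scaling $\A^1$-homotopy on $\mathbb{G}_{a,Z}$ through $i_*$ along the restriction-of-functions map $\mathcal{O}(-)\to\mathcal{O}(-\times_X Z)$.
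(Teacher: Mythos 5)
Your proposal follows the same essential route as the paper's proof. The closed-immersion hypothesis enters through exactly the same device: given a pro-étale cover of $T_Z$ by affines, the family $\{\widetilde{U_i}\to T\}\cup\{T_{X\setminus Z}\to T\}$ built from the henselization of \Cref{notation-tilde} is a pro-étale cover of $T$ whose base change to $Z$ refines the given one — this is your cocontinuity of $\rho\colon Y\mapsto Y\times_X Z$, and it is verbatim the cover constructed at the end of the paper's proof. The $\A^1$-part is likewise handled in the same spirit: you commute $i_*=\rho^p$ past $\Sing$ using $\rho(Y\times\Delta^n_X)\cong\rho(Y)\times\Delta^n_Z$, which is the content of the paper's citation of Ayoub's Théorème 4.5.35. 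Assembling via the transfinite presentation of $L_{\mot}$ from \Cref{prop:motiviclocpreprod} rather than arguing separately on each class of generating local equivalences is a presentational difference, not a mathematical one.

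The one step you pass over too quickly is the jump from cocontinuity of $\rho$ to ``$i_*=\rho^p$ commutes with hypersheafification $L_{\proet}$.'' For ordinary sheafification of $1$-presheaves, cocontinuity directly yields preservation of local isomorphisms; but $L_{\proet}$ here is \emph{hyper}sheafification of $\infty$-presheaves, and the corresponding statement is a genuine (if expected) lemma. The paper supplies precisely the missing reduction: $i_*$ commutes with the pointwise truncations $\tau_{\leq n}$ because these are computed pointwise in $\PSh_\emptyset$, and Postnikov completeness of the pro-étale topos lets one test whether a map is inverted by $L_{\proet}$ on its $n$-truncations, where sheafification and hypersheafification agree and cocontinuity applies. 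Without this truncation step, your assertion is stated but not argued, and it is the part of the proof where the subtlety actually lives. Your closing alternative is also slightly off: the generating motivic equivalences include the (hyper)Čech maps of pro-étale covers as well as the $\A^1$-projections, and it is the former — not the latter — that require the henselization input, so the projections are not ``the only delicate case.''
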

\begin{proof}
	As in \cite[Théorème~4.5.35]{MR2423375} the functor $i_*$ preserves $\A^1$-local equivalences, so it suffices to see that it preserves pro-\'etale local equivalences.
	Let $f$ be a pro-\'etale local equivalence in $ \PSh_\emptyset(\Sch_Z)$, i.e. a morphism that becomes an equivalence after pro-\'etale hypersheafification.
	Note that the inclusion $ \PSh_\emptyset(\Sch_Z) \to  \PSh(\Sch_Z)$ commutes with $\tau_{\leq n}$, i.e. truncations in $\PSh_\emptyset(\Sch_Z)$ are computed pointwise.
	It follows that $i_* \colon \PSh_\emptyset(\Sch_Z) \rightarrow \PSh_\emptyset(\Sch_X)$ also commutes with truncations.
	To see that $i_* f$ is a pro-\'etale local equivalence it suffices, that $ \tau_{\leq n} i_*f $ becomes an equivalence after pro-\'etale sheafification.
	By the preceding discussion, we may thus assume that $f$ is a map of $n$-truncated presheaves.
	In this case, there is no difference between pro-\'etale sheafification and hypersheafification, thus it suffices to see that $i_*$ preserves maps that become equivalences after pro-\'etale sheafification.
	
	For this the usual argument applies. More precisely, the claim follows as in \cite[Lemme 4.5.33]{MR2423375} if for any $T \in \Sch_X$ and any pro-\'etale covering $\ucal = \{U_i \to T_Z\}$ of the base change $T_Z$ of $T$ to $Z$ there is a covering $ \mc{V} = \{V_j \to T \} $ such that the pullback of $\mc{V}$ to $Z$ refines $ \ucal $.
	For this we may assume that $T$ is affine and all $U_i$ are pro-\'etale affines over $T_Z$ by \cite[Theorem~2.3.4]{zbMATH06479630}.
	Then we may simply take the covering $\{\tilde{U_i} \to T, T_{X\setminus Z } \to T\}$ where we use the notations of \Cref{notation-tilde}.
\end{proof}

\begin{proof}[Proof of \Cref{Proposition: i_* pre colimits}]
    The first statement is a formal consequence of the above Lemma, see \cite[\S~7.2]{khan}. The second statement is now formal, see \cite[Proposition~5.1]{MR3570135} for more details.
\end{proof}
We also need the following lemma.
\begin{lem}\label{lem:essimagei_*}
    Let $i \colon Z \to X$ be a closed immersion.
    Then the image of
    \[
    i^* \colon \SH_\proet(X) \to \SH_\proet(Z)
    \]
    generates $\SH_\proet(Z)$ under colimits.
    The analogous claim holds for $ \HH_{\proet}$ and $ \HH_{\proet,\bullet}$.
\end{lem}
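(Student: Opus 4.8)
The plan is to let $\mathcal{S}\subseteq\SH_\proet(Z)$ be the smallest full subcategory closed under small colimits and containing the image of $i^*$, and to prove that $\mathcal{S}=\SH_\proet(Z)$. Since $i^*$ is symmetric monoidal and carries $\mathbb{S}_X(1)$ to $\mathbb{S}_Z(1)$, the image of $i^*$ — hence $\mathcal{S}$ — is stable under Tate twists, so by \Cref{def:SmallSH} it suffices to show $\mathbb{S}_Z[U]\in\mathcal{S}$ for every qcqs weakly smooth $Z$-scheme $U$. The cases of $\HH_\proet$ and $\HH_{\proet,\bullet}$ will be handled by exactly the same argument (there one only has to treat the representables $U$, resp.\ $U_+$), using that $i^*$ preserves colimits and sends a representable $V$ over $X$ to $V\times_X Z$ over $Z$.

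First I would reduce to the case where $X$, and hence $Z$, is affine. Choose a finite affine open cover $X=\bigcup_j X_j$ with inclusions $v_j\colon X_j\hookrightarrow X$, and set $Z_j = Z\times_X X_j$, $u_j\colon Z_j\hookrightarrow Z$, $i_j\colon Z_j\to X_j$, so that each $i_j$ is a closed immersion of affine schemes. A standard descent argument for the open cover $\{Z_j\}$ of $Z$ (using that $L_{\mot}$, $\Sigma^\infty_{\mathbb{P}^1}$ and the functors $(u_j)_\sharp$ all preserve colimits) shows that the images of the functors $(u_j)_\sharp\colon\SH_\proet(Z_j)\to\SH_\proet(Z)$ generate $\SH_\proet(Z)$ under colimits. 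On the other hand, smooth base change for the cartesian square with vertical maps $u_j,v_j$ and horizontal maps $i_j,i$ (see \ref{empty:additionaladjoints}) gives a natural equivalence $i^*(v_j)_\sharp\simeq(u_j)_\sharp i_j^*$; thus the image of $i^*$ contains $(u_j)_\sharp$ applied to the image of $i_j^*$, for every $j$, and so, granting the lemma for the affine closed immersions $i_j$, we obtain $\mathcal{S}=\SH_\proet(Z)$.

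Now with $X=\Spec R$ and $Z=\Spec R/I$, I would use \Cref{lem:locWL} to cover any qcqs weakly smooth $Z$-scheme by finitely many affine open subschemes $W$, each carrying a weakly étale map $W\to\A^n_Z$ (a basic open of such a $W$ is again of this shape). A two-step Čech argument — expressing $\mathbb{S}_Z[U]$ as the geometric realization of the Čech nerve of such a cover $\bigsqcup_i W_i\to U$, and then each overlap $W_{i_0}\cap\dots\cap W_{i_k}$, being a quasi-compact open of the affine scheme $W_{i_0}$, as the realization of a Čech nerve built from basic opens of $W_{i_0}$ — reduces the problem to showing $\mathbb{S}_Z[W]\in\mathcal{S}$ for $W$ affine, weakly smooth over $Z$, equipped with a weakly étale map $W\to\A^n_Z$. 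For such $W$, the map $W\to\A^n_Z$ is a weakly étale morphism of affine schemes, so $W$ defines an object of the pro-(affine étale) site $(\A^n_Z)_\proet^{\mathrm{aff}}$ (cf.\ \Cref{rec:proet-VS-pro-aff-et}). Since $\A^n_Z\hookrightarrow\A^n_X$ is a closed immersion of affine schemes, the henselization functor $\mathrm{Hens}_{\A^n_X}\colon(\A^n_Z)_\proet^{\mathrm{aff}}\to(\A^n_X)_\proet^{\mathrm{aff}}$ of \Cref{notation-tilde} produces a weakly étale affine $\A^n_X$-scheme $\widetilde{W}$ with $\widetilde{W}\times_{\A^n_X}\A^n_Z\simeq W$ (the unit of this adjunction is an equivalence by full faithfulness). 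Then $\widetilde{W}$ is a weakly smooth $X$-scheme and $\widetilde{W}\times_X Z = \widetilde{W}\times_{\A^n_X}\A^n_Z\simeq W$, so $\mathbb{S}_Z[W]\simeq i^*\mathbb{S}_X[\widetilde{W}]$ lies in the image of $i^*$.

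I expect the affine case above to be the only non-formal ingredient: one must recognize the weakly étale affine $Z$-scheme $W$ as living in the pro-(affine étale) site of $\A^n_Z$ and transport it across the closed immersion $\A^n_Z\hookrightarrow\A^n_X$ via the henselization functor of \Cref{notation-tilde}, then verify that the resulting $X$-scheme is weakly smooth with the correct fiber over $Z$. Everything else — the reduction to $X$ affine and the Čech bookkeeping — is routine, provided one is careful that $i^*$ commutes with all the colimits and $\sharp$-functors involved, which is guaranteed by \ref{empty:additionaladjoints}.
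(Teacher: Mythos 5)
Your argument follows the same route as the paper's: reduce to $X$ affine, use \Cref{lem:locWL} (plus compatibility of $i^*$ with $\mathbb{P}^1$-shifts/colimits and the Zariski/\v{C}ech bookkeeping) to reduce to an affine weakly smooth $Z$-scheme $W$ with a weakly \'etale map $W \to \A^n_Z$, and then lift $W$ across the closed immersion $\A^n_Z \hookrightarrow \A^n_X$ via the henselization functor of \Cref{notation-tilde} to produce a weakly smooth $\widetilde{W}$ over $X$ with $\widetilde{W}\times_X Z \simeq W$. This is exactly the paper's argument, spelled out in more detail on the reduction steps that the paper leaves to the reader.

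One point deserves a word of caution. You write that ``the map $W\to\A^n_Z$ is a weakly \'etale morphism of affine schemes, so $W$ defines an object of the pro-(affine \'etale) site $(\A^n_Z)_\proet^{\aff}$.'' Strictly speaking a weakly \'etale affine morphism is \emph{not} automatically a cofiltered limit of finitely presented affine \'etale $\A^n_Z$-schemes; what is true (\cite[Theorem 2.3.4]{zbMATH06479630}, underlying \Cref{rec:proet-VS-pro-aff-et}) is that one can find a faithfully flat ind-\'etale cover $W'\to W$ with $W'\to\A^n_Z$ ind-\'etale, and then the \v{C}ech nerve of $W'\to W$ expresses the sheaf represented by $W$ as a colimit of objects of $(\A^n_Z)_\proet^{\aff}$. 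Since every functor in sight (the inclusion from the small site, $(\pi_n)_\sharp$, $L_{\mot}$, $\Sigma^\infty_{\mathbb{P}^1}$) preserves colimits, this one further devissage lands you in a situation where the henselization functor genuinely applies. The paper's own proof also compresses this step (it says ``$g$ pro-\'etale'' after invoking \Cref{lem:locWL}, which only gives weakly \'etale), so this is not a criticism specific to your write-up, but it is worth making the reduction explicit if you want the argument to be airtight.
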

\begin{proof}
    We can assume $X$ to be affine. Using \Cref{lem:locWL}, 
	and the fact that  $i^*$ commutes with $ \mathbb{P}^1$-shifts it suffices to show that the $\mathbb{S}[Y]$ belong to its image, where $ f \colon Y \to Z$ factors as
    \[
        Y \xrightarrow{g} \mathbb{A}^n_Z \to Z
    \]
    with $g$ pro-\'etale and $Y$ affine.
    But the henselization of $ g $ along the closed immersion $ \mathbb{A}^n_Z  \to \mathbb{A}^n_X  $ restricts to $Y$ over $Z$ and the claim follows.
    The proofs for $ \HH_{\proet}$ and $ \HH_{\proet,\bullet}$ are the same.
\end{proof}

\subsection{Localization}\label{sec:localization}

Let $X$ be a qcqs scheme. The goal of this subsection is to prove the following theorem in our setting:

\begin{thm}[Localization]\label{thm:loc}
	Let $i \colon Z \rightarrow X$ be a closed immersion and let $j \colon U \rightarrow X$ be the complementary open immersion. Assume that $U$ is quasi-compact and quasi-separated.
	For any $\mathcal{F} \in \HH_{\proet}(X)$ the canonical square
	\[
	\begin{tikzcd}
j_\sharp j^* \mathcal{F} \arrow[r] \arrow[d] & \mathcal{F} \arrow[d] \\
U \arrow[r]                                  & i_*i^*\mathcal{F}
\end{tikzcd}
	\]
	is cocartesian.
\end{thm}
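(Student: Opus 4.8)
The plan is to transport the classical proof of the localization theorem (Morel--Voevodsky, Ayoub, Hoyois) to the pro-\'etale setting, the genuinely new inputs being \Cref{Proposition: i_* pre colimits}, \Cref{lem:locWL} and \Cref{lem:i_*moteq}. \emph{First, reduce to representables.} The assignments $\mathcal F \mapsto j_\sharp j^*\mathcal F$ and $\mathcal F \mapsto \mathcal F$ preserve all small colimits, $i^*$ preserves all small colimits, and $i_*$ preserves weakly contractible colimits by \Cref{Proposition: i_* pre colimits}; moreover a colimit of the constant diagram at $U$ over a weakly contractible index category is again $U$. Hence the full subcategory of those $\mathcal F \in \HH_\proet(X)$ for which the square is cocartesian is closed under weakly contractible colimits, a fortiori under sifted colimits and under coproducts (a coproduct being a pushout over the initial object). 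The initial object lies in this subcategory: from the universal property of $i_*$ one checks directly that $i_*i^*\emptyset \simeq i_*\emptyset_{\HH_\proet(Z)} \simeq U$, since $\mathrm{Hom}_{\HH_\proet(X)}(W, i_*\emptyset_{\HH_\proet(Z)}) = \mathrm{Hom}_{\HH_\proet(Z)}(i^*W, \emptyset_{\HH_\proet(Z)})$ is nonempty exactly when $i^*W\simeq\emptyset$, just as for $U$. Since weakly smooth $X$-schemes generate $\HH_\proet(X)$ under colimits, it now suffices to treat $\mathcal F = Y$ with $Y\to X$ weakly smooth, and by Zariski descent on $Y$ together with \Cref{lem:locWL} we may assume $Y$ affine, factored as $Y\xrightarrow{g}\A^n_X\to X$ with $g$ weakly \'etale.

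\emph{The core computation.} For $\mathcal F = Y$ the square is $Y\times_X U \to Y$, $Y\times_X U \to U$ with corner $i_*i^*Y$, so we must show $i_*i^*Y \simeq Y\sqcup_{Y\times_X U}U$. Working in $\PSh_\emptyset(\Sch_X)$, the restriction $i^*Y$ is represented by $Y_Z := Y\times_X Z$, and the right Kan extension computing the pro-\'etale sheaf $i_*$ sends it --- this is where \Cref{lem:i_*moteq} intervenes, letting us compare the presheaf-level and sheaf-level pushforwards after $L_{\mot}$ --- to the motivic localization of the presheaf $T\mapsto \mathrm{Hom}_X(T\times_X Z, Y)$. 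Using the henselization functor $\mathrm{Hens}_X$ of \Cref{notation-tilde} (available since $g$ is weakly \'etale and $Y$ affine) and testing on $w$-contractible schemes via \Cref{cor:global_sections_on_wcontr} and \Cref{prop:W-contractible_gives_hens_pair}, introduce the pro-\'etale (hence weakly smooth) $X$-scheme $\widehat Y := \mathrm{Hens}_X(Y_Z)$, whose base change to $Z$ is $Y_Z$ and which carries a canonical map $\widehat Y\to Y$ that is an isomorphism over $Z$. Then the square
\[\begin{tikzcd}
\widehat Y\times_X U \arrow[r]\arrow[d] & \widehat Y \arrow[d] \\
Y\times_X U \arrow[r] & Y
\end{tikzcd}\]
is Cartesian, its two legs toward $Y$ form a pro-\'etale covering, and the right leg is an isomorphism over $Z$: this is the pro-\'etale analogue of a Nisnevich distinguished square, so it becomes cocartesian after pro-\'etale --- hence after motivic --- localization. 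Combining this with the computation of $i_*i^*Y$ above and with $j^*U\simeq U$, $i^*U\simeq\emptyset_{\HH_\proet(Z)}$ yields the desired cocartesian square. One may streamline the last step using that $(j^*,i^*)$ is jointly conservative on $\HH_\proet(X)$ --- inherited from the open--closed decomposition of $\Sh_\proet(\Sch_X)$ along $Z$ --- which reduces it to the checks that both corners become the terminal object of $\HH_\proet(U)$ after $j^*$ (by open base change $j^*i_*\simeq i'_*j'^*$ and $Z\times_X U=\emptyset$) and become $Y_Z$ after $i^*$ (by $i^*i_*\simeq\id$, i.e. full faithfulness of $i_*$, again a consequence of \Cref{lem:i_*moteq}).

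\emph{The main obstacle} is exactly this core computation: identifying $i_*i^*Y$, after motivic localization, with the gluing produced from $\widehat Y=\mathrm{Hens}_X(Y_Z)$, and recognizing the square above as a distinguished square for the pro-\'etale topology. This is where passing from the \'etale to the pro-\'etale topology is indispensable: the argument hinges on \Cref{lem:i_*moteq} (whose proof extends pro-\'etale covers of the base across $Z$ by means of $\mathrm{Hens}_X$), on full faithfulness of $\mathrm{Hens}_X$, and on the description of pro-\'etale sheaves over $w$-contractible schemes. Keeping the bookkeeping straight between the big pro-\'etale site, the presheaf- and sheaf-level pushforwards, and the $\A^1$-localization is the delicate part; everything else is a formal manipulation of the adjunctions $(f^*,f_*)$, $(f_\sharp,f^*)$ and of the projection formula.
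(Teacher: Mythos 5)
The reduction in your first paragraph is correct and is essentially what the paper does: by \Cref{Proposition: i_* pre colimits} and \Cref{lem:locWL} one reduces to $X$ affine and $\mathcal F = L_{\mot}(Y)$ with $Y$ affine and pro-\'etale over $\A^n_X$, and by \Cref{lem:i_*moteq} the task becomes showing that $Y\amalg_{Y_U}U\to i_*Y_Z$ is a motivic equivalence. The ``core computation,'' however, is not correctly closed by either of the two routes you sketch, and this is exactly where the substance of the proof lies.

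\emph{The henselization route.} First a small but real issue: $Y_Z$ is pro-\'etale over $\A^n_Z$, not over $Z$, so $\mathrm{Hens}_X$ of \Cref{notation-tilde} cannot be applied to it directly; you need $\widehat Y=\mathrm{Hens}_{\A^n_X}(Y_Z)$, for which the canonical map $\widehat Y\to Y$ over $\A^n_X$ does exist by adjunction. Granting this, the excision square $\widehat Y\sqcup_{\widehat Y_U}Y_U\xrightarrow{\ \sim\ }Y$ does hold in $\Sh_{\proet}(\Sch_X)$ (checkable after restriction to each $T_\proet$ via \Cref{prop:loc-proet-small-site}). But this only converts the problem $Y\sqcup_{Y_U}U\to i_*Y_Z$ into the identically-shaped problem $\widehat Y\sqcup_{\widehat Y_U}U\to i_*Y_Z$: the map $\Hom_X(T,\widehat Y)\to\Hom_X(T_Z,Y_Z)$ is an isomorphism only when $T$ is pro-\'etale over $\A^n_X$, where the $\mathrm{Hens}$-adjunction applies, and says nothing for a general $T\in\Sch_X$ mapping to $i_*Y_Z$. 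The missing input is precisely the paper's \Cref{Lemma:Phiinvariantunderproetale}: after pulling back along each $T\to i_*Y_Z$, one isolates the presheaf $\Phi_T(Y_T,t)$, and the lemma shows $\Phi$ is unchanged (up to pro-\'etale local equivalence) when $Y_T$ is replaced by $\A^n_T$ along the pro-\'etale map $Y_T\to\A^n_T$, after which the classical Morel--Voevodsky argument for smooth schemes finishes the proof. Your proposal neither states nor proves an analogue of this invariance.

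\emph{The joint-conservativity route is circular.} Joint conservativity of $(i^*,j^*)$ on $\HH_\proet(X)$ is \Cref{cor:pointedloc}/\Cref{cor:loc}, a \emph{consequence} of the localization theorem, not an input; and it is definitely not ``inherited from the open--closed decomposition of $\Sh_\proet(\Sch_X)$ along $Z$,'' because the big pro-\'etale topos admits no such recollement. Concretely, $Z\sqcup U\to X$ becomes the identity after both $i^*$ and $j^*$ but is not an isomorphism in $\Sh_\proet(\Sch_X)$ (its $X$-points are empty unless $Z$ is clopen), nor after $\A^1$-localization, since $L_{\mot}(Z\sqcup U)$ remains a nontrivial coproduct while $L_{\mot}(X)$ is terminal; the closed immersion $Z\to X$ is simply not flat and does not enter pro-\'etale covers. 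Relatedly, ``$i^*i_*\simeq\id$ on $\HH_\proet$, as a consequence of \Cref{lem:i_*moteq}'' is not a correct deduction: that lemma shows the presheaf-level $i_*$ preserves motivic equivalences (used in the paper to transport the question to $\PSh(\Sch_X)$), not that the right adjoint of $i^*$ on the colimit-closure $\HH_\proet(X)$ is computed by the sheaf-level $i_*$; that compatibility is again part of what the theorem establishes.
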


The upshot of our proof is that the localization theorem for the small pro-\'etale site allows us to reduce to the original proof due to Morel and Voevodsky \cite[\S 3 Theorem 2.21]{MR1813224}.

\begin{rem}
	
	As we assume that the scheme $X$ is qcqs, by \cite[\href{https://stacks.math.columbia.edu/tag/01PH}{Lemma 01PH} and \href{https://stacks.math.columbia.edu/tag/01TV}{Lemma 01TV}]{stacks-project} the open subset $U$ is qcqs if and only if there is a closed subscheme $Z'$ of $X$ with  same underlying topological subset as $Z$, such that $Z'\to X$ is finitely presented.
\end{rem}
\begin{rem}
	The assumptions that the open $U$ is quasi-compact is necessary for \Cref{thm:loc} to hold.
	In fact for non-quasi-compact opens the localization theorem already fails on the level of sheaves on the small pro-\'etale site, see \Cref{rem:counterex_to_loc} below.
\end{rem}
 We begin with a similar statement for the small pro-étale site. 
\begin{prop}
	\label{prop:loc-proet-small-site}
	The functors $i^* \colon \Sh(X_{\proet}) \rightarrow \Sh(Z_{\proet})$ and $j^* \colon \Sh(X_{\proet}) \rightarrow \Sh(U_{\proet})$ are jointly conservative.
\end{prop}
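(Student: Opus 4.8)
The plan is to reduce the statement to the conservativity of pullback along a single surjective morphism of schemes, which has already been established in \Cref{lem:conservativity-of-pullbacks}. First I would form the morphism $f \colon Z \sqcup U \to X$ induced by the closed immersion $i$ and the complementary open immersion $j$ (note that $Z \sqcup U$ is again qcqs when $U$ is). Since $U$ is by definition the complement $X \setminus Z$, the image of $f$ on underlying topological spaces is $|Z| \cup |U| = |X|$, so $f$ is surjective. Applying \Cref{lem:conservativity-of-pullbacks} to $f$ then shows that the pullback functor $f^* \colon \Sh(X_\proet) \to \Sh((Z \sqcup U)_\proet)$ is conservative.

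It remains to identify $f^*$ with the pair $(i^*, j^*)$. For this I would use that the pro-étale topos carries disjoint unions of schemes to products: every weakly étale scheme over $Z \sqcup U$ decomposes uniquely as a disjoint union of a weakly étale $Z$-scheme and a weakly étale $U$-scheme, so that $(Z \sqcup U)_\proet \simeq Z_\proet \times U_\proet$ as sites, and hence $\Sh((Z \sqcup U)_\proet) \simeq \Sh(Z_\proet) \times \Sh(U_\proet)$ (hypercompleteness being a componentwise condition). Under this identification $f^*$ is the functor $F \mapsto (i^*F, j^*F)$, so its conservativity is precisely the asserted joint conservativity of $i^*$ and $j^*$.

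I do not expect any genuine obstacle here: the argument is entirely formal once \Cref{lem:conservativity-of-pullbacks} is available. It is worth observing that this step, in contrast to the full localization \Cref{thm:loc} which builds on it, requires no quasi-compactness hypothesis on $U$; the failure of the localization square for non-quasi-compact opens (\Cref{rem:counterex_to_loc}) is therefore not in tension with joint conservativity holding in general.
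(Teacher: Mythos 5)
Your main argument is correct and is in fact a somewhat cleaner route than the paper's. The paper uses \Cref{lem:conservativity-of-pullbacks} together with \Cref{lem:strictly_profinite_cover} to reduce to the case where $X$ is affine, reduced, $0$-dimensional with separably closed residue fields, then argues in $\Cond(\Ani)_{/\pi_0(X)}$ using that a quasi-compact open subset of a profinite set is clopen, so $\pi_0(X) \simeq \pi_0(Z) \amalg \pi_0(U)$. You instead apply \Cref{lem:conservativity-of-pullbacks} directly to the surjection $Z \sqcup U \to X$ and use that $(Z\sqcup U)_{\proet}$ splits as a product of sites. Both arguments are valid; yours avoids the second reduction step at the cost of relying on the slightly delicate identification of $\Sh((Z\sqcup U)_\proet)$ with $\Sh(Z_\proet)\times\Sh(U_\proet)$ (which does hold, since weakly \'etale schemes over a disjoint union decompose accordingly and hypercompleteness is a componentwise condition).

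Your closing remark, however, is incorrect, and contradicts the paper: the quasi-compactness of $U$ \emph{is} essential for this proposition, not just for \Cref{thm:loc}. The paper's \Cref{rem:counterex_to_loc} is a counterexample precisely to the joint conservativity of $(i^*,j^*)$, not to the localization square: the map $\mathbb{N} \amalg \{\infty\} \to \mathbb{N}\cup\{\infty\}$ in $\Cond(\Ani)_{/\mathbb{N}\cup\{\infty\}}$ is not an isomorphism but becomes one after restriction to the open $\mathbb{N}$ and the closed $\{\infty\}$. Your own argument does in fact depend on the quasi-compactness of $U$, in a way you noted but then dismissed: the paper works under the blanket convention that all schemes are qcqs, and \Cref{lem:conservativity-of-pullbacks} is established only in that generality (its proof invokes \Cref{lem:strictly_profinite_cover}, which requires quasi-compactness). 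If $U$ is not quasi-compact then $Z \sqcup U$ is not quasi-compact, so \Cref{lem:conservativity-of-pullbacks} cannot be applied to $f \colon Z\sqcup U \to X$, and there is no way to repair this, since the conclusion is actually false in that generality. Concretely, a surjection of schemes on the level of points does not induce a conservative pullback of pro-\'etale sheaves once one leaves the qcqs world: the counterexample above is exactly the statement that $\mathbb{N}\amalg\{\infty\}\to\mathbb{N}\cup\{\infty\}$, though surjective on points, is not an effective epimorphism of condensed anima.
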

\begin{proof}
	Using \Cref{lem:strictly_profinite_cover} and \Cref{lem:conservativity-of-pullbacks}, we may reduce to the case where $X$ is reduced, $0$-dimensional, affine and all of its local rings are separably closed fields.
	In this case, $i^*$ and $j^*$ identify with the pullback functors $\Cond(\Ani)_{/\pi_0(X)} \to \Cond(\Ani)_{/\pi_0(Z)}$ and $\Cond(\Ani)_{/\pi_0(X)} \to \Cond(\Ani)_{/\pi_0(U)}$.
	Since a quasi-compact open of a profinite set is clopen, we have an isomorphism of profinite sets $\pi_0(X) \simeq \pi_0(Z) \amalg \pi_0(U)$.
	Thus the two above functors are clearly jointly conservative, as desired.
\end{proof}
\begin{rem}
	\label{rem:counterex_to_loc}
	The conclusion of \Cref{prop:loc-proet-small-site} does not hold if the open subset $U$ is not quasi-compact.
	Indeed, consider the profinite set $\mathbb{N} \cup \{ \infty \}$ and the open closed decomposition $ \mathbb{N} \hookrightarrow \mathbb{N} \cup \{ \infty \} \hookleftarrow \{\infty\}$.
	Then in $\Cond(\Ani)_{/ \mathbb{N} \cup \{ \infty \}}$ we have a canonical map
	\[
		\mathbb{N} \amalg \{ \infty \} \to \mathbb{N} \cup \{ \infty \}.
	\]
	This map is clearly not an isomorphism (the domain is equivalent to $\mathbb{N} \cup \{ \infty \}$ equipped with the discrete topology).
	However it also clearly becomes an isomorphism after restricting to $\mathbb{N} $ and $ \{\infty\}$.
\end{rem}
\begin{emptypar}\label{par:i**computation}

	Observe that the diagram
	\[
	\begin{tikzcd}
\PSh_\emptyset(X_{\proet}^{\aff}) \arrow[r,"i^*"] \arrow[d,"L_\proet",swap] & \PSh_\emptyset(Z_{\proet}^{\aff}) \arrow[r,"i_*"] \arrow[d, "L_{\proet}",swap] & \PSh_\emptyset(X_{\proet}^{\aff}) \arrow[d, "L_{\proet}"] \\
\Sh(X_{\proet}) \arrow[r,"i^*"]&\Sh(Z_{\proet}) \arrow[r, "i_*"]                 & \Sh(X_{\proet})
\end{tikzcd}
	\]
	commutes: the left-hand square commutes by definition; in the right-hand square, all functors commute with weakly contractible colimits and $\PSh_\emptyset(Z_{\proet}^{\aff})$ is generated by representables under weakly contractible colimits; as the case of representables follows from the fact that the pro-étale topology is sub-canonical \cite[\href{https://stacks.math.columbia.edu/tag/098Z}{Tag~098Z}]{stacks-project}, the right-hand square indeed commutes. 
	
	We now use notations introduced in \Cref{notation-tilde}. We have a left adjoint $\widetilde{(-)}$ to the pullback $X^{\aff}_{\proet} \to Z^{\aff}_{\proet}$, so that at the level of presheaves, $i^*$ is given by 
	\[P \mapsto (U\mapsto P(\widetilde{U})).\]
	Since $\tilde{\emptyset} = \emptyset$, this preserves presheaves that send $\emptyset$ to $*$. Let $\fcal$ be in $\Sh(X_\proet)$; using the above commutative diagram, the sheaf $i_*i^* \mathcal{F}$ is therefore given by the sheafification of the assertion
	\[
		V \mapsto \fcal(\widetilde{\restr V Z}).
	\]
\end{emptypar}

Armed with this we can now roughly proceed as in the proof of the localization theorem for ordinary motivic spaces. If $Y\in\mathrm{Sch}_X$ is a $X$-scheme, we also denote by $Y$ its image in $\Sh_\proet(X)$.

\begin{defi}
	For any scheme $Y$ over $X$ and any section $t \colon Z \rightarrow Y_Z=Y\times_X Z$ we define a presheaf of sets $\Phi_X(Y,t)$ on the category of schemes over $X$ by setting
	\[
  \Phi_X(Y,t)(Y')=
  \begin{cases}
    \Hom_X(Y',Y) \times_{\Hom_Z(Y'_Z,Y_Z)} *  & \text{if } Y_Z \neq \emptyset\\
    * & \text{else}
  \end{cases}
\]
where the map $* \to \Hom_Z(Y'_Z,Y_Z)$ is given by the map $Y'_Z\to Z \xrightarrow{t} Y_Z$. In other words,
\[\Phi_X(Y,t)=(Y \bigsqcup_{Y_U}U) \times_{i_*Y_Z}X\] with the map $X \to i_* Y_Z$ given by $t$ and adjunction.
\end{defi}

\begin{lem}
	\label{Lemma:Phiinvariantunderproetale}
	Let $f \colon V \rightarrow Y$ be a pro-\'etale morphism of affine schemes and let $t' \colon Z \rightarrow V_Z$ and $t \colon Z \rightarrow Y_Z$ be sections over $Z$ such that $ f_Z \circ t' = t $.
	Then the induced map
		\[
			\psi \colon \Phi_X(V,t') \rightarrow \Phi_X(Y,t)
		\]
	becomes an isomorphism after applying the sheafification functor
	\[
		L_{\proet} \colon \PSh(\Sch_X) \rightarrow \Sh_{\proet}(X).
	\]
\end{lem}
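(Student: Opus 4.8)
The plan is to show that $L_\proet\psi$ is an isomorphism in $\Sh_\proet(X)$ by checking that $\psi$ is a local equivalence for the pro-étale topology on $\Sch_X$. Since $\Phi_X(V,t')$ and $\Phi_X(Y,t)$ are $0$-truncated (presheaves of sets), their hypersheafifications agree with ordinary sheafifications, so it will be enough to prove that $\psi$ is both a local epimorphism and a local monomorphism. One first disposes of the degenerate case $Z=\emptyset$: then $Y_Z=V_Z=\emptyset$, both presheaves are constant with value $\ast$, and $\psi$ is already an isomorphism; so from now on assume $Z\neq\emptyset$, hence $Y_Z\neq\emptyset$.

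Both local conditions can be tested at a scheme $Y'\in\Sch_X$, and after a Zariski cover one may assume $Y'$ affine. I would then use the pro-étale covering $\{\widetilde{Y'_Z}\to Y',\ Y'_U\to Y'\}$ of $Y'$, exactly as in the proof of \Cref{lem:i_*moteq}: here $\widetilde{Y'_Z}=\mathrm{Hens}_{Y'}(Y'_Z)$ is the Henselization of $Y'$ along its closed subscheme $Y'_Z=Y'\times_X Z$ in the sense of \Cref{notation-tilde}, and $Y'_U=Y'\times_X U$ is the open complement. It then suffices to show that $\psi$ is \emph{bijective} on sections over $\widetilde{Y'_Z}$ and over $Y'_U$ separately: surjectivity on the two pieces yields the local lifting of an arbitrary section of $\Phi_X(Y,t)$, and injectivity yields the local agreement of two sections of $\Phi_X(V,t')$ with the same image, which is precisely what it means for $\psi$ to be a local epimorphism and a local monomorphism.

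Over $Y'_U$ one has $(Y'_U)_Z=Y'_U\times_X Z=\emptyset$, so in the description $\Phi_X(Y,t)=(Y\sqcup_{Y_U}U)\times_{i_*Y_Z}X$ the pullback is taken over $\Hom_Z(\emptyset,Y_Z)=\ast$ and degenerates; one checks $\Phi_X(Y,t)(Y'_U)=\ast=\Phi_X(V,t')(Y'_U)$, so $\psi$ is the identity of $\ast$. Over $W':=\widetilde{Y'_Z}$, the pair $(W',Y'_Z)$ is Henselian by the universal property of $\mathrm{Hens}_{Y'}$ (compare the proof of \Cref{prop:W-contractible_gives_hens_pair}); hence for any affine pro-étale $W'$-scheme $P$, restriction along $Y'_Z\hookrightarrow W'$ gives a bijection between sections of $P\to W'$ over $W'$ and sections of $P\times_{W'}Y'_Z\to Y'_Z$ over $Y'_Z$ (for $P$ étale this is the definition of a Henselian pair; for $P=\lim_\alpha P_\alpha$ pro-étale one passes to the limit over $\alpha$). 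Applying this to $P=W'\times_Y V$, which is affine and pro-étale over $W'$ because $f$ is affine and pro-étale: a section of $\Phi_X(Y,t)$ over $W'$ is a morphism $g\colon W'\to Y$ over $X$ with $g|_{Y'_Z}=t\circ(Y'_Z\to Z)$, and since $f_Z\circ t'=t$ the morphism $t'\circ(Y'_Z\to Z)\colon Y'_Z\to V$ defines a section of $(W'\times_Y V)\times_{W'}Y'_Z\to Y'_Z$; this section extends uniquely to a section of $W'\times_Y V\to W'$, i.e. to a morphism $h\colon W'\to V$ over $X$ with $f\circ h=g$ and $h|_{Y'_Z}=t'\circ(Y'_Z\to Z)$. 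This shows $\psi$ is surjective over $W'$, and the uniqueness of the extension shows it is injective, completing the argument.

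The step I expect to be the crux is identifying the correct local model: one must see that, pro-étale locally on $Y'$, a section of $\Phi_X(Y,t)$ is controlled by its restriction to a Henselization along $Z$ together with its (trivial) restriction to the part over $U$, and that over the Henselization $\psi$ becomes exactly the statement that sections of a pro-étale morphism lift uniquely across a Henselian pair. Once the covering $\{\widetilde{Y'_Z},Y'_U\}$ is chosen, the remainder is a routine application of the universal property of the Henselization functor, formally parallel to the classical Morel--Voevodsky argument for the Nisnevich topology.
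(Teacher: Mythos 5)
Your proposal is correct, and the core idea — reduce to the Henselization $\widetilde{Y'_Z}$ and invoke the universal property of the Henselization functor, or equivalently the unique section-lifting property for pro-étale morphisms across a Henselian pair — is exactly the one the paper uses. The difference is in how you get there. The paper first restricts $\psi$ to the small sites $T^{\aff}_{\proet}$, invokes the joint conservativity of $(i^*,j^*)$ from \Cref{prop:loc-proet-small-site}, then uses the conservativity and full faithfulness of $(i_T)_*$ together with the explicit description of $(i_T)_*(i_T)^*$ as a sheafification (\Cref{par:i**computation}) to reduce to checking $\psi(\widetilde{T_Z})$; it then computes a fiber over a given $q\colon\widetilde{T_Z}\to Y$ and shows it is a singleton. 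You sidestep this categorical apparatus by directly exhibiting the pro-étale cover $\{\widetilde{Y'_Z}\to Y',\, Y'_U\to Y'\}$ (the same cover underlying \Cref{lem:i_*moteq}) and verifying bijectivity of $\psi$ over each piece, which gives the local epi and local mono conditions directly; your final surjectivity/injectivity argument is the same as the paper's fiber computation phrased differently. Both are sound; your version is somewhat more elementary, while the paper's is wired into its surrounding topos-theoretic framework. One small remark: your evaluation $\Phi_X(Y,t)(Y'_U)=\ast$ uses the reading of the definition on which the degenerate value $\ast$ is triggered by $Y'_Z=\emptyset$ (equivalently, the formula $\Phi_X(Y,t)=(Y\sqcup_{Y_U}U)\times_{i_*Y_Z}X$), not by $Y_Z=\emptyset$ as the case distinction is literally typeset; this is clearly the intended reading, it is the one the paper's own proof relies on in the same step, so this is not a gap on your end, but it is worth being aware that the literal fiber-product formula would give $\Hom_X(Y'_U,Y)$ rather than $\ast$.
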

\begin{proof}
	Let $T$ be an affine scheme over $X$. As pro-étale sheafification commutes with restriction to small sites by \cite[Proposition~7.1]{MR4296353}, it suffices to prove that $L_\proet\psi_{\mid T^\mathrm{aff}_{\proet}}$ is an equivalence. By \Cref{prop:loc-proet-small-site} it suffices to prove that $L_\proet\psi_{\mid (T_U)^\mathrm{aff}_{\proet}}$ and $L_\proet\psi_{\mid (T_Z)^\mathrm{aff}_\proet}$ are equivalences.
	As for any $Y\in (T_U)^\mathrm{aff}_{\proet}$, we have $Y_Z = \emptyset$, it is clear that $\psi_{\mid (T_U)^\mathrm{aff}_\proet}$ is an equivalence. We remark that if $i_T\colon T_Z\to T$ is the closed immersion obtained by pullback of $i$ to $T$, the functor $(i_T)_*$ is conservative because it is fully faithful as its left adjoint $i_T^*$ has a further left adjoint given by henselization (see \Cref{notation-tilde}) and the latter is fully faithful.
	Thus it suffices to prove that 
	$$(i_T)_*L_{\proet}\psi_{\mid (T_Z)_{\proet}} = (i_T)_*(i_T)^*\psi_{\mid T_\proet}$$ is an equivalence.
	By \Cref{par:i**computation}, we have that the map of sheaves $(i_T)_*(i_T)^*\psi_{\mid T_\proet}$ is obtained as the sheafification of 
	\[W\in T_\proet^\mathrm{aff}\mapsto \psi(\widetilde{W_{\mid T_Z}})\] so that in particular, it suffices to show (up to replacing $T$ by an affine pro-étale scheme over $T$) that $\psi(\widetilde{T_Z})$ is an equivalence.

	To see this we check that the fiber of $ \psi(\widetilde{T_Z}) $ over any morphism $ q \colon \widetilde{T_Z} \rightarrow Y $ over $ X $ that gives rise to an element in $ 	\Phi_X(Y,t)(\widetilde{T_Z}) $ is a point.
	The latter fiber is isomorphic to
	\[
		\Phi_{\widetilde{T_Z}}(V \times_Y \widetilde{T_Z}, t \times_Z T_Z) (\widetilde{T_Z})
	\]
	so it suffices to see that this set is a point.
	But an element in there is given by a map of schemes $ s \colon \widetilde{T_Z} \rightarrow \widetilde{T_Z} \times_X V $ that makes the diagram
	\[\begin{tikzcd}
		& {T_Z \times_X V} \\
		{T_Z} & {T_Z} & {\widetilde{T_Z} \times_XV} \\
		& {\widetilde{T_Z}} & {\widetilde{T_Z} } \\
		\arrow["\id", from=3-2, to=3-3]
		\arrow[from=2-3, to=3-3]
		\arrow[hook, from=1-2, to=2-3]
		\arrow[from=2-1, to=2-2]
		\arrow[from=1-2, to=2-2]
		\arrow[hook, from=2-1, to=3-2]
		\arrow["{\,}"{description}, hook, from=2-2, to=3-3]
		\arrow["s"{pos=0.3}, from=3-2, to=2-3]
		\arrow["{t \times_Z T_Z}"{description}, from=2-1, to=1-2]
	\end{tikzcd}\]
	commute.
	It follows that there is a unique such map $ s $ by the universal property of the henselization.
	This proves the claim.
\end{proof}

Now we are able to prove the localization theorem in the pro-\'etale setting:

\begin{proof}(of \Cref{thm:loc})
	We may reduce to the case where $X$ is affine.
	Since all functors in discussion commute with sifted colimits, we may reduce to the case that $\mathcal{F}$ is given by $L_{\mot}(Y)$ for some $Y$ that is affine and pro-\'etale over $\A^n_X$ for some $n$ by \Cref{lem:locWL} and \cite[Theorem~2.3.4]{zbMATH06479630}.
	Furthermore all functors in discussion preserve motivic equivalences (the only non-trivial case is $i_*$ but this is \Cref{lem:i_*moteq}).
	This means that it suffices to see that the induced map
	\[
		Y \amalg_{Y_U} U \rightarrow i_* Y_Z
	\]
	is a motivic equivalence in $\PSh(\Sch_X)$.
	For this it suffices to see that for any affine $X$-scheme $f\colon T\to X$ with a morphism $T \rightarrow i_* Y_Z$ corresponding to a morphism $t_0 \colon T_Z \to Y_Z $, the induced morphism
	\[
		(Y \amalg_{Y_U} U \rightarrow i_* Y_Z) \times_{i_* Y_Z} T
	\]
	is a motivic equivalence.

	But using the projection formula in $\PSh(\Sch_T)$ (see \cite[Proposition 6.3.5.11]{MR2522659}) this map is given by applying the functor $f_\sharp \colon \PSh(\Sch_T) \rightarrow \PSh(\Sch_X)$ to the morphism
	\[
		\Phi_T(Y_T,t) \rightarrow * =T.
	\]
	with $t\colon T_Z \to Y_T \times_{T} T_Z=Y_Z \times_X T$ induced by $t_0$ on the first factor and the inclusion $T_Z\to T$ on the second factor. 
	Hence, we have reduced to showing that the presheaf $\Phi_T(Y_T,t)$ is motivically contractible.
	Recall that since $Y$ was affine pro-\'etale over $\A^n_X$, it follows that the structure map $Y_T \rightarrow T$ factors as
	\[
		Y_T \xrightarrow{f_T} \A^n_T \rightarrow T
	\]
	where all schemes are affine and $f_T$ is pro-\'etale.
	By Lemma~\ref{Lemma:Phiinvariantunderproetale} we may reduce to showing that $\Phi_T(\A^n_T,(f_T \times_T T_Z)\circ t)$ is motivically contractible in $\PSh(\Sch_T)$, which is well-known as now $\A^n_T \rightarrow T$ is smooth (see the proof of \cite[Theorem~4.18]{MR3570135}).
\end{proof}

This also implies the localization theorem for $ \SH_{\proet}(X) $:

\begin{cor}[Pointed localization]
\label{cor:pointedloc}
    Let $i\colon Z\to X$ be a closed immersion and let $j\colon U\to X$ be the open complement. Assume that $j$ is a qcqs immersion. For every $M\in \HH_{\proet,\bullet}(X)$ The sequence
    \[
    j_\sharp j^*M\to M\to i_*i^* M
    \]
    is a cofiber sequence.
	In particular $i^*$ and $j^*$ are jointly conservative and $i_*$ is fully faithful.
\end{cor}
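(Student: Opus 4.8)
The plan is to deduce all three assertions from the unpointed localization theorem \Cref{thm:loc}; the only real work is the passage from the unpointed cocartesian square to the pointed cofiber sequence, carried out exactly as in the classical situation (\cite[\S 3]{MR1813224}, see also \cite[\S 7.2]{khan} and \cite[\S 5]{MR3570135}). Let $M=(\overline M,s\colon X\to\overline M)$ be a pointed object. The forgetful functor $\HH_{\proet,\bullet}(X)\to\HH_{\proet}(X)$ preserves pushouts (coslice forgetful functors preserve connected colimits), so it suffices to identify underlying unpointed objects. Since $i^*$ and $j^*$ preserve the terminal object, the pointed $i^*M$, $j^*M$ have underlying objects $i^*\overline M$, $j^*\overline M$; a short adjunction computation (using $j_\sharp U\simeq U$ and $j^*X\simeq U$) identifies the underlying object of the pointed $j_\sharp j^*M$ with the pushout $j_\sharp j^*\overline M\amalg_U X$ — glued along $j\colon U\to X$ and $j_\sharp(j^*s)\colon U\to j_\sharp j^*\overline M$ — and that of the pointed $i_*i^*M$ with $i_*i^*\overline M$. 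Feeding the unpointed cocartesian square of \Cref{thm:loc} for $\overline M$ into the pasting law for pushouts then shows that
\[
\begin{tikzcd}
j_\sharp j^*M \arrow[r] \arrow[d] & M \arrow[d] \\
0 \arrow[r] & i_*i^*M
\end{tikzcd}
\]
is cocartesian in $\HH_{\proet,\bullet}(X)$, i.e. the asserted cofiber sequence.

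For the full faithfulness of $i_*$, I would first record the elementary identities $i^*j_\sharp\simeq 0$ and $j^*j_\sharp\simeq\id$ (both from smooth base change along the cartesian squares $Z\times_X U=\emptyset$ and $U\times_X U=U$), and $j^*i_*\simeq 0$ (the sheaf-level computation of \Cref{par:i**computation}, using that $j$ is an open immersion). Applying $i^*$ — a left adjoint, hence preserving cofiber sequences — to the cofiber sequence above and using $i^*j_\sharp\simeq 0$ shows that the unit $i^*M\to i^*i_*i^*M$ of the adjunction $i^*\dashv i_*$ is an equivalence for every $M$; by the triangle identity the counit $\varepsilon_{i^*M}\colon i^*i_*i^*M\to i^*M$ is then an equivalence as well. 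Now $i^*$ and $i_*$ both preserve small colimits — for $i_*$ this is \Cref{Proposition: i_* pre colimits} — and by \Cref{lem:essimagei_*} the objects $i^*W_+$, for $W$ weakly smooth over $X$, generate $\HH_{\proet,\bullet}(Z)$ under colimits; since $\varepsilon$ is an equivalence on each $i^*W_+$, it is an equivalence, i.e. $i_*$ is fully faithful.

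Finally, for joint conservativity of $(i^*,j^*)$, let $\varphi\colon M\to M'$ be a map in $\HH_{\proet,\bullet}(X)$ with $i^*\varphi$ and $j^*\varphi$ equivalences. Then $\mathrm{cofib}(\varphi)$ and — using that $i^*$, $j^*$ are left exact, being inverse images of geometric morphisms, and that finite limits of $\AAA$-local objects are computed as in pro-\'etale sheaves — $\mathrm{fib}(\varphi)$ are annihilated by both $i^*$ and $j^*$. For any $C$ with $i^*C\simeq 0\simeq j^*C$ one has $j_\sharp j^*C\simeq 0$ and $i_*i^*C\simeq 0$, so the cofiber sequence for $C$ reads $0\to C\to 0$, whence $C\simeq\mathrm{cofib}(0\to C)\simeq i_*i^*C\simeq 0$. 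Thus $\mathrm{fib}(\varphi)\simeq 0\simeq\mathrm{cofib}(\varphi)$, and $\varphi$ is an equivalence by the same argument as in the classical case \cite[\S 3]{MR1813224}.

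The main obstacle is the first paragraph: matching the pointed incarnations of $j_\sharp$ and $i_*$ with the unpointed ones. This is standard but genuinely requires care; once it is in place, the remaining assertions are formal consequences of \Cref{thm:loc}, \Cref{Proposition: i_* pre colimits} and \Cref{lem:essimagei_*}.
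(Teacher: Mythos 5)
Your reduction of the pointed cofiber sequence to the unpointed localization theorem \Cref{thm:loc} is correct and is exactly what the paper does (it cites \cite[Proposition~5.2]{MR3570135} for the formal passage). Your argument for full faithfulness of $i_*$ is also correct and is essentially a mirror image of the paper's: you apply $i^*$ to the cofiber sequence to see that $i^*(\eta_M)$ is an equivalence, deduce that the counit $\varepsilon_{i^*M}$ is an equivalence by the triangle identity, and then propagate over all of $\HH_{\proet,\bullet}(Z)$ using colimit preservation (\Cref{Proposition: i_* pre colimits}) and the generation statement of \Cref{lem:essimagei_*}. The paper instead plugs $i_*N$ into the cofiber sequence, uses $j^*i_*\simeq *$ to see that $\eta_{i_*N}\colon i_*N\to i_*i^*i_*N$ is an equivalence, and concludes via the triangle identity and conservativity of $i_*$ (again \Cref{lem:essimagei_*}). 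The two arguments use the same ingredients, just the unit/counit of the adjunction from opposite ends.

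Your argument for joint conservativity of $(i^*,j^*)$ has a genuine gap at the last step. You correctly show that $\mathrm{cofib}(\varphi)$ and $\mathrm{fib}(\varphi)$ are killed by $i^*$ and $j^*$, and that any $C$ with $i^*C\simeq 0\simeq j^*C$ is zero; but you then assert that $\mathrm{fib}(\varphi)\simeq 0\simeq\mathrm{cofib}(\varphi)$ forces $\varphi$ to be an equivalence ``by the same argument as in the classical case'' without saying what that argument is. In an arbitrary pointed presentable $\infty$-category this implication is not formal (it already fails in the $1$-category of pointed sets, where the fold map $\{*,a,b\}\to\{*,a\}$ has trivial strict fiber and cofiber without being an isomorphism; the homotopy-theoretic situation is better, but still requires an argument). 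The paper does not take this detour at all: it calls conservativity an ``obvious consequence,'' which in context means that $i^*$ and $j^*$ on $\HH_{\proet,\bullet}$ are simply the restrictions of the inverse-image functors on pro-\'etale sheaves (they preserve $\A^1$-local objects), and that joint conservativity at the level of sheaves follows from \Cref{prop:loc-proet-small-site} applied over each qcqs weakly smooth $W/X$ — no manipulation of fibers and cofibers is needed. I would either switch to that direct argument, or, if you want to keep the $\mathrm{fib}/\mathrm{cofib}$ route, actually prove that vanishing of both the homotopy fiber and cofiber of a pointed map in this category implies it is an equivalence, which is a nontrivial (and in general sensitive) claim.
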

\begin{proof}
    The first statement is a formal consequence of \Cref{thm:loc}, see \cite[Proposition~5.2]{MR3570135}.
	That $i^*$ and $j^*$ are jointly conservative is an obvious consequence.
	Plugging $i_* (-)$ into the above cofiber sequence and using $j^* i_* \simeq \ast$ we see that $i_* \to i_* i^* i_*$ is an equivalence.
	Thus the claim follows since $i_*$ is conservative by \Cref{lem:essimagei_*}.
\end{proof}
\begin{cor}[Closed projection formula]
	\label{cor:closed-proj}
    Let $i\colon Z\to X$ be a closed immersion. Assume that its complement is a qcqs open immersion.
    Let $M\in \HH_{\proet,\bullet}(X)$ and let $N\in \HH_{\proet,\bullet}(Z)$. The canonical map
    \[
    M\otimes i_* N\to i_*(i^*M\otimes N)
    \]
is an equivalence.
\end{cor}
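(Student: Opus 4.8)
The plan is to reduce the closed projection formula to the pointed localization sequence of \Cref{cor:pointedloc}, mimicking the standard argument (compare \cite[\S 2]{MR3570135}). First I would recall that for any $N \in \HH_{\proet,\bullet}(Z)$ there is a natural map $M \otimes i_*N \to i_*(i^*M \otimes N)$, obtained as the adjoint of the composite $i^*(M \otimes i_*N) \simeq i^*M \otimes i^*i_*N \to i^*M \otimes N$ (using that $i^*$ is symmetric monoidal and the counit $i^*i_* \to \id$, which is an equivalence by \Cref{cor:pointedloc}). Since $i^*$ and $j^*$ are jointly conservative by \Cref{cor:pointedloc}, it suffices to check that this map becomes an equivalence after applying both $i^*$ and $j^*$.

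**Key steps.** After applying $j^*$: the right-hand side vanishes because $j^*i_* \simeq *$ (again from \Cref{cor:pointedloc}, as $j^*i_*N$ sits in the localization sequence with $i^*$ inverting it), and the left-hand side is $j^*M \otimes j^*i_*N \simeq j^*M \otimes * \simeq *$ since $j^*$ is symmetric monoidal and $*$ is the monoidal-unit-killing zero object in a pointed setting — more precisely $j^*i_*N \simeq 0$ so the smash product is $0$. Hence the map is an equivalence after $j^*$. After applying $i^*$: using that $i^*$ is symmetric monoidal, the left-hand side becomes $i^*M \otimes i^*i_*N \simeq i^*M \otimes N$ (the counit being an equivalence), and the right-hand side becomes $i^*i_*(i^*M \otimes N) \simeq i^*M \otimes N$ likewise; one checks the map is the identity under these identifications. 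Therefore the map is an equivalence after both pullbacks, hence an equivalence.

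**Main obstacle.** The genuine content is not in the conservativity bookkeeping but in verifying that the natural transformation one writes down is compatible with the two projection formulas $j^*(A \otimes B) \simeq j^*A \otimes j^*B$ and $i^*(A\otimes B)\simeq i^*A \otimes i^*B$ in a coherent way, so that ``the map is the identity'' after applying $i^*$ is actually correct and not off by some unit/counit discrepancy. This is precisely the kind of diagram chase that is routine but must be done with the localization sequence $j_\sharp j^* M \to M \to i_*i^*M$ in hand: one typically applies $- \otimes i_*N$ to this cofiber sequence, uses the weakly smooth projection formula $j_\sharp(j^*M' \otimes A) \simeq j_\sharp(j^*M') \otimes A$ on the left term, and compares with the localization sequence for $i^*M \otimes N$ on $Z$ pushed forward. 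I expect the cleanest route is in fact to tensor the cofiber sequence of \Cref{cor:pointedloc} (for the object $M$) with $i_*N$ and identify the resulting diagram termwise with the localization sequence attached to $i_*(i^*M \otimes N)$, the middle term matching by the universal property of the cofiber; the only subtlety being that $j_\sharp j^* M \otimes i_* N \simeq j_\sharp(j^*M \otimes j^* i_* N) \simeq j_\sharp(j^*M \otimes 0) \simeq 0$, which kills the left term on both sides simultaneously.
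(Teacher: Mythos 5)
Your first argument (joint conservativity of $(i^*,j^*)$, then checking both restrictions) is correct and is essentially the same as the paper's proof, which also rests entirely on \Cref{cor:pointedloc} together with $j^*i_*\simeq *$: the paper's intended argument is simply to apply the localization cofiber sequence to the object $M\otimes i_*N$, observe that the first term $j_\sharp j^*(M\otimes i_*N)\simeq j_\sharp(j^*M\otimes j^*i_*N)$ is trivial, and read off $M\otimes i_*N\simeq i_*i^*(M\otimes i_*N)\simeq i_*(i^*M\otimes N)$ using that $i^*$ is monoidal and $i^*i_*\simeq\id$ (full faithfulness of $i_*$). Both routes are routine restatements of the same input.

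One caveat on your proposed ``cleanest route'': tensoring the localization sequence for $M$ with $i_*N$ produces $*\to M\otimes i_*N\to i_*i^*M\otimes i_*N$, so you learn $M\otimes i_*N\simeq i_*i^*M\otimes i_*N$. But to reach $i_*(i^*M\otimes N)$ from there you would need $i_*A\otimes i_*B\simeq i_*(A\otimes B)$, which is precisely the projection formula you are proving (with $M$ replaced by $i_*A$); as stated this step is circular. The fix is exactly the paper's version of the argument --- apply the localization sequence directly to $M\otimes i_*N$ (or to $i_*A\otimes i_*B$) rather than to $M$ and then tensoring. So stick with your first argument, or with the direct application of the localization sequence to $M\otimes i_*N$, and discard the ``cleanest route'' as written.
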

\begin{proof}
    This follows immediately from \Cref{cor:pointedloc} using that $j^* i_* \simeq \ast$.
\end{proof}

\begin{emptypar}
    Let $i\colon Z\to X$ be a closed immersion.
    Then canonical functor
    \[\HH_{\proet,\bullet}(X)\otimes_{\HH_{\proet,\bullet}(Z)}\SH_\proet(Z)\to\SH_\proet(X)\] is an equivalence by \cite[Lemma 1.3.11]{arXiv:2204.03434}. 
\end{emptypar}

\begin{cor}
    \label{cor:i_*_stable_pre_colimits}
    Let $i\colon Z\to X$ be a closed immersion. Assume that its complement is a qcqs open immersion.
    Then the functor
    \[i_*\colon \SH_\proet(Z)\to\SH_\proet(X)\] commutes with colimits and the closed projection formula holds in $\SH_\proet$.
\end{cor}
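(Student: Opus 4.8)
The plan is to bootstrap the stable statement from its unstable pointed counterparts \Cref{Proposition: i_* pre colimits} (colimit preservation of $i_*$) and \Cref{cor:closed-proj} (closed projection formula), using the description of $\SH_\proet(X)$ as the $(\mathbb{P}^1,\infty)$-stabilization of $\HH_{\proet,\bullet}(X)$, equivalently the equivalence $\HH_{\proet,\bullet}(X)\otimes_{\HH_{\proet,\bullet}(Z)}\SH_\proet(Z)\xrightarrow{\sim}\SH_\proet(X)$ recorded just before the statement. The quickest argument is to note that, under the latter equivalence, $i_*$ is identified with a base-change functor of module $\infty$-categories over the presentably symmetric monoidal $\HH_{\proet,\bullet}(Z)$; such a functor is automatically a left adjoint (its right adjoint being restriction of scalars) and hence preserves colimits, while the projection formula is part of the module-structure data, with the relevant compatibility supplied by \Cref{cor:closed-proj}. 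Since the precise module structures require unwinding \cite[Lemma~1.3.11]{arXiv:2204.03434}, I would also spell out the more hands-on version below, which makes the identification transparent.

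First I would observe that $i_*\colon\HH_{\proet,\bullet}(Z)\to\HH_{\proet,\bullet}(X)$ is a morphism in $\PrL$ by \Cref{Proposition: i_* pre colimits}, and that by \Cref{cor:closed-proj} applied with $M$ of the form $i^*M'$ together with $i^*\mathbb{P}^1_X\simeq\mathbb{P}^1_Z$, the square relating $i_*$ with the endofunctors $-\otimes\mathbb{P}^1$ on $\HH_{\proet,\bullet}(Z)$ and $\HH_{\proet,\bullet}(X)$ commutes up to canonical equivalence. By the functoriality of $\mathbb{P}^1$-inversion in the sense of \cite[Definition~4.8]{MR3281141} (or, concretely, by passing to the colimit of the sequential diagrams $\HH_{\proet,\bullet}(-)\xrightarrow{-\otimes\mathbb{P}^1}\cdots$ in $\PrL$), this yields a colimit-preserving functor $\widetilde{i_*}\colon\SH_\proet(Z)\to\SH_\proet(X)$ equipped with an equivalence $\widetilde{i_*}\circ\Sigma^\infty_{\mathbb{P}^1}\simeq\Sigma^\infty_{\mathbb{P}^1}\circ i_*$.

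The key point is then to identify $\widetilde{i_*}$ with the $i_*$ of the statement, i.e.\ with the right adjoint of $i^*\colon\SH_\proet(X)\to\SH_\proet(Z)$. For this I would first stabilize \Cref{cor:pointedloc}: since $\Sigma^\infty_{\mathbb{P}^1}$ is exact, colimit-preserving and commutes with $j^*$, $j_\sharp$ and $i^*$, the stable localization cofiber sequence $j_\sharp j^*\to\id\to i_*i^*$ holds on $\SH_\proet(X)$ (as in \cite[Proposition~5.2]{MR3570135}), exhibiting $i_*\colon\SH_\proet(Z)\to\SH_\proet(X)$ as fully faithful with essential image $\ker(j^*)$ and with $i^*|_{\ker(j^*)}$ as its inverse. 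On the other hand, using the unstable identities $i^*i_*\simeq\id$ and $j^*i_*\simeq\ast$ together with $\Sigma^\infty_{\mathbb{P}^1}i^*\simeq i^*\Sigma^\infty_{\mathbb{P}^1}$ and $\Sigma^\infty_{\mathbb{P}^1}j^*\simeq j^*\Sigma^\infty_{\mathbb{P}^1}$, one computes $i^*\widetilde{i_*}\circ\Sigma^\infty_{\mathbb{P}^1}\simeq\Sigma^\infty_{\mathbb{P}^1}$ and $j^*\widetilde{i_*}\circ\Sigma^\infty_{\mathbb{P}^1}\simeq 0$; since $\SH_\proet(Z)$ is generated under colimits and $\otimes\mathbb{P}^1$ by the image of $\Sigma^\infty_{\mathbb{P}^1}$ and all functors involved preserve colimits, this forces $i^*\widetilde{i_*}\simeq\id$ and $j^*\widetilde{i_*}\simeq 0$. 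Hence $\widetilde{i_*}$ lands in $\ker(j^*)$ and is a section there of the equivalence $i^*|_{\ker(j^*)}$, so it must coincide with its inverse, which is exactly $i_*$. Therefore $i_*\simeq\widetilde{i_*}$ preserves colimits.

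Finally, for the closed projection formula $M\otimes i_*N\to i_*(i^*M\otimes N)$ in $\SH_\proet$: now that $i_*$ is known to preserve colimits, both sides of the comparison map are colimit-preserving in $M\in\SH_\proet(X)$ and in $N\in\SH_\proet(Z)$ (for the right-hand side also using colimit preservation of $i^*$ and of $\otimes$), so it suffices to check the map is an equivalence for $M\simeq\mathbb{S}[Y](n)$ and $N\simeq\mathbb{S}[Y'](m)$. For such generators it is obtained by applying the colimit-preserving symmetric monoidal functor $\Sigma^\infty_{\mathbb{P}^1}$ to the unstable pointed identity of \Cref{cor:closed-proj} (with the Tate twists passing through $i_*$, $i^*$ and $\otimes$ by invertibility of $\mathbb{S}(1)$ and compatibility of $i_*$, $i^*$ with $\Sigma^\infty_{\mathbb{P}^1}$). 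The main obstacle is the identification step: checking that the formally constructed stabilized functor $\widetilde{i_*}$ really is the right adjoint $i_*$; this is forced once the stable localization sequence and the relations $i^*i_*\simeq\id$, $j^*i_*\simeq\ast$ are available, but it ultimately rests on the localization theorem \Cref{thm:loc}, which is why the quasi-compactness hypothesis on the open complement is essential.
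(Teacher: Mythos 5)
Your ``quickest argument'' is in the same module-theoretic spirit as the paper's actual proof, but it misdescribes $i_*$: under the equivalence with a relative tensor product, $i_*$ is \emph{not} a base-change/extension-of-scalars functor (that would be $i^*$), and so there is no right adjoint ``restriction of scalars'' whose existence would make colimit-preservation automatic. What the paper actually uses is that $(i^*,i_*)$ is an \emph{internal} adjunction in the $(\infty,2)$-category of $\HH_{\proet,\bullet}(X)$-modules in $\PrL$ --- this relies on $i_*$ being a morphism of modules, which is exactly what \Cref{Proposition: i_* pre colimits} and \Cref{cor:closed-proj} buy you --- and that internal adjunctions are preserved by the $(\infty,2)$-functor of relative tensor. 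That mechanism is different from, and does not reduce to, ``base change is a left adjoint''.

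Your hands-on version goes by a genuinely different route --- explicitly building a colimit-preserving $\widetilde{i_*}$ on the sequential colimit presentation of $\SH_\proet$ and then matching it with the right adjoint of $i^*$ --- but as written it has a circularity at the identification step. You assert that the stable cofiber sequence $j_\sharp j^* \to \id \to i_* i^*$ (with $i_*$ the genuine right adjoint of $i^*$ on $\SH_\proet$) follows ``formally'' from \Cref{cor:pointedloc} because $\Sigma^\infty_{\mathbb{P}^1}$ commutes with $j_\sharp,j^*,i^*$. That argument does not reach the right adjoint $i_*$: applying $\Sigma^\infty_{\mathbb{P}^1}$ to the pointed cofiber sequence produces the cofiber sequence $j_\sharp j^* \to \id \to \widetilde{i_*}\, i^*$ on generators, with \emph{your} constructed functor $\widetilde{i_*}$ in the last slot. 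Getting from there to the stable right adjoint $i_*$ requires precisely the commutation $\Sigma^\infty_{\mathbb{P}^1} i_* \simeq i_* \Sigma^\infty_{\mathbb{P}^1}$, i.e.\ essentially the statement you are trying to prove. This is why the paper proves the corollary \emph{before} \Cref{cor:loc}, not the other way around.

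The route can be repaired without invoking the paper's $2$-categorical argument, but it takes more than you wrote. One should first establish the stable cofiber sequence $j_\sharp j^* \to \id \to \widetilde{i_*}\, i^*$ (this does follow by generators and colimits as above), then deduce from your correctly established identities $i^*\widetilde{i_*}\simeq\id$, $j^*\widetilde{i_*}\simeq 0$ that $M\to\widetilde{i_*} i^* M$ is the reflection onto $\ker(j^*)$ and that $\widetilde{i_*}\colon \SH_\proet(Z)\to\ker(j^*)$ and $i^*|_{\ker(j^*)}$ are mutually inverse, and only then conclude $\Hom(M,\widetilde{i_*}N)\simeq\Hom(\widetilde{i_*} i^* M,\widetilde{i_*}N)\simeq\Hom(i^*M,N)$, i.e.\ $\widetilde{i_*}\simeq i_*$. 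The projection formula then goes through as you outline. So the blueprint is sound, but the passage through ``the stable localization sequence'' in the form you stated it is a genuine gap.
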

\begin{proof}
    The closed projection formula for $\HH_{\proet,\bullet}$ (\Cref{cor:closed-proj}) together with \Cref{Proposition: i_* pre colimits} implies that the functor $i_*\colon \HH_{\proet,\bullet}(Z)\to\HH_{\proet,\bullet}(X)$ is a map of $\HH_{\proet,\bullet}(X)$-modules in $\PrL$ (see \cite[Remark~7.3.2.9]{lurieHigherAlgebra2022}).
    Using \cite[Section~4.4]{MR3607274}, as the adjunction $(i^*,i_*)$ is internal to the $(\infty,2)$-category denoted by $\mathbf{Pr}^\mathrm{L}(\HH_{\proet,\bullet}(X))$, whose objects are the presentable $\HH_{\proet,\bullet}(X)$-modules, its image by the $(\infty,2)$-functor
    \[-\otimes_{\HH_{\proet,\bullet}(Z)}\SH_\proet(Z)\colon \mathbf{Pr}^\mathrm{L}(\HH_{\proet,\bullet}(X))\to \mathbf{Pr}^\mathrm{L}(\SH_\proet(X))\] is again an internal adjunction.
    In particular, the right adjoint $i_*$ of $i^*\colon \SH_\proet(X)\to\SH_\proet(Z)$ preserves colimits and the projection formula holds.
\end{proof}

\begin{cor}\label{cor:loc}
    Let $i\colon Z\to X$ be a closed immersion and let $j\colon U\to X$ be the open complement. Assume that $j$ is a qcqs immersion.
    For every $M\in \SH_{\proet}(X)$ the sequence
    \[
    j_\sharp j^*M\to M\to i_*i^* M
    \]
    is a cofiber sequence. In particular the pair $(i^*,j^*)$ is conservative.
\end{cor}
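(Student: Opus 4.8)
The plan is to bootstrap from the pointed localization cofiber sequence of \Cref{cor:pointedloc} along the $\mathbb{P}^1$-stabilization functor $\Sigma^\infty_{\mathbb{P}^1}\colon\HH_{\proet,\bullet}(X)\to\SH_\proet(X)$. Recall from \Cref{def:SmallSH} that $\SH_\proet(X)$ is generated under small colimits by the objects $\mathbb{S}[Y](n)$ for $Y$ a weakly smooth $X$-scheme and $n\in\Z$, and that $\mathbb{S}[Y]$ lies in the image of $\Sigma^\infty_{\mathbb{P}^1}$.

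First I would note that both endofunctors $M\mapsto j_\sharp j^*M$ and $M\mapsto i_*i^*M$ of $\SH_\proet(X)$ preserve small colimits: $j_\sharp$, $j^*$ and $i^*$ are left adjoints, while $i_*\colon\SH_\proet(Z)\to\SH_\proet(X)$ preserves colimits by \Cref{cor:i_*_stable_pre_colimits}. Hence $M\mapsto\operatorname{cofib}(j_\sharp j^*M\to M)$ is colimit-preserving too, and the natural comparison map to $i_*i^*M$ is a natural transformation of colimit-preserving functors. Therefore the full subcategory of those $M$ for which $j_\sharp j^*M\to M\to i_*i^*M$ is a cofiber sequence is closed under colimits, and it suffices to check the statement on the generators $\mathbb{S}[Y](n)$.

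Next I would eliminate the Tate twist. Each of $j^*$, $i^*$ is symmetric monoidal, $j_\sharp$ commutes with $-\otimes\mathbb{S}(1)$ by the weakly smooth projection formula, and $i_*$ commutes with $-\otimes\mathbb{S}(1)$ by the closed projection formula of \Cref{cor:i_*_stable_pre_colimits} together with $i^*\mathbb{S}(1)\simeq\mathbb{S}(1)$. As $\mathbb{S}(1)$ is $\otimes$-invertible, the condition above is stable under $-\otimes\mathbb{S}(1)^{\pm1}$, so it suffices to treat $M=\mathbb{S}[Y]$, i.e. objects in the image of $\Sigma^\infty_{\mathbb{P}^1}$. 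For such $M$ one applies the colimit-preserving, symmetric monoidal functor $\Sigma^\infty_{\mathbb{P}^1}$ to the cofiber sequence of \Cref{cor:pointedloc}: it commutes with $j_\sharp$, $j^*$ and $i^*$ by construction (the corresponding squares of right adjoints with $\Omega^\infty$ commuting), and with $i_*$ because, as in the proof of \Cref{cor:i_*_stable_pre_colimits}, the stable $i_*$ is obtained from the unstable one by base change along $\HH_{\proet,\bullet}(Z)\to\SH_\proet(Z)$. This yields the cofiber sequence for all $M\in\SH_\proet(X)$. The conservativity assertion is then immediate: if $i^*M\simeq0$ and $j^*M\simeq0$, the outer terms of the cofiber sequence vanish, so $M\simeq0$.

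The main obstacle has in effect already been dealt with upstream: it is the good behaviour of $i_*$ after $\mathbb{P}^1$-stabilization --- preservation of colimits, the projection formula, and compatibility with $\Sigma^\infty_{\mathbb{P}^1}$ --- which is the content of \Cref{cor:i_*_stable_pre_colimits}, resting ultimately on \Cref{Proposition: i_* pre colimits} and the localization theorem \Cref{thm:loc}. What remains is routine: reduction to generators via colimit-closedness and compatibility with Tate twists.
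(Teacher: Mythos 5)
Your proof is correct and follows essentially the same approach as the paper's: the paper also reduces to the pointed localization sequence of \Cref{cor:pointedloc} via $\Sigma^\infty_{\mathbb{P}^1}$ by noting that all the functors involved commute with colimits and $\mathbb{P}^1$-stabilization (using \Cref{cor:i_*_stable_pre_colimits}) and that the image of $\Sigma^\infty_{\mathbb{P}^1}$ generates under colimits and Tate shifts. Your write-up just spells out the steps (closure of the locus of $M$ under colimits, reduction past the twist, commutation of $\Sigma^\infty_{\mathbb{P}^1}$ with $i_*$) that the paper leaves implicit in a two-sentence proof.
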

\begin{proof}
    By \Cref{cor:i_*_stable_pre_colimits} all functors in question commute with colimits and $\mathbb{P}^1$-stabilization.
    Since $\Sigma^\infty_{\mathbb{P}^1}$ commutes with colimits and its image generates $\SH_{\proet}(X)$ under colimits and $ \mathbb{P}^1$-shifts, the claim follows from pointed localization (\Cref{cor:pointedloc}).
\end{proof}

\begin{cor}\label{cor:i_*_is_ff}
    Let $i \colon Z \to X$ be a closed immersion.
    Assume that its complement is a qcqs open immersion. Then the functor
    \[i_*\colon \SH_\proet(Z)\to\SH_\proet(X)\]
    is fully faithful. 
    The same holds for $ \HH_{\proet}(Z) $ and $ \HH_{\proet}(X) $.
\end{cor}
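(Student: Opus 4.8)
The strategy is to deduce this formally from the localization statements already established, mirroring the last part of the proof of \Cref{cor:pointedloc}. Since $i^*$ is left adjoint to $i_*$, full faithfulness of $i_*$ is equivalent to the counit $c_N \colon i^* i_* N \to N$ being an equivalence for every $N$ in $\SH_\proet(Z)$ (resp. in $\HH_\proet(Z)$). By the triangle identity $i_*(c_N) \circ u_{i_* N} = \id_{i_* N}$, where $u$ denotes the unit of the adjunction, it suffices to prove that $u_{i_* N} \colon i_* N \to i_* i^* i_* N$ is an equivalence and that $i_*$ is conservative.

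First I would record the elementary vanishing $j^* i_* \simeq \ast$ (equivalently $\simeq 0$ in the stable setting): already on the small pro-\'etale sites one has $V \times_X Z = \emptyset$ for every pro-\'etale $V \to U$, so $(i_* \fcal)|_U$ is terminal for any $\fcal$, and this persists after passing to the $\A^1$-local, pointed, and $\mathbb{P}^1$-stable categories. Now apply the localization statement to $M = i_* N$: in the stable case \Cref{cor:loc} gives a cofiber sequence $j_\sharp j^* i_* N \to i_* N \to i_* i^* i_* N$ whose first term is $j_\sharp(0) \simeq 0$, so the second map $u_{i_* N}$ is an equivalence; in the unstable case \Cref{thm:loc} gives a cocartesian square whose left vertical map $j_\sharp j^* i_* N \to U$ is an equivalence (as $j^* i_* N$ is terminal and $U = j_\sharp(\ast)$), so the right vertical map $u_{i_* N} \colon i_* N \to i_* i^* i_* N$ is an equivalence.

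It remains to check that $i_*$ is conservative, and this is where \Cref{lem:essimagei_*} enters. If $g \colon N_1 \to N_2$ is a morphism with $i_* g$ an equivalence, then for every $M$ the map $\map(i^* M, g) \simeq \map(M, i_* g)$ is an equivalence; the full subcategory of objects $P$ with $\map(P, g)$ an equivalence is closed under colimits and contains the essential image of $i^*$, hence coincides with the whole category by \Cref{lem:essimagei_*}, so $g$ is an equivalence by the Yoneda lemma. Combining the two points, $i_*(c_N)$ is an equivalence, hence so is $c_N$; the same argument works verbatim in the unstable setting. No genuine obstacle arises here: all of the content is in the localization theorem for quasi-compact open complements (\Cref{thm:loc}, \Cref{cor:loc}) together with \Cref{lem:essimagei_*}, everything else being formal adjunction bookkeeping.
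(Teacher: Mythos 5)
Your proof is correct and is essentially the paper's own argument: the paper's proof of this corollary simply defers to the proof of \Cref{cor:pointedloc}, which uses exactly the same reasoning (plug $i_*N$ into the localization sequence, use $j^*i_* \simeq \ast$ to see that $i_* \to i_*i^*i_*$ is an equivalence, and deduce conservativity of $i_*$ from \Cref{lem:essimagei_*}). You spell out the final deduction via the triangle identity and the why-conservativity-follows-from-generation step in slightly more detail than the paper does, but the content is identical.
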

\begin{proof}
    The proof is the same as in \Cref{cor:pointedloc}.
\end{proof}

\subsection{Ambidexterity}\label{sec:ambidexterity}
In this section, we verify that no step of the proof of ambidexterity for smooth projective finitely presented morphism of qcqs schemes use a localization triangle with a non-qcqs open immersion. We follow \cite[Section 5]{MR3570135}.

\begin{defi}
	\label{def:unstableqcqssys}
	Let $\ccal\colon \mathrm{Sch}^\op\to \mathrm{CAlg}(\PrL_\bullet)$ be a functor with values in pointed symmetric monoidal presentable $\infty$-categories. For $f$ a map of schemes, we denote by $f^*$ the functor $\ccal(f)$ and by $f_*$ its right adjoint.
	We say that $\ccal$ is an \emph{unstable qcqs motivic coefficient system} if the following conditions hold:
	\begin{enumerate}
		\item For every smooth map $f$, the functor $f^*$ has a left adjoint $f_\sharp$ satisfying smooth base change and the smooth projection formula.
		\item \emph{$\A^1$-invariance}: the functor $\pi^*$ is fully faithful if $\pi\colon\A^1_X\to X$ is the canonical projection with $X$ any qcqs scheme.
		\item \emph{Nisnevich descent}: the functor $\ccal$ is a Nisnevich sheaf.
		\item \emph{Qcqs localization}: the analogue of \Cref{cor:pointedloc} holds in $\ccal$.
	\end{enumerate}
\end{defi}

\begin{thm}[Drew-Gallauer]
	Let $\mathrm{H}_\bullet(-)$ denote the unstable qcqs motivic coefficient system constructed by Morel and Voevodsky (it is the Nisnevich version, see \cite{MR1813224}). There exists a natural transformation 
	\[\rho_\ccal\colon\mathrm{H}_\bullet(-)\to \ccal(-)\] that also commutes with $f_\sharp$ for a smooth morphism $f$.
\end{thm}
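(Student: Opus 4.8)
The plan is to run the classical Morel--Voevodsky argument through the universal property of the presheaf categories $\PSh(\Sm_{(-)})_\bullet$, following the same pattern used to build $\HH_\proet$ in \Cref{sec:weaklysmooth}. Fix a scheme $X$. With the Day convolution of the cartesian product on $\Sm_X$, the category $\PSh(\Sm_X)$ is the free presentably symmetric monoidal $\infty$-category on $\Sm_X$, so, after passing to pointed objects and using that $\ccal(X)$ is pointed, a symmetric monoidal colimit-preserving functor $\PSh(\Sm_X)_\bullet\to\ccal(X)$ is the same as a symmetric monoidal functor $\Sm_X\to\ccal(X)$ (see \cite{MR3281141}). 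First I would take $\Phi_X$ sending $p\colon U\to X$ to $p_\sharp\un_U=p_\sharp p^*\un_X$; since $p_\sharp$ is left adjoint to $p^*$, the object $p_\sharp\un_U$ represents $\gcal\mapsto\Gamma(U,p^*\gcal)$, and restriction along pullbacks makes this functorial in $U\in\Sm_X$ (so $\Phi_X$ is genuinely a functor, also on the non-smooth morphisms of $\Sm_X$). It is symmetric monoidal because $\Phi_X(X)=\un_X$ and, by the smooth base change and smooth projection formula of axiom (1) of \Cref{def:unstableqcqssys}, there is a natural equivalence
\[
	p_\sharp\un_U\otimes q_\sharp\un_V\simeq(p\times_X q)_\sharp\un_{U\times_X V}.
\]
Write $\Phi_X\colon\PSh(\Sm_X)_\bullet\to\ccal(X)$ also for the associated symmetric monoidal colimit-preserving functor.

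Next I would check that $\Phi_X$ inverts the motivic equivalences, so that it descends along the localization functor $\PSh(\Sm_X)_\bullet\to\mathrm{H}_\bullet(X)$ to a symmetric monoidal colimit-preserving $\rho_\ccal(X)\colon\mathrm{H}_\bullet(X)\to\ccal(X)$; since $\Phi_X$ preserves colimits it suffices to invert generating families of $\A^1$-local and Nisnevich-local equivalences. For $\A^1$-invariance one reduces to the maps $(\A^1_U)_+\to U_+$ with $U\in\Sm_X$, and after applying $p_{U,\sharp}$ it is enough that the counit $\pi_\sharp\pi^*\un_U\to\un_U$ be an equivalence in $\ccal(U)$, where $\pi\colon\A^1_U\to U$; testing against $\map_{\ccal(U)}(-,\gcal)$ this is exactly the full faithfulness of $\pi^*$ granted by axiom (2). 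For Nisnevich descent, the Nisnevich topology is generated by elementary distinguished squares, so it is enough to invert the maps $V\sqcup_W U\to Y$ coming from such a square over $X$ (together with the empty square, which gives $\Phi_X(\emptyset)=0$); writing everything over $Y$ and applying the colimit-preserving $p_{Y,\sharp}$ reduces to $X=Y$, where testing against $\map_{\ccal(Y)}(-,\gcal)$ turns the assertion into the equivalence
\[
	\Gamma(Y,\gcal)\xrightarrow{\ \sim\ }\Gamma(V,p_V^*\gcal)\times_{\Gamma(W,p_W^*\gcal)}\Gamma(U,p_U^*\gcal),
\]
which is the Nisnevich descent of axiom (3).

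It remains to assemble the $\rho_\ccal(X)$ into a natural transformation of functors $\Sch^\op\to\CAlg(\PrL_\bullet)$ that additionally intertwines the functors $f_\sharp$ for $f$ smooth. On generators the needed identifications are immediate: for $f\colon X'\to X$, smooth base change gives $f^*\Phi_X(U)\simeq\Phi_{X'}(f^*U)$ for $U\in\Sm_X$, and for $f$ smooth, transitivity of $(-)_\sharp$ gives $f_\sharp\Phi_{X'}(U')\simeq\Phi_X(f_\sharp U')$ for $U'\in\Sm_{X'}$, all functors in sight preserving colimits; and the localization $\PSh(\Sm_X)_\bullet\to\mathrm{H}_\bullet(X)$ is itself compatible with $f^*$ and with $f_\sharp$ for smooth $f$. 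The real obstacle is the coherence: upgrading these pointwise compatibilities to a single natural transformation. I would handle this exactly as the $2$-functoriality of $\HH_\proet$ was set up in \Cref{sec:weaklysmooth}: work with the functors $\PSh(\Sm_{(-)})_\bullet$, $\mathrm{H}_\bullet(-)$ and $\ccal(-)$ valued in $\CAlg(\PrL_\bullet)$ over $\Sch^\op$, construct the natural transformation $\Phi$ out of $\PSh(\Sm_{(-)})_\bullet$ from the relative form of the universal property above, observe fibrewise (by the previous paragraph) that it annihilates the compatible classes of motivic equivalences, and then descend it along the localization $\PSh(\Sm_{(-)})_\bullet\to\mathrm{H}_\bullet(-)$ using the universal property of localization, compatibly with all the $f_\sharp$. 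The resulting $\rho_\ccal$ is the required natural transformation.
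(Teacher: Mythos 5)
Your strategy is exactly the one in the reference the paper cites for this result: Drew--Gallauer (\cite{MR4560376}) establish that $\mathrm{H}_\bullet(-)$ is the initial object in the category of functors $\Sch^\op\to\CAlg(\PrL_\bullet)$ equipped with smooth $(-)_\sharp$ satisfying $\A^1$-invariance and Nisnevich descent, by running precisely the argument you outline (free monoidal presheaf category on $\Sm_X$, the assignment $U\mapsto p_\sharp\un_U$, and descent along the motivic localization). The paper's own proof is a one-line citation to that result, so in spirit you are reproducing the cited proof rather than giving an alternative one. Your pointwise constructions are sound: $\Phi_X(U)=p_\sharp\un_U$ corepresents $\gcal\mapsto\Gamma(U,p^*\gcal)$ and so is a functor on all of $\Sm_X$; the monoidality follows from smooth base change and projection formula; $\A^1$-contractions and elementary Nisnevich squares are inverted by axioms (2) and (3) after reducing to the base via $p_\sharp$.

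The gap is the coherence step, which you flag but do not actually fill. Descending $\Phi_X$ to $\mathrm{H}_\bullet(X)$ object by object is easy; what the theorem asserts is a single natural transformation of functors $\Sch^\op\to\CAlg(\PrL_\bullet)$, compatible with all the $f^*$'s and with $f_\sharp$ for smooth $f$, and upgrading the pointwise $\rho_\ccal(X)$ to such a transformation requires identifying the source $\mathrm{H}_\bullet(-)$, with its full smooth $(-)_\sharp$-structure, as a corepresenting object in a suitable $\infty$-category of coefficient systems. Your appeal to ``handle this exactly as the $2$-functoriality of $\HH_\proet$ was set up in \Cref{sec:weaklysmooth}'' doesn't supply this: that section constructs a single coefficient system from sheaf topoi, it does not establish an initiality statement among coefficient systems, which is what you need to map out of $\mathrm{H}_\bullet(-)$. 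This initiality (together with the identification of the localization $\PSh(\Sm_{(-)})_\bullet\to\mathrm{H}_\bullet(-)$ as a map of coefficient systems) is the nontrivial content of \cite[Theorem~7.3]{MR4560376}; it involves organizing the presheaf categories, the Bousfield localizations, and the $(-)_\sharp$-structure into a coherent diagram of symmetric monoidal cocartesian fibrations, and it cannot be dispatched by the informal remark that ``the localization is itself compatible with $f^*$ and $f_\sharp$''. If you want a self-contained proof you must carry out this relative universal-property argument explicitly; otherwise the cleanest fix is to do what the paper does and cite Drew--Gallauer.
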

\begin{proof}
	Indeed, this is a direct application of \cite[Proposition 6.8 and Theorem 7.3]{MR4560376} (see Remark 7.11 of \emph{loc. cit.}).
\end{proof}

\begin{lem}
	\label{lem:univ-rho-commutes-i_*}
	Let $i\colon Z\to X$ be a closed immersion with qcqs open complement and with $X$ qcqs. Then the natural transformation
	\[ \rho_\ccal\circ i_* \to i_*\circ\rho_\ccal\] is an equivalence.
\end{lem}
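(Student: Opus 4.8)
$\begin{proof}[Proof sketch]$

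The plan is to reduce to a statement that can be checked on the generators of the categories involved. Since $\mathrm{H}_\bullet(-)$ is the universal unstable qcqs motivic coefficient system (in particular it is generated under colimits by objects of the form $Y_+$ for $Y$ smooth over the base), and since the transformation $\rho_\ccal$ is characterized by commuting with $f_\sharp$ for smooth $f$ and with the symmetric monoidal structure, it suffices to verify the claimed equivalence after evaluating both sides on such generators. Concretely, first I would fix $Y \to Z$ smooth with structure map $g$, so that in $\mathrm{H}_\bullet(Z)$ we have $Y_+ \simeq g_\sharp g^* \un$; then $\rho_\ccal(i_* Y_+)$ should be compared with $i_* \rho_\ccal(Y_+) = i_*(g_\sharp g^* \un_{\ccal})$.

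The key technical input is a \emph{smooth--closed base change} formula relating $i_*$ and $h_\sharp$: if $Y \to Z$ is smooth and we form the cartesian square completing $Y$ over $Z$ against $i\colon Z \to X$ — that is, we choose a smooth lift $\widetilde Y \to X$ of $Y$ along $i$ (which exists Zariski-locally and, since we are free to pass to a Zariski cover by Nisnevich descent, globally enough for our purposes, using the henselization trick as in \Cref{lem:essimagei_*}) — then $i_* g_\sharp \simeq \widetilde g_\sharp \widetilde i_*$ where $\widetilde i \colon Y \to \widetilde Y$ is the closed immersion obtained by base change. This holds in $\mathrm{H}_\bullet$ by the usual Morel--Voevodsky arguments (it is an instance of the interaction of $i_*$ with $f_\sharp$, deduced from qcqs localization together with the smooth projection formula and $i_* i^* \simeq$ cofiber of $j_\sharp j^*$), and it holds in $\ccal$ for exactly the same reasons, since $\ccal$ is an unstable qcqs motivic coefficient system and the complement of each closed immersion in sight is still a qcqs open. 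Then $\rho_\ccal$ commutes with $\widetilde g_\sharp$ (it commutes with $\sharp$-pushforward along smooth maps by hypothesis) and with $\widetilde i_*$ by induction — strictly speaking, one runs a dévissage: the closed immersion $\widetilde i$ is of the form one gets by base changing $i$, so this is not quite circular; rather, one reduces the general statement to the case of closed immersions together with smooth generators, and closes the loop by noting that on a generator $Y_+$, both sides are computed by the displayed base change formula, and the remaining comparison is $\rho_\ccal(\widetilde i_* \un) \simeq \widetilde i_* \un$, i.e.\ the statement of the lemma applied to the pointed unit, which is automatic since $\rho_\ccal$ is symmetric monoidal, hence preserves the unit, and $i_* \un_Z$ is characterized via the localization cofiber sequence $j_\sharp j^* \un \to \un \to i_* \un$ — all three terms of which are preserved by $\rho_\ccal$ ($j_\sharp$ and $j^*$ because $\rho_\ccal$ commutes with smooth $\sharp$ and all $*$-pullbacks, $\un$ because it is monoidal).

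So the argument is really: reduce to generators; on generators use the smooth--closed base change formula on both sides; match the two sides using that $\rho_\ccal$ is monoidal and commutes with $f^*$, $f_\sharp$ (smooth $f$), and that it takes the qcqs localization cofiber sequence in $\mathrm{H}_\bullet$ to the one in $\ccal$. The main obstacle I anticipate is the bookkeeping around the lift $\widetilde Y \to X$: such a lift need not exist globally, only after passing to a Zariski (or Nisnevich) cover, so one must argue that Nisnevich descent — which is part of the definition of an unstable qcqs motivic coefficient system, and which $\rho_\ccal$ respects — lets one glue the local comparisons into a global one, and that the cofiber sequences and base change squares involved are suitably compatible with this descent. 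Once that is in place the rest is formal from the universal property packaged in \cite{MR4560376}.
$\end{proof}$
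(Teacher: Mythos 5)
Your plan is built around the right core observation—that the commutation with $i_*$ should be deduced from qcqs localization together with the commutation of $\rho_\ccal$ with $j_\sharp$—and your last paragraph already contains the essential argument the paper uses. But the route you take to get there introduces unnecessary technicalities that the paper sidesteps entirely.

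The detour is the "smooth--closed base change" reduction. You start from generators $Y_+$ with $Y$ smooth over $Z$ and try to choose smooth lifts $\widetilde Y$ over $X$, then glue the Zariski-local comparisons via (Nisnevich) descent. Even though standard étale presentations do give such lifts Zariski-locally, you then need to check that the resulting local comparison isomorphisms satisfy a descent/cocycle condition on overlaps—nontrivial bookkeeping that you flag yourself as the main obstacle. None of this is needed. Instead of generating $\HH_\bullet(Z)$ by arbitrary smooth $Y/Z$, generate it under colimits by objects of the form $i^*N$ with $N\in \HH_\bullet(X)$ (the image of $i^*$ generates, as in \Cref{lem:essimagei_*} and its classical Nisnevich analogue). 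Then there is no lift to produce: for such an $N$, qcqs localization gives
\[
i_*i^*N \;\simeq\; \mathrm{cofib}\bigl(j_\sharp j^* N \to N\bigr)
\]
both in $\HH_\bullet(X)$ and in $\ccal(X)$, and $\rho_\ccal$ commutes with $j_\sharp$ (by hypothesis), with $j^*$ and $i^*$ (being a natural transformation of functors on $\Sch^{\op}$), and with cofibers (being a map in $\CAlg(\PrL)$). Hence $\rho_\ccal$ commutes with $i_*i^*$ on the nose, which together with colimit preservation of $i_*$ on both sides (in $\ccal$ this again follows from the localization cofiber sequence: the essential image of $i_*$ is $\ker j^*$, closed under colimits, and $i_*i^*$ preserves colimits) gives the claim. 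This is exactly the one-line argument the paper gives—"by localization, this follows directly from the commutation of $\rho_\ccal$ with $j_\sharp$." So: the idea is right, but you should drop the lifting step and apply localization directly to $i^*$-pullback generators; once you do, the gluing concern evaporates.

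Two smaller points. First, your claim that "$\rho_\ccal(\widetilde i_*\un)\simeq \widetilde i_*\un$ is automatic since $\rho_\ccal$ is symmetric monoidal" is misstated: monoidality gives $\rho_\ccal(\un)\simeq\un$ but says nothing about $\widetilde i_*\un$; you still need the localization cofiber sequence (which you do in fact invoke in the next clause). Second, make sure the generating set you use is really the right one—for the Morel--Voevodsky category $\HH_\bullet(Z)$ the relevant statement is the classical one that $i^*$ has dense image, not the pro-étale \Cref{lem:essimagei_*}, though the proofs are parallel.
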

\begin{proof}
	By localization, this follows directly from the commutation of $\rho_\ccal$ with $j_\sharp$ with $j\colon U\to X$ the open complement.
\end{proof}

\begin{emptypar}\label{def:Sigma^M} Let $S$ be a qcqs scheme and let $\mathcal{M}$ be a locally free $\ocal_S$-module. We denote by 
$p\colon \mathbb{V}(\mathcal{M})\to S$ the associated vector bundle, with zero section $s\colon S\to \mathbb{V}(\mathcal{M})$.
Recall that one denotes by 
\[\Sigma^\mcal \colon \ccal(S) \to \ccal(S) \] 
the functor $p_\sharp s_*$. We also denote by $\Sr^\mcal\coloneq \Sigma^\mcal\un_S\in \ccal(S)$. Note that the morphism $\mathbb{V}(\mcal)\to S$ is affine thus $\mathbb{V}(\mcal)$ is qcqs, and as the zero section is given by a finite type ideal (namely, $\mcal^\vee$), the open immersion $\mathbb{V}(\mcal)\setminus S\to\mathbb{V}(\mcal)$ is a qcqs open immersion. By the closed projection formula (the analog of \Cref{cor:closed-proj} which is follows formally from qcqs localization and the smooth projection formula), we then have that \begin{equation}\label{SigmaM}\Sigma^\mcal\simeq \mathrm{S}^\mcal\otimes(-).\end{equation} Moreover, by \Cref{lem:univ-rho-commutes-i_*} we have $\rho_\ccal\circ\Sigma^\mcal\simeq\Sigma^\mcal\circ\rho_\ccal$. 
	If $f\colon S\to T$ is smooth, then we denote by $$\mathrm{Th}_S(\mcal)\coloneqq f_\sharp(\mathrm{S}^\mcal)\in\ccal(T)$$ the Thom space of $\mcal$. Again by \Cref{lem:univ-rho-commutes-i_*}, we have that $\rho_\ccal(\mathrm{Th}_S(\mcal))\simeq \mathrm{Th}_S(\mcal)$. 
\end{emptypar}
\begin{thm}[Purity]
	\label{purity}
	Let \[\begin{tikzcd}
	Z & X \\
	& S
	\arrow["s", from=1-1, to=1-2]
	\arrow["q"', from=1-1, to=2-2]
	\arrow["p", from=1-2, to=2-2]
\end{tikzcd}\] be a commutative diagram of qcqs schemes with $p$ and $q$ smooth and $s$ a finitely presented closed immersion. Let $\ncal_s$ be the normal bundle of $s$. Then there is a canonical isomorphism 
\[p_\sharp s_*\simeq q_\sharp \Sigma^{\ncal_s}\] of functors $\ccal(Z)\to \ccal(S)$. 
\end{thm}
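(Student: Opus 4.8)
The plan is to run the deformation-to-the-normal-cone argument of Morel–Voevodsky \cite[\S3]{MR1813224}, in the axiomatic form of \cite[\S5]{MR3570135}; since $\ccal$ is an arbitrary unstable qcqs motivic coefficient system, the only thing that needs checking is that no step uses a localization triangle along a non-quasi-compact open immersion, so that axiom (4) of \Cref{def:unstableqcqssys} (equivalently \Cref{cor:pointedloc}) is always available. First I would rewrite the target: by \Cref{def:Sigma^M} one has $\Sigma^{\ncal_s}=\pi_\sharp z_*$, where $\pi\colon\mathbb{V}(\ncal_s)\to Z$ is the projection and $z\colon Z\to\mathbb{V}(\ncal_s)$ the zero section, a finitely presented closed immersion with qcqs open complement (cut out by $\ncal_s^\vee$), so that $q_\sharp\Sigma^{\ncal_s}=(q\pi)_\sharp z_*$. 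Thus both sides have the shape ``$(\text{smooth})_\sharp\circ(\text{finitely presented closed immersion})_*$ with source $Z$'', and in the tautological case $X=\mathbb{V}(\ncal_s)$, $s=z$, $p=q\pi$ they literally coincide; the deformation will interpolate between $s$ and this model case.

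Next I would introduce the deformation space $D=D_ZX$, the complement in $\mathrm{Bl}_{Z\times\{0\}}(X\times\A^1)$ of the strict transform of $X\times\{0\}$. Because $s$ is a finitely presented closed immersion of $S$-smooth schemes, hence a regular immersion, $D$ is smooth over $S\times\A^1$, carries a $\Gm$-action lying over the scaling action on $\A^1$, and is equipped with a finitely presented closed immersion $\tilde s\colon Z\times\A^1\hookrightarrow D$ over $S\times\A^1$ whose fibre over $1\in\A^1$ is $(s\colon Z\hookrightarrow X)$ with structure map $p$, and whose fibre over $0$ is $(z\colon Z\hookrightarrow\mathbb{V}(\ncal_s))$ with structure map $q\pi$ — the fibre at $0$ being the normal \emph{bundle} rather than merely the normal cone precisely because $s$ is regular. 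The key bookkeeping point is that every closed immersion entering the picture — $\tilde s$, the two fibre inclusions $D_0,D_1\hookrightarrow D$, the zero section $z$, and all the immersions appearing in the localization triangles below — is finitely presented, hence has quasi-compact open complement.

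With $P\colon D\to S\times\A^1$ the (smooth) structure map, I would set $G\coloneqq P_\sharp\circ\tilde s_*\circ\mathrm{pr}_Z^*\colon\ccal(Z)\to\ccal(S\times\A^1)$. Smooth base change along $P$, base change of $\tilde s_*$ along the finitely presented closed fibre inclusions $D_t\hookrightarrow D$ — legitimate because the localization triangle of \Cref{cor:pointedloc} for $\tilde s$ is stable under this pullback, all relevant complements staying quasi-compact — and the identities $\mathrm{pr}_Z\circ i_t=\id_Z$ then identify the restrictions of $G$ along the two zero sections $i_0,i_1\colon S\rightrightarrows S\times\A^1$ with $(q\pi)_\sharp z_*=q_\sharp\Sigma^{\ncal_s}$ and with $p_\sharp s_*$ respectively. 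It remains to produce an equivalence $(i_0)^*G\simeq(i_1)^*G$, and this is the one place $\A^1$-invariance genuinely enters: over $\Gm\subset\A^1$ the family $D$ is the constant family $X\times\Gm$, and the $\Gm$-action on $D$ together with $\A^1$-invariance exhibits the deformation as an $\A^1$-homotopy between its two ends — this step is \cite[\S5]{MR3570135}, itself following \cite[\S3]{MR1813224}, verbatim, and uses nothing about $\ccal$ beyond $\A^1$-invariance, Nisnevich descent, smooth base change and quasi-compact localization. Naturality of the whole construction in the given square makes the resulting isomorphism canonical, and \Cref{lem:univ-rho-commutes-i_*} yields its compatibility with $\rho_\ccal$, with $\Sigma^{\mcal}$ and with Thom spaces as recorded in \Cref{def:Sigma^M}.

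The hard part is not any single computation but exactly the bookkeeping announced at the start of this section: one must check that \emph{every} closed immersion occurring in the construction of $D$ and in the identification of $(i_0)^*G$ and $(i_1)^*G$ is finitely presented, so that its open complement is quasi-compact and the localization axiom of \Cref{def:unstableqcqssys} applies to it. This is precisely where the hypothesis that $s$ be finitely presented is used, and it is what makes the statement fail for general closed immersions (compare \Cref{rem:counterex_to_loc}); modulo this verification, the argument is the classical one and nothing new happens.
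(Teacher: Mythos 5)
Your proposal takes a genuinely different route from the paper, and in its current form it has a real gap.

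The paper's proof has a small but crucial piece of cleverness that your argument does not reproduce. After constructing the zig-zag
\[p_\sharp s_* \to r_\sharp \hat{p}_\sharp\hat{s}_* r^* \leftarrow q_\sharp \Sigma^{\ncal_s}\]
from the deformation space, the paper uses the closed projection formula together with full faithfulness of $s_*$ (\Cref{cor:i_*_is_ff}) to observe that every object of $\ccal(Z)$ is of the form $s^*M$, and hence that checking the two legs are equivalences reduces to checking them on the single object $\un_Z$. Evaluated at $\un_Z$, the entire span is visibly in the image of $\rho_\ccal\colon \mathrm{H}_\bullet(-)\to\ccal(-)$, and by \Cref{lem:univ-rho-commutes-i_*} the comparison map commutes with everything in sight. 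So the equivalence may be verified in $\mathrm{H}_\bullet(S)$, where it is literally \cite[Proposition~5.7]{MR3570135}. This is what lets the authors quote Hoyois \emph{verbatim} without any further work inside $\ccal$.

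Your proof instead claims to run the deformation argument directly in $\ccal$ and asserts that the identification $(i_0)^*G\simeq (i_1)^*G$ ``is \cite[\S5]{MR3570135}, itself following \cite[\S3]{MR1813224}, verbatim, and uses nothing about $\ccal$ beyond $\A^1$-invariance, Nisnevich descent, smooth base change and quasi-compact localization.'' This is the gap. First, $\A^1$-invariance in the sense of axiom (2) of \Cref{def:unstableqcqssys} says only that $\pi^*$ is fully faithful; it does \emph{not} make $i_0^*$ and $i_1^*$ agree on an arbitrary object of $\ccal(S\times\A^1)$. The content of purity is precisely that, after passing to a zig-zag, the relevant object is in the image of $\pi^*$, and that is established in Morel--Voevodsky/Hoyois by reducing to trivial vector bundles via Nisnevich/Zariski descent and then writing down explicit $\A^1$-homotopies between morphisms of schemes. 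Second, Hoyois's proof of Proposition~5.7 is carried out in the concrete model of pointed motivic spaces; it is not phrased as an argument in an abstract unstable qcqs motivic coefficient system, so ``verbatim'' is not on offer — one would have to redo the whole computation axiomatically inside $\ccal$, which you have not done. The two reductions in the paper (to the unit object, then to $\mathrm{H}_\bullet$ via $\rho_\ccal$) are exactly the device that makes the direct quotation of Hoyois legitimate, and your proof cannot dispense with them without supplying the omitted computations. Your bookkeeping about finitely presented closed immersions and quasi-compact complements is correct and is indeed where the hypothesis on $s$ enters, but by itself it does not produce the equivalence.
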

\begin{proof}
	The morphism is obtained as a span given by deformation to the normal cone: there is a smooth morphism $\hat{p}\colon D_Z X\to \A^1_S$ with $D_Z X$ the deformation space and a finitely presented closed immersion $\hat{s}\colon\A^1_Z\to D_Z X$ obtained by pulling back the finitely presented closed immersion $Z\to X$ along the structure map $D_Z X\to X$, such that the fiber at $1$ is the triangle of the theorem and the fiber at $0$ is the triangle \[\begin{tikzcd}
	Z & {\mathbb{V}(\ncal_s)} \\
	& S
	\arrow["{s_0}", from=1-1, to=1-2]
	\arrow[from=1-1, to=2-2]
	\arrow["{p_0}", from=1-2, to=2-2]
\end{tikzcd}\] with $s_0$ the zero section of the normal bundle and $p_0$ the canonical map. This induces a span 
$$p_\sharp s_* \to  r_\sharp \hat{p}_\sharp\hat{s}_* r^*\leftarrow q_\sharp \Sigma^{\ncal_s}$$ of functors $\ccal(Z)\to\ccal(S)$, with $r$ the projection $\A^1_{(-)}\to (-)$ (see the discussion above \cite[Proposition~5.7]{MR3570135} for more details). 
Using the closed projection formula (\Cref{cor:closed-proj} which holds for $\ccal$ as before), if $M\in\ccal(S)$, then the above span evaluated at $s^*M$ can be written 
$$p_\sharp s_*(\un_Z)\otimes M \to r_\sharp \hat{p}_\sharp\hat{s}_* r^*(\un_Z)\otimes M\leftarrow q_\sharp \Sigma^{\ncal_s}(\un_Z)\otimes M.$$ As $s_*$ is fully faithful by \Cref{cor:i_*_is_ff}, in fact all objects of $\ccal(Z)$ are of the form $s^*M$ and it suffices to prove that the two morphism in the span are equivalences when evaluated at $\un_Z$. Using \Cref{lem:univ-rho-commutes-i_*}, it suffices to prove it in $\mathrm{H}_\bullet(S)$, where it is \cite[Proposition 5.7]{MR3570135}.
\end{proof}
\begin{constr}
In \cite[Section 5.3]{MR3570135}, Hoyois constructs the unit and counit of the ambidexterity theorem. We need the following. Let $f\colon X\to S$ be a finitely presented projective smooth morphism of qcqs schemes. Then there exists (depending of some choice of an embedding of $X$ in a projective space over $S$) a map (called the Pontryagin–Thom collapse)
\[\eta\colon \mathrm{S}^\mcal \to\mathrm{Th}_X(\ncal)\] in $\mathrm{H}_\bullet(S)$; here $\mcal$ and $\ncal$ are vector bundles on $S$ and $X$ that satisfy $\mathrm{S}^{\Omega_f}\otimes \mathrm{S}^\ncal\simeq \mathrm{S}^{f^*(\mcal)}$ with $\Omega_f$ the locally free sheaf of Kähler differentials. By \Cref{SigmaM} and the smooth projection formula, this defines a natural transformation
\[\eta\colon \Sigma^\mcal \simeq \mathrm{S}^\mcal\otimes (-)\xrightarrow{\rho_\ccal(\eta)\otimes (-)} \mathrm{Th}_X(\ncal)\otimes(-)\simeq f_\sharp\Sigma^{\ncal}f^*\] of functors $\ccal(S)\to\ccal(S)$.

He also defines a map (\cite[(5.20)]{MR3570135}) $\varepsilon\colon f^*f_\sharp\to \Sigma^{\Omega_f}$ in $\mathrm{H}_\bullet(X)$ and we can copy his definition to get a map in $\ccal(X)$ as follows:
$$\varepsilon\colon f^*f_\sharp \simeq (p_2)_\sharp p_1^* \to (p_2)_\sharp\delta_*\delta^*p_1^*\simeq (p_2)_\sharp\delta_*\overset{\text{\ref{purity}}}{\simeq} \Sigma^{\ncal_\delta}\overset{\nu}{\simeq} \Sigma^{\Omega_f},$$ with $p_1,p_2\colon X\times_S X\to X$ the projections, $\delta\colon X\to X\times_S X$ the diagonal, $\nu\colon \ncal_\delta\to\Omega_f$ the isomorphism sending $x\otimes 1 - 1\otimes x$ to $dx$, and the one before last isomorphism is purity \Cref{purity}, that we may apply because the morphism $X\to S$ is of finite type so that the diagonal $\delta$ is finitely presented by  \cite[\href{https://stacks.math.columbia.edu/tag/0818}{Tag~0818}]{stacks-project}.
\end{constr}
\begin{thm}[Ambidexterity]
	\label{ambidextry}
	Let $f\colon X\to S$ be a finitely presented projective smooth morphism of qcqs schemes. Then the compositions 
	\[f^*\Sigma^{\mcal}\xrightarrow{\eta} f^*f_\sharp\Sigma^{\ncal}f^* \xrightarrow{\varepsilon} \Sigma^{\Omega_f} \Sigma^{\ncal} f^*\simeq \Sigma^{f^*(\mcal)}f^*\simeq f^*\Sigma^{\mcal}\] and
	\[\Sigma^{\mcal}f_\sharp \xrightarrow\eta f_\sharp\Sigma^{\ncal}f^* f_\sharp \xrightarrow\varepsilon f_\sharp\Sigma^{\ncal}\Sigma^{\Omega_f}\simeq f_\sharp \Sigma^{f^*(\mcal)} \simeq \Sigma^{\mcal}f_\sharp\]
	are the identity in $\ccal$.

	In particular, if $\Sigma^\mathcal{V}$  is an equivalence in $\ccal(S)$ for any vector bundle $\mathcal{V}$, then the map $\varepsilon$ induces a map $f^*f_\sharp \Sigma^{-\Omega_f}\to\mathrm{Id}$ that exhibits $f^*$ as the left adjoint of $f_\sharp\Sigma^{-\Omega_f}$ and yields a natural equivalence $f_\sharp\Sigma^{-\Omega_f}\xrightarrow{\sim} f_*$.
\end{thm}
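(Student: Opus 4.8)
The plan is to follow Hoyois's proof \cite[\S~5.3]{MR3570135} essentially verbatim, the point being that the two triangle identities only invoke the smooth projection formula and smooth base change, $\A^1$-invariance, Nisnevich descent, qcqs localization, the closed projection formula (\Cref{cor:closed-proj}) and purity (\Cref{purity}), all of which are available for $\ccal$ under our finite-presentation hypotheses. Rather than repeat the computation I would reduce both identities to $\mathrm{H}_\bullet(S)$, where they are Hoyois's ambidexterity theorem, by means of the natural transformation $\rho_\ccal\colon\mathrm{H}_\bullet(-)\to\ccal(-)$.

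The key step is that every functor and every natural transformation entering the two composites is compatible with $\rho_\ccal$. Using \eqref{SigmaM} together with the smooth and closed projection formulas, each of $f^*\Sigma^\mcal$, $f_\sharp\Sigma^\ncal f^*$, $\Sigma^{\Omega_f}\Sigma^\ncal f^*$, $\Sigma^\mcal f_\sharp$ and the other functors occurring is, up to canonical equivalence, built from $f^*$, $f_\sharp$ and tensoring with Thom objects; while the arrows $\eta$, $\varepsilon$, the purity equivalences of \Cref{purity}, the isomorphism $\nu$ and the bundle identification $\mathrm{S}^{\Omega_f}\otimes\mathrm{S}^\ncal\simeq\mathrm{S}^{f^*\mcal}$ were, by the very way they were constructed above, obtained by applying $\rho_\ccal$ to the corresponding data on $\mathrm{H}_\bullet$. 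Here one uses that $\rho_\ccal$ commutes with $f_\sharp$ for smooth $f$ and with $i_*$ for closed immersions with qcqs open complement (\Cref{lem:univ-rho-commutes-i_*}), hence with every $\Sigma^\mcal$, and that the purity isomorphism of \Cref{purity} was itself defined so as to be $\rho_\ccal$-compatible. Since $\rho_\ccal$ is a morphism of coefficient systems, the two composites of natural transformations are then the images under $\rho_\ccal$ of the analogous composites in $\mathrm{H}_\bullet$, which are identities by \cite[\S~5.3]{MR3570135}; hence so are ours. The one place where finite presentation intervenes is in guaranteeing that all the closed immersions occurring---the zero sections of the relevant vector bundles, the diagonal $\delta\colon X\to X\times_S X$ and the immersion $\hat{s}\colon\A^1_Z\to D_Z X$ of \Cref{purity}---are finitely presented, so that their open complements are quasi-compact and \Cref{cor:closed-proj} and qcqs localization apply (and $\delta_*$, $s_*$ are fully faithful by \Cref{cor:i_*_is_ff}); this is precisely what makes Hoyois's constructions legal in the present generality.

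For the final assertion, assume $\Sigma^{\mathcal{V}}$ is invertible on $\ccal(S)$ for every vector bundle $\mathcal{V}$. Then $\Sigma^\mcal$ and $\Sigma^\ncal$ are invertible, and so is $\Sigma^{\Omega_f}$ since $\mathrm{S}^{\Omega_f}\otimes\mathrm{S}^\ncal\simeq\mathrm{S}^{f^*\mcal}$ is invertible and $\ncal$ is a vector bundle. Twisting the two displayed identities by the now invertible functors $\Sigma^{-\mcal}$ and $\Sigma^{-\ncal}$ on the appropriate sides, and using the relation $\Sigma^{\Omega_f}\Sigma^\ncal\simeq\Sigma^{f^*\mcal}$, one reads off that $\eta$ induces a unit $\id\to f_\sharp\Sigma^{-\Omega_f}f^*$ and $\varepsilon$ a counit $f^*f_\sharp\Sigma^{-\Omega_f}\to\id$, and the identities just proved become precisely the triangle identities exhibiting $f^*$ as a left adjoint of $f_\sharp\Sigma^{-\Omega_f}$. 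As $f^*$ already admits $f_*$ as a right adjoint, uniqueness of adjoints gives the canonical equivalence $f_\sharp\Sigma^{-\Omega_f}\xrightarrow{\sim}f_*$.

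The main obstacle is not conceptual but organisational: one must check carefully that the entire Pontryagin--Thom and deformation-to-the-normal-cone package underlying $\eta$ and $\varepsilon$ has been transported from $\mathrm{H}_\bullet$ along $\rho_\ccal$ coherently, so that each arrow in the two composites is genuinely the $\rho_\ccal$-image of its motivic counterpart. This coherence is forced by $\rho_\ccal$ being a morphism of coefficient systems commuting with $f_\sharp$ for smooth $f$ and $i_*$ for closed immersions with qcqs complement, but spelling it out is the only delicate point.
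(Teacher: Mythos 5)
Your approach is essentially the paper's: reduce to Hoyois's ambidexterity theorem in $\mathrm{H}_\bullet$ via $\rho_\ccal$. The one thing the paper does differently, which sidesteps exactly the ``delicate point'' you flag at the end, is that it does not try to transport the entire diagram of natural transformations along $\rho_\ccal$. Instead, following \cite[Theorem 5.20]{MR3570135}, it first uses the smooth and closed projection formulas to rewrite the two composites as $\iota \otimes f^*(-)$ and $f_\sharp(\iota \otimes (-))$, where $\iota$ is the first composite evaluated on $\un_S$. This reduces everything to a single endomorphism of the unit. That single map satisfies $\iota = \rho_\ccal(\iota)$ by the very construction of $\eta$ and $\varepsilon$ (both were defined by applying $\rho_\ccal$ to the Pontryagin--Thom collapse and using the $\rho_\ccal$-compatible purity isomorphism of \Cref{purity}), so there is no further coherence to check beyond what \Cref{lem:univ-rho-commutes-i_*} and the discussion in \Cref{def:Sigma^M} already provide. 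Hoyois's theorem then says $\iota$ is the identity in $\mathrm{H}_\bullet(S)$, and one is done. For the last assertion the paper simply cites \cite[Theorem 6.9]{MR3570135}, which is the same formal duality argument you spell out by twisting with $\Sigma^{-\mcal}$ and $\Sigma^{-\ncal}$.
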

\begin{proof}
	As in the proof of \cite[Theorem 5.20]{MR3570135}, if $\iota$ is the first map evaluated at the unit, then the two maps are $\iota\otimes f^*(-)$ and $f_\sharp(\iota\otimes (-))$ thus it suffices to prove that $\iota$ is the identity. By construction we have $\iota = \rho_\ccal(\iota)$ and by \cite[Theorem 5.20]{MR3570135}, $\iota$ is the identity in $\mathrm{H}_\bullet(S)$. Thus the proof is finished as the second point is a direct consequence (see \cite[Theorem 6.9]{MR3570135}).
\end{proof}

\begin{defi}
	\label{def:qcqsmotiviccoeffsys}
	Let $\ccal$ be an unstable qcqs motivic coefficient system \Cref{def:unstableqcqssys}. We say that $\ccal$ is a \emph{qcqs motivic coefficient system} if 
	\begin{enumerate}
	\item It takes values in stable presentably symmetric monoidal $\infty$-categories.
	\item It is \emph{Tate-stable} meaning that the Tate motive $T_{\Spec(\Z)}\coloneqq \mathrm{S}^{\ocal_{\Spec(\Z)}}$ is $\otimes$-invertible.
	\end{enumerate}
\end{defi}
\begin{rem}
	Let $\ccal$ be a qcqs motivic coefficient system. Using Tate-stability, the proof of \cite[Theorem~7.14]{MR4560376} can easily be adapted to prove that there is a map 
	\[\SH\to \ccal\] where $\SH$ is the coefficient system of Nisnevich motivic spectra. Let now $\mcal$ be a locally free $\ocal_X$-module; since in $\SH$ the sphere $\mathrm{S}^\mcal$ is $\otimes$-invertible by \cite[Theorème~1.5.7]{MR2423375}, so is its image in $\ccal$ which is the object we also denoted by $\mathrm{S}^\mcal$. Hence, the map $\Sigma^\mcal$ is an equivalence. 
\end{rem}

\begin{cor}[Ambidexterity]
	\label{ambidextry_stable} Working over a qcqs motivic coefficient system, let $f\colon X\to S$ be a finitely presented projective smooth morphism of qcqs schemes. There is a natural equivalence $f_\sharp\Sigma^{-\Omega_f}\xrightarrow{\sim} f_*$.
\end{cor}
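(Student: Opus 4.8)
The plan is to deduce this directly from \Cref{ambidextry}. That theorem already establishes the two triangle-identity compatibilities for a finitely presented projective smooth morphism of qcqs schemes over an arbitrary unstable qcqs motivic coefficient system, and its ``in particular'' clause yields the sought-after equivalence $f_\sharp\Sigma^{-\Omega_f}\xrightarrow{\sim}f_*$ under the single extra hypothesis that $\Sigma^{\mathcal{V}}$ is an equivalence on $\ccal(S)$ for every vector bundle $\mathcal{V}$ over every qcqs scheme $S$. So the only point to check is this invertibility.

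To do so I would invoke the Remark preceding the statement. Since $\ccal$ is a \emph{qcqs motivic coefficient system}, it is stable, presentably symmetric monoidal and Tate-stable, so adapting the proof of \cite[Theorem~7.14]{MR4560376} gives a morphism of coefficient systems $\SH\to\ccal$. In $\SH$ the Thom sphere $\mathrm{S}^{\mathcal{V}}$ is $\otimes$-invertible by \cite[Théorème~1.5.7]{MR2423375}, hence so is its image in $\ccal(S)$, which is by construction the object denoted $\mathrm{S}^{\mathcal{V}}$ there; combined with the identification $\Sigma^{\mathcal{V}}\simeq\mathrm{S}^{\mathcal{V}}\otimes(-)$ from \eqref{SigmaM}, this shows that $\Sigma^{\mathcal{V}}$ is an equivalence.

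Granting this, the ``in particular'' part of \Cref{ambidextry} produces a counit $f^*f_\sharp\Sigma^{-\Omega_f}\to\id$ exhibiting $f^*$ as a left adjoint of $f_\sharp\Sigma^{-\Omega_f}$; since $f^*$ already has $f_*$ as right adjoint by the basic functoriality of $\ccal$ (every $f^*=\ccal(f)$ is a morphism in $\PrL$), uniqueness of right adjoints gives the natural equivalence $f_\sharp\Sigma^{-\Omega_f}\xrightarrow{\sim}f_*$. There is no genuine obstacle here beyond this bookkeeping: all the substantive content — the deformation-to-the-normal-cone construction of the maps $\eta$ and $\varepsilon$ and the verification of the triangle identities — is already packaged into \Cref{ambidextry}, which itself reduces, via the universal natural transformation $\rho_\ccal$, to Hoyois' computations in $\mathrm{H}_\bullet(S)$.
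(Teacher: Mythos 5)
Your proposal is correct and matches the paper's (implicit) argument exactly: the corollary is read off from the ``in particular'' clause of \Cref{ambidextry}, with the hypothesis that each $\Sigma^{\mathcal{V}}$ is invertible supplied by the remark preceding the statement (Tate-stability gives a map $\SH\to\ccal$, under which the $\otimes$-invertible Thom sphere $\mathrm{S}^{\mathcal{V}}$ maps to the object of the same name, so $\Sigma^{\mathcal{V}}\simeq\mathrm{S}^{\mathcal{V}}\otimes(-)$ is an equivalence). Nothing to add.
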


\subsection{The six functors for pro-étale motives}\label{sec:sixfunctors}

Because the localization property is only true for closed immersions with qcqs open immersion, we do not get a 6 functor formalism as good as for $\mathrm{SH}$. Fix a qcqs motivic coefficient system $\ccal$.

\begin{prop}
	\label{lem:exchange-open-proper}
		\label{prop:PBC}
	Let $p\colon Y\to X$ be a finitely presented proper map.
	\begin{enumerate}
		\item Let \[\begin{tikzcd}
	{Y'} & {X'} \\
	Y & X
	\arrow["{p'}", from=1-1, to=1-2]
	\arrow["{j'}"', from=1-1, to=2-1]
	\arrow["j", from=1-2, to=2-2]
	\arrow["p", from=2-1, to=2-2]
\end{tikzcd}\] be a cartesian square of qcqs schemes with $j$ a smooth map. Then the exchange map
\[j_\sharp p'_*\to p_*j'_\sharp\] is invertible.
	\item The functor $p_*$ commutes with colimits.
	\item Let \[\begin{tikzcd}
	{Y'} & Y \\
	{X'} & X
	\arrow["{f'}", from=1-1, to=1-2]
	\arrow["{p'}"', from=1-1, to=2-1]
	\arrow["p", from=1-2, to=2-2]
	\arrow["f", from=2-1, to=2-2]
\end{tikzcd}\] be a cartesian square of qcqs schemes. Then the natural transformation 
\[f^*p_*\to p'_*(f')^*\] is an equivalence.
	\item For any $M$ in $\ccal(X)$ and any $N$ in $\ccal(Y)$, the natural map 
	\[(p_*M)\otimes N\to p_*(M\otimes p^* N)\] is an isomorphism.
	\end{enumerate}
\end{prop}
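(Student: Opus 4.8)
The plan is to establish assertions (2), (3) and (4) first for a projective bundle projection and for a finitely presented closed immersion, then for all projective morphisms by composition, and finally for arbitrary finitely presented proper $p$ by Chow's lemma; assertion (1) then follows for any such $p$ from (3) by the standard mate argument with respect to the adjunctions $(j_\sharp,j^*)$ and $(j'_\sharp,j'^*)$, so I focus on the other three. For the projection $\pi\colon\mathbb{P}^n_X\to X$, which is smooth, projective and finitely presented, \Cref{ambidextry_stable} gives a natural equivalence $\pi_*\simeq\pi_\sharp\Sigma^{-\Omega_\pi}$. The functor $\pi_\sharp$ preserves colimits and satisfies smooth base change and the smooth projection formula by construction, while $\Sigma^{-\Omega_\pi}$ is the inverse of tensoring with the $\otimes$-invertible object $\mathrm{S}^{\Omega_\pi}$ (by \Cref{SigmaM} and Tate-stability), hence preserves colimits and, $\Omega_\pi$ being compatible with base change, commutes with all pullbacks; putting these together yields (2), (3) and (4) for $p=\pi$.

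For a finitely presented closed immersion $i\colon Z\to X$ the open complement $j\colon U\to X$ is automatically quasi-compact, so qcqs localization provides the exact triangle $j_\sharp j^*\to\id\to i_*i^*$ of endofunctors of $\ccal(X)$. Since $j_\sharp$, $j^*$ and $i^*$ preserve colimits and $(i^*,j^*)$ is conservative, the same formal argument as in \Cref{cor:i_*_stable_pre_colimits} shows that $i_*$ preserves colimits; the closed projection formula for $\ccal$ (the analogue of \Cref{cor:closed-proj}) and closed base change are then obtained by applying $i^*$ and $j^*$ to the relevant comparison maps, using $j^*i_*\simeq 0$, $i^*i_*\simeq\id$ and \Cref{cor:i_*_is_ff}. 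Since a general projective morphism $p$ factors as $Y\xrightarrow{i}\mathbb{P}^n_X\xrightarrow{\pi}X$ with $i$ a finitely presented closed immersion — finitely presented because $p$ is and $\pi$ is separated and finitely presented — and since (2), (3) and (4) are stable under composition of morphisms, they hold for every projective $p$.

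To pass from projective to arbitrary finitely presented proper $p\colon Y\to X$, I would use absolute Noetherian approximation to reduce to the case where $p$ is pulled back from a proper morphism of finite-type $\Z$-schemes, apply Chow's lemma to produce a projective birational $g\colon Y'\to Y$ with $pg$ projective, and carry out the classical Noetherian induction on dimension — descending along the closed complement of the open locus over which $g$ is an isomorphism, where the dimension strictly drops — exactly as in the treatments of Ayoub and Cisinski--Déglise \cite{MR2423375,MR3971240}: the properties for $g$ and $pg$ come from the projective case, and the induction propagates them to $p$. The main obstacle is this final reduction: one must run the Chow's lemma and Noetherian induction bookkeeping while keeping every auxiliary closed immersion finitely presented, so that qcqs localization stays available at each stage, and while controlling the behaviour of $\ccal$ along the cofiltered limit of schemes used in the approximation. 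By contrast the projective case itself is comparatively soft once ambidexterity (\Cref{ambidextry_stable}) is in hand.
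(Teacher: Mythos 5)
Your treatment of the projective case — ambidexterity for $\mathbb{P}^n_X\to X$, localization for a finitely presented closed immersion, then composition, which makes sense because for $p$ finitely presented proper the closed immersion $Y\hookrightarrow\mathbb{P}^n_X$ is automatically finitely presented — matches the paper's first paragraph and is fine. The gap is in the final reduction from projective to arbitrary finitely presented proper $p$, and it is more serious than your closing caveat suggests.

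Noetherian approximation writes $X=\lim_i X_i$ and $p$ as the base change of a proper $p_{i_0}\colon Y_{i_0}\to X_{i_0}$ with $X_{i_0}$ of finite type over $\Z$. But that alone does not let you transfer anything from $\ccal(Y_{i_0})$ back to $\ccal(Y)$: the coefficient system $\ccal$ here is built from pro-\'etale sheaves, and $\mathrm{SH}_\proet$ (hence $\ccal$) is \emph{not} continuous in the base — there is no equivalence $\ccal(\lim_i X_i)\simeq\colim_i\ccal(X_i)$, in contrast with $\mathrm{SH}$ or $\mathrm{SH}_\et$. Consequently the standard Ayoub/Cisinski--D\'eglise reduction to the noetherian case is not available. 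Nor can you run the dimension induction directly on $Y$: the qcqs scheme $Y$ may have infinite Krull dimension, so ``Noetherian induction on dimension'' is not even well-posed. So the obstacle you note is not bookkeeping; the approach does not apply to this coefficient system.

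The paper avoids both issues. It uses the \emph{refined} Chow lemma for qcqs schemes directly: given a qcqs open $\nu\colon V\hookrightarrow Y$, there is a blow-up $e\colon Z\to Y$ with centre disjoint from $V$ such that $q=p\circ e$ is projective. By Zariski descent all four statements need only be checked after precomposition with $\nu_\sharp$ for such $\nu$, and the crucial observation is the identity $\nu_\sharp\simeq e_*\circ w_\sharp$, where $w\colon V\hookrightarrow Z$ lifts $\nu$; this is proved by checking the comparison map over $V$ (smooth base change) and over $(Y\setminus V)_{\mathrm{red}}$ (projective base change), using the localization conservativity $(\nu^*,i^*)$. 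That identity reduces everything to the already established projective case with no Noetherian hypothesis and no continuity. No dimension induction is needed.

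One further point: you assert that (1) follows from (3) ``by the standard mate argument.'' The exchange $j_\sharp p'_*\to p_* j'_\sharp$ is indeed the mate of the proper-base-change isomorphism $j^*p_*\simeq p'_*j'^*$ with respect to the adjunctions $(j_\sharp,j^*)$ and $(j'_\sharp,j'^*)$, but the mate of an equivalence is not automatically an equivalence; horizontal adjointability is an extra property, not a formal consequence. The paper therefore treats (1), (2), (3), (4) simultaneously throughout the d\'evissage rather than deriving (1) from (3). You would need to justify the adjointability separately (for instance by checking it on generators, or by proving it in the projective case and propagating it through the same reduction).
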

\begin{proof}
	The proof follows the proofs of \cite[Proposition 4.1.1, Proposition 4.1.2]{MR4466640}.
	We first assume that $p$ is projective. As all claims are local on $X$ we may assume that $p = \pi\circ i$ with $\pi\colon \mathbb{P}^n_X\to X$ the projection and $i\colon Y\to X$ a finitely presented closed immersion. We may treat $i$ and $\pi$ separately. The statements for $i_*$ follow from localization (and \Cref{cor:closed-proj}). Using ambidexterity we have $\pi_*\simeq \pi_\sharp\Sigma^{-\Omega}$, thus the four statements for $\pi_*$ follow from their analogue for $\pi_\sharp$, which are clear.

	We now reduce to the projective case.
	We begin by noting that by Zariski descent, it suffices to prove that each qcqs open immersion $\nu\colon V\to Y$ with base change $\nu'$ to $Y'$, the map
	$j_\sharp p'_*\circ \nu'_\sharp\to p_*j'_\sharp \circ\nu'_\sharp$ is an equivalence (1.), the functor $p_*\circ \nu_\sharp$ preserves colimits (2.), $f^*p_*\circ \nu_\sharp\to p'_*(f')^*\circ \nu_\sharp$ is an equivalence (3.) and $(p_*\circ\nu_\sharp M)\otimes N\to (p_*\circ \nu_\sharp)(M\otimes \nu^*p^*N)$ is an equivalence (4., here we now have $M\in\ccal(V)$). We fix such a $\nu\colon V\to Y$.
	By the refined Chow's lemma \cite[Corollary 2.6]{zbMATH05269409} we may find a blow-up $e\colon Z\to Y$ with center disjoint from $V$ such that $q = p\circ e$ is a projective morphism. 
	We let $w\colon V\to Z$ be an open immersion such that $\nu = e\circ w$ (this exists because $e$ is an isomorphism above $V$). We define $Z' = Z\times_Y Y'$ and $e'\colon Z'\to Y'$, $q'\colon Z'\to X'$ and $w'\colon V'\to Z'$ to be the base changes of $e$ , $q$ and $w$ along $f$. 
	
	We claim that $\nu_\sharp \simeq e_*\circ w_\sharp$ and $\nu'_\sharp\simeq e'_*\circ w'_\sharp$. The proof of the claim is the same in both cases thus we only deal with the first one. 
	Note first that there exists a comparison map $\alpha$ obtained by adjunction from 
	$\mathrm{Id}\to \nu^*e_*w_\sharp \simeq w^*w_\sharp$.  Let $i\colon (Y\setminus V)_\mathrm{red}\to Y$ be the reduced closed immersion. By localization, the pair $(i^*,\nu^*)$ is conservative. We have $\nu^*(\alpha) = \mathrm{Id}_{\mathrm{\nu^*\nu_\sharp}}$ by smooth base change. We have $i^*(\alpha) = \mathrm{Id}_0$ by projective base change (that we proved in the first paragraph). This proves the claim.

	Using the claim, we have: \begin{enumerate}
		\item $(j_\sharp p'_*\circ \nu'_\sharp \to p_*j'_\sharp\circ \nu'_\sharp)=(j_\sharp (p'\circ e')_*\circ w'_\sharp \to (p\circ e)\circ w_\sharp\circ j''_\sharp)$ with $j''$ the base change of $j'$ to $U$. 
		\item $p_*\circ \nu_\sharp \simeq (p\circ e)_*\circ w_\sharp$.
		\item $(f^*p_*\circ \nu_\sharp \to p'_*(f')^*\circ \nu_\sharp)\simeq (f^*(p\circ e)_*\circ w_\sharp \to (p'\circ e')_*\circ w'_\sharp \circ  (f'')^*$) with $f''$ the base change of $f'$ to $V$.
		\item \begin{eqnarray*}  & &[(p_*\circ\nu_\sharp M)\otimes N\to (p_*\circ \nu_\sharp)(M\otimes \nu^*p^*N)]\\ 
			&\simeq& [((p\circ e)_*\circ w_\sharp M)\otimes N \to ((p\circ e)_*\circ w_\sharp)(M\otimes w^*(p\circ e)^*N)] \\
			&\simeq & [(p\circ e)_*(w_\sharp M)\otimes N\to (p\circ e)_*((w_\sharp M)\otimes (p\circ e)^*N)] 
		\end{eqnarray*} using smooth projection formula for $w$. 
	\end{enumerate}
	In particular, we may replace $p$ and $p'$ by $q$ and $q'$ thus reducing to the case where $p$ is a finitely presented projective map. 

\end{proof}


\begin{emptypar}
	\label{par:factoSeparated}
	Let $f\colon X \to S$ be a separated finitely presented morphism of qcqs schemes. Then, there is a finitely presented compactification of $f$ \textit{i.e.} a finitely presented proper morphism $\overline{f}\colon \overline{X}\to S$ and a qcqs open immersion $j\colon X\to \overline{X}$ such that $f=\overline{f}\circ j$. 
	To see that, we may write $S=\lim S_i$ as a cofiltered limit of a system of Noetherian schemes with affine transition maps by \cite[\href{https://stacks.math.columbia.edu/tag/01ZJ}{Tag~01ZJ}]{stacks-project}. We can then choose an index $i$ and a morphism $X_i\to S_i$ of finite presentation whose base change to $S$ is $X\to S$, by \cite[\href{https://stacks.math.columbia.edu/tag/01ZM}{Tag~01ZM}]{stacks-project}. After increasing $i$ we may assume $X_i\to S_i$ is separated, by \cite[\href{https://stacks.math.columbia.edu/tag/01ZQ}{Tag~01ZQ}]{stacks-project}.
	But we can find a compactification of $X_i$ over $S_i$ by \cite[\href{https://stacks.math.columbia.edu/tag/0F41}{Tag~0F41}]{stacks-project}; it is then finitely presented and the base change of this to $S$ is a compactification of $X$. 

	In the above setting, we can set $f_!\coloneqq \overline{f}_* j_\sharp$. Note that this functor preserves colimits: it is clear for $j_\sharp$ and follows from \Cref{ambidextry_stable} for $\overline{f}_*$.
	We can then follow \cite[\S~4.3]{MR4466640} to upgrade $f\mapsto f_!$ into a functor $\ccal_!\colon \Sch_\mathrm{qcqs}^{\mathrm{fp}}\to \PrL$ from the category of qcqs schemes with finitely presented (non-necessarily separated) morphisms. In fact we can do better (see \Cref{thm:motivic_6FF}).
\end{emptypar}
The following extension of \Cref{purity} and \Cref{ambidextry} is classical:
\begin{cor}
	\begin{enumerate}
		\item Let $f\colon Y\to X$ be a smooth map of qcqs schemes. Then there is a canonical isomorphism $f^!\simeq f^*\Sigma^{\Omega_f}$ with $\Omega_f$ the sheaf of differentials of $f$.
		\item Let $f\colon Y\to X$ be a finitely presented smooth and proper map. Then there is a natural equivalence $f_\sharp\Sigma^{-\Omega_f}\xrightarrow{\sim} f_*$.
	\end{enumerate}
\end{cor}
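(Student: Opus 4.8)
The plan is to obtain (1) as relative purity for smooth morphisms, a formal consequence of the purity isomorphism \Cref{purity} together with ambidexterity \Cref{ambidextry}, and then to deduce (2) from (1) by a short manipulation of adjunctions together with the equality $f_!\simeq f_*$ for proper $f$.

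For (1), first I would reduce to the case where $f$ is separated and finitely presented. The asserted equivalence can be tested locally on $Y$ for the Zariski topology: the three functors $f^!$, $f^*$ and $\Sigma^{\Omega_f}$ all commute with restriction along an open immersion $k\colon V\hookrightarrow Y$ (for $f^!$ because $k^!=k^*$ and $(f\circ k)^!=k^!f^!$), and a morphism of $\ccal(Y)$ becoming invertible after pulling back to the members of a Zariski cover is itself invertible by descent; taking $V$ affine makes $f|_V$ separated and finitely presented. On this locus, $f^!\simeq\Sigma^{\Omega_f}\circ f^*$ is relative purity, which in any motivic six-functor formalism is deduced from \Cref{purity} and \Cref{ambidextry} — as in \cite{MR2423375}, \cite{MR3971240}, \cite{khan} — by applying deformation to the normal cone to the diagonal $\delta\colon Y\to Y\times_X Y$: this is a finitely presented closed immersion, a section of the smooth finitely presented projection $p_2\colon Y\times_X Y\to Y$ with $\ncal_\delta\simeq\Omega_f$; smooth base change identifies $f^*f_\sharp$ with $(p_2)_\sharp p_1^*$, \Cref{purity} applied to $\delta$ yields $(p_2)_\sharp\delta_*\simeq\Sigma^{\ncal_\delta}\simeq\Sigma^{\Omega_f}$, and this is upgraded to a natural transformation $\Sigma^{\Omega_f}\circ f^*\to f^!$ which one checks is an equivalence via the smooth and closed projection formulas (\Cref{cor:closed-proj}) and the fact that $\ccal(Y)$ is generated under colimits by the $\mathbb{S}[V]$ for $V$ weakly smooth over $Y$. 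The one thing to verify in our setting is that the argument never uses a localization triangle for a non-quasi-compact open immersion or an exceptional functor along a non-finitely-presented morphism: the complement of $\delta$ in $Y\times_X Y$ is quasi-compact (a finitely presented closed immersion of a qcqs scheme has quasi-compact open complement), the compactifications of \Cref{par:factoSeparated} are quasi-compact on the open part, and ambidexterity is available for all proper finitely presented morphisms by the reduction via Chow's lemma already carried out for \Cref{prop:PBC}.

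Given (1), statement (2) is purely formal. Let $f\colon Y\to X$ be finitely presented smooth and proper. By (1), $f^!\simeq\Sigma^{\Omega_f}\circ f^*$. Since $f$ is smooth, $f^*$ admits the left adjoint $f_\sharp$; since $\Omega_f$ is locally free and $\ccal$ is Tate-stable, $\Sigma^{\Omega_f}\simeq\mathrm{S}^{\Omega_f}\otimes(-)$ is an equivalence with inverse $\Sigma^{-\Omega_f}$, so $\Sigma^{-\Omega_f}$ is in particular left adjoint to $\Sigma^{\Omega_f}$. Composing these adjunctions, $f_\sharp\circ\Sigma^{-\Omega_f}$ is left adjoint to $\Sigma^{\Omega_f}\circ f^*\simeq f^!$; as $f_!$ is left adjoint to $f^!$ by definition, uniqueness of adjoints gives $f_!\simeq f_\sharp\Sigma^{-\Omega_f}$. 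Finally, $f$ being proper, the trivial compactification $f=f\circ\id$ together with \Cref{prop:PBC} identifies $f_!$ with $f_*$, so $f_\sharp\Sigma^{-\Omega_f}\xrightarrow{\sim}f_*$; in particular this also reproves \Cref{ambidextry_stable} without assuming projectivity.

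The main obstacle lies entirely in (1): setting up the relative purity isomorphism rigorously and verifying that the classical proof is unaffected by our having only exceptional functors for finitely presented (rather than arbitrary finite type) morphisms, and only localization triangles for quasi-compact open immersions. Everything in (2) is then a one-line consequence once (1) and the proper case of $f_!\simeq f_*$ are in hand.
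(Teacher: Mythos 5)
Your proposal is correct and takes essentially the same approach as the paper: the paper deduces (2) from (1) together with $f_!\simeq f_*$ for proper $f$, and for (1) cites the classical deformation-to-the-normal-cone argument (Khan's thesis, Theorem 2.44) with the instruction to verify that no non-quasi-compact open immersions are used, which is precisely the argument and the check you sketch in more detail.
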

\begin{proof}
	The second point readily follows from the first, using that $f_!\simeq f_*$ for a proper map. The proof of the first is classical, see for example \cite[Theorem 2.44]{khan2021voevodsky} (one checks similarly as before that the proof does not use non qcqs open immersions).
\end{proof}

\begin{thm}\label{thm:motivic_6FF}
     Let $\ccal\colon \Sch\to \CAlg(\PrL)$ be a qcqs motivic coefficient system. Then it is endowed with the six functors. More precisely, this means:
     \begin{enumerate}
    \item There exists a lax-symmetric monoidal functor \[\ccal_!^*\colon \mathrm{Corr}(\mathrm{Sch},\mathrm{fp})^\otimes\to \mathrm{Pr}^{\mathrm{L},\otimes}\] extending $\ccal$, where $\mathrm{Corr}(\mathrm{Sch},\mathrm{fp})$ is the category of correspondences defined in \cite[Definition 2.2.10]{HeyerMann} (see \Cref{rem:6FFexplained}), with backward maps any map, and onward maps finitely presented morphisms of qcqs schemes. Roughly, this means that we have all the higher coherence for the following:\begin{itemize}
		\item For each finitely presented map of qcqs schemes $f$, we have a functor $f_!$, which is compatible with the formation of pullbacks.
		\item  We have a natural isomorphism $$(f_!M)\otimes N \rar f_!(M\otimes f^*N).$$
		\item If $f$ is finitely presented proper, then $f_!$ is the right adjoint of $f^*$, and if $f$ is a qcqs open immersion, then $f_!$ is the left adjoint of $f^*$.
	\end{itemize}
    \item For any $f\colon X\to S$ finitely presented smooth morphism of qcqs schemes, there is a natural equivalence $ f^! \simeq \Sigma^{\Omega_f} \circ f^*$.
    \item For any closed immersion $i\colon Z\rar X$ with qcqs open complement $j\colon X\setminus Z\rar X$, we have exact triangles of natural transformations $$j_!j^!\rar \id \rar i_*i^*$$
    $$i_!i^!\rar \id \rar j_* j^*$$ given by the unit and counit of the appropriate adjunctions.
	\item There is a natural transformation $f_!\to f_*$ which is an equivalence if $f$ is proper of finite presentation.
\end{enumerate}
\end{thm}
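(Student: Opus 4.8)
The plan is to deduce all four points from the abstract machinery that produces a coherent six-functor formalism --- i.e.\ a lax symmetric monoidal functor on a category of correspondences --- out of a small amount of ``elementary'' data: a category of geometric objects, a class $I$ of morphisms along which $f_!$ is a left adjoint, a class $P$ along which $f_!$ is a right adjoint, and a short list of base-change and projection-formula compatibilities for $I$ and $P$ separately together with a mixed exchange. We will use the version of this principle axiomatised in \cite{HeyerMann} (in the tradition of Liu--Zheng and Gaitsgory--Rozenblyum), applied with $\Sch$ the category of qcqs schemes, $I$ the class of quasi-compact open immersions, $P$ the class of finitely presented proper morphisms, and $E$ the class of finitely presented morphisms. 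First I would check that $E$ is generated by $I$ and $P$ in the sense the machine requires: by Nagata compactification (\Cref{par:factoSeparated}) every separated finitely presented morphism factors as $\overline f\circ j$ with $j\in I$ and $\overline f\in P$, and since finite presentation and the pertinent triangle identities can be verified Zariski-locally on the source this is enough.

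Granting this reduction, point 1 amounts to the following list, which is already available. For $j\in I$ the functor $j^*$ has a left adjoint $j_!\coloneqq j_\sharp$ with base change and the projection formula (and $j^!\simeq j^*$, as $\Omega_j=0$) --- this is the qcqs localization axiom together with smooth base change and the smooth projection formula from \Cref{def:unstableqcqssys}. For $p\in P$ the functor $p_*$ preserves colimits, so $p_!\coloneqq p_*$ is defined with right adjoint $p^!$, and it satisfies proper base change and the projection formula; together with the mixed exchange isomorphism $j_\sharp p'_*\xrightarrow{\sim}p_*j'_\sharp$, this is \Cref{prop:PBC}, whose proof treats the projective case by ambidexterity (\Cref{ambidextry_stable}) and reduces the general proper case to it via the refined Chow lemma. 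Feeding this into \cite{HeyerMann} produces the lax symmetric monoidal functor $\ccal_!^*\colon\mathrm{Corr}(\Sch,\fp)^\otimes\to\mathrm{Pr}^{\mathrm{L},\otimes}$ extending $\ccal$, together with its asserted properties: $f_!$ compatible with pullbacks, the projection formula $(f_!M)\otimes N\simeq f_!(M\otimes f^*N)$, $f_!=f_\sharp$ left adjoint to $f^*$ for $f\in I$, and $f_!=f_*$ right adjoint to $f^*$ for $f$ proper.

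Points 2--4 are then short. Point 2 is the corollary stated just before the theorem, which gives $f^!\simeq\Sigma^{\Omega_f}\circ f^*$ for finitely presented smooth $f$ by adapting \cite[Theorem 2.44]{khan2021voevodsky} --- equivalently, purity (\Cref{purity}) passed to right adjoints. For point 3, the first triangle $j_!j^!\to\id\to i_*i^*$ is, because $j_!=j_\sharp$ and $j^!=j^*$, exactly the qcqs localization cofiber sequence holding in $\ccal$ by hypothesis; I would deduce the second triangle $i_!i^!\to\id\to j_*j^*$ formally, by applying $\underline{\Hom}_X(-,M)$ to the first triangle evaluated at $\un_X$ and using the closed and smooth projection formulas to identify $\underline{\Hom}_X(i_*\un_Z,M)\simeq i_!i^!M$ and $\underline{\Hom}_X(j_\sharp\un_U,M)\simeq j_*j^*M$ (equivalently, it is the ``other'' localization triangle of the resulting recollement). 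For point 4, the natural transformation $f_!\to f_*$ is, for $f$ separated with a compactification $f=\overline f\circ j$, the image under $\overline f_*$ of the canonical map $j_\sharp\to j_*$ adjoint to the identity of $j^*(-)$; it is an equivalence exactly when one may take $j=\id$, i.e.\ when $f$ is proper of finite presentation, and the non-separated case and independence of the compactification are part of the coherence supplied by the $\mathrm{Corr}$-construction.

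The hard part is not any individual geometric input --- qcqs localization, proper base change, ambidexterity, and deformation to the normal cone are all in hand --- but organizing them into the single lax symmetric monoidal functor on $\mathrm{Corr}(\Sch,\fp)$, which is exactly what \cite{HeyerMann} is designed to do. The one genuinely delicate point, and the reason for the finite-presentation hypotheses pervading the statement, is that the localization theorem (\Cref{thm:loc}) holds only for quasi-compact open immersions: one must therefore stay within finitely presented morphisms and confirm that Nagata-type factorizations together with Zariski descent on the source suffice to express every such morphism through the classes $I$ and $P$ that the abstract formalism accepts.
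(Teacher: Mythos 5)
Your proposal is correct and follows essentially the same route as the paper: verify the elementary axioms (qcqs localization, proper/smooth base change and projection formulas, the open--proper exchange from \Cref{lem:exchange-open-proper}, ambidexterity via Chow's lemma), feed them into an abstract correspondence-category machine, then handle separated versus arbitrary finitely presented morphisms by Zariski descent on the source. The one substantive divergence is in which black box does the assembly: the paper first invokes \cite[Theorem 4.35]{CLLSpan} to produce a lax symmetric monoidal $2$-functor out of $\mathrm{SPAN}_2(\Sch,\mathrm{fp}^{\mathrm{sep}})^{\otimes}_{\mathrm{prop,open}}$, passes to the underlying $(2,1)$-category $\mathrm{Corr}(\Sch,\mathrm{fp}^{\mathrm{sep}})$, and only then uses \cite[Proposition 3.4.8]{HeyerMann} together with the $\ccal$-suave and universal $\ccal^!$-cover formalism of \emph{loc.\ cit.} to extend from separated to arbitrary finitely presented morphisms; you compress this into a single appeal to \cite{HeyerMann} with a somewhat informal "Zariski-locally on the source this is enough." Both routes are sound, but the two-stage split is worth keeping, since the precise hypotheses that the open-immersion restriction is a universal $\ccal^!$-cover (conservativity of $j^*$ plus $j^*\simeq j^!$) are what makes the descent step go through. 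Your explicit derivation of the second localization triangle from the first via $\underline{\Hom}_X(-,M)$ actually supplies a detail the paper elides by simply calling point 3 "an assumption on $\ccal$."
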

\begin{proof}
	Let $\mathrm{Corr}(\mathrm{Sch},\mathrm{fp}^\mathrm{sep})$ be the category of correspondences whose objects are qcqs schemes, backward morphisms are any morphisms, and onward morphisms are finitely presented separated maps (see \cite[Definition 2.2.10]{HeyerMann}). The monoidal structure is induced by the cartesian structure on schemes.
	We first apply \cite[Theorem 4.35]{CLLSpan}, using  \Cref{prop:PBC}, \Cref{lem:exchange-open-proper}  and \Cref{par:factoSeparated}. This provides a lax symmetric monoidal $2$-functor 
	\[\mathbf{\ccal}^*_!\colon \mathrm{SPAN}_2(\Sch,\mathrm{fp}^\mathrm{sep})^\otimes_{\mathrm{prop,open}}\to\mathbf{Pr}^{\mathrm{L},\otimes},\] where $\mathrm{SPAN}_2(\Sch,\mathrm{fp}^\mathrm{sep})^\otimes_{\mathrm{prop,open}}$ is a $(2,2)$-category of correspondences defined in \cite[Construction 4.1]{CLLSpan} (in the notations of \emph{loc. cit.} we take $C$ the category of qcqs schemes, $E$ the class of finitely presented separated maps, $I$ the class of qcqs open immersions and $P$ the class of finitely presented proper maps) and $\mathbf{Pr}^\mathrm{L}$ is the $(\infty,2)$-category of presentable $\infty$-categories, endowed with the Lurie tensor product. The underlying $(2,1)$-category of this $(2,2)$-category is the category $\mathrm{Corr}(\mathrm{Sch},\mathrm{fp}^\mathrm{sep})$ by \cite[Lemma 4.13]{CLLSpan}. In particular, by composing with the canonical lax monoidal inclusion $$\mathrm{Corr}(\mathrm{Sch},\mathrm{fp}^\mathrm{sep})^\otimes\to \mathrm{SPAN}_2(\Sch,\mathrm{fp}^\mathrm{sep})^\otimes_{\mathrm{prop,open}}$$ and forgetting the $2$-morphisms, we obtain a lax symmetric monoidal $\infty$-functor 
	$$\ccal^*_!\colon \mathrm{Corr}(\mathrm{Sch},\mathrm{fp}^\mathrm{sep})^\otimes \to \mathrm{Pr}^{\mathrm{L},\otimes}$$ which is almost all we wanted. The claim was about having for $!$-able maps all finitely presented and not only separated thus we have to do some Zariski descent. Note that for any finitely presented map $f\colon Y\to X$, there exists an hypercovering $Y_\bullet\to Y$ for the Zariski topology such that each composite $Y_n\to Y\to X$ is separated (for example, we may take the $Y_n$ to be affine). We now use the terminology introduced in \cite[Definition 4.5.1]{HeyerMann}. By definition, for any Zariski covering $j\colon U\to S$, we have $j^*\simeq j^!$, where $j^!$ is the right adjoint of $j_!$. Thus by \cite[Lemma 4.5.4]{HeyerMann}, the map $f$ is $\ccal$-suave. Moreover, by Zariski descent, the functor $j^*$ is conservative, thus by \cite[Lemma 4.7.1]{HeyerMann}, the map $j$ is a universal $\ccal^!$-cover. All of this put together allows us to apply \cite[Proposition 3.4.8 (ii)]{HeyerMann} to conclude that the functor $\ccal^*_!$ extends uniquely to a 6-functor formalism on $\mathrm{Corr}(\mathrm{Sch},\mathrm{fp})$ the category of correspondences where now all finitely presented maps are onward maps: this provides a lax symmetric monoidal functor
	\[\ccal^*_!\colon \mathrm{Corr}(\mathrm{Sch},\mathrm{fp})^\otimes\to \mathrm{Pr}^{\mathrm{L},\otimes}\] as promised.
	
	The second point is \Cref{ambidextry_stable}, the third point is an assumption on $\ccal$ and the fourth point follows from the third: given a factorization $f=p\circ j$, applying the localization triangle to $j_*$ and further applying $p_*$, we obtain the natural transformation.
\end{proof}

\begin{rem}
	\label{rem:6FFexplained}
	The category $\mathrm{Corr}(\mathrm{Sch},\mathrm{fp})$ has all qcqs schemes for objects. If $X$ and $Y$ are qcqs, the morphisms $X\to Y$ in the category of or correspondences are  
 	\[\alpha\colon X \xleftarrow{f} Z\xrightarrow{g} Y\] where $f$ is any morphism and $g$ is a finitely presented morphism. Composition is given by the obvious pullback diagram. The functor $\ccal^*_!$ we obtain has the additional property that $\ccal^*_!(\alpha)\simeq g_!f^*$.  If $g$ is furthermore separated, then for any compactification $g=p\circ j$ with $p$ finitely presented proper and $j$ a qcqs open immersion, we have $g_!\simeq p_*j_!$. If $g$ is not separated anymore, one can choose a Zariski hypercovering $Z_\bullet\to Z$ by affines, and by definition, if one denote by $g^n\colon Z_n\to Y$ the composition $Z_n\to Z\to Y$ of $g$ with the morphism $j_n\colon Z_n\to Z$, we have:\[g_!\simeq \colim_{n\in\Delta}g^n_!j_n^*.\]

\end{rem}
\begin{cor}\label{cor:6FF_proet_mot}
    We have the six functors on $\SH_\proet$ in the sense of \Cref{thm:motivic_6FF}.
\end{cor}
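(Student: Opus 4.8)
The plan is to check that $\SH_\proet$ satisfies the hypotheses of \Cref{thm:motivic_6FF}, that is, that it is a qcqs motivic coefficient system in the sense of \Cref{def:qcqsmotiviccoeffsys}, and then to simply invoke that theorem. Recall from \Cref{sec:functoriality} that $\SH_\proet$ defines a functor $\Sch^\op\to\CAlg(\PrL)$. Each $\SH_\proet(X)$ is stable — it is obtained from $\HH_{\proet,\bullet}(X)$ by $\otimes$-inverting $\mathrm{S}^{2,1}$, which in particular inverts $\mathrm{S}^{1,0}\simeq\mathrm{S}^1\otimes\un$ — and presentably symmetric monoidal, since the tensor product on $L_{\A^1}\Sh^{\mathbb{P}^1}_\proet(X)$ commutes with colimits in each variable and $\SH_\proet(X)$ is closed under colimits. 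This establishes condition (1) of \Cref{def:qcqsmotiviccoeffsys}, and also lets us regard $\SH_\proet$ as a functor valued in pointed symmetric monoidal presentable categories, as required by \Cref{def:unstableqcqssys}.

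Next I would verify the four axioms of \Cref{def:unstableqcqssys}. The smooth functoriality axiom (1) — existence of $f_\sharp$ for smooth $f$, together with smooth base change and the smooth projection formula — was established in \Cref{sec:functoriality}, in fact for the larger class of weakly smooth maps. $\A^1$-invariance (axiom (2)) is built into the construction: $\SH_\proet$ is assembled out of $\A^1$-local objects, so $\pi^*$ is fully faithful for $\pi\colon\A^1_X\to X$. Qcqs localization (axiom (4)) for $\SH_\proet$ is precisely \Cref{cor:loc} (the stable analogue of \Cref{cor:pointedloc}). Finally, Nisnevich descent (axiom (3)) follows from the construction: the assignment $X\mapsto\Sh_\proet(\WSm_X,\mathrm{Sp})$ satisfies pro-étale descent, and the localizations producing $\A^1$-invariance and $\mathbb{P}^1$-stabilization are taken at pullback-stable classes of maps and hence are compatible with this descent, so $\SH_\proet$ is a pro-étale — hence Nisnevich — hypersheaf on the base; alternatively one deduces Nisnevich descent from \Cref{cor:loc} together with $\A^1$-invariance by the standard argument.

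It remains to check Tate-stability (condition (2) of \Cref{def:qcqsmotiviccoeffsys}), i.e. that $T_{\Spec(\Z)}=\mathrm{S}^{\ocal_{\Spec(\Z)}}$ is $\otimes$-invertible. By the standard computation of the Thom space of a trivial line bundle — using homotopy invariance and the localization cofiber sequence over $\A^1_S$ for the open–closed decomposition $\Gm{}_{,S}\hookrightarrow\A^1_S\hookleftarrow S$ — one gets $\mathrm{S}^{\ocal_S}\simeq(\mathbb{P}^1_S,*)=\mathrm{S}^{2,1}$, and the latter is invertible in $\SH_\proet(S)$ by construction. Since $\mathrm{S}^{\ocal_{\Spec(\Z)}}$ pulls back to $\mathrm{S}^{\ocal_X}$ over any scheme $X$, it is $\otimes$-invertible; hence $\SH_\proet$ is Tate-stable.

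Putting these together, $\SH_\proet$ is a qcqs motivic coefficient system and \Cref{thm:motivic_6FF} applies, yielding all the asserted structure. This corollary is essentially organizational: the genuinely nontrivial inputs — the localization theorem \Cref{thm:loc}/\Cref{cor:loc} and ambidexterity \Cref{ambidextry_stable} — have already been proven, so the only points requiring mild care are the verification of Nisnevich descent and the set-theoretic bookkeeping discussed in \Cref{rem:set_theory_continued}, both of which are routine. The closest thing to an obstacle, already dealt with by restricting exceptional operations to finitely presented maps, is that the localization property is only available for quasi-compact open immersions; this is exactly why \Cref{thm:motivic_6FF} is phrased the way it is.
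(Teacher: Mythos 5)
Your proposal is correct and is precisely the verification the paper leaves implicit: the corollary follows by checking that $\SH_\proet$ satisfies the axioms of a qcqs motivic coefficient system (\Cref{def:unstableqcqssys} and \Cref{def:qcqsmotiviccoeffsys}) and then invoking \Cref{thm:motivic_6FF}. You correctly locate each ingredient — smooth (in fact weakly smooth) functoriality with base change and projection formula from \Cref{sec:functoriality}, $\A^1$-invariance from the construction, qcqs localization from \Cref{cor:loc}, Nisnevich descent from the localization triangle in the standard way, stability and presentable monoidality from the $\mathbb{P}^1$-stabilization, and Tate-stability via $\mathrm{S}^{\ocal_S}\simeq\mathrm{S}^{2,1}$ being inverted by construction.
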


\subsection{The embedding theorem from étale motives}\label{sec:embeddingetalemotives}
If $\Lambda$ is a condensed ring spectrum, it can be seen as an object of $\Sh_{\proet}(\WSm_X,\Sp)$ by using the canonical maps of sites
\[*_\proet \to X_\proet \to (\WSm_X)_\proet.\] We let \[
\SH_\proet(X,\Lambda)\coloneqq \Mod_\Lambda(\SH_\proet(X)) .
\]
If the condensed ring spectrum $\Lambda$ is $H\Z$-linear, we denote by $\DM_\proet(X,\Lambda)$ the $\infty$-category $\SH_\proet(X,\Lambda)$ and call it the category of \emph{pro-étale motives with $\Lambda$-coefficients}; in that case, we also denote by $\D_\proet^{\A^1}(\WSm_X,\Lambda)$ the category $\Mod_\Lambda(\mathrm{SH}^{S^1}_\proet(X))$.


\begin{defi}\label{def:etale_bounded}
	Let $X$ be a qcqs scheme.
	\begin{enumerate}
		\item We say that $X$ is étale bounded if it has finite Krull dimension and if  
		\[\sup_{x\in X,p\text{ prime}} \mathrm{cd}_p(k(x))<+\infty\]
	where $\mathrm{cd}_p(k)$ is the mod $p$ Galois cohomological dimension of a field $k$. 
		\item We say that $X$ is locally étale bounded if it is étale-locally étale bounded.
	\end{enumerate}

\end{defi}

\begin{thm}\label{thm:embedding_etale_motives}
	Let $X$ be a locally étale bounded qcqs scheme.
	Then for any ring spectrum $ \Lambda$ (endowed with the discrete topology), the canonical functor
	\[
		\SH_\et(S,\Lambda)\to \SH_\proet(S,\Lambda)
	\]
	is fully faithful. 
\end{thm}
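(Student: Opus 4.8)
The plan is to reduce the fully faithfulness of $\SH_\et(X,\Lambda)\to\SH_\proet(X,\Lambda)$ to a statement about the étale-local structure of the source and target, and then to a computation over a field (or more precisely over $w$-contractible affines), where the comparison becomes tractable via the classical rigidity theorem for étale motives. First I would reduce to the case $\Lambda = \mathbb{S}$ (the sphere spectrum): since $\SH_\proet(X,\Lambda) = \Mod_\Lambda(\SH_\proet(X))$ and similarly étale-side, and the forgetful functors are conservative and compatible with the comparison functor, it suffices to check full faithfulness on a set of generators of $\SH_\et(X)$ after tensoring with $\Lambda$; the generators are the $\Sigma^\infty U_+(n)$ for $U\to X$ smooth, and one reduces to computing mapping spectra between such objects. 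So the heart of the matter is to show that for $U, V$ smooth over $X$, the map
\[
\map_{\SH_\et(X)}(\Sigma^\infty U_+, \Sigma^\infty V_+(n)[m]) \to \map_{\SH_\proet(X)}(\Sigma^\infty U_+, \Sigma^\infty V_+(n)[m])
\]
is an equivalence, under the locally étale bounded hypothesis.

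Next I would use the standard reduction of such statements to étale (pro-étale) descent plus a stalk computation. Both $\SH_\et$ and $\SH_\proet$ satisfy descent for the respective topologies, and the comparison functor is compatible with $f^*$ and with $f_\sharp$ for (weakly) étale $f$ by the functoriality established in \Cref{empty:additionaladjoints}; so by taking a $w$-contractible pro-étale hypercover of $X$ one reduces to the case where $X$ is $w$-contractible, hence (by \Cref{lem:strictly_profinite_cover}) to the case where $X$ is $0$-dimensional, reduced, affine with separably closed residue fields, i.e.\ essentially $\pi_0(X)$ a profinite set of separably closed points. Here the étale site is the Zariski site (\cite[Corollary 2.5]{Schroer}) and the pro-étale site is $\Cond(\Ani)_{/\pi_0(X)}$-flavoured, so the mapping spectra become, after unwinding, a condensed/profinite limit of mapping spectra over separably closed fields. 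On such a field $k$, $\SH_\et(k,\Lambda)$ is computed by the classical rigidity theorem of \cite{bachmannrigidity}, which identifies (after suitable completion) étale motivic homotopy theory with a Galois-equivariant — here trivial since $k$ is separably closed — category of spectra; the analogous description on the pro-étale side is the content of the pro-étale rigidity theorem \Cref{thm:embedding_rigidity} (or can be bootstrapped directly from the torsion case as in that proof). The finite cohomological dimension hypothesis is exactly what is needed to control the convergence of the relevant completions/Postnikov towers and to ensure that the comparison of the uncompleted categories, not merely the completed ones, is an equivalence.

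The main obstacle I expect is precisely this convergence/boundedness issue: étale motivic cohomology and its pro-étale analogue are only literally equivalent after an appropriate completion (typically an $\ell$-completion or $\prod_\ell$-completion matching the invertible primes), and the mapping spectra one needs to compare are honest, not completed, spectra. Bridging this gap requires that the Postnikov towers converge, which is controlled by cohomological dimension: étale bounded gives a uniform bound $\mathrm{cd}_p(k(x)) \le d$ for all points and all primes, together with finite Krull dimension, so that étale cohomology on $X$ has uniformly bounded cohomological dimension; this lets one show that the relevant sheaves are already "complete enough" that the comparison with the pro-étale side is an equivalence on the nose and not just after completion. Concretely, one would argue that for such $X$ the functor $\SH_\et(X)\to\SH_\et(X)^\wedge$ to the hypercomplete/bounded-cohomological-dimension localization is fully faithful on the relevant bounded-above objects, and similarly identify the pro-étale category with a bounded-cohomological-dimension localization, matching the two. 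I would assemble the needed input about hypercompleteness and cohomological dimension from the cited literature (\cite{zbMATH06479630}, \cite{MR4609461}) and from \Cref{thm:embedding_rigidity}; the delicate point to get right is uniformity of the bounds across the profinite family of residue fields appearing over a $w$-contractible base, which is exactly why "locally étale bounded" rather than merely "finite-dimensional" is assumed.
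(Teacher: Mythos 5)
Your proposal runs into a genuine obstruction at the very first reduction step. You propose to use pro-étale descent to reduce to a $w$-contractible base and then invoke rigidity over fields. But pro-étale descent is available only for the \emph{target} $\SH_\proet$; the source $\SH_\et(X)$ is only known to satisfy \emph{étale} descent. If $W_\bullet \to X$ is a $w$-contractible pro-étale hypercover, then $\map_{\SH_\proet(X)}(\nu^*M,\nu^*N) \simeq \lim_{[n]}\map_{\SH_\proet(W_n)}(\cdot,\cdot)$, but there is no a priori identification of $\map_{\SH_\et(X)}(M,N)$ with $\lim_{[n]}\map_{\SH_\et(W_n)}(\cdot,\cdot)$. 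Indeed, pro-étale descent for $\SH_\et$ is precisely the \emph{corollary} that the paper deduces from the embedding theorem, so using it in the proof is circular. (The initial reduction to étale-bounded schemes is fine; it is the further reduction to $0$-dimensional $w$-contractible bases that breaks.) A second, more hidden, gap is in your plan to compute mapping spectra between generators: since the target is the $\mathbb{G}_m$-stabilization, those mapping spectra involve filtered colimits of $\Omega_{\mathbb{G}_m}$-loops, and commuting $\nu^*$ past these loops (equivalently, past $f_*$ for smooth $f$) is exactly the non-formal content where finite cohomological dimension is needed. Your proposal mentions this only obliquely via convergence of completions, and the appeal to Bachmann's rigidity (which identifies completed categories) does not by itself bridge the uncompleted comparison.

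The paper's actual route is structurally different: it first shows the comparison functor $\nu^*$ between hypersheaves on the big sites $\Sm_X$ and $\WSm_X$ is fully faithful by reducing to the small sites, where full faithfulness follows from Postnikov completeness of both sides (the pro-étale side unconditionally, the étale side using the cohomological dimension hypothesis). It then checks that $\nu^*$ preserves $\A^1$-invariance by t-exactness and Postnikov completeness, giving full faithfulness at the $S^1$-stable level, and finally verifies commutation of $\nu^*$ with $\Omega_{\mathbb{G}_m}$ (Propositions \ref{prop:big-proet-f_*-compatible} and \ref{prop:hypothesis_big-proet-f_*-compatible_are_satisfied}), which is where the uniform bound on $\mathrm{cd}_p$ is used via the finite cohomological dimension of $f_*$. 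No descent to $w$-contractibles and no rigidity over fields is invoked. The moral you should take is that the cohomological finiteness enters not to make completions converge for the rigidity comparison, but to keep the small-site sheaf categories Postnikov complete and the pushforwards of finite cohomological dimension.
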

\begin{proof}
	As the $\Lambda$-linear version is obtained by taking $\Lambda$-modules on both sides, we only deal with spectral coefficients. We can reduce to étale bounded schemes because both $\SH_\et$ and $\SH_\proet$ have étale descent.

	We begin by showing that the functor
	\[
		\nu^* \colon \Sh_{\et}(\Sm_{X}) \to \Sh_{\proet}(\WSm_{X})
	\]
	is fully faithful, that is we want to see that the canonical map
	\[
		\Hom_{\Sh_{\et}(\Sm_{X})}(\mathcal{F},\mathcal{G}) \to 	\Hom_{\Sh_{\proet}(\WSm_{X})}(\nu^* \mathcal{F},\nu^*\mathcal{G})
	\]
	is an equivalence.
	We may assume that $ \mathcal{F} = T $ for a smooth $ X $-scheme $ T $.
	Then the map may be rewritten as
	\[
		\Hom_{\Sh_{\et}(\Sm_{T})}(\un,f^* \mathcal{G}) \to \Hom_{\Sh_{\proet}(\WSm_{T})}(\un,f^* \nu^*\mathcal{G})
	\]
	Since the unit is in the essential image of the inclusion functors from the small étale sites we may again identify this as
	\[
			\Hom_{\Sh(T_{\et})}(\un, \theta  f^* \mathcal{G}) \to \Hom_{\Sh({T}_{\proet})}(\un, \theta' \nu^* f^*\mathcal{G})
	\]
	where the functors $ \theta  $ and $ \theta' $ are the right adjoints to the inclusion from the small étale site.
	Furthermore we have a commutative square
	\[\begin{tikzcd}
		{\Sh(T_{\et})} & {\Sh(T_{\proet})} \\
		{\Sh_{\et}(\Sm_{T})} & {\Sh_{\proet}(\WSm_{T})}
		\arrow["{\nu^*}"', swap, from=1-1, to=1-2]
		\arrow["\theta"', swap, from=2-1, to=1-1]
		\arrow["{\nu^*}", from=2-1, to=2-2]
		\arrow["{\theta'}", from=2-2, to=1-2]
	\end{tikzcd}\]
	(all functors preserve colimits because of \cite[Proposition~7.1]{MR4296353} and on representables this is easy).
	So we may reduce to $ \nu^* $ being fully faithful on the small site, which follows from Postnikov-completeness of both sides (which is a consequence of \cite[Proposition 3.2.3]{zbMATH06479630} on the pro-étale site and of our assumption on finite cohomological dimension for the étale site, see \cite[Lemma 2.23]{MattisUnstable}) and the truncated case which is \cite[Proposition 2.38]{bastietal}.

	The stable analogue of $\nu^*$ is thus also fully faithful.
	We now claim that it restricts to a fully faithful functor 
	\[
		\mathrm{SH}^{S^1}_{\et}(X) \to \mathrm{SH}^{S^1}_{\proet}(X).
	\]
	In other words we have to show that if $ \mathcal{F}$ is an $\A^1$-invariant sheaf of spectra on $\Sm_X$, then $ \nu^* \mathcal{F}$ is also $\A^1$-invariant.
	We first claim that $\nu^*$ is t-exact for the standard t-structure: as restricting to small sites is t-exact and commutes with $\nu^*$, we are reduced to showing that $\nu^*\colon \Sh(T_\et,\mathrm{Sp})\to \Sh(T_\proet,\mathrm{Sp})$ is t-exact for any smooth $X$-scheme $T$. The result then follows from \cite[Remark~1.3.2.8]{SAG}.

	So let $ U = \lim_i U_i$ be affine and pro-\'etale over some smooth $X$-scheme $T$.
	We want to show that
	\[
		(\nu^*\mathcal{F})(U) \to (\nu^*\mathcal{F})(U \times \A^1)
	\]
	is an equivalence.
	Since $\nu^*\mathcal{F} \simeq \lim_n \tau_{\leq n} \nu^* \mathcal{F}$ by Postnikov-completeness of $\Sh_\proet(\WSm_X)$ (which is again a consequence of \cite[Proposition 3.2.3]{zbMATH06479630}) and because $\nu^*$ commutes with truncations, we may assume that $\mathcal{F}=\tau_{\leq n} \mathcal{F}$.
	But then the above map rewrites as
	\[
		\colim_i (\nu^*\mathcal{F})(U_i) \to \colim_i (\nu^*\mathcal{F})(U_i \times_X \A^1_X);
	\]
	indeed, the formula for the left-hand side can be checked by restricting to $\Sh(T_\proet)$ and the formula for the right-hand side can be checked by restricting to $\Sh((\A^1_T)_\proet)$ and in both cases, the argument is then the same as in \cite[Corollary 5.1.6]{zbMATH06479630} which is the case of the derived category of sheaves of abelian groups. Thus our map is an equivalence.

	Now the claim of the theorem follows formally, if we can show that the square
\[\begin{tikzcd}
	{\mathrm{SH}_{\et}^{S^1}(X)} & {\mathrm{SH}_{\proet}^{S^1}(X)} \\
	{\mathrm{SH}_{\et}^{S^1}(X)} & {\mathrm{SH}_{\proet}^{S^1}(X)}
	\arrow["{\nu^*}", from=1-1, to=1-2]
	\arrow["{\underline{\Hom}(\mathbb{S}[\mathbb{G}_m],-)}"', from=1-1, to=2-1]
	\arrow["{\underline{\Hom}(\mathbb{S}[\mathbb{G}_m],-)}", from=1-2, to=2-2]
	\arrow["{\nu^*}"', from=2-1, to=2-2]
\end{tikzcd}\]
	commutes in the obvious way.
	That is we have to show that the canonical map
	\[
	\nu^* \underline{\Hom}(\mathbb{S}[\mathbb{G}_m],\mathcal{F}) \to \underline{\Hom}(\mathbb{S}[\mathbb{G}_m],\nu^* \mathcal{F})
	\]
	is an equivalence for any $\mathcal{F} \in \mathrm{SH}_{\et}^{S^1}(X)$.
	Note that here the internal Homs a priori are taken in $\mathrm{SH}_{\et}^{S^1}(X)$ and $\mathrm{SH}_{\proet}^{S^1}(X)$, respectively, but the latter in fact agree with the internal Homs in $\Sh_{\et}(\Sm_{X},\Sp) $ and $ \Sh_{\proet}(\WSm_{X},\Sp)$, respectively.
	Thus we may argue in the categories of sheaves, instead of effective motives.
	Then the claim follows from \Cref{prop:big-proet-f_*-compatible} and \Cref{prop:hypothesis_big-proet-f_*-compatible_are_satisfied} below (indeed note that $\underline{\Hom}(\mathbb{S}[Y],-)$, for $f\colon Y\to X$ a smooth map, is isomorphic to $f_*f^*$ by adjunction and smooth projection formula). 
\end{proof}


\begin{prop}
	\label{prop:big-proet-f_*-compatible}
	Let $S$ be an \'etale bounded qcqs scheme.
	Let $f \colon T \to  S$ be a map of schemes such that for any base change $ f' \colon X \times_S T \to X $ to a smooth $S$-scheme $X$, the functor $f_* \colon \Sh((X \times_S T)_\et,\mathbb{Z}) \to \Sh(X_\et,\mathbb{Z})$ has finite cohomological dimension.
	Then the square
\[\begin{tikzcd}
	{\Sh_{\et}(\Sm_T,\Sp)} & {\Sh_{\proet}(\WSm_T,\Sp)} \\
	{\Sh_{\et}(\Sm_S,\Sp)} & {\Sh_{\proet}(\WSm_S,\Sp)}
	\arrow["{\nu^*}", from=1-1, to=1-2]
	\arrow["{f_*}"', from=1-1, to=2-1]
	\arrow["{f_*}", from=1-2, to=2-2]
	\arrow["{\nu^*}"', from=2-1, to=2-2]
\end{tikzcd}\]
	commutes.
\end{prop}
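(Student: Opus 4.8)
\textit{Plan of proof.}
The square of sites given by the base–change functors $(-)\times_S T\colon \Sm_S\to \Sm_T$ and $(-)\times_S T\colon \WSm_S\to \WSm_T$ together with the inclusions $\Sm\hookrightarrow \WSm$ commutes, so the associated pullback functors satisfy $\nu^* f^*\simeq f^*\nu^*$; combining this equivalence with the unit and counit of the $(f^*,f_*)$–adjunctions on the two big sites produces a canonical exchange transformation $\mathrm{Ex}\colon \nu^* f_*\to f_*\nu^*$, and the claim is that $\mathrm{Ex}$ is an equivalence. The plan is to reduce this to a statement on the small \'etale and pro-\'etale sites. A sheaf of spectra on $\WSm_S$ is detected by its restrictions to the small pro-\'etale sites $X_\proet$ for $X$ affine and weakly smooth over $S$, and by \Cref{lem:locWL} together with Zariski descent it is enough to take $X$ weakly \'etale over some $\A^n_S$. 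Since the big-site pushforward is computed by $(f_*\mathcal G)(Z)=\mathcal G(Z\times_S T)$, it is local on the base: if $f_X\colon X_T:=X\times_S T\to X$ denotes the base change, then $(f_*\mathcal G)|_{X_\proet}\simeq (f_X)_*\bigl(\mathcal G|_{(X_T)_\proet}\bigr)$, the pushforward being the one on small sites. Feeding this into the commutative square relating the big- and small-site versions of $\nu^*$ established in the proof of \Cref{thm:embedding_etale_motives} (restriction to small sites commutes with $\nu^*$), one rewrites the value of $\mathrm{Ex}$ on $X$ as the value, on $\mathcal G:=\mathcal F|_{(X_T)_\et}$, of the exchange transformation $\nu^*_X(f_X)_*\to (f_X)_*\nu^*_{X_T}$ between the small \'etale and pro-\'etale sites of $X$ and $X_T$. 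Here $X$ is again \'etale bounded (it is weakly \'etale over $\A^n_S$ and $S$ is \'etale bounded), and $f_X$ still satisfies the cohomological dimension hypothesis, because the geometric fibres of $f_X$ are geometric fibres of $f_{\A^n_S}$.

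So it remains to prove: for a morphism $g\colon T'\to X'$ with $X'$ \'etale bounded and $g_*\colon \Sh(-_\et,\Z)$ of cohomological dimension $\le d$, the exchange $\mathrm{Ex}\colon \nu^*_{X'} g_*\to g_*\nu^*_{T'}$ on small sites is an equivalence. Both $\Sh(X'_\proet,\Sp)$ and $\Sh(X'_\et,\Sp)$ are Postnikov complete (the former always, the latter by \cite[Lemma~2.23]{MattisUnstable} since $X'$ is \'etale bounded), and $\nu^*_{X'}$ is t-exact and commutes with truncations by \cite[Remark~1.3.2.8]{SAG}; as $g_*$ raises connectivity by at most $d$, both $\nu^*_{X'}g_*\mathcal F$ and $g_*\nu^*_{T'}\mathcal F$ are recovered from their Postnikov towers, whose $n$-th stages involve only $\tau_{\le n+d}\mathcal F$, and $\mathrm{Ex}$ is compatible with these towers. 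Hence it suffices to treat bounded $\mathcal F$, and by d\'evissage the case where $\mathcal F$ lies in the heart. Note that we cannot assume $\Sh(T'_\et,\Sp)$ to be Postnikov complete, since $T'$ need not be \'etale bounded, which is precisely why the finiteness of the cohomological dimension of $g_*$ is needed at this point.

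For $\mathcal F$ bounded one checks $\mathrm{Ex}$ after evaluation at every $w$-contractible $W\in X'_\proet$, these forming a conservative family (\Cref{cor:global_sections_on_wcontr}). Writing $W=\lim_\alpha W_\alpha$ as a cofiltered limit of affine \'etale $X'$-schemes, the left-hand side becomes $\colim_\alpha R\Gamma_\et\bigl((W_\alpha)_{T'},\mathcal F\bigr)$ and the right-hand side $R\Gamma_\proet\bigl(W_{T'},\nu^*\mathcal F\bigr)$; by continuity of \'etale cohomology along cofiltered limits of schemes (\cite[\href{https://stacks.math.columbia.edu/tag/09YQ}{Tag~09YQ}]{stacks-project}) the former is $R\Gamma_\et\bigl(W_{T'},\mathcal F_\infty\bigr)$, and the comparison of \'etale with pro-\'etale cohomology for bounded-below complexes (\cite[Corollary~5.1.6]{zbMATH06479630}) identifies it with the latter, using that $W_{T'}\to T'$ is weakly \'etale. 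This last step — controlling the pushforward along $g$ on the pro-\'etale site in terms of the \'etale site — is the main obstacle, and both the finiteness of the cohomological dimension and the reduction to $w$-contractible, essentially $0$-dimensional base schemes are essential there; the remaining content is the bookkeeping with the various commuting squares of small and big sites, which is routine given the compatibilities already recorded in the proof of \Cref{thm:embedding_etale_motives}.
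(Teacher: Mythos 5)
Your proof follows the same general route as the paper's: reduce to the small pro-\'etale sites via a cube of restriction functors, prove the bounded case by comparing \'etale and pro-\'etale cohomology along cofiltered limits, then extend to unbounded sheaves of spectra by a Postnikov-tower argument. The bounded case is worked out more explicitly than in the paper (which simply points to an adaptation of \cite[Lemma~5.4.3]{zbMATH06479630}), and that part is fine.

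The gap is in the step that reduces from general to bounded $\mathcal{F}$. You write ``as $g_*$ raises connectivity by at most $d$, both $\nu^*_{X'}g_*\mathcal F$ and $g_*\nu^*_{T'}\mathcal F$ are recovered from their Postnikov towers, whose $n$-th stages involve only $\tau_{\leq n+d}\mathcal F$,'' but the hypothesis of the proposition only bounds the cohomological dimension of the \emph{\'etale} pushforward. For the second functor $g_*\nu^*_{T'}(-)$ (the pro-\'etale pushforward of a $\nu^*$-pullback), the connectivity bound is exactly what needs to be established. Transporting the bound from the \'etale to the pro-\'etale side requires knowing the exchange in the bounded-below case first: then the fiber of the Postnikov transition map can be rewritten as $\nu^*g_*(\pi_n\mathcal F)$, whose connectivity is controlled by the \'etale hypothesis. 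This is precisely what the paper does — rather than reducing to bounded $\mathcal{F}$, it observes that the homotopy sheaves of the target $g_*\nu^*\mathcal F$ stabilize (using the bounded case to identify $\mathrm{fib}(g_*\nu^*\tau_{\leq n}\mathcal F\to g_*\nu^*\tau_{\leq n-1}\mathcal F)$ with $\nu^*g_*(\pi_n\mathcal F)$), so that the target has classical homotopy sheaves, hence lies in the full subcategory $\nu^*(\Sh(X'_\et,\Sp))$ by \cite[Lemma~3.5]{bachmannrigidity}, and then concludes by full faithfulness of $\nu^*$ together with the formula for $\nu_*$ on representables. Your proof can be repaired along these lines, but as written the connectivity assertion for the pro-\'etale pushforward is unjustified and, without invoking the bounded case, the dependence looks circular.

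A smaller point: you reduce to small sites of schemes that are affine weakly \'etale over $\A^n_S$, whereas the proposition's hypothesis bounds the cohomological dimension of $f'_*$ only for $X$ smooth over $S$. The paper avoids this by working with smooth $X$ throughout its cube. Your justification — ``geometric fibres of $f_X$ are geometric fibres of $f_{\A^n_S}$'' — is not in itself sufficient to control the cohomological dimension of a pushforward with $\Z$-coefficients; the correct argument is rather that the sheaf $\mathcal{F}|_{(X_T)_\et}$ is restricted from $(\A^n_S)_T$ and one can transport the bound for $f_{\A^n_S,*}$ by weakly \'etale base change. This is fixable, but as stated it is a further lacuna.
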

\begin{proof}
	For any smooth map $g \colon X \to S$, we consider the cube
\[\begin{tikzcd}
	& {\Sh((X\times_S T)_{\et},\Sp)} & {\Sh((X\times_S T)_{\proet},\Sp)} \\
	& {\Sh(X_{\et},\Sp)} & {\Sh(X_{\proet},\Sp)} \\
	{\Sh_{\et}(\Sm_T,\Sp)} & {\Sh_{\proet}(\WSm_T,\Sp)} \\
	{\Sh_{\et}(\Sm_S,\Sp)} & {\Sh_{\proet}(\WSm_S,\Sp)}
	\arrow["{\nu^*}", from=1-2, to=1-3]
	\arrow["{f'_*}", from=1-2, to=2-2]
	\arrow["{f'_*}", from=1-3, to=2-3]
	\arrow["{\nu^*}", from=2-2, to=2-3]
	\arrow[from=3-1, to=1-2]
	\arrow["{\nu^*}", from=3-1, to=3-2]
	\arrow["{f_*}"', from=3-1, to=4-1]
	\arrow[from=3-2, to=1-3]
	\arrow["{f_*}", from=3-2, to=4-2]
	\arrow[from=4-1, to=2-2]
	\arrow["{\nu^*}"', from=4-1, to=4-2]
	\arrow[from=4-2, to=2-3]
\end{tikzcd}\]
	where the diagonal functors are given by base changing and then restricting.
	Note that the left-hand square and the right-hand square of the cube clearly commute.
	The top and bottom already commute at the level of sheaves of spaces as we have seen in the proof of the theorem.
	Since the collection of all functors of the form $\Sh_\proet(\WSm_S,\mathrm{Sp})\to \Sh(X_\proet,\mathrm{Sp})$ for all smooth $g \colon X \to S$ is conservative, we may reduce to showing that the back square commutes.
	We observe that for bounded above sheaves of spectra, this follows easily by adapting the proof of \cite[Lemma~5.4.3]{zbMATH06479630}.

	To extend to all sheaves of spectra, note that the finiteness assumptions imply that the horizontal functors are fully faithful.
	It therefore suffices to see that $f'_*(\nu^* \mathcal{F})$ is in the image of $\nu^*$: indeed then, $f'_*(\nu^* \mathcal{F})=\nu^*\nu_*(f'_*(\nu^* \mathcal{F}))$,
	\[
		\nu_* (f'_*(\nu^* \mathcal{F}))(U) = f'_*(\nu^* \mathcal{F})(U) = \nu^* \mathcal{F}(f'^{-1}(U)) = \mathcal{F}(f'^{-1}(U)) = f'_*(\mathcal{F})(U)
	\]
	for any $U$ \'etale over $X$.
	Now, observe that we may identify the full subcategory of $\Sh_{\proet}(X,\Sp)$ spanned by the image of $\nu^*$ with the full subcategory spanned by those pro\'etale sheaves whose homotopy sheaves are classical by \cite[Lemma~3.5]{bachmannrigidity}.
	It therefore remains to see that $\pi_k(f'_*(\nu^* \mathcal{F})) \in \Sh(X_{\proet},\Sp)^{\heart}$ is classical.
	By Postnikov-completeness, we note that
	\[
		f'_*(\nu^*\mathcal{F}) = \lim_n f'_*(\nu^* \tau_{\leq n} \mathcal{F}).
	\]
	It follows that the inverse system $ \left(\pi_k f'_*(\nu^* \tau_{\leq n} \mathcal{F})\right)_k$ is eventually constant since the connectivity of the fiber of
	\[
		\nu^* f'_*(\pi_n \mathcal{F})=  f'_*(\nu^* \pi_n \mathcal{F}) = \mathrm{fib} (f'_*(\nu^* \tau_{\leq n} \mathcal{F}) \to f'_*(\nu^* \tau_{\leq n-1} \mathcal{F}))
	\]
	goes to infinity as $n$ grows, by our cohomological finiteness assumption.
	Thus for $n >\!\!> 0$ we have $ \pi_k f'_*(\nu^* \mathcal{F}) = \pi_k f'_*( \nu^*\tau_{\leq n} \mathcal{F})$ and the latter is classical since $\tau_{\leq n} \mathcal{F}$ is bounded above.
\end{proof}

\begin{lem}\label{prop:hypothesis_big-proet-f_*-compatible_are_satisfied}
	Let $S$ be an \'etale bounded qcqs scheme.
	Then any finite presentation morphism $ f \colon X \to S $ satisfies the assumptions of \Cref{prop:big-proet-f_*-compatible}.
\end{lem}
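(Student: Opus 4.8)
The plan is to reduce to the classical bound on the cohomological dimension of a pushforward along a finitely presented morphism, in terms of the relative dimension and of the cohomological dimension of the residue fields of the target, and then to observe that smoothness over the étale bounded base $S$ keeps all the relevant invariants finite.

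Fix a smooth $S$-scheme $Y$ --- this is the scheme called ``$X$'' in \Cref{prop:big-proet-f_*-compatible} --- and write $g\colon X\times_S Y\to Y$ for the base change of $f$. We must bound the cohomological dimension of $g_*\colon \Sh((X\times_S Y)_\et,\Z)\to \Sh(Y_\et,\Z)$. Since this is local on $Y$ we may assume $Y$ is affine, hence quasi-compact and of finite presentation over $S$; then $\dim Y<+\infty$, and every residue field of $Y$ is a finitely generated field extension of a residue field of $S$, so $\mathrm{cd}_p(k(y))\leqslant \mathrm{cd}_p(k(s))+\mathrm{trdeg}<+\infty$ uniformly (\cite[Exposé X]{sga4}). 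Thus $Y$ is again étale bounded. Moreover $g$ is finitely presented, and since $X$ and $S$ are qcqs (writing $S$ as a cofiltered limit of Noetherian schemes) the fibre dimensions of $f$, hence of $g$, are bounded by a constant $d$ depending only on $f$.

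Now $R^q g_*\fcal$ may be computed on stalks: for a geometric point $\bar y\to Y$ one has $(R^q g_*\fcal)_{\bar y}\simeq H^q_\et(W_{\bar y},\fcal)$, where $W_{\bar y}=(X\times_S Y)\times_Y \Spec \mathcal O^{\mathrm{sh}}_{Y,\bar y}$, by continuity of étale cohomology (\cite[Exposé VII, Théorème 5.7]{sga4}, applied to the cofinal system of affine étale neighbourhoods of $\bar y$, whose transition maps are affine). The scheme $\Spec \mathcal O^{\mathrm{sh}}_{Y,\bar y}$ is a cofiltered limit of affine étale $Y$-schemes, so it has Krull dimension $\leqslant \dim Y$ and residue fields that are separable algebraic over residue fields of $Y$; hence it is étale bounded with the same constants as $Y$. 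Consequently $W_{\bar y}$ is finitely presented over an étale bounded scheme with fibre dimension $\leqslant d$, so it is again étale bounded, with constants independent of $\bar y$. It therefore suffices to show that an étale bounded scheme $W$ has finite étale cohomological dimension with $\Z$-coefficients, with a bound depending only on its étale-bounding constants.

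For torsion coefficients this is classical: a qcqs scheme of finite Krull dimension all of whose residue fields have bounded $p$-cohomological dimension has finite $p$-cohomological dimension (\cite[Exposé X]{sga4}; the qcqs case follows by a limit argument over Noetherian approximations). To pass to $\Z$, apply $R\Gamma_\et(W,-)$ to the short exact sequence $0\to \Z\to \Q\to \Q/\Z\to 0$: the $\Q$-coefficient étale cohomology of $W$ agrees with its Zariski cohomology, since a strictly henselian local ring has trivial étale fundamental group and hence no higher étale cohomology with uniquely divisible torsion-free coefficients, so it vanishes above $\dim W$; and the $\Q/\Z$-coefficient cohomological dimension equals $\sup_p \mathrm{cd}_p(W)$. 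Hence $\mathrm{cd}_\Z(W)\leqslant \max(\dim W,\ \sup_p \mathrm{cd}_p(W)+1)<+\infty$. Applying this to $W=W_{\bar y}$ with the uniform constants found above yields a bound on the cohomological dimension of $g_*$ independent of $Y$, which is exactly the hypothesis of \Cref{prop:big-proet-f_*-compatible}. The main obstacle is the bookkeeping of the last two paragraphs: one must check that the torsion-coefficient cohomological dimension bound is genuinely uniform over all the strict henselizations $\mathcal O^{\mathrm{sh}}_{Y,\bar y}$, and that it survives the passage from $\Q/\Z$ to $\Z$; the remaining steps are a routine propagation of finiteness along finitely presented and smooth morphisms.
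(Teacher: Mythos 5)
Your overall strategy (bound the cohomological dimension of the fibres over geometric points, and then propagate finiteness from étale-boundedness of $S$) is reasonable, but the writeup has a genuine gap in the final step. The hypothesis in \Cref{prop:big-proet-f_*-compatible} asks that $g_*\colon\Sh((X\times_S Y)_\et,\Z)\to\Sh(Y_\et,\Z)$ have finite cohomological dimension as a functor, i.e.\ $R^q g_*\fcal=0$ for $q\gg 0$ uniformly over \emph{all} abelian sheaves $\fcal$ on $(X\times_S Y)_\et$. Your ``torsion to $\Z$'' argument via $0\to\Z\to\Q\to\Q/\Z\to 0$ only controls $H^q(W_{\bar y},\Z)$ for the constant sheaf $\Z$. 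For an arbitrary abelian sheaf one must filter $\fcal$ by its torsion subsheaf and a torsion-free quotient, treat the torsion part by primary decomposition and filtered colimits (reducing to $\mathrm{cd}_p$), and treat the torsion-free part by comparing $\fcal$ with $\fcal\otimes\Q$ (whose quotient is again torsion). This is not hard, but your argument as written does not supply it. Relatedly, the comparison of $\Q$-coefficient étale cohomology with Zariski cohomology requires more than the observation that strictly henselian local rings have trivial $\pi_1$: the stalks of $R^q\epsilon_*$ for $\epsilon\colon W_\et\to W_{\mathrm{Zar}}$ live over ordinary (not strictly henselian) local rings, and the vanishing for uniquely divisible sheaves there is a Galois-descent/profinite-group-cohomology statement. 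None of this is fatal, but it is not the ``routine propagation'' your last paragraph suggests.

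The paper's proof is shorter and avoids both issues. Rather than computing stalks, it cites a uniform cohomological-dimension bound for étale bounded qcqs schemes directly (\cite[Corollary 3.29]{MR4296353}), which already applies to arbitrary abelian sheaves and yields $\mathrm{DimCoh}(U)\leqslant N$ uniformly over all étale $U\to X$. It then uses the presheaf description of $R^i f_*$ from \cite[\href{https://stacks.math.columbia.edu/tag/072W}{Tag 072W}]{stacks-project} --- $R^i f_*\fcal$ is the sheafification of $V\mapsto H^i_\et(f^{-1}(V),\fcal)$ --- so the uniform bound immediately gives $R^i f_*=0$ for $i>N$. Finally, since \'etale-boundedness is closed under finite presentation (\cite[Lemma 2.24]{MattisUnstable}), one need not carry the auxiliary smooth $Y$ (your $Y$, the paper's $X$) through the argument at all: it suffices to treat the map $f$ itself between étale bounded schemes. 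If you want to make your stalk-based argument airtight, the torsion/torsion-free filtration of an arbitrary abelian sheaf is the ingredient to supply, after which you essentially reprove the cited Corollary 3.29.
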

\begin{proof}
	First note that if $T\to S$ is any finitely presented map, 
	then $T$ is also étale bounded by \cite[Lemma 2.24]{MattisUnstable} 
	(the statement is for smooth morphisms but the proof works for any finite presentation morphism). In particular it suffices to prove that the functor 
	$Rf_*$ has finite cohomological dimension.
	Also, any étale bounded qcqs scheme of finite dimension has finite étale cohomological 
	dimension by \cite[Corollary 3.29]{MR4296353}, even better, for any $U\to X$ étale, we have  $\mathrm{DimCoh}(U)\leqslant \sup_{x\in X}\sup_{p\in\mathbb{P}}\mathrm{cd}_p(\kappa(x)) + \mathrm{dim}(X)=:N$. 
	Finally, by \cite[\href{https://stacks.math.columbia.edu/tag/072W}{Lemma 072W}]{stacks-project}, if $A\in\Sh(X_\et,\Z)$, the sheaf 
	$R^if_*A$ is the sheafification of the presheaf that sends an étale scheme $V\to S$ to $\HH^i_\et(f^{-1}(V),A)$. In particular, for $i>N$, we have $R^if_*A=0$, finishing the proof.
\end{proof}

\begin{rem}
\Cref{thm:embedding_etale_motives} also works if $X$ is finite dimensional, and \'etale locally,
\[\sup_{x\in X,p\in\mc{P}} \mathrm{cd}_p(k(x))<+\infty\] with $\mc{P}$ the set of primes that are not invertible in $\Lambda$.
The proof works the same, replacing sheaves of spectra with sheaves of $\mc{P}$-local spectra.
Indeed, the analogue of \Cref{prop:hypothesis_big-proet-f_*-compatible_are_satisfied} is still a consequence of \cite[Corollary 3.29]{MR4296353} and the Postnikov-completeness of objects in $\Sh(X_{\et},\Sp_{\mathcal{(P)}})$ used in the proofs of \Cref{thm:embedding_etale_motives} and \Cref{prop:big-proet-f_*-compatible} follows from \cite[Theorem 4.28]{MR4296353}.
\end{rem}

\begin{cor}
	Over locally étale bounded qcqs schemes, étale motives have non-effective pro-étale hyperdescent. That is, if $U_\bullet \to X$ is a pro-étale hypercovering with $X$ and each $U_n$ locally étale bounded, and if $M,N\in \mathrm{SH}_\et(X)$, the map 
	\[\mathrm{map}_{\mathrm{SH}_\et(X)}(M,N)\to \lim_{[n]\in\Delta}\mathrm{map}_{\mathrm{SH}_\et(U_n)}(M_{\mid U_n},N_{\mid U_n})\] is an equivalence of spectra. For $M=\mathbb{S}$ the unit and $N= H\Z_\et(i)$ the twisted étale-local motivic Eilenberg-Mac Lane spectrum with $i\in\Z$, this means that étale motivic cohomology has pro-étale descent.
\end{cor}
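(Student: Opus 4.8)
The plan is to reduce the assertion to the embedding theorem \Cref{thm:embedding_etale_motives}, combined with the pro-\'etale hyperdescent of the \emph{big} pro-\'etale categories, which holds essentially by construction. Write $\nu^*\colon\SH_\et(-)\to\SH_\proet(-)$ for the canonical functor; it is natural in the scheme, as one sees on the generating objects $\mathbb{S}[T](n)$ (with $T$ smooth over the base) and since all functors in play preserve colimits. In particular, for the structure maps $U_n\to X$ and the simplicial maps of $U_\bullet$ one has $(\nu^*M)_{\mid U_n}\simeq\nu^*(M_{\mid U_n})$, compatibly with composition. Since $X$ and every $U_n$ is locally \'etale bounded, \Cref{thm:embedding_etale_motives} shows that $\nu^*$ is fully faithful over $X$ and over each $U_n$, so that $\map_{\SH_\et(X)}(M,N)\simeq\map_{\SH_\proet(X)}(\nu^*M,\nu^*N)$ and, for each $n$, $\map_{\SH_\et(U_n)}(M_{\mid U_n},N_{\mid U_n})\simeq\map_{\SH_\proet(U_n)}\big((\nu^*M)_{\mid U_n},(\nu^*N)_{\mid U_n}\big)$, naturally in $[n]\in\Delta$. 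Thus it suffices to prove the analogous descent statement with $\SH_\proet$ in place of $\SH_\et$.

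By \Cref{def:SmallSH}, $\SH_\proet(Y)$ is a full subcategory of $L_{\A^1}\Sh^{\mathbb{P}^1}_\proet(Y)$, compatibly with pullbacks, so mapping spectra in $\SH_\proet(Y)$ coincide with those in $L_{\A^1}\Sh^{\mathbb{P}^1}_\proet(Y)$, and moreover $(\nu^*M)_{\mid U_n}$ already lies in $\SH_\proet(U_n)$. Hence it is enough to show that $Y\mapsto L_{\A^1}\Sh^{\mathbb{P}^1}_\proet(Y)$, viewed through pullbacks, satisfies pro-\'etale hyperdescent; granting this, the mapping spectra in the limit diagram compute $\map_{\SH_\proet(X)}(\nu^*M,\nu^*N)$, and feeding in the equivalences of the first paragraph gives exactly the claim. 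Now $Y\mapsto\Sh_\proet(\Sch_Y)$ satisfies pro-\'etale hyperdescent by construction: we work with hypersheaves, so a pro-\'etale hypercover $U_\bullet\to X$ of schemes is a hypercover in the $\infty$-topos $\Sh_\proet(\Sch_X)$ (its matching objects, being iterated fibre products of the $U_k$ over $X$, are representable, and pro-\'etale covers are effective epimorphisms there), while the slice of $\Sh_\proet(\Sch_X)$ over the representable $U_n$ is $\Sh_\proet(\Sch_{U_n})$; hence $\Sh_\proet(\Sch_X)\simeq\lim_{[n]\in\Delta}\Sh_\proet(\Sch_{U_n})$. Passing to pointed objects, to the $\A^1$-localization, and to the $\otimes$-inversion of $\mathbb{S}^{2,1}\simeq(\mathbb{P}^1,*)$ is carried out uniformly over the base --- both $\A^1$ and $(\mathbb{P}^1,*)$ are pulled back from $\Spec(\Z)$ --- so the limit description is preserved, exactly as in the \'etale case. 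Taking $M=\mathbb{S}$ and $N=H\Z_\et(i)$ and using the identification of $\map_{\SH_\et(Y)}(\mathbb{S},H\Z_\et(i))$ with \'etale motivic cohomology $R\Gamma_\et(Y,\Z(i))$ yields the final sentence.

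The step I expect to require the most care --- though it is routine and exactly parallel to the \'etale case --- is the stability of the $\A^1$-localization (and $\mathbb{P}^1$-inversion) under pro-\'etale hyperdescent on the big categories, concretely the fact that pullback along an arbitrary morphism of schemes preserves $\A^1$-local objects. This follows because $f^*$ on $\Sh_\proet(\Sch_{(-)})$ admits a left adjoint $f_!$ sending $\A^1_Y\times_Y T\to T$ to the $\A^1$-equivalence $\A^1_X\times_X T\to T$; since the $\A^1$-equivalences are generated under colimits by such maps, $f^*$ preserves $\A^1$-local objects, and it is moreover conservative when $f$ is a pro-\'etale cover, which is what lets one identify the $\A^1$-local objects over $X$ with the compatible systems of $\A^1$-local objects over the $U_n$.
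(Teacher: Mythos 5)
Your proof is correct and takes essentially the same route as the paper's: the paper's argument is the one-line observation that descent holds in $\SH_\proet$ by construction, combined with the full faithfulness of $\SH_\et \to \SH_\proet$ from \Cref{thm:embedding_etale_motives}. You have simply spelled out why $\SH_\proet$ has pro-\'etale hyperdescent (naturality of $\nu^*$, passing through the ambient category $L_{\A^1}\Sh^{\mathbb P^1}_\proet$, hyperdescent for big pro-\'etale hypersheaves and stability of the localizations under pullback), which the paper leaves implicit.
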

\begin{proof}
	This is true in $\mathrm{SH}_\proet$ by pro-étale descent, thus \Cref{thm:embedding_etale_motives} implies the result.
\end{proof}

\section{The rigidity theorem.}\label{sec:rigidity}
The main goal of this section is to prove a version of the \emph{rigidity theorem} in our context, see \Cref{thm:solid_rigidity_V1}.
In \Cref{sec:solidmotives}, we define categories of \emph{solid motives} by applying the abstract solidification procedure of \Cref{sec:solidcats} to categories of pro-\'etale motives.
We prove that solid pro-\'etale sheaves embed fully faithfully into these categories in \Cref{sec:embeddingsolid} and prove a rigidity theorem with torsion coefficients \Cref{prop:rig-torsion}.
Finally in \Cref{sec:solidrigidity}, we prove a version of the rigidity theorem with $\ZhatP$-coefficients.
In order to do so, we need to pass to a further localization of the categories introduced in \Cref{sec:solidmotives}, see \Cref{def:effectivesolidmotives}.
As a consequence of the rigidity theorem \Cref{thm:solid_rigidity_V1}, it follows that the categories solid pro-\'etale sheaves introduced in \Cref{sec:solid} have the six operations.

In this section, we fix a solid $\ZhatP$-algebra $\Lambda$, where $\mathcal{P}$ is a set of prime numbers, and a qcqs scheme $X$.
\subsection{Solid motives.}\label{sec:solidmotives}

\begin{defi}\label{def:cond_cat_usual}
We define condensed presentably monoidal $\infty$-categories by \[\underline{\D}_\proet(\WSm_X,\Lambda)\colon S \mapsto \D_\proet(\WSm_{X\times S},\Lambda)\]
\[\underline{\D}^{\A^1}_\proet(\WSm_X,\Lambda)\colon S \mapsto \D^{\A^1}_\proet(\WSm_{X\times S},\Lambda)\]
\[\underline{\DM}_\proet(X,\Lambda)\colon S \mapsto \DM_\proet(X\times S,\Lambda)\]
The $\infty$-category of \emph{solid $\A^1$-invariant sheaves} over $X$ is then defined as
	\[\D^{\A^1}(\WSm_X,\Lambda)^\blacksquare\coloneqq \underline{\D}^{\A^1}_\proet(\WSm_X,\Lambda)^\blacksquare(*).\]
We also define a Tate-stable version, as
	\[\D^{\A^1,\Gm}(\WSm_X,\Lambda)^\blacksquare\coloneqq \underline{\DM}_\proet(X,\Lambda)^\blacksquare(*).\]
	Finally, we set 	\[\D(\WSm_X,\Lambda)^\blacksquare\coloneqq \underline{\D}_\proet(\WSm_X,\Lambda)^\blacksquare(*).\]
\end{defi}
\begin{rem}
	All of the above categories coincide with $\Lambda$-modules in the analogous categories with $\widehat{\Z}_\mathcal{P}$-coefficients.
\end{rem}

\begin{thm}
	The functor $\D^{\A^1}(\WSm_{(-)},\Lambda)^\blacksquare \colon \Sch^\op \to \CAlg(\PrL)$ is an unstable qcqs motivic coefficient system.
	More explicitly we have the following properties
	\begin{enumerate}
		\item For every weakly smooth map $ f \colon X \to Y$ the functor $f^*$ has a $\D^{\A^1}(\WSm_{Y},\Lambda)^\blacksquare$-linear left adjoint that is compatible with base change.
		\item For any scheme $X$ the functor $\pr_1^* \colon \D^{\A^1}(\WSm_{X},\Lambda)^\blacksquare \to \D^{\A^1}(\WSm_{X \times \A^1},\Lambda)^\blacksquare$ is fully faithful.
		\item For any closed immersion $i \colon Z \to X$ with qcqs open complement $j \colon U \to X$, the canonical square
\[\begin{tikzcd}
	{\D^{\A^1}(\WSm_{U},\Lambda)^\blacksquare} & {\D^{\A^1}(\WSm_{X},\Lambda)^\blacksquare} \\
	\ast & {\D^{\A^1}(\WSm_{Z},\Lambda)^\blacksquare}
	\arrow["{j_\sharp}", from=1-1, to=1-2]
	\arrow[from=1-1, to=2-1]
	\arrow["{i^*}", from=1-2, to=2-2]
	\arrow[from=2-1, to=2-2]
\end{tikzcd}\]
	in $\PrL$ is cocartesian.
	\end{enumerate}
	The same is true for $\D^{\A^1,\Gm}(\WSm_{(-)},\Lambda)^\blacksquare$. 
\end{thm}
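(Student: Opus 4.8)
The plan is to verify all the required properties at the level of the condensed category $\underline{\D}^{\A^1}_\proet(\WSm_{(-)},\Lambda)\colon \Sch^\op \to \CAlg(\PrL_\cond)$ and then transport them along the solidification functor $(-)^\blacksquare = (-)\otimes^\cond_{\underline{\Mod}_\Lambda}\underline{\mathrm{Solid}}_\Lambda$ followed by global sections $\Gamma = (-)(\ast)$. This suffices because $\D^{\A^1}(\WSm_X,\Lambda)^\blacksquare = \underline{\D}^{\A^1}_\proet(\WSm_X,\Lambda)^\blacksquare(\ast)$ by \Cref{def:cond_cat_usual}. The two transport mechanisms I would use are: $(-)^\blacksquare$ preserves colimits (being a left adjoint on $\Mod_{\underline{\Mod}_\Lambda}(\PrL_\cond)$) and, by the evident relative versions of \Cref{lem:tensoring_adjoints} and \Cref{lem:tensoring_adjoints_fully_faithful} (proved exactly as the absolute ones), carries an adjunction of $\underline{\Mod}_\Lambda$-linear condensed functors to an adjunction and preserves the properties ``the unit is an equivalence'' and ``the counit is an equivalence''; and $\Gamma$ preserves colimits by \Cref{lem:colimits_in_Prlcond}, preserves adjunctions, and sends a pointwise fully faithful condensed functor to a fully faithful functor.

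First I would check that $\underline{\D}^{\A^1}_\proet(\WSm_{(-)},\Lambda)$ satisfies condensed analogues of the axioms. At each profinite set $S$, the category $\D^{\A^1}_\proet(\WSm_{X\times S},\Lambda) = \Mod_\Lambda(\SH^{S^1}_\proet(X\times S))$ enjoys, by \Cref{sec:proetmotives}: for weakly smooth $f$, the adjunction $f_\sharp\dashv f^*$ with the weakly smooth projection formula and weakly smooth base change; $\A^1$-invariance, i.e. $\pr_1^*$ fully faithful, since $\SH^{S^1}_\proet$ is built as a category of $\A^1$-local objects; and the $S^1$-stable localization property of \Cref{cor:pointedloc}, noting that $U\times S\to X\times S$ is a qcqs open immersion (a cofiltered limit with affine transition maps of finite coproducts of the qcqs open immersion $U\to X$), together with full faithfulness of $i_*$, $j_\sharp$, $j_*$ and the vanishing $j^*i_*\simeq 0$. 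The condensed upgrades are then formal: the adjointability squares exhibiting $f_\sharp$ as a condensed left adjoint come from weakly smooth base change exactly as in \Cref{lem:f-sharp-is-condensed}; $\pr_1^*$, $i_*$, $j_\sharp$, $j_*$ are condensed fully faithful because they are so pointwise; the $\underline{\D}^{\A^1}_\proet(\WSm_Y,\Lambda)$-linearity of $f_\sharp$ and the base-change compatibility are bifunctoriality of $-\otimes^\cond-$; and the localization square is a pushout in $\Mod_{\underline{\Mod}_\Lambda}(\PrL_\cond)$ because it is one pointwise and, by the description of $\PrL_\cond$ in \cite{MWPresentable}, such pushouts of presentable condensed categories along condensed left adjoints with the requisite pointwise adjointability are computed pointwise.

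Then I would apply $(-)^\blacksquare$ and $\Gamma$. For axiom 1, the relative \Cref{lem:tensoring_adjoints} produces a condensed adjunction $f_\sharp^\blacksquare\dashv f^{*,\blacksquare}$ with $f^{*,\blacksquare}$ linear over $\underline{\D}^{\A^1}_\proet(\WSm_Y,\Lambda)^\blacksquare$, hence over $\D^{\A^1}(\WSm_Y,\Lambda)^\blacksquare$ after $\Gamma$; base-change compatibility, being an equivalence between composites of these functors, is preserved. For axiom 2, ``$\pr_1^*$ fully faithful'' means $\pr_{1\sharp}\pr_1^*\simeq\id$, so applying the functor $(-)^\blacksquare$ (which sends $\id$ to $\id$) gives $\pr_{1\sharp}^\blacksquare\pr_1^{*,\blacksquare}\simeq\id$; combined with $\pr_{1\sharp}^\blacksquare\dashv\pr_1^{*,\blacksquare}$ this forces $\pr_1^{*,\blacksquare}$ fully faithful, a property $\Gamma$ preserves. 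For axiom 3, $(-)^\blacksquare$ and $\Gamma$ both preserve the localization pushout; $i_*^\blacksquare$, $j_\sharp^\blacksquare$, $j_*^\blacksquare$ remain fully faithful and $j^{*,\blacksquare}i_*^\blacksquare\simeq 0$ by the same argument as for $\pr_1^*$, so the square of $\D^{\A^1}(\WSm_{(-)},\Lambda)^\blacksquare$ is cocartesian. Nisnevich descent (condition (3) of \Cref{def:unstableqcqssys}) is inherited from the effective pro-étale descent of $\SH^{S^1}_\proet$, which is preserved by the colimit-preserving functors $(-)^\blacksquare$ and $\Gamma$. The case of $\D^{\A^1,\Gm}(\WSm_{(-)},\Lambda)^\blacksquare = \underline{\DM}_\proet(X,\Lambda)^\blacksquare(\ast)$ is proved identically, with $\SH^{S^1}_\proet$ replaced by $\SH_\proet$ and \Cref{cor:pointedloc} by \Cref{cor:loc}.

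The main difficulty will be axiom 3: one must know that the localization square is a colimit in $\PrL_\cond$ (equivalently in $\Mod_{\underline{\Mod}_\Lambda}(\PrL_\cond)$), not merely pointwise, so that it survives solidification. I would either justify that such ``geometric'' pushouts in $\PrL_\cond$ are computed pointwise, or --- more robustly --- recast localization entirely as recollement data (joint conservativity of $i^*$, $j^*$; full faithfulness of $i_*$, $j_\sharp$, $j_*$; $j^*i_*\simeq 0$), each ingredient of which transports transparently through $(-)^\blacksquare$ and $\Gamma$ by the elementary manipulations above. The other points needing a little care are the relative versions of \Cref{lem:tensoring_adjoints} and \Cref{lem:tensoring_adjoints_fully_faithful}, which are routine given the absolute ones, and the fact that $-\otimes^\cond_{\underline{\Mod}_\Lambda}\underline{\mathrm{Solid}}_\Lambda$ preserves the $\CAlg$-structure so that $\D^{\A^1}(\WSm_{(-)},\Lambda)^\blacksquare$ indeed lands in $\CAlg(\PrL)$, which follows from symmetric monoidality of $-\otimes^\cond-$.
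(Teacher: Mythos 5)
Your proposal is correct and follows essentially the same strategy as the paper's proof: verify the axioms at the level of $\PrL_{\cond}$ and transport them through $- \otimes^{\cond} \underline{\mathrm{Solid}}_\Lambda$ and $\Gamma$, invoking \Cref{lem:tensoring_adjoints}, \Cref{rem:2-func-of-tensor} and \Cref{lem:colimits_in_Prlcond}. Your additional observation about point 3 is well taken: the paper asserts that the localization square is already cocartesian in $\PrL_{\cond}$ by citing \Cref{thm:loc} without commenting on why a pointwise-cocartesian square is cocartesian internally, and your recollement-style reformulation (joint conservativity of $i^*, j^*$, full faithfulness of $i_*, j_\sharp, j_*$, vanishing of $j^*i_*$, all of which transport transparently through the two functors) is the clean way to close that gap; you also explicitly address Nisnevich descent, which the paper's proof leaves implicit.
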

\begin{proof}
	Let us start by proving 1.
	Extending the notations of \Cref{def:cond_cat_usual} to arbitrary condensed $\mathbb{E}_\infty$-algebra, we see that for a weakly smooth map $f \colon X \to S$ we have a condensed adjunction 
	\[
		{f_\sharp}\colon {\underline{\D}^{\A^1}(\WSm_{X},\Sp)}\leftrightarrows{\underline{\D}^{\A^1}(\WSm_{S},\Sp)}\colon {f^*}
	\]
	in $\PrL_{\cond}$ and the left adjoint $f_\sharp$ is compatible with base change.
	Using \Cref{lem:tensoring_adjoints}, it follows that the functor $f^* \colon \D^{\A^1}(\WSm_{S},\Lambda)^\blacksquare\to \D^{\A^1}(\WSm_{X},\Lambda)^\blacksquare$ has a left adjoint given by $f_\sharp^\blacksquare = (f_\sharp \otimes^{\cond} \underline{\mathrm{Solid}}_\Lambda)(*)$.
	Thus, compatibility with base change follows immediately from the bifunctoriality of $-\otimes - $.
	To see that $f_\sharp^\blacksquare$ satisfies the projection formula, we note that by \cite[Proposition A]{zbMATH07785229} 
	there is a canonical comparison map
	\[
		f_\sharp^\blacksquare(F \otimes f^* G) \to f_\sharp^\blacksquare(F) \otimes G
	\]
	and it just remains to show that it is an equivalence.
	For this, we note that all the functors in question are given by applying their non-solid versions and then solidifying and thus the claim follows from the projection formula for $\D^{\A^1}(\WSm_{X},\Sp)$.

	We now prove 2.
	Similarly as before the adjunction $p_\sharp\dashv p^*$ lifts to an adjunction in $\PrL_{\cond}$ where the right adjoint is fully faithful.
	Thus the same is still true after applying $- \otimes^{\cond}\underline{\mathrm{Solid}}_\Lambda $ using \Cref{rem:2-func-of-tensor}.

	Finally for 3., we note that we have an analogous cocartesian square in $\PrL_{\cond}$ before applying $- \otimes^{\cond}\mathrm{Solid}_\Lambda$ by \Cref{thm:loc}.
	Since the latter functor is a left adjoint it preserves cocartesian squares and the claim follows using \Cref{lem:colimits_in_Prlcond}.
\end{proof}

\subsection{Embedding solid sheaves into solid motives}\label{sec:embeddingsolid}
We now prove that solid sheaves embed fully faithfully into the categories of solid motives that we consider.
\begin{lem}
	\label{lem:small-to-big-preserves-limits}
	Consider the fully faithful functor of condensed categories
	\[
		\rho_{\sharp} \colon \underline{\Sh}(X_{\proet}) \to \underline{\Sh}_{\proet}(\WSm_X).
	\]
	Then this functor is a condensed right adjoint.
\end{lem}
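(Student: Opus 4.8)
The plan is to verify the two conditions of \Cref{prop:cond_left_adj} which characterise condensed right adjoints: namely that for every profinite set $S$ the functor
\[
	\rho_\sharp(S)\colon \Sh((X\times S)_\proet)\to \Sh_\proet(\WSm_{X\times S})
\]
admits a left adjoint, and that for every map $s\colon S\to T$ of profinite sets the naturality square of $\rho_\sharp$ — with horizontal arrows the functors $\rho_\sharp$ and vertical arrows the transition functors $s^*$, which commutes since $\rho_\sharp$ is given as a condensed functor — is horizontally left adjointable.

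For the first condition, fix $S$ and drop it from the notation. The functor $\rho_\sharp$ is the sheafified left Kan extension along the inclusion of sites $\iota\colon X_\proet\hookrightarrow\WSm_X$; since a covering family in $\WSm_X$ of an object $U$ of $X_\proet$ consists of weakly étale maps to $U$, hence is a covering family in $X_\proet$, the functor $\iota$ is continuous and cocontinuous, so that $\rho_\sharp$ is a fully faithful left adjoint of the restriction functor $\rho^*$, itself sitting in a string $\rho_\sharp\dashv\rho^*\dashv\rho_*$. In particular $\rho_\sharp$ is accessible, and by the adjoint functor theorem for presentable $\infty$-categories it admits a left adjoint as soon as it preserves all small limits. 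One first observes that $\rho_\sharp$ is left exact: the comma categories $(\iota\downarrow Y)$ of pairs $(U,Y\to U)$ with $U\in X_\proet$ are cofiltered, being closed under the fibre products $U_1\times_X U_2$ and the evident equalisers (which again lie in $X_\proet$), so $\rho_\sharp$ is the inverse image of a geometric morphism $\Sh(X_\proet)\to\Sh_\proet(\WSm_X)$. It then remains to show that $\rho_\sharp$ preserves small products, and for this one makes its essential image explicit: using \Cref{lem:locWL} (and pro-étale descent) one reduces to evaluating $\rho_\sharp\mathcal G$ on weakly smooth $X$-schemes $Y$ which are weakly étale over some $\A^n_X$, and for such a $Y$ one constructs a ``weakly étale part'' $\pi(Y)\to X$ in $X_\proet$ — obtained by collapsing the affine-space directions — through which every $X$-morphism from $Y$ to a weakly étale $X$-scheme factors uniquely, so that $\rho_\sharp\mathcal G(Y)\simeq\mathcal G(\pi(Y))$. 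Being evaluation at a fixed object of $X_\proet$, this preserves all small limits in $\mathcal G$; combined with left exactness this shows $\rho_\sharp$ preserves all small limits, hence admits a left adjoint $L$, with $L[Y]=[\pi(Y)]$ on representables.

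For the second condition, the square commutes because $\rho_\sharp$ is a condensed functor (equivalently, left Kan extension along $(-)_\proet\hookrightarrow\WSm_{(-)}$ is compatible with base change along the integral map $X\times S\to X\times T$). Horizontal left adjointability then amounts exactly to the statement that the left adjoints $L$ assemble into a condensed functor, i.e. that $L_S\circ s^*\simeq s^*\circ L_T$. Using the description $L[Y]=[\pi(Y)]$ from the previous step, this reduces to the evident compatibility of the formation of $\pi(Y)$ with base change, namely $\pi\bigl(Y\times_{X\times T}(X\times S)\bigr)\simeq \pi(Y)\times_{X\times T}(X\times S)$; alternatively one passes to right adjoints and deduces it from integral base change (\cite[Proposition~6.18]{bastietal}) exactly as in the proof of \Cref{lem:f-sharp-is-condensed}.

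The main obstacle is the preservation of all small limits by $\rho_\sharp$ — i.e. the assertion that ``small to big preserves limits'' — and concretely the construction of the weakly étale part $\pi(Y)$ of a weakly smooth scheme together with the identification $\rho_\sharp\mathcal G(Y)\simeq\mathcal G(\pi(Y))$; the cofiltered comma category argument only yields left exactness, so genuine input beyond finite limits is needed here. Granting this, the existence of the pointwise left adjoints and the adjointability squares are formal, the latter being parallel to the companion statement \Cref{lem:f-sharp-is-condensed}.
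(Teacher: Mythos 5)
The proposal correctly identifies that one must verify the two conditions of \Cref{prop:cond_left_adj}, and for the adjointability condition the alternative route via integral base change is the same as the paper's argument. The problem lies in the verification of the first condition.

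You assert that for a weakly smooth $Y$ which is weakly \'etale over some $\A^n_X$ there is a ``weakly \'etale part'' $\pi(Y)\in X_\proet$ with the universal property of a left adjoint to $X_\proet\hookrightarrow\WSm_X$, and that consequently $\rho_\sharp\mathcal G(Y)\simeq\mathcal G(\pi(Y))$. Even granting the (unestablished) existence of $\pi$, the displayed formula is false. The functor $\rho_\sharp$ involves sheafification for the pro-\'etale topology on the \emph{big} site, so $\rho_\sharp\mathcal G(Y)$ records the pro-\'etale cohomology of $Y$ itself, whereas $\mathcal G(\pi(Y))$ cannot. Concretely, take $X=\Spec\C$, $Y=\Gm_{\C}$, so $\pi(Y)=\Spec\C$, and $\mathcal G=B\Z_\ell$. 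Then $\pi_0\bigl(\rho_\sharp\mathcal G(\Gm_\C)\bigr)=\HH^1_{\proet}(\Gm_\C,\Z_\ell)=\Z_\ell\neq 0$, while $\pi_0\bigl(\mathcal G(\Spec\C)\bigr)=\HH^1_{\proet}(\Spec\C,\Z_\ell)=0$. Equivalently, if a left adjoint $L$ of $\rho_\sharp$ exists then $L[Y]\simeq f_\sharp\un_Y$ computed on the small pro-\'etale sites, which is typically a non-representable sheaf; it cannot be of the form $[\pi(Y)]$.

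The mechanism the paper uses is much cruder: instead of evaluating $\rho_\sharp\mathcal G$ at a single $Y$, one restricts $\rho_\sharp\mathcal G$ to the small pro-\'etale topos of each weakly smooth $f\colon Y\to X$ and observes the identity $\mathrm{res}_Y\circ\rho_\sharp\simeq f^*$. Since the $\mathrm{res}_Y$ are jointly conservative and preserve limits, and $f^*$ preserves limits by \Cref{lem:pullbackslimits}, $\rho_\sharp$ preserves limits pointwise; the adjointability square can likewise be checked after applying $\mathrm{res}_Y$, where it reduces to integral base change exactly as in \Cref{lem:f-sharp-is-condensed}. The rest of your argument (cofilteredness of the comma categories, hence left exactness; passing to right adjoints for the second condition) is fine but secondary to this missing observation.
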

\begin{proof} By \Cref{prop:cond_left_adj}, we have to show that for any profinite set $K$, the functor \[\rho_{\sharp} \colon \Sh((X\times K)_{\proet}) \to \Sh_{\proet}(\WSm_{X\times K})\] preserves all limits and that for any map $\alpha\colon K\to K'$ of profinite sets, the commutative square
	\[\begin{tikzcd}
	{{\Sh}((X \times K')_\proet)} & {\Sh_{\proet}(\WSm_{X \times K'})} \\
	{{\Sh}((X \times K)_\proet) } & {\Sh_{\proet}(\WSm_{X \times K})}
	\arrow["{\rho_\sharp}", from=1-1, to=1-2]
	\arrow["{\alpha^*}", from=1-1, to=2-1]
	\arrow["{\rho_\sharp}"', from=2-1, to=2-2]
	\arrow["{\alpha^*}"', from=1-2, to=2-2]
\end{tikzcd}\]
is horizontally left adjointable, or equivalently vertically right adjointable.

	For this we consider the restriction functors to any small pro\'etale site $\Sh(Y_\proet)$ for $f \colon Y \to X \times K$ weakly smooth and consider the composite squares
\[\begin{tikzcd}
	{{\Sh}((X \times K')_{\proet})} & {\Sh_{\proet}(\WSm_{X \times K'})} & {{\Sh}((Y')_{\proet})} \\
	{{\Sh}((X \times K)_{\proet})} & {\Sh_{\proet}(\WSm_{X \times K})} & {{\Sh}(Y_{\proet})}
	\arrow["{\rho_\sharp}", from=1-1, to=1-2]
	\arrow["\mathrm{res}_{Y'}", from=1-2, to=1-3]
	\arrow["{\alpha^*}", from=1-1, to=2-1]
	\arrow["{\rho_\sharp}"', from=2-1, to=2-2]
	\arrow["{\alpha^*}", from=1-2, to=2-2]
	\arrow["{\mathrm{res}_Y}"', from=2-2, to=2-3]
	\arrow["{\alpha_Y^*}"', from=1-3, to=2-3]
\end{tikzcd}\]
	where we write $Y' = Y \times_{(X \times K')} (X \times K)$.
	Observe that the collection of functors $\mathrm{res}_Y$ for all $Y$ is jointly conservative.
	Since $\mathrm{res}_Y$ commutes with all limits and we have $ \mathrm{res}_Y \circ \rho_\sharp \simeq f^*$, it follows that $\rho_\sharp$ commutes with limits pointwise by \Cref{lem:pullbackslimits}.
	Similarly, it follows that we may check vertical right adjointability of left square above after applying  $\mathrm{res}_Y$ for all $Y$.
	Now note that the right square is vertically right adjointable as it is clearly horizontally left adjointable.
	Thus it suffices to show vertical right adjointability of the composite square.
	Via the identification $ \mathrm{res}_Y \circ \rho_\sharp \simeq f^*$, this follows from integral base change \cite[Proposition 6.18]{bastietal}, as in the proof of \Cref{lem:f-sharp-is-condensed}.
\end{proof}

\begin{cor}
	\label{cor:small-to-big-tensored}
	For any $ \mathcal{C} \in \PrL_{\cond} $, the functor $$\underline{\Sh}(X_{\proet}) \otimes^{\cond} \mathcal{C} \to \underline{\Sh}_{\proet}(\WSm_X) \otimes^{\cond} \mathcal{C}$$ is a fully faithful condensed right adjoint.
\end{cor}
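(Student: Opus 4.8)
The plan is to deduce this formally from \Cref{lem:small-to-big-preserves-limits} together with the behaviour of adjunctions under the condensed tensor product recorded in \Cref{lem:tensoring_adjoints} and \Cref{lem:tensoring_adjoints_fully_faithful}. First I would note that $\rho_\sharp \colon \underline{\Sh}(X_\proet) \to \underline{\Sh}_\proet(\WSm_X)$ is a morphism in $\PrL_\cond$: pointwise it is the left Kan extension along the inclusion of sites $(X\times S)_\proet \to \WSm_{X\times S}$, hence preserves colimits pointwise, and its source and target are presentable condensed categories. By \Cref{lem:small-to-big-preserves-limits} it is moreover fully faithful and a condensed right adjoint, so it admits a left adjoint $L \colon \underline{\Sh}_\proet(\WSm_X) \to \underline{\Sh}(X_\proet)$; since $L$ is a pointwise left adjoint it preserves colimits pointwise, so $L$ is again a morphism in $\PrL_\cond$. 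Thus we are in the situation $L \dashv \rho_\sharp$ with $\rho_\sharp$ fully faithful, i.e.\ $L$ is a condensed localization.

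Next I would feed this adjunction into the two tensoring lemmas, applied with $\mathcal{T} = \mathcal{C}$. Applying \Cref{lem:tensoring_adjoints} to the pair $L \dashv \rho_\sharp$ shows that $L \otimes^{\cond}\mathcal{C}$ is a left adjoint of $\rho_\sharp \otimes^{\cond}\mathcal{C}$, and $\rho_\sharp \otimes^{\cond}\mathcal{C}$ is precisely the functor $\underline{\Sh}(X_\proet)\otimes^{\cond}\mathcal{C} \to \underline{\Sh}_\proet(\WSm_X)\otimes^{\cond}\mathcal{C}$ of the statement; in particular it is a condensed right adjoint. Then applying \Cref{lem:tensoring_adjoints_fully_faithful} to the condensed localization $f = L$ (whose right adjoint $\rho_\sharp$ is fully faithful) shows that the right adjoint of $L \otimes^{\cond}\mathcal{C}$, namely $\rho_\sharp \otimes^{\cond}\mathcal{C}$, is fully faithful. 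Combining the two gives the assertion.

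The argument is essentially formal once \Cref{lem:small-to-big-preserves-limits} is in hand; the only points that require a little care are the bookkeeping of which of the two adjoint functors in $L \dashv \rho_\sharp$ plays the role of the localization and which plays the role of the fully faithful right adjoint when invoking \Cref{lem:tensoring_adjoints_fully_faithful}, and checking that $L$ — and not merely $\rho_\sharp$ — is a genuine morphism in $\PrL_\cond$, so that $L \otimes^{\cond}\mathcal{C}$ is defined and the lemmas apply.
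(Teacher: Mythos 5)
Your argument reaches the right conclusion with the same two ingredients the paper uses — the adjunction $L \dashv \rho_\sharp$ coming from \Cref{lem:small-to-big-preserves-limits} and the tensoring lemma \Cref{lem:tensoring_adjoints_fully_faithful} — but you introduce an unnecessary intermediate step that creates a small gap. You first invoke \Cref{lem:tensoring_adjoints} with $f = \rho_\sharp$ and $g = L$, which requires $\rho_\sharp$ itself to be a morphism of $\PrL_\cond$. Your justification for that (``pointwise cocontinuous'') is incomplete: by \Cref{prop:cond_left_adj}, being a condensed left adjoint also requires horizontal right adjointability of the restriction squares along maps of profinite sets, and you do not verify this. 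The paper avoids the question entirely: it applies \Cref{lem:tensoring_adjoints_fully_faithful} directly to $f = L$ (which is automatically in $\PrL_\cond$, since \Cref{lem:small-to-big-preserves-limits} says precisely that $\rho_\sharp$ is a condensed \emph{right} adjoint). That single lemma already produces the right adjoint $\rho_\sharp \otimes^\cond \mathcal{C}$ of $L \otimes^\cond \mathcal{C}$ and its fully faithfulness in one stroke, and — crucially — its proof constructs $g \otimes^\cond \mathcal{T}$ via the identification $\mathcal{C} \otimes^\cond \mathcal{T} \simeq \underline{\Fun}^{\cond,\mathrm{R}}(\mathcal{T}^\op,\mathcal{C})$ and postcomposition with $g$, so $g = \rho_\sharp$ is not required to lie in $\PrL_\cond$. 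Once you drop the detour through \Cref{lem:tensoring_adjoints}, the issue disappears and your proof collapses to the paper's. Your closing remark also misplaces the worry: the membership of $L$ in $\PrL_\cond$ is automatic from \Cref{lem:small-to-big-preserves-limits}; the nontrivial membership claim in your version of the argument is the one about $\rho_\sharp$, which is exactly the claim you should either verify or (better) avoid.
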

\begin{proof}
	By \Cref{lem:small-to-big-preserves-limits}, 
	the functor $\rho_\sharp$ has a  condensed left adjoint. The lemma then follows from \Cref{lem:tensoring_adjoints_fully_faithful}.
\end{proof}

\emptypar \label{rho_sharp_commutes_with_limits} As a particular case of the above corollary we obtain a fully faithful and  limit-preserving functor
\[
	\rho_\sharp^\blacksquare \colon \D(X,\Lambda)^\blacksquare \to \D(\WSm_X,\Lambda)^\blacksquare
\]
by letting $\mathcal{C}=\underline{\mathrm{Solid}}_\Lambda$ and evaluating at $*$.

\begin{rem}
	Note that the canonical square
\[\begin{tikzcd}
	{\D(\WSm_{X\times\mathbb{A}^1},\Lambda)^\blacksquare} & {\D(\WSm_{X\times\mathbb{A}^1},\Lambda)} \\
	{\D(\WSm_X,\Lambda)^\blacksquare} & {\D(\WSm_{X},\Lambda)}
	\arrow[from=1-1, to=1-2]
	\arrow["{\pr_1^*}", from=2-1, to=1-1]
	\arrow[from=2-1, to=2-2]
	\arrow["{\pr_1^*}"', from=2-2, to=1-2]
\end{tikzcd}\]
	where the horizontal maps are the inclusions given by \Cref{lem:tensoring_adjoints_fully_faithful} commutes.
	Indeed, the square obtained by taking left adjoints commutes, since $(\pr_1)_\sharp$ commutes with solidification.
	It follows that $F \in\D(\WSm_X,\Lambda)^\blacksquare$ is in $\D^{\A^1}(\WSm_X,\Lambda)^\blacksquare$ if and only if the canonical map $F(Y) \to F(Y \times \mathbb{A}^1)$ is an equivalence for any weakly smooth $Y$ over $X$.
\end{rem}

\begin{cor}\label{cor:A1invrho}
	Assume that all primes in $\mathcal{P}$ are invertible on $X$. Then, the fully faithful functor $\rho_\sharp^\blacksquare \colon \D(X,\Lambda)^\blacksquare \to \D(\WSm_X,\Lambda)^\blacksquare$ factors through the full subcategory $\D^{\A^1}(\WSm_X,\Lambda)^\blacksquare$.
\end{cor}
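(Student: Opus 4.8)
The plan is to verify, for every $F\in\D(X,\Lambda)^\blacksquare$, the criterion of the preceding remark: that the canonical map $(\rho_\sharp^\blacksquare F)(Y)\to(\rho_\sharp^\blacksquare F)(\A^1_Y)$ is an equivalence for every weakly smooth $g\colon Y\to X$, where $\A^1_Y\coloneqq Y\times_X\A^1_X$. The first step is to compute these sections. By \Cref{cor:small-to-big-tensored} and \Cref{lem:tensoring_adjoints} the functor $\rho_\sharp^\blacksquare$ is fully faithful with a left adjoint $\rho_\flat^\blacksquare$; unwinding the condensed constructions of \Cref{sec:solidcats}, together with the compatibility of $\rho_\sharp$ with pullbacks and with the functors $f_\sharp$ (\Cref{lem:f-sharp-is-condensed}, \Cref{lem:some-operations-on-Dsolid}) and with solidification, and using that $\rho_\sharp^\blacksquare$ is symmetric monoidal and sends the unit to the unit, I would establish the natural identification
\[
(\rho_\sharp^\blacksquare F)(Y)\;\simeq\;\Gamma\big(Y_\proet,\,g^*F\big),
\]
the pro-étale cohomology of the small site of $Y$ with coefficients in the complex $g^*F$, which is solid by \Cref{prop:pullback_of_solid_remains_solid}. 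Granting this and writing $\pi\colon\A^1_Y\to Y$ for the projection, the transition map above becomes $\Gamma(Y_\proet,g^*F)\to\Gamma(Y_\proet,\pi_*\pi^*(g^*F))$, so it suffices to prove that the unit $g^*F\to\pi_*\pi^*(g^*F)$ is an equivalence in $\D(Y_\proet,\Lambda)$. As every prime of $\mathcal{P}$ is again invertible on $Y$, the corollary is reduced to the following claim.

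\emph{Claim.} Let $Z$ be a qcqs scheme on which all primes of $\mathcal{P}$ are invertible, let $\pi\colon\A^1_Z\to Z$ be the projection, and let $G\in\D(Z,\Lambda)^\blacksquare$. Then the unit $G\to\pi_*\pi^*G$ is an equivalence.

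To prove the Claim I would first reduce to $\Lambda=\ZhatP$, as all functors involved are $\ZhatP$-linear and $\D(Z,\Lambda)^\blacksquare=\Mod_\Lambda(\D(Z,\ZhatP)^\blacksquare)$ with conservative forgetful functor commuting with $\pi^*$ and $\pi_*$ (cf. the proof of \Cref{prop:derivedSolid_Lambda}). Now $\pi^*$ is exact, preserves solidity (\Cref{prop:pullback_of_solid_remains_solid}) and all limits (\Cref{lem:pullbackslimits}), while $\pi_*$ preserves solidity (\Cref{lem:some-operations-on-Dsolid}), commutes with all limits, and commutes with filtered colimits of uniformly bounded-below complexes (using \cite[Lemma~5.3]{MR4609461}, since $\pi$ has finite relative pro-étale cohomological dimension for $\mathcal{P}$-torsion coefficients), exactly as $\pi_*$ is handled in the proofs of \Cref{lem:some-operations-on-Dsolid} and \Cref{prop:solid_BC}. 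Thus the full subcategory of $\D(Z,\ZhatP)^\blacksquare$ on which $G\to\pi_*\pi^*G$ is an equivalence is stable under shifts, extensions, all limits, and filtered colimits of uniformly bounded-below objects. Invoking left- and right-completeness of the solid $t$-structure (\Cref{prop:derivedSolid}), a Postnikov and spectral-sequence dévissage, and the description $\Sh(Z,\ZhatP)^\blacksquare\simeq\In(\mathrm{Pro}(\Sh^{\mathrm{cons}}_{\mathcal{P}}(Z_\et)))$ of \Cref{thm:solid} (every solid sheaf being a filtered colimit of cofiltered limits of $\mathcal{P}$-torsion constructible étale sheaves), this reduces the Claim to the case $G=\nu^*H$ for $H$ a constructible étale sheaf of $\Z/n\Z$-modules with $n\in\Pi\mathcal{P}$ (notation of \Cref{lem:ZhatconstrIslimTors}). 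In that case $\pi^*\nu^*H\simeq\nu^*\pi^*H$, and by the torsion comparison argument of \cite[Lemma~5.4.3]{zbMATH06479630} one gets $\pi_*^{\proet}\nu^*(\pi^*H)\simeq\nu^*\pi_*^{\et}(\pi^*H)$; finally $\pi_*^{\et}\pi^*H\simeq H$ by homotopy invariance of étale cohomology with torsion coefficients invertible on the base (\cite{sga4}). This proves the Claim, and with it the corollary.

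I expect the main obstacle to be the very first step: turning the abstract condensed definition of $\rho_\sharp^\blacksquare$ into the concrete formula $(\rho_\sharp^\blacksquare F)(Y)\simeq\Gamma(Y_\proet,g^*F)$, which requires carefully tracking how $\rho_\sharp^\blacksquare$, its left adjoint, the solidification functor, and the pullback and $f_\sharp$ functorialities interact through the machinery of \Cref{sec:solidcats}. Once that identification is available, the rest is a routine dévissage whose only nontrivial geometric input—homotopy invariance of torsion étale cohomology—is classical; the one point needing attention there is the finite-cohomological-dimension bookkeeping that licenses commuting $\pi_*$ past the filtered colimits in the $\In$-presentation of solid sheaves, which is dealt with exactly as in the base-change arguments of \Cref{sec:derivedsolid}.
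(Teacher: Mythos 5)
Your proof is correct and takes essentially the same route as the paper: both reduce to $\Lambda=\ZhatP$, identify $(\rho_\sharp^\blacksquare F)(Y)$ with $\Gamma(Y_\proet,g^*F)$ (the paper does this via the commutative square with the non-solid inclusion, which makes your flagged "main obstacle" unproblematic), and then dévissage through Postnikov completeness, passage to the heart, the $\In\mathrm{Pro}$-description from \Cref{thm:solid}, and finally homotopy invariance of torsion étale cohomology from SGA4. The only cosmetic difference is that you phrase the key reduction as the unit $G\to\pi_*\pi^*G$ being an equivalence at the sheaf level, while the paper works with global sections over each weakly smooth $Y$, but these are the same statement.
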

\begin{proof}
	We may assume that $\Lambda=\widehat{\Z}_\mathcal{P}$. Note that the square
\[\begin{tikzcd}
	{\D(X,{\widehat{\Z}_\mathcal{P}})^\blacksquare} & {\D(\WSm_X,{\widehat{\Z}_\mathcal{P}})^\blacksquare} \\
	{\D(X_{\proet},{\widehat{\Z}_\mathcal{P}})} & {\D(\WSm_X,{\widehat{\Z}_\mathcal{P}})}
	\arrow["{\rho_{\sharp}^{\blacksquare}}", from=1-1, to=1-2]
	\arrow[hook, from=1-1, to=2-1]
	\arrow[hook, from=1-2, to=2-2]
	\arrow["{\rho_\sharp}"', from=2-1, to=2-2]
\end{tikzcd}\]
	commutes, because it is obtained by tensoring an adjunction in $\PrL_{\cond}$ with $\underline{\mathrm{Solid}}_{\widehat{\Z}_\mathcal{P}}$ (see the proof of \Cref{LikeFS}).
	Thus we have to check that for $ F \in \D(X,{\widehat{\Z}_\mathcal{P}})^\blacksquare$ the canonical map
	\[
		\rho_\sharp(F)(Y) \to \rho_\sharp(F)(Y \times \A^1)
	\]
	is an equivalence for any $f \colon Y \to X \in \WSm_X$.
	This map identifies with the canonical map
	$ f^*F(Y) \to \pr_1^* f^*F(Y \times \A^1)$, as $f^*F$ is solid by \Cref{lem:pullback-preserves-solid}, we may assume that $Y=X$. As $\mathrm{pr}_1^*$ commutes with limits by \Cref{lem:f*-commutes-with-iL}, we can use Postnikov-completeness to assume that $F$ is bounded below, and we can then further reduce to $F$ being in the heart by a spectral sequence argument. Then, we can reduce to $F$ being in $\mathrm{Pro}(\Sh_\mc{P}^\mathrm{cons}(X_\et))$ by \Cref{thm:solid} and then in $\Sh_\mc{P}^\mathrm{cons}(X_\et)$ in which case it follows from \cite[Exposé~XV, Corollaire~2.2]{sga4}.
\end{proof}

\begin{thm}\label{thm:embedding_rigidity}
	Assume that all primes in $\mc{P}$ are invertible on $X$. Then, the functor
	\[\rho_\sharp^\blacksquare\colon \D(X,\Lambda)^\blacksquare\to\D^{\A^1}(\WSm_X,\Lambda)^\blacksquare\] of \Cref{cor:A1invrho} is fully faithful, commutes with all small colimits and limits, all pullbacks and with $f_\sharp$ for $f$ weakly étale.
%
\end{thm}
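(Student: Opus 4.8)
The functor $\rho_\sharp^\blacksquare$ in the statement is exactly the one produced by \Cref{cor:A1invrho}, and two of the four assertions are essentially already in place: \Cref{cor:small-to-big-tensored} and \Cref{lem:small-to-big-preserves-limits} show that $\rho_\sharp^\blacksquare\colon\D(X,\Lambda)^\blacksquare\to\D(\WSm_X,\Lambda)^\blacksquare$ is fully faithful and limit-preserving, and \Cref{cor:A1invrho} shows that it factors through the full subcategory $\D^{\A^1}(\WSm_X,\Lambda)^\blacksquare$, which by \Cref{lem:tensoring_adjoints_fully_faithful} is reflective, hence an inclusion that preserves and creates limits. So the corestricted functor is still fully faithful and still preserves limits, and it remains to prove compatibility with colimits, with $f_\sharp$ for $f$ weakly étale, and with arbitrary pullbacks.

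For colimits, I would argue that the condensed Kan-extension functor $\rho_\sharp\colon\underline{\Sh}(X_\proet)\to\underline{\Sh}_\proet(\WSm_X)$ is not only a condensed right adjoint (\Cref{lem:small-to-big-preserves-limits}) but also a morphism in $\PrL_\cond$: objectwise it is the left adjoint of restriction, and the Beck--Chevalley condition of \Cref{prop:cond_left_adj} against the transition functors holds by the same integral-base-change argument that underlies \Cref{lem:f-sharp-is-condensed}. Tensoring with $\underline{\mathrm{Solid}}_\Lambda$, using bifunctoriality of $-\otimes^{\cond}-$ and \Cref{lem:colimits_in_Prlcond}, the functor $\rho_\sharp^\blacksquare=(\rho_\sharp\otimes^{\cond}\underline{\mathrm{Solid}}_\Lambda)(\ast)\colon\D(X,\Lambda)^\blacksquare\to\D(\WSm_X,\Lambda)^\blacksquare$ is then a left adjoint, so it preserves colimits; since colimits in $\D^{\A^1}(\WSm_X,\Lambda)^\blacksquare$ are computed by applying $L_{\A^1}$ to colimits in $\D(\WSm_X,\Lambda)^\blacksquare$ and the essential image of $\rho_\sharp^\blacksquare$ consists of $\A^1$-invariant objects, on which $L_{\A^1}$ is the identity, this also gives colimit-preservation into $\D^{\A^1}(\WSm_X,\Lambda)^\blacksquare$.

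For $f\colon Y\to X$ weakly étale, the square of sites formed by the two inclusions $Y_\proet\hookrightarrow\WSm_Y$, $X_\proet\hookrightarrow\WSm_X$ and the two forgetful inclusions $Y_\proet\hookrightarrow X_\proet$, $\WSm_Y\hookrightarrow\WSm_X$ commutes, and along the forgetful inclusions $f_\sharp$ is computed by left Kan extension; since left Kan extensions compose, $\rho_\sharp^X\circ f_\sharp\simeq f_\sharp\circ\rho_\sharp^Y$ on presheaves, hence (all functors being colimit-preserving, and the identification holding on representables) on hypersheaves, and condensedly as a commuting square of morphisms in $\PrL_\cond$, with the condensed enhancement of the big-site $f_\sharp$ obtained as in \Cref{lem:f-sharp-is-condensed}. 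Applying $-\otimes^{\cond}\underline{\mathrm{Solid}}_\Lambda$ and evaluating at $\ast$ yields $\rho_\sharp^\blacksquare\circ f_\sharp^\blacksquare\simeq f_\sharp^\blacksquare\circ\rho_\sharp^\blacksquare$ on solid sheaves --- the two versions of $f_\sharp^\blacksquare$ being that of \Cref{lem:some-operations-on-Dsolid} and that of the coefficient-system structure on $\D^{\A^1}(\WSm_{(-)},\Lambda)^\blacksquare$, each uniquely characterized as the left adjoint of $f^*$ --- and this persists after $\A^1$-localization because $f^*$ preserves $\A^1$-local objects, so $f_\sharp$ preserves $\A^1$-equivalences.

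For an arbitrary $g\colon Y\to X$, both $\rho_\sharp^\blacksquare\circ g^*$ and $g^*\circ\rho_\sharp^\blacksquare\colon\D(X,\Lambda)^\blacksquare\to\D^{\A^1}(\WSm_Y,\Lambda)^\blacksquare$ preserve colimits ($g^*$ being a left adjoint on solid sheaves and on solid motives, and $\rho_\sharp^\blacksquare$ by the previous step), so it suffices to compare the canonical base-change map on the generators $j_\sharp^\blacksquare\Lambda$, for $j\colon U\to X$ affine pro-étale, which generate $\D(X,\Lambda)^\blacksquare$ under colimits by \Cref{thm:solid}. Writing $j_Y\colon U\times_X Y\to Y$ and $g'\colon U\times_X Y\to U$, one has on one side $g^*\rho_\sharp^\blacksquare(j_\sharp^\blacksquare\Lambda)\simeq g^*\bigl(j_\sharp^\blacksquare\rho_\sharp^\blacksquare\Lambda\bigr)\simeq(j_Y)_\sharp^\blacksquare g'^*\rho_\sharp^\blacksquare\Lambda$, using the $f_\sharp$-compatibility above and weakly-smooth base change for $j_\sharp$ on the motivic side, and on the other side $\rho_\sharp^\blacksquare g^*(j_\sharp^\blacksquare\Lambda)\simeq\rho_\sharp^\blacksquare\bigl((j_Y)_\sharp^\blacksquare g'^*\Lambda\bigr)\simeq(j_Y)_\sharp^\blacksquare\rho_\sharp^\blacksquare(g'^*\Lambda)$, using \Cref{lem:pullback-preserves-solid} and the $f_\sharp$-compatibility again; since $\rho_\sharp^\blacksquare$ and $g'^*$ all carry the monoidal unit to the monoidal unit (for $\rho_\sharp^\blacksquare$ because $\rho_\sharp$ sends the representable $X$ to the representable $X$), both sides are identified with $(j_Y)_\sharp^\blacksquare\un$ compatibly with the comparison map, which finishes the argument. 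The one genuinely non-formal ingredient throughout is the integral-base-change verification --- as in \Cref{lem:f-sharp-is-condensed} and \Cref{lem:small-to-big-preserves-limits} --- that the un-solidified $\rho_\sharp$, together with its $f_\sharp$- and $g^*$-functoriality, organizes into morphisms of $\PrL_\cond$; once this is granted, bifunctoriality of $-\otimes^{\cond}-$ and the compactness of the generators $j_\sharp^\blacksquare\Lambda$ do the rest.
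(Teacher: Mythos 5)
Your proof is correct and follows the same strategy as the paper's: the key observation is that full faithfulness and compatibility with limits come from \Cref{cor:small-to-big-tensored} and \Cref{lem:small-to-big-preserves-limits}, while colimit-preservation and compatibility with $f_\sharp$ and $g^*$ are deduced by verifying (via integral base change, as in \Cref{lem:f-sharp-is-condensed}) that $\rho_\sharp$ and the relevant squares live in $\PrL_{\cond}$, and then propagating this through $-\otimes^{\cond}\underline{\mathrm{Solid}}_\Lambda$ and global sections. The one place where you diverge slightly is the commutation with arbitrary pullbacks: you reduce to the generators $j_\sharp^\blacksquare\Lambda$ and compare both sides of the exchange map by hand, which forces you to assert --- without a diagram chase --- that the two chains of identifications with $(j_Y)_\sharp^\blacksquare\un$ are ``compatible with the comparison map.'' The paper instead observes that the square $\rho_\sharp^Y\circ g^* \simeq g^*\circ\rho_\sharp^X$ already commutes at the un-solidified condensed level (checked on representables, all functors being colimit-preserving condensed left adjoints), and then the same bifunctoriality argument that handles colimits carries it over; this avoids the unverified compatibility entirely. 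Since your closing paragraph identifies this cleaner route as the genuinely non-formal content, the gap is expository rather than mathematical.
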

\begin{proof}
	The commutation with colimits is formal (we are taking the global sections of a map in $\PrL_\mathrm{cond}$), the commutation with limits is \Cref{rho_sharp_commutes_with_limits}. The commutation with pullbacks and with $f_\sharp$ for $f$ weakly étale is true at the condensed level before solidification and taking global sections so that it remains true. 
\end{proof}

From this, we can deduce the solid rigidity theorem with torsion coefficients.
\begin{prop}
	\label{prop:rig-torsion}
	Let $n\in\N$ be invertible on $X$. Assume that $\Lambda$ is a $\Z/n\Z$-algebra.
	Then, the functors
	\[\D(X,\Lambda)^\blacksquare\to\D^{\A^1}(\WSm_X,\Lambda)^\blacksquare\to \D^{\A^1,\Gm}(\WSm_X,\Lambda)^\blacksquare\]
	are all equivalences.
\end{prop}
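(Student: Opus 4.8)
The strategy is to reduce the rigidity statement with torsion coefficients to the classical torsion rigidity theorem for étale sheaves, bootstrapping through the embedding theorem \Cref{thm:embedding_rigidity} together with the fact that everything in sight is built by solidifying categories that are, at the torsion level, already very close to étale sheaves. First I would reduce to the case $\Lambda=\Z/n\Z$, since all the categories in question are obtained by taking $\Lambda$-modules in the corresponding $\Z/n\Z$-linear ones and the functors are $\Z/n\Z$-linear; hence an equivalence with $\Z/n\Z$-coefficients base-changes to one with $\Lambda$-coefficients.

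\textbf{First equivalence.} For $\D(X,\Z/n\Z)^\blacksquare\to\D^{\A^1}(\WSm_X,\Z/n\Z)^\blacksquare$: by \Cref{thm:embedding_rigidity} this functor $\rho_\sharp^\blacksquare$ is fully faithful and commutes with colimits, so it suffices to show it is essentially surjective, equivalently that its image generates $\D^{\A^1}(\WSm_X,\Z/n\Z)^\blacksquare$ under colimits. Now $\D^{\A^1}(\WSm_X,\Z/n\Z)^\blacksquare$ is generated under colimits by the $f_\sharp^\blacksquare$ of the units for $f\colon Y\to X$ weakly smooth (this is inherited from the generation of $\underline\D^{\A^1}_\proet(\WSm_X,\Z/n\Z)$ by representables, passing through the solidification which is a localization and hence preserves generators). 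By \Cref{lem:locWL}, Zariski-locally any weakly smooth $Y$ factors as $Y\to \A^m_X\to X$ with $Y\to\A^m_X$ weakly étale; using $\A^1$-invariance (the $\A^1$-localization kills the projection $\A^m_X\to X$) together with $\rho_\sharp^\blacksquare$ commuting with $g_\sharp$ for $g$ weakly étale, one sees that the generators of $\D^{\A^1}(\WSm_X,\Z/n\Z)^\blacksquare$ already lie in the image of $\rho_\sharp^\blacksquare$. (Here one uses that $\D(X,\Z/n\Z)^\blacksquare$ has the exceptional $g_\sharp^\blacksquare$ for weakly étale $g$, as recorded in \Cref{lem:some-operations-on-Dsolid}, plus that solidification commutes with $g_\sharp$ and with pullback.) This gives essential surjectivity, hence the first equivalence.

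\textbf{Second equivalence.} For $\D^{\A^1}(\WSm_X,\Z/n\Z)^\blacksquare\to\D^{\A^1,\Gm}(\WSm_X,\Z/n\Z)^\blacksquare$ — which is the $\mathbb{P}^1$-stabilization, solidified — it suffices to show that the Tate object is already $\otimes^\blacksquare$-invertible on the effective side. By \Cref{lem:muinfty_invertible} the sheaf $\mu_{\infty,\mathcal{P}}$ is $\otimes^\blacksquare$-invertible in $\D(X,\Z/n\Z)^\blacksquare$; since for $n\in\Pi\mathcal{P}$ one has $\mu_{\infty,\mathcal{P}}\otimes\Z/n\Z\simeq\mu_n$ (and more to the point $\mu_n$ itself is invertible with torsion coefficients because $n$ is invertible on $X$, so $\lim_m\mu_{nm}\simeq\mu_n$ once we're $n$-torsion), the Tate twist $\Z/n\Z(1)$ is invertible in $\D(X,\Z/n\Z)^\blacksquare$. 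Transporting this along the first equivalence, $\rho_\sharp^\blacksquare(\mu_n)$ — which is the image of the motivic Tate object under the canonical comparison, by the classical torsion rigidity identification $\Z/n\Z(1)\simeq\mu_n$ in $\DM_\et(X,\Z/n\Z)$, see \cite{bachmannrigidity} — is $\otimes$-invertible in $\D^{\A^1}(\WSm_X,\Z/n\Z)^\blacksquare$. Since a presentably symmetric monoidal category in which $\Sr^{2,1}$ is already invertible equals its own $\mathbb{P}^1$-stabilization, and since solidification commutes with the stabilization colimit (being a left adjoint, by \Cref{lem:colimits_in_Prlcond}), the natural functor to $\D^{\A^1,\Gm}(\WSm_X,\Z/n\Z)^\blacksquare$ is an equivalence.

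\textbf{Expected main obstacle.} The delicate point is the identification in the effective case of the motivic object $\rho_\sharp^\blacksquare(\Sr^{2,1})$ with $\mu_n$ (equivalently, showing the solid $\A^1$-invariant category already sees the usual torsion rigidity equivalence), so that invertibility of $\mu_n$ in $\D(X,\Z/n\Z)^\blacksquare$ really does force $\mathbb{P}^1$-stability. This should follow by unwinding: $\rho_\sharp^\blacksquare$ commutes with $f_\sharp$ for weakly étale $f$ and with pullback (\Cref{thm:embedding_rigidity}), the object $\Sr^{2,1}=(\Gm,1)\otimes S^1$ is built from $\Gm$ by such operations, and the computation of its $\A^1$-local, solidified avatar reduces — via $\A^1$-invariance and Postnikov completeness — to the étale-local statement $L_{\A^1}\Sigma^\infty(\Gm,1)\otimes\Z/n\Z\simeq\mu_n[1]$ which is exactly the torsion rigidity theorem, valid here because $n$ is invertible on $X$. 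The rest is bookkeeping with the solidification functor, which being a symmetric monoidal localization commuting with pullbacks and with the relevant $f_\sharp$'s, transports all of this faithfully.
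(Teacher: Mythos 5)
Your reduction, following \Cref{lem:locWL}, writes a weakly smooth $f\colon Y\to X$ Zariski-locally as $Y\xrightarrow{g}\A^m_X\xrightarrow{\pi}X$ with $g$ weakly \'etale, and you claim that ``$\A^1$-invariance kills the projection'' together with the commutation of $\rho_\sharp^\blacksquare$ with $g_\sharp$ puts $f_\sharp(\Z/n\Z)=\pi_\sharp g_\sharp(\Z/n\Z)$ in the image. This does not work: $\A^1$-invariance only gives $\pi_\sharp\pi^*\simeq\id$ (equivalently, $\A^m_{X,+}\simeq S^0$), but you are applying $\pi_\sharp$ to $g_\sharp(\Z/n\Z)$, which is a genuinely non-constant object over $\A^m_X$ and not of the form $\pi^*(-)$. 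And \Cref{thm:embedding_rigidity} only asserts that $\rho_\sharp^\blacksquare$ commutes with $f_\sharp$ for $f$ \emph{weakly \'etale}, not for the smooth projection $\pi$. So nothing in your toolkit controls $\pi_\sharp$ on the image of $\rho_\sharp^\blacksquare$. The paper sidesteps this precisely by factoring through the proper map $p\colon\mathbb{P}^n_X\to X$ instead of the open projection $\pi$, using ambidexterity ($p_\sharp\simeq p_*(n)[2n]$) and then showing that $p_*$ commutes with $\rho_\sharp^\blacksquare$ via proper base change on both sides (\Cref{prop:solid_BC} and \Cref{prop:PBC}).

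\textbf{Circularity in the order of the two equivalences.} You prove the first equivalence first and then transport invertibility of $\mu_n$ along it to get the second; but the ambidexterity step needed to repair your essential surjectivity argument requires $\mathbb{P}^1$-stability (i.e., the Tate twist being invertible) in $\D^{\A^1}(\WSm_X,\Z/n\Z)^\blacksquare$, which is exactly the content of the second equivalence. The paper therefore goes the other way: it first establishes Tate-stability of $\D^{\A^1}(\WSm_X,\Z/n\Z)^\blacksquare$ directly by pushing forward the $\otimes$-inverse of the Tate twist from effective \'etale motives with $\Z/n\Z$-coefficients (where torsion rigidity already holds), so that the stabilization functor is an equivalence, and only then uses ambidexterity in the essential surjectivity step. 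Your asserted identification $\rho_\sharp^\blacksquare(\mu_n)\simeq\Sr^{2,1}$ in the ``main obstacle'' paragraph, which you acknowledge as the delicate point, is also not actually established; to make your route work you would have to prove it directly, and then still need the ambidexterity/proper-base-change input to finish the essential surjectivity step.
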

\begin{proof}
	It suffices to deal with the case $\Lambda = \Z/n\Z$.
	Using the functor frome effective étale motives we see that in $\D^{\A^1}(\WSm_X,\Z/n\Z)^\blacksquare$ the Tate twist is already invertible, thus the functor
	\[\D^{\A^1}(\WSm_X,\Z/n\Z)^\blacksquare\to \D^{\A^1,\Gm}(\WSm_X,\Z/n\Z)^\blacksquare\] is an equivalence. Hence, using \Cref{thm:embedding_rigidity}, the only thing left to prove is that the functor \[\D(X,\Lambda)^\blacksquare\to\D^{\A^1}(\WSm_X,\Lambda)^\blacksquare\] is essentially surjective knowing that it is fully faithful. Hence, we only have to prove that collection of objects that generate the target category under colimits are in the image. 
	Now, the $f_\sharp(\Z/n\Z)$ for $f$ weakly smooth generate $\D^{\A^1}(\WSm_X,\Z/n\Z)^\blacksquare$. Zariski-locally on the source such an $f$ can be written as a composition \[U\xrightarrow{g} \A^n_X \xrightarrow{j}\mb{P}^n_X \xrightarrow{p} X\] with $g$ weakly étale, $j$ the canonical open immersion and $p$ the canonical projection (here we just use \Cref{lem:locWL} and decompose the canonical projection $\A^n_X\to X$); as $j\circ g$ is still weakly étale, we see that $f$ is Zariski-locally on the source of the form \[U\xrightarrow{h} \mb{P}^n_X \xrightarrow{p} X\] with $h$ weakly étale. Hence, the $p_\sharp h_\sharp (\Z/n\Z)$ for $p$ and $h$ as above generate $\D^{\A^1}(\WSm_X,\Z/n\Z)^\blacksquare$. 
	But then, ambidexterity (\Cref{ambidextry_stable}) ensures that  $p_\sharp\simeq p_*(n)[2n]$ and thus the $p_*h_\sharp(\Z/n\Z)$ generate $\D^{\A^1}(\WSm_X,\Z/n\Z)^\blacksquare$. 

	Note now that on both sides we have proper base change for finitely presented morphism by \Cref{prop:solid_BC} and \Cref{prop:PBC}. This formally implies that $p_*$ commutes with $\rho_\sharp^\blacksquare$: indeed, we have an exchange morphism $\rho_\sharp^\blacksquare p_* \to p_*\rho_\sharp^\blacksquare$ as in \cite[4.4.2]{MR3477640} and we can test whether it is an isomorphism by taking the $\Hom(g_\sharp(\Z/n\Z),-)$ for $g$ weakly étale over $X$; the proof is then the same as in \cite[Proposition~4.4.3]{MR3477640}. As we already know that $h_\sharp$ commutes with $\rho_\sharp^\blacksquare$, we can reach the generators which finishes the proof.
\end{proof}

\subsection{The rigidity theorem}\label{sec:solidrigidity}
Our goal is now to prove the rigidity theorem. As we have seen, both $\D^{\A^1}(\WSm_{(-)},\Lambda)^\blacksquare$ and $\D^{\A^1,\Gm}(\WSm_(-),\Lambda)^\blacksquare$ are unstable qcqs motivic coefficient systems. Rigidity would imply Tate-stability and as a means of enforcing it, we  further localize $\D^{\A^1}(\WSm_{(-)},\Lambda)^\blacksquare$ at a certain set of maps. To see that this procedure still yields an unstable qcqs motivic coefficient system, we first need a bit of general theory.

Let $ \ccal \colon \Sch \to \CAlg(\PrL)$ be an unstable qcqs motivic coefficient system.
Fix a small collection of morphisms in $ T $ in $ \ccal(\Spec(\Z)) $.
Then we may define an induced functor $ \ccal_T \colon \Sch \to \CAlg(\PrL) $ via
\[
	f \colon Y \to \Spec(\Z) \mapsto \ccal_T(Y) \coloneqq \ccal(Y)[\{f^* \lambda \otimes M \mid \lambda \in T; M \in \ccal(Y)\}^{-1}].
\]

\begin{rem}
	Note that there is a natural equivalence of functors
	\[
		\ccal_T \simeq \ccal(-) \otimes_{\ccal(\Spec(\Z))} \ccal_T(\Spec(\Z)).
	\]
\end{rem}

\begin{prop}
	\label{prop:inverting_maps_in_a_6ff}
	In the above setting, also ${\ccal}_T$ is an unstable qcqs motivic coefficient system.
\end{prop}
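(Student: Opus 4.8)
Write $\mathcal S = \ccal(\Spec(\Z))$, $\mathcal S_T = \ccal_T(\Spec(\Z))$, and for a scheme $Y$ with structure map $h_Y\colon Y\to\Spec(\Z)$ let $\lambda_Y\colon\ccal(Y)\to\ccal_T(Y)$ be the localization functor, with fully faithful right adjoint $\iota_Y$ and associated idempotent endofunctor $L_Y=\iota_Y\lambda_Y$. By definition $\ccal_T(Y)\subseteq\ccal(Y)$ is the reflective subcategory of objects right-orthogonal to the saturated class $W_Y$ of maps generated under 2-out-of-3, pushout and transfinite composition by $\{h_Y^*\lambda\otimes M:\lambda\in T,\ M\in\ccal(Y)\}$; since this generating set is stable under tensoring with arbitrary objects, $\lambda_Y$ is symmetric monoidal and $\ccal\to\ccal_T$ is a natural transformation of functors $\Sch\to\CAlg(\PrL)$ (equivalently $\ccal_T\simeq\ccal(-)\otimes_{\mathcal S}\mathcal S_T$, as in the remark above). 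In particular $\lambda$ commutes with every transition functor $f^*$. The plan is to check the four conditions of \Cref{def:unstableqcqssys} for $\ccal_T$ by transporting structure along these localizations.

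The technical core is the following lemma: \emph{(a)} every symmetric monoidal colimit-preserving functor among the $\ccal$'s — in particular each $f^*$ — carries $W$ to $W$, and its right adjoint carries $\ccal_T$-objects to $\ccal_T$-objects; \emph{(b)} for $f$ smooth, $f_\sharp$ carries $W$ into $W$, so it descends to a left adjoint $f^T_\sharp$ of $f^{T*}$ with $\lambda_Y f_\sharp\simeq f^T_\sharp\lambda_X$; \emph{(c)} for a closed immersion $i\colon Z\to X$ with \emph{qcqs} open complement, $i_*$ carries $W$ into $W$ and carries $\ccal_T$-objects to $\ccal_T$-objects, so $\lambda_X i_*\simeq i^T_*\lambda_Z$. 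For (a) one computes on generators $f^*(h_Y^*\lambda\otimes M)\simeq h_X^*\lambda\otimes f^*M$ using $h_Y\circ f=h_X$, and concludes since $f^*$ preserves colimits; preservation of $\ccal_T$-objects by the right adjoint is then formal (orthogonality). For (b), the smooth projection formula (condition~1 of \Cref{def:unstableqcqssys}) gives $f_\sharp\big(h_X^*\lambda\otimes M\big)=f_\sharp\big(f^*h_Y^*\lambda\otimes M\big)\simeq h_Y^*\lambda\otimes f_\sharp M\in W_Y$, and $f_\sharp$ preserves colimits. For (c) the closed projection formula for $\ccal$ (the analogue of \Cref{cor:closed-proj}, which holds by qcqs localization together with the smooth projection formula) gives $i_*\big(h_Z^*\lambda\otimes N\big)\simeq h_X^*\lambda\otimes i_*N\in W_X$, $i_*$ preserves colimits because the complement is qcqs (the analogue of \Cref{cor:i_*_stable_pre_colimits}), and $i_*$ preserves $\ccal_T$-objects because $i^*$ preserves $W$ by (a). This step, where the qcqs hypothesis on the open complement is used, is the one I expect to be the only real obstacle; the rest is bookkeeping.

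Granting the lemma, the conditions transfer as follows. \emph{Condition~1}: for smooth $f$, part~(a) gives $f^{T*}=f^*|_{\ccal_T}$ and part~(b) provides its left adjoint $f^T_\sharp$; smooth base change and the smooth projection formula for $f^T_\sharp$ follow by precomposing the corresponding identities in $\ccal$ with $\lambda$, using that $\lambda$ commutes with $f^*$, with $f_\sharp$ and (being symmetric monoidal) with $\otimes$, and that $\lambda$ is essentially surjective. \emph{Condition~2}: since $\pi\colon\A^1_X\to X$ is smooth, $\pi^{T*}$ is the restriction of the fully faithful functor $\pi^*$ to the full subcategories $\ccal_T$, hence fully faithful. \emph{Condition~3}: in an elementary distinguished square all transition functors are smooth pullbacks, so by (a)--(b) they commute with the endofunctors $L$; as a Nisnevich cover yields a jointly conservative family of such pullbacks, an object of $\ccal(X)$ lies in $\ccal_T(X)$ iff all its restrictions do, so the descent equivalence $\ccal(X)\iso\ccal(U)\times_{\ccal(W)}\ccal(V)$ restricts to the $\ccal_T$-subcategories, and $\ccal_T(\emptyset)=\ast$; thus $\ccal_T$ is again a Nisnevich sheaf. \emph{Condition~4}: apply the colimit-preserving functor $\lambda_X$ to the cofiber sequence $j_\sharp j^*M\to M\to i_*i^*M$ in $\ccal(X)$; using that $\lambda$ commutes with $j_\sharp$, $j^*$, $i^*$ and $i_*$ (the lemma) this becomes $j^T_\sharp j^{T*}\to\id\to i^T_*i^{T*}$ on $\ccal_T(X)$, which is the required sequence. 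The same argument applies verbatim with $\D^{\A^1,\Gm}(\WSm_{(-)},\Lambda)^\blacksquare$ in place of $\ccal$.
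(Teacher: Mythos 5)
Your proof is correct, but it takes a genuinely different route from the paper's. The paper proves this in two sentences: it observes that each of the four conditions in \Cref{def:unstableqcqssys} is a property internal to the $(\infty,2)$-category $\mathbf{Pr}^{\mathrm{L}}_{\ccal(\Spec(\Z))}$ of presentable $\ccal(\Spec(\Z))$-modules (existence of an adjoint with base change and projection formula, fully faithfulness, cartesian squares, cofiber sequences of natural transformations), and then notes that $-\otimes_{\ccal(\Spec(\Z))}\ccal_T(\Spec(\Z))$ is an $(\infty,2)$-functor (citing \cite[\S 4.4]{MR3607274}) and $\ccal_T\simeq\ccal(-)\otimes_{\ccal(\Spec(\Z))}\ccal_T(\Spec(\Z))$; such a functor automatically preserves all four properties. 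Your argument instead verifies each axiom by hand: you check that $f^*$ always preserves the saturated class $W$; that $f_\sharp$ does so for $f$ smooth (via the smooth projection formula); and that $i_*$ does so for a closed immersion with qcqs open complement (via the closed projection formula, which you correctly note is a formal consequence of axioms 1 and 4). From this you deduce that the localization $\lambda$ commutes with $f^*$, smooth $f_\sharp$, $i^*$, $i_*$, and hence that all four axioms pass to $\ccal_T$ by applying $\lambda$ to the corresponding data in $\ccal$. The paper's argument is shorter and more conceptual but requires knowing the nontrivial fact that relative tensor product upgrades to an $(\infty,2)$-functor and convincing oneself that each axiom really is an internal property (Nisnevich descent, say, requires a moment's thought); your argument is longer but self-contained and exposes exactly why the result holds — because the localizing class is stable under the relevant operations — at the cost of a fair amount of Beck–Chevalley bookkeeping. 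One small remark: where you cite ``the analogue of \Cref{cor:i_*_stable_pre_colimits}'' for $i_*$ preserving colimits, it is worth noting (since the definition does not list this as an axiom) that this is itself a formal consequence of qcqs localization: the cofiber sequence shows $i_*i^*$ preserves colimits, $j^*i_*\simeq \ast$, and $i^*$, $j^*$ are jointly conservative, from which $i_*$ preserving colimits follows.
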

\begin{proof}
	This is immediate once one realises that all properties of being an unstable qcqs motivic coefficient system are properties internal to the $(\infty,2)$-category $\mathbf{Pr}^\mathrm{L}_{\ccal(\Spec(\Z))}$. As the functor $-\otimes_{\ccal(\Spec(\Z))}\ccal_T(\Spec(\Z))$ is an ($\infty$,2)-functor (\cite[Section 4.4]{MR3607274}), the claim follows.
\end{proof}

Recall that $\Pi\mathcal{P}$ is the set of integers which are divisible only by primes in $\mathcal{P}$. 
\begin{defi}\label{def:effectivesolidmotives}
	The $\infty$-category $\mathrm{DM}^\mathrm{eff}(X,\Lambda)^\blacksquare$ of \emph{effective solid motives} over $X$ is the localization of the $\infty$-category
		$\D^{\A^1}(\WSm_X,\Lambda)^\blacksquare$ at the maps \[\Lambda[\Gm_X]\otimes_\Lambda\Lambda[Y]\simeq\Lambda[\Gm_Y]\to (\lim_{n\in \Pi\mathcal{P}}\Z/n\Z[\Gm_X])\otimes_{\widehat{\Z}_\mathcal{P}} \Lambda[Y]\] where $Y$ is weakly smooth over $X$.

	\end{defi}
\begin{rem}
	\begin{enumerate}
		\item Note that by \Cref{prop:rig-torsion}, the objects $\Z/n\Z[\Gm_X]$ are actually in the image of the functor $\rho_\sharp^\blacksquare$. Moreover, because $\rho_\sharp^\blacksquare$ commutes with limits by \Cref{rho_sharp_commutes_with_limits}, we see that $\lim_n\Z/n\Z[\Gm_X]$ is again in the image of $\rho_\sharp^\blacksquare$. Using that over the small pro-étale site,
	pullbacks commute with all limits, we see that the maps we invert are stable under pullbacks.
		\item Note also that in fact, if $f$ is a map that we invert, and $M$ is any object, then the map $f\otimes\mathrm{Id}_M$ is also inverted. In particular, the localization functor is symmetric monoidal.
	\end{enumerate}
\end{rem}

\begin{rem}
	\label{rem:connection_to_mu}Assume that all the primes in $\mathcal{P}$ are invertible on $X$. By \cite[Proposition 3.2.2]{MR3477640} we have $\Z(1)[1]\simeq \Gm$ in $\mathrm{DM}_{\et}^{\mathrm{eff}}(\Spec(\Z),\Z)$. Thus for $n\in\Pi\mc{P}$, the classical Kummer sequences identifies $\Z(1)[1]/n \simeq \mu_n[1]$ (see \cite[3.2]{MR3477640}). In particular, we obtain maps $\Z(1)\to\mu_n$ for all $n\in\Pi\mc{P}$. As both sides a stable under pullbacks, those maps exist over any scheme $X$. Their images in $\D^{\A^1}(\WSm_X,\Lambda)^\blacksquare$ provides after taking a limit a map $$\Lambda(1)\to (\lim_{n\in\Pi\mc{P}}\mu_n)\otimes_{\ZhatP}\Lambda.$$ Thus, in $\D^{\A^1}(\WSm_X,\Lambda)^\blacksquare$, the map
	\[\Lambda[\Gm_X]\otimes_\Lambda\Lambda[Y]\to (\lim_{n\in \Pi\mathcal{P}} \Z/n\Z[\Gm_X])\otimes_{\widehat{\Z}_\mathcal{P}} \Lambda[Y]\] we invert can in fact be written as
	\[\Lambda\oplus \Lambda(1)[1]\to (\lim_{n\in \Pi\mathcal{P}}\Z/n\Z\oplus \lim_{n\in \Pi\mathcal{P}} \mu_n)\otimes_{\widehat{\Z}_\mathcal{P}}\Lambda[Y].\] 
	We have ${\widehat{\Z}_\mathcal{P}} = \lim_{n\in \Pi\mathcal{P}}\Z/n\Z$ in $\D^{\A^1}(\WSm_X,{\widehat{\Z}_\mathcal{P}})^\blacksquare$ because this is true over the small site, and $\rho_\sharp^\blacksquare$ commutes with limits. 
	Moreover, $\mu_n$ comes from the small étale site, and the limit $\lim_{n\in\Pi\mathcal{P}}\mu_n$ is representable in the small pro-étale site by $\mu_{\infty,\mathcal{P}}$. This proves that our localization is also the localization of $\D^{\A^1}(\WSm_X,\Lambda)^\blacksquare$ at the maps
	\[M(1)\to M\otimes_{\widehat{\Z}_\mathcal{P}}\mu_{\infty,\mathcal{P}}\] for all $M$.
	
\end{rem}

The discussion above in particular shows that the functor
\[
\mathrm{DM}^\mathrm{eff}(-,\Lambda) ^\blacksquare\colon \Sch \to \CAlg(\PrL)
\]
is as in \Cref{prop:inverting_maps_in_a_6ff}, for the map $\Lambda[\Gm_\mathbb{Z}] \to (\lim_{n\in \Pi\mathcal{P}} \Z/n\Z[\Gm_\mathbb{Z}])\otimes_{\widehat{\Z}_\mathcal{P}} \Lambda$.

\begin{prop}
	The functor $ \mathrm{DM}^\mathrm{eff}(-,\Lambda)^\blacksquare $ is a qcqs motivic coefficient system. Hence we have the six operations on $ \mathrm{DM}^\mathrm{eff}(-,\Lambda)^\blacksquare $ in the sense of \Cref{thm:motivic_6FF}.
\end{prop}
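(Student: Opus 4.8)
The plan is to reduce everything to structural results already in place. By \Cref{def:effectivesolidmotives} the coefficient system $\mathrm{DM}^\mathrm{eff}(-,\Lambda)^\blacksquare$ is obtained from $\D^{\A^1}(\WSm_{(-)},\Lambda)^\blacksquare$ — which we have already shown to be an unstable qcqs motivic coefficient system — by inverting the maps $\Lambda[\Gm_X]\otimes_\Lambda\Lambda[Y]\to(\lim_{n\in\Pi\mathcal{P}}\Z/n\Z[\Gm_X])\otimes_{\ZhatP}\Lambda[Y]$ for $Y\to X$ weakly smooth. As recorded in the discussion following \Cref{rem:connection_to_mu}, this family is precisely the family of maps of the form $f^*\lambda\otimes M$ with $\lambda$ the single map $\Lambda[\Gm_\Z]\to(\lim_{n}\Z/n\Z[\Gm_\Z])\otimes_{\ZhatP}\Lambda$ over $\Spec(\Z)$ and $M$ arbitrary; hence \Cref{prop:inverting_maps_in_a_6ff} applies and shows that $\mathrm{DM}^\mathrm{eff}(-,\Lambda)^\blacksquare$ is again an unstable qcqs motivic coefficient system. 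First I would therefore dispatch this point, so that only the two extra conditions of \Cref{def:qcqsmotiviccoeffsys} remain.

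Next I would check stability: since the localization functor $\D^{\A^1}(\WSm_X,\Lambda)^\blacksquare\to\mathrm{DM}^\mathrm{eff}(X,\Lambda)^\blacksquare$ is symmetric monoidal (second part of the remark after \Cref{def:effectivesolidmotives}), its target is again $\Lambda$-linear and presentably symmetric monoidal, and as $\Lambda$ is an $H\Z$-algebra this forces it to be stable; equivalently, the class of inverted maps is closed under the shift $M\mapsto M[1]$, so the reflective localization of the stable category $\D^{\A^1}(\WSm_X,\Lambda)^\blacksquare$ at it is stable. For Tate-stability I would invoke \Cref{rem:connection_to_mu}: in $\mathrm{DM}^\mathrm{eff}(X,\Lambda)^\blacksquare$ the map $M(1)\to M\otimes_{\ZhatP}\mu_{\infty,\mathcal{P}}$ is an equivalence for every $M$, so the Tate sphere $\mathrm{S}^{\ocal_X}\simeq\un(1)[2]$ becomes identified with $\mu_{\infty,\mathcal{P}}[2]\otimes_{\ZhatP}\Lambda$; by \Cref{lem:muinfty_invertible}, $\mu_{\infty,\mathcal{P}}$ is $\otimes$-invertible in $\D(X,\ZhatP)^\blacksquare$ whenever every prime in $\mathcal{P}$ is invertible on $X$, and transporting this invertibility along the symmetric monoidal functors $\D(X,\Lambda)^\blacksquare\xrightarrow{\rho_\sharp^\blacksquare}\D^{\A^1}(\WSm_X,\Lambda)^\blacksquare\to\mathrm{DM}^\mathrm{eff}(X,\Lambda)^\blacksquare$ shows $\mathrm{S}^{\ocal_X}$ is $\otimes$-invertible there (the statement being meant over the base on which the primes of $\mathcal{P}$ are invertible).

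Granting that $\mathrm{DM}^\mathrm{eff}(-,\Lambda)^\blacksquare$ is a qcqs motivic coefficient system, the six operations in the sense of \Cref{thm:motivic_6FF} follow by a direct application of that theorem, which gives the ``hence'' part of the statement. I expect the only genuine content beyond quoting earlier results to be two bookkeeping points, both essentially prepared in the excerpt: confirming that the family of maps defining $\mathrm{DM}^\mathrm{eff}$ truly has the shape $\{f^*\lambda\otimes M\}$ demanded by \Cref{prop:inverting_maps_in_a_6ff} (which rests on \Cref{rem:connection_to_mu} and on stability of $\mu_{\infty,\mathcal{P}}$, equivalently $\lim_n\Z/n\Z[\Gm]$, under pullback), and identifying the Tate sphere with a shift of $\mu_{\infty,\mathcal{P}}\otimes_{\ZhatP}\Lambda$ so as to propagate invertibility through the monoidal localization. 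The latter — getting Tate-stability without circularly appealing to the rigidity equivalence $\mathrm{DM}^\mathrm{eff}\simeq\mathrm{DM}$ proved afterwards — is the main thing to be careful about.
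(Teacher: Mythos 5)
Your proposal is correct and takes essentially the same route as the paper: dispatch the unstable axioms via \Cref{prop:inverting_maps_in_a_6ff} (using the explicit identification of the inverted family as pullbacks from a single map over $\Spec(\Z)$, recorded right after \Cref{rem:connection_to_mu}), and obtain Tate-stability from the by-definition identification of the Tate twist with $\mu_{\infty,\mathcal{P}}\otimes_{\ZhatP}\Lambda$ together with \Cref{lem:muinfty_invertible}. The only difference is that you spell out the stability of the localization and the implicit restriction to $\Spec(\Z[1/\mathcal{P}])$-schemes, both of which the paper leaves tacit, and you correctly note that this avoids any circular appeal to the later rigidity theorem.
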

\begin{proof}
	The only thing that does not follow from \Cref{prop:inverting_maps_in_a_6ff} is Tate-stability, but it is true since in $ \mathrm{DM}^\mathrm{eff}(-,\Lambda)^\blacksquare $, the Tate twist is by definition equivalent to $\mu_{\infty,\mathcal{P}}\otimes_{\widehat{\Z}_\mathcal{P}}\Lambda$ which is invertible by \Cref{lem:muinfty_invertible}.
\end{proof}

Let us now state our first rigidity result:

\begin{thm}\label{thm:solid_rigidity_V1}
	Assume that all prime numbers $\mc{P}$ are invertible on $X$. Then, the functor $ \rho_\sharp^\blacksquare \colon \D(X,\Lambda)^\blacksquare \to \D^{\mathbb{A}^1}(\WSm_X,\Lambda)^\blacksquare$ induces an equivalence
	\[
		\D(X,\Lambda)^\blacksquare \to \mathrm{DM}^\mathrm{eff}(X,\Lambda)^\blacksquare.
	\]
	In particular, the $\D(-,\Lambda)^\blacksquare$ are endowed with the six functors in the sense of \Cref{thm:motivic_6FF}. 
\end{thm}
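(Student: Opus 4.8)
The plan is to show that the fully faithful functor $\rho_\sharp^\blacksquare$ of \Cref{thm:embedding_rigidity} identifies $\D(X,\Lambda)^\blacksquare$ with the subcategory of local objects cutting out $\mathrm{DM}^\mathrm{eff}(X,\Lambda)^\blacksquare$. Since $\D(X,\Lambda)^\blacksquare = \Mod_\Lambda(\D(X,\ZhatP)^\blacksquare)$, the class of maps defining $\mathrm{DM}^\mathrm{eff}(-,\Lambda)^\blacksquare$ is obtained from the one for $\ZhatP$ by tensoring with $\Lambda$, and the comparison functor is $\Lambda$-linear and symmetric monoidal, so we may reduce to $\Lambda=\ZhatP$. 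Write $W$ for the set of maps $M(1)\to M\otimes_{\ZhatP}\mu_{\infty,\mathcal{P}}$ of \Cref{rem:connection_to_mu} and $L_W$ for the localization.

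\textbf{Step 1: $\rho_\sharp^\blacksquare$ lands in the $W$-local subcategory.} The key point is that both $\ZhatP(1)$ (a retract of the compact generator $\ZhatP[\Gm_X]$) and $\mu_{\infty,\mathcal{P}}=\lim_{n\in\Pi\mc{P}}\mu_n$ are compact in $\D^{\A^1}(\WSm_X,\ZhatP)^\blacksquare$: for the second one uses that $\lim_n\mu_n$ is a cofiltered limit of torsion constructible étale sheaves, hence compact in $\D(X,\ZhatP)^\blacksquare$ by \Cref{thm:solid}, together with the fact that $\rho_\sharp^\blacksquare$ preserves compact objects (its right adjoint, restriction to the small pro-étale site, commutes with filtered colimits). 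Hence the cones of the maps in $W$ — which for $M$ a compact generator are of the form $M\otimes\mathrm{cone}(\ZhatP(1)\to\mu_{\infty,\mathcal{P}})$ — are compact, so the $W$-local subcategory is closed under colimits. It therefore suffices to check that $\rho_\sharp^\blacksquare$ takes the compact generators $j_\sharp^\blacksquare\ZhatP$ of $\D(X,\ZhatP)^\blacksquare$ to $W$-local objects. By \Cref{lem:ZhatconstrIslimTors} these are cofiltered limits of torsion constructible étale sheaves, $\rho_\sharp^\blacksquare$ commutes with limits (\Cref{rho_sharp_commutes_with_limits}), and the $W$-local subcategory, being reflective, is closed under limits; so we reduce to a torsion constructible $\nu^*G$ with $G$ a $\Z/n\Z$-module, $n\in\Pi\mc{P}$. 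For such a sheaf the Kummer sequence identifies $\Z/n\Z(1)\xrightarrow{\sim}\mu_n\simeq\mu_{\infty,\mathcal{P}}\otimes\Z/n\Z$ as in \Cref{rem:connection_to_mu} (alternatively this is torsion rigidity \Cref{prop:rig-torsion}), so $\rho_\sharp^\blacksquare(\nu^*G)$ is $W$-local.

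\textbf{Step 2: the comparison functor and essential surjectivity.} By Step 1 the composite $\bar\rho\coloneqq L_W\circ\rho_\sharp^\blacksquare\colon\D(X,\ZhatP)^\blacksquare\to\mathrm{DM}^\mathrm{eff}(X,\ZhatP)^\blacksquare$ agrees with $\rho_\sharp^\blacksquare$ followed by the inclusion of the $W$-local subcategory; it is therefore fully faithful, symmetric monoidal, and colimit preserving, so its essential image is a colimit-closed, $\otimes$-closed subcategory. It remains to hit a generating set of $\mathrm{DM}^\mathrm{eff}(X,\ZhatP)^\blacksquare$, namely the $L_W(\ZhatP[Y])$ for $Y$ weakly smooth over $X$ (Tate twists being invertible there by \Cref{lem:muinfty_invertible}). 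As in the proof of \Cref{prop:rig-torsion}, by \Cref{lem:locWL} and the factorisation $\A^n_X\hookrightarrow\mathbb{P}^n_X\to X$, every such $Y$ is Zariski-locally on the source of the form $U\xrightarrow{h}\mathbb{P}^n_X\xrightarrow{p}X$ with $h$ weakly étale and $p$ the projection; by Zariski descent we reduce to $p_\sharp h_\sharp\ZhatP$. Ambidexterity \Cref{ambidextry_stable} (applicable since $\mathrm{DM}^\mathrm{eff}(-,\ZhatP)^\blacksquare$ is a qcqs motivic coefficient system) gives $p_\sharp\simeq p_*(-\otimes\mathrm{S}^{\Omega_p})$ with $\mathrm{S}^{\Omega_p}$ invertible and in the essential image of $\bar\rho$, so it suffices to see $p_*h_\sharp\ZhatP$ lies in the image. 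Now $\rho_\sharp^\blacksquare$ commutes with $h_\sharp$ for $h$ weakly étale by \Cref{thm:embedding_rigidity}, so $h_\sharp\ZhatP=\rho_\sharp^\blacksquare(h_\sharp^\blacksquare\ZhatP)$; and $\bar\rho$ commutes with $p_*$ for the proper map $p$: the exchange transformation $\bar\rho\circ p_*^\blacksquare\to p_*\circ\bar\rho$ is tested after $\Hom(g_\sharp\ZhatP,-)$ for $g$ weakly étale over $X$, where it becomes an equivalence by proper base change (\Cref{prop:solid_BC} on the solid side, \Cref{prop:PBC} on the motivic side) together with the already-established commutation with $g_\sharp$, exactly as in \cite[Proposition~4.4.3]{MR3477640}. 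Hence $p_*h_\sharp\ZhatP\simeq\bar\rho(p_*^\blacksquare h_\sharp^\blacksquare\ZhatP)$ lies in the image, proving essential surjectivity.

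The main obstacle is Step 1, and within it the compactness of $\mu_{\infty,\mathcal{P}}$ in the motivic category: this is what makes the $W$-local subcategory closed under colimits and thus lets locality propagate from the torsion constructible case — where it is classical torsion rigidity — to all solid sheaves. The final assertion is then automatic: $\mathrm{DM}^\mathrm{eff}(-,\Lambda)^\blacksquare$ is a qcqs motivic coefficient system and hence carries the six operations of \Cref{thm:motivic_6FF}, and these transport along the natural equivalence $\D(-,\Lambda)^\blacksquare\simeq\mathrm{DM}^\mathrm{eff}(-,\Lambda)^\blacksquare$ just constructed.
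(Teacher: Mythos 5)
Your overall strategy—show $\rho_\sharp^\blacksquare$ lands in the $W$-local subcategory, so that $L_W\circ\rho_\sharp^\blacksquare$ is automatically fully faithful, and then do essential surjectivity as in torsion rigidity—is attractive and different from the paper's argument. The paper instead proves full faithfulness by directly checking that a specific comparison map of internal mapping objects is an equivalence, reducing by a chain of truncation/dévissage/filtered-colimit steps to the torsion constructible case. Your essential surjectivity (Step 2) is fine and mirrors the paper's.

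The gap is in Step 1, in the sentence where you claim "$\ZhatP(1)$ (a retract of the compact generator $\ZhatP[\Gm_X]$)" is compact in $\D^{\A^1}(\WSm_X,\ZhatP)^\blacksquare$. This is not justified and is likely false as stated: $\ZhatP[\Gm_X]$ is a \emph{generator} of the big pro-\'etale derived category, but it is not a \emph{compact} one. The compact generators of $\D(\WSm_X,\ZhatP)$ that the paper uses are the $\ZhatP[W]$ for $w$-contractible weakly smooth $W$, not representables on arbitrary smooth schemes such as $\Gm_X$; and solidification/$\A^1$-localization do not help here. Your claim about $\mu_{\infty,\mathcal{P}}$ is more defensible (it comes from the small site where compactness is understood, via $\rho_\sharp^\blacksquare$ whose right adjoint is a condensed left adjoint and so preserves filtered colimits), but the compactness of $\ZhatP(1)$, and hence of the cones of the $W$-maps, is exactly the point that is not available before the rigidity theorem is proven. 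Consequently the assertion that the $W$-local subcategory is closed under colimits—which your whole Step 1 reduction hinges on—is not established.

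The paper's proof is careful precisely on this point. It never asserts compactness of $\ZhatP[\Gm_X]$. Instead, it observes that the internal Hom out of $\ZhatP[\Gm_X]$ computed in the localization agrees with the one computed in the \emph{unsolid} category $\D(\WSm_X,\ZhatP)$, and then checks that the resulting functor $\underline{\Hom}(\ZhatP[\Gm_X],\rho_\sharp^\blacksquare\tau_{\leq n}(-))$ commutes with filtered colimits in its argument $\mathcal F\in\D(X,\ZhatP)^\blacksquare$, by testing against the genuine compact generators $\ZhatP[W]$ and reducing to \cite[Lemma~5.3]{MR4609461} via the pull-back formula $\rho_\sharp^\blacksquare(-)|_{W\times\Gm}\simeq \pi^*p^*\rho_\sharp^\blacksquare(-)$. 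Crucially, this only checks commutation with filtered colimits \emph{for objects in the image of} $\rho_\sharp^\blacksquare$—a much weaker and achievable claim than compactness of $\ZhatP(1)$ in the ambient big category. You would need to replace your compactness argument by something of this kind for Step 1 to go through; otherwise the reduction from arbitrary $\mathcal F$ to compact $\mathcal F$, and then to torsion constructible $\mathcal F$, is not available in the $W$-local subcategory.
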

\begin{proof}We can assume that $\Lambda=\widehat{\Z}_\mathcal{P}$. 
	To prove full faithfulness, we need to prove that for any $ \mathcal{F} \in \D(X,\widehat{\Z}_\mathcal{P})^\blacksquare  $ the canonical map
	\[
			\underline{\Hom}_{\D^{\mathbb{A}^1}(\WSm_X,\widehat{\Z}_\mathcal{P})^\blacksquare}(\lim_{n\in \Pi\mathcal{P}}\Z/n\Z[\Gm_X],\rho_\sharp^\blacksquare  \mathcal{F}) \to \underline{\Hom}_{\D^{\mathbb{A}^1}(\WSm_X,\widehat{\Z}_\mathcal{P})^\blacksquare}(\widehat{\Z}_\mathcal{P}[\mathbb{G}_m],\rho_\sharp^\blacksquare  \mathcal{F})
	\]
	is an equivalence.
	Since $ \rho_\sharp^\blacksquare $ commutes with limits we may replace $ \mathcal{F} $ by $ \tau_{\leq n} \mathcal{F} $.
	We claim that both sides preserve filtered colimits in $ \mathcal{F} $.
	For the domain this follows immediately because the object $\lim_{n\in \Pi\mathcal{P}}\Z/n\Z[\Gm_X]$ is dualizable in $\D^{\mathbb{A}^1}(\WSm_X,\widehat{\Z}_\mathcal{P})^\blacksquare$, by \Cref{rem:connection_to_mu} and \Cref{lem:muinfty_invertible}.
	For the codomain, we observe that since the localization functor $\D(\WSm_X,\widehat{\Z}_\mathcal{P}) \to \D^{\mathbb{A}^1}(\WSm_X,\widehat{\Z}_\mathcal{P})^\blacksquare$ is symmetric monoidal, we have that
	\[
		\underline{\Hom}_{\D^{\mathbb{A}^1}(\WSm_X,\widehat{\Z}_\mathcal{P})^\blacksquare}(\widehat{\Z}_\mathcal{P}[\mathbb{G}_m],\rho_\sharp^\blacksquare  \mathcal{F}) \simeq \underline{\Hom}_{\D(\WSm_X,\widehat{\Z}_\mathcal{P})}(\widehat{\Z}_\mathcal{P}[\mathbb{G}_m],\iota \rho_\sharp^\blacksquare  \mathcal{F})
	\]
	where we write $\iota \colon \D^{\mathbb{A}^1}(\WSm_X,\widehat{\Z}_\mathcal{P})^\blacksquare \to \D(\WSm_X,\widehat{\Z}_\mathcal{P})$ for the inclusion.
	To check compatibility with filtered colimits, it thus suffices to show that the functor
	\[
		\underline{\Hom}_{\D(\WSm_X,\widehat{\Z}_\mathcal{P})}(\widehat{\Z}_\mathcal{P}[\mathbb{G}_m],\rho_\sharp^\blacksquare \tau_{\leq n}(-)) \colon \D(X,\widehat{\Z}_\mathcal{P})^\blacksquare \to \D(\WSm_X,\widehat{\Z}_\mathcal{P})
	\]
	commutes with filtered colimits.
	This we may check after mapping out of $ \widehat{\Z}_\mathcal{P}[W] $ for a $w$-contractible weakly smooth scheme, since (shifts of) these compactly generate $\D(\WSm_X,\widehat{\Z}_\mathcal{P})$.
	Writing $p \colon W \to X$ for the structure map and $\pi \colon W \times \mathbb{G}_m \to W$ for the projection, we get
	\begin{align*}
		& \Hom_{\D(\WSm_X,\widehat{\Z}_\mathcal{P})}(\widehat{\Z}_\mathcal{P}[W], \underline{\Hom}_{\D(\WSm_X,\widehat{\Z}_\mathcal{P})}(\widehat{\Z}_\mathcal{P}[\mathbb{G}_m],\rho_\sharp^\blacksquare \tau_{\leq n}(-)))  \\ \simeq & \Hom_{\D(\WSm_W,\widehat{\Z}_\mathcal{P})}(\widehat{\Z}_\mathcal{P}[\mathbb{G}_m], p^* \rho_\sharp^\blacksquare \tau_{\leq n} (-)) \\   \simeq & \Hom_{\D(\WSm_{\mathbb{G}_{m} \times W},\widehat{\Z}_\mathcal{P})^\blacksquare}(\widehat{\Z}_\mathcal{P}, \pi^* p^* \rho_\sharp^\blacksquare \tau_{\leq n}(-))
	\end{align*}
	Now note that both sides are in the image of $ \rho_\sharp^\blacksquare $, since $ \rho_\sharp^\blacksquare $ commutes with pullback.
	But then the claim follows from \cite[Lemma~5.3]{MR4609461}.

	Thus we may reduce to the case where $ \mathcal{F} $ is bounded and so, by d\'evissage, to the case where $ \mathcal{F} $ is concentrated in one degree.
	Using \Cref{thm:solid} and commutation with filtered colimits again, we may reduce to the case where $ \mathcal{F} $ is torsion and the claim is then clear.
	The proof of essential surjectivity is then exactly the same as in \Cref{prop:rig-torsion}.
\end{proof}

We finish by defining the category of solid motives and proving that it is the same as the category of effective solid motives.

\begin{defi}
	We define the category of \emph{solid motives} $\mathrm{DM}(X,\Lambda)^\blacksquare$ to be the localization $\D^{\A^1,\Gm}(\WSm_X,\Lambda)^\blacksquare$ at the maps \[M(1)\to M\otimes_{\widehat{\Z}_\mathcal{P}}\Sigma^\infty_{\mathbb{P}^1}\mu_{\infty,\mathcal{P}}\] for all $M$.
\end{defi}
\begin{prop}\label{prop:solid_rigidity_V2}
	The canonical functor \[\mathrm{DM}^\mathrm{eff}(X,\Lambda)^\blacksquare\to\mathrm{DM}(X,\Lambda)^\blacksquare\] is an equivalence.
\end{prop}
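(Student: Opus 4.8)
The plan is to exhibit the passage from $\mathrm{DM}^\mathrm{eff}(X,\Lambda)^\blacksquare$ to $\mathrm{DM}(X,\Lambda)^\blacksquare$ as the $\otimes$-inversion of $\mathbb{P}^1$, and then to observe that this inversion is already an equivalence because the Tate twist is invertible in $\mathrm{DM}^\mathrm{eff}(X,\Lambda)^\blacksquare$. First I would record that $\D^{\A^1,\Gm}(\WSm_X,\Lambda)^\blacksquare$ is obtained from $\D^{\A^1}(\WSm_X,\Lambda)^\blacksquare$ by $\otimes$-inverting $\mathbb{P}^1$: by construction $\underline{\DM}_\proet(X,\Lambda)$ is the pointwise $\otimes$-inversion of $\mathbb{P}^1$ in $\underline{\D}^{\A^1}_\proet(\WSm_X,\Lambda)$ inside $\CAlg(\PrL_{\cond})$ (formation of $\Lambda$-modules commutes with the $\mathbb{P}^1$-stabilization). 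Since $-\otimes^{\cond}_{\underline{\Mod}_\Lambda}\underline{\mathrm{Solid}}_\Lambda$ preserves colimits, and so does $\Gamma=(-)(\ast)$ by \Cref{lem:colimits_in_Prlcond}, while $\otimes$-inversion is the sequential colimit of tensoring with $\mathbb{P}^1$, we obtain a symmetric monoidal equivalence $\D^{\A^1,\Gm}(\WSm_X,\Lambda)^\blacksquare \simeq \D^{\A^1}(\WSm_X,\Lambda)^\blacksquare[(\mathbb{P}^1)^{-1}]$ under which the canonical functor out of $\D^{\A^1}(\WSm_X,\Lambda)^\blacksquare$ is the $\otimes$-inversion map.

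Next I would match the two $\mu_{\infty,\mathcal{P}}$-localizations. By the remark following \Cref{def:effectivesolidmotives}, $\mathrm{DM}^\mathrm{eff}(X,\Lambda)^\blacksquare$ is the monoidal localization of $\D^{\A^1}(\WSm_X,\Lambda)^\blacksquare$ at the single map $w\colon \Lambda(1)\to\mu_{\infty,\mathcal{P}}\otimes_{\ZhatP}\Lambda$ (inverting $w$ monoidally automatically inverts $M(1)\to M\otimes_{\ZhatP}\mu_{\infty,\mathcal{P}}$ for every $M$, and conversely), and $\mathrm{DM}(X,\Lambda)^\blacksquare$ is the monoidal localization of $\D^{\A^1,\Gm}(\WSm_X,\Lambda)^\blacksquare$ at the image of $w$ under $(-)[(\mathbb{P}^1)^{-1}]$. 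Both localizing at a set of maps and $\otimes$-inverting an object are determined by universal properties in $\CAlg(\PrL)$, hence commute, so $\mathrm{DM}(X,\Lambda)^\blacksquare \simeq \mathrm{DM}^\mathrm{eff}(X,\Lambda)^\blacksquare[(\mathbb{P}^1)^{-1}]$, compatibly with the canonical functor $\mathrm{DM}^\mathrm{eff}(X,\Lambda)^\blacksquare\to\mathrm{DM}(X,\Lambda)^\blacksquare$.

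It then remains to see that $\mathbb{P}^1$ is already $\otimes$-invertible in $\mathrm{DM}^\mathrm{eff}(X,\Lambda)^\blacksquare$, for then the canonical map $\mathrm{DM}^\mathrm{eff}(X,\Lambda)^\blacksquare\to\mathrm{DM}^\mathrm{eff}(X,\Lambda)^\blacksquare[(\mathbb{P}^1)^{-1}]$ is an equivalence by the universal property. By \Cref{thm:solid_rigidity_V1} there is a symmetric monoidal equivalence $\D(X,\Lambda)^\blacksquare\simeq\mathrm{DM}^\mathrm{eff}(X,\Lambda)^\blacksquare$, and by construction of the localization (see \Cref{rem:connection_to_mu}) the Tate twist $\Lambda(1)$ of $\mathrm{DM}^\mathrm{eff}(X,\Lambda)^\blacksquare$ corresponds to the object $\mu_{\infty,\mathcal{P}}\otimes_{\ZhatP}\Lambda\in\D(X,\Lambda)^\blacksquare$, which is represented on the small pro-étale site and is $\otimes$-invertible by \Cref{lem:muinfty_invertible} and \Cref{rem:quotients_of_Zhat}. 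Hence $\mathbb{P}^1\simeq\Lambda(1)[2]$ is invertible, and composing the equivalences gives the claim.

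The main obstacle is the bookkeeping in the two middle steps: checking that $\otimes$-inversion commutes with solidification and with global sections (which reduces to preservation of sequential colimits), and that it commutes with the $\mu_{\infty,\mathcal{P}}$-localization while correctly identifying the effective and Tate-stable localizing classes. This is all formal once phrased via universal properties; the only genuinely non-formal inputs are the rigidity theorem \Cref{thm:solid_rigidity_V1} and \Cref{lem:muinfty_invertible}.
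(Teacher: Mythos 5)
Your proposal is correct and follows essentially the same route as the paper's proof: identify $\D^{\A^1,\Gm}(\WSm_X,\Lambda)^\blacksquare$ with the $\Gm$-stabilization (equivalently, $\otimes$-inversion of $\mathbb{P}^1$) of $\D^{\A^1}(\WSm_X,\Lambda)^\blacksquare$ by pushing a sequential colimit through $-\otimes^{\cond}\underline{\mathrm{Solid}}_\Lambda$ and global sections, then observe that the further inversion of $\mathbb{P}^1$ is already superfluous because the Tate twist becomes $\mu_{\infty,\mathcal{P}}\otimes\Lambda$ after localizing at $w$, and the latter is invertible by \Cref{lem:muinfty_invertible}. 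The paper packages the last part as "both localizations have the same universal property" rather than explicitly commuting the localization with the $\mathbb{P}^1$-inversion, but this is only a difference of bookkeeping.

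Two small points you gloss over which the paper handles explicitly. First, to compute the $\otimes$-inversion as the sequential colimit $\mathcal{C}\xrightarrow{\otimes\mathbb{P}^1}\mathcal{C}\xrightarrow{\otimes\mathbb{P}^1}\cdots$ in $\CAlg(\PrL)$ (or in $\CAlg(\PrL_{\cond})$), one needs $\mathbb{P}^1$ (or $\Lambda(1)$) to be a \emph{symmetric} object; the paper deduces this from the corresponding fact in $\D^{\A^1}_\et$. Second, to know that the pointwise $\mathbb{P}^1$-inversion of $\underline{\D}^{\A^1}_\proet(\WSm_X,\Lambda)$ actually computes the colimit in the sheaf category $\PrL_{\cond}$ one needs evaluation at each profinite set $S$ (not only at $*$) to preserve colimits; \Cref{lem:colimits_in_Prlcond} is the case $S=*$, but the same reasoning applies to all $S$, which is the citation the paper makes. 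Finally, invoking the full rigidity theorem \Cref{thm:solid_rigidity_V1} in your last step is more than needed: the invertibility of $\mathbb{P}^1$ in $\mathrm{DM}^\mathrm{eff}(X,\Lambda)^\blacksquare$ follows directly from \Cref{rem:connection_to_mu} and \Cref{lem:muinfty_invertible} (this is how the paper establishes Tate-stability of $\mathrm{DM}^\mathrm{eff}$ already before this proposition). None of these are genuine gaps, just places where a complete write-up would need to say a bit more.
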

\begin{proof} It suffices to show that $\D^{\A^1,\Gm}(\WSm_X,\Lambda)^\blacksquare$ is the $\Gm$-stabilization of $\D^{\A^1}(\WSm_X,\Lambda)^\blacksquare$. Indeed, both categories will then have the same universal properties as presentably symmetric monoidal $\infty$-categories because $\mu_{\infty,\mathcal{P}}$ is $\otimes$-invertible already in the small pro-étale site. 

The object $\Lambda(1)$ is symmetric in $\D^{\A^1}(\WSm_X,\Lambda)^\blacksquare$ because it is the image of the object $\Z(1)$ of $\D^{\A^1}_{\et}(\Sm_X,\Z)$ which is already symmetric by \cite[Lemma~4.4]{MR1648048}. We can therefore compute the $\Gm$-stabilization as the colimit of the diagram: 
\[\D^{\A^1}(\WSm_X,\Lambda)^\blacksquare \xrightarrow{-\otimes_\Lambda \Lambda(1)}\D^{\A^1}(\WSm_X,\Lambda)^\blacksquare \xrightarrow{-\otimes_\Lambda \Lambda(1)}\cdots.\]
On the other hand, we have a natural map from the colimit of the diagram
\[\underline{\D}^{\A^1}_\proet(\WSm_X,\Lambda)\xrightarrow{-\otimes_\Lambda \Lambda(1)} \underline{\D}^{\A^1}_\proet(\WSm_X,\Lambda) \xrightarrow{-\otimes_\Lambda \Lambda(1)} \cdots\]
 to 
$\underline{\DM}_\proet(X,\Lambda)$. This map is an equivalence because taking sections over any profinite set $S$ is compatible with colimits (see the proof of \cite[Proposition~2.6.3.2]{MWColimits} applied to condensed $S$-categories) and over $S$, this is true by definition. As the condensed tensor product commutes with small colimits in each variable and as the global section functor commutes with colimits, this implies that $\D^{\A^1,\Gm}(\WSm_X,\Lambda)^\blacksquare$ indeed is the $\Gm$-stabilization of $\D^{\A^1}(\WSm_X,\Lambda)^\blacksquare$ finishing the proof.
\end{proof}

\begin{thm}
	The obtained realization functor 
	\[\rho_\blacksquare\colon\DM_\proet(X,\Z)\to \DM(X,\Lambda)^\blacksquare \simeq \D(X,\Lambda)^\blacksquare\] is a morphism of $6$-functor formalisms on qcqs $\Spec(\Z[1/\mc{P}])$-schemes. That is, it commutes with $f^*$, $\otimes$, and $g_!$ for $g$ a finitely presented map of qcqs schemes.
\end{thm}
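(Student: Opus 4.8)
The plan is to package the construction as a morphism of $6$-functor formalisms on the category of correspondences $\mathrm{Corr}(\Sch_{\Z[1/\mc{P}]},\fp)$ of \Cref{thm:motivic_6FF}. Recall that both $\DM_\proet(-,\Z)$ and $\DM(-,\Lambda)^\blacksquare$ are qcqs motivic coefficient systems, so each extends to a lax symmetric monoidal functor $\mathrm{Corr}(\Sch_{\Z[1/\mc{P}]},\fp)^\otimes \to \mathrm{Pr}^{\mathrm{L},\otimes}$ by \Cref{thm:motivic_6FF}. What one needs is a natural transformation between these two correspondence-functors whose underlying pointwise functor is the solid realization $\rho_\blacksquare$. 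First I would record that $\rho_\blacksquare$ is defined as the composite $\DM_\proet(X,\Z)\to \DM_\proet(X,\Lambda)\to \mathrm{DM}(X,\Lambda)^\blacksquare$, where the first arrow is extension of scalars along $\Z\to\Lambda$ (more precisely $H\Z\to H\Lambda$, viewing $\Lambda = R\otimes_\Z\ZhatP$ as in \Cref{Main_Thm}) and the second is the localization to effective solid motives composed with the equivalence of \Cref{prop:solid_rigidity_V2} and \Cref{thm:solid_rigidity_V1}.

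The key steps, in order, are: (1) Extension of scalars along a map of condensed ring spectra is a morphism of $6$-functor formalisms. This is a formal consequence of the fact that $\DM_\proet(-,-)$ is functorial in the coefficient ring and that $f_!$, $f^*$, $\otimes$ are all computed "$\Lambda$-linearly": concretely, the forgetful functor $\DM_\proet(X,\Lambda)\to\DM_\proet(X,\Z)$ is conservative, compatible with $f^*$ and $f_*$, and the base-change and projection-formula exchange maps for $\Lambda$-coefficients are detected by it, so $-\otimes_{H\Z}H\Lambda$ upgrades to a lax symmetric monoidal transformation on correspondences (this is the same bootstrapping as in \Cref{lem:some-operations-on-Dsolid} and \Cref{prop:solid_BC}, now applied in the motivic setting). (2) The localization functor $\DM_\proet(X,\Lambda)\to\DM^\mathrm{eff}(X,\Lambda)^\blacksquare$ is a morphism of $6$-functor formalisms. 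By \Cref{thm:solid} and \Cref{prop:inverting_maps_in_a_6ff}, $\DM^\mathrm{eff}(-,\Lambda)^\blacksquare$ is obtained from $\DM_\proet(-,\Lambda)$ (well, from the solidification, which agrees on correspondences with the same coefficient system by \Cref{LikeFS}) by a symmetric monoidal Bousfield localization that is stable under pullback; since all structure maps of the $6$-functor formalism of \Cref{thm:motivic_6FF} are built from $f^*$, $f_\sharp$ (for weakly smooth $f$), $f_*$ (for proper $f$), and $\otimes$, and each of these is compatible with a symmetric monoidal localization stable under pullback (this is exactly the content internal to $\mathbf{Pr}^{\mathrm{L}}$ used in the proof of \Cref{prop:inverting_maps_in_a_6ff}), the localization functor upgrades to a morphism on correspondences. (3) Compose (1) and (2) and invoke \Cref{prop:solid_rigidity_V2} and \Cref{thm:solid_rigidity_V1} to identify the target with $\D(X,\Lambda)^\blacksquare$. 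The resulting transformation, evaluated on an object $X$, is $\rho_\blacksquare$ by construction, and being a morphism of $6$-functor formalisms on $\mathrm{Corr}(\Sch_{\Z[1/\mc{P}]},\fp)$ it in particular commutes with $f^*$, $\otimes$, and $g_!$ for finitely presented $g$, which is the claimed statement.

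The main obstacle is step (2): one must check that forming the exceptional pushforward $g_!$ for a finitely presented (possibly non-separated) $g$ genuinely commutes with the solidification/localization. For separated $g$ one writes $g_! = \overline{g}_* j_\sharp$ (as in \Cref{par:factoSeparated}), and $\overline{g}_*$ for $\overline{g}$ proper finitely presented equals $\overline{g}_\sharp\Sigma^{-\Omega}$ by ambidexterity (\Cref{ambidextry_stable}), so $g_!$ is expressed entirely in terms of $f_\sharp$-type functors and Thom twists — all of which visibly commute with a colimit-preserving symmetric monoidal localization. For general finitely presented $g$ one then uses the Zariski-hyperdescent formula $g_! \simeq \colim_{[n]\in\Delta} g^n_! j_n^*$ of \Cref{rem:6FFexplained}, which is again stable under the localization since the latter preserves colimits. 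The cleanest way to avoid re-doing this by hand is to note, as in the proofs of \Cref{prop:inverting_maps_in_a_6ff} and \Cref{cor:i_*_stable_pre_colimits}, that the whole passage is internal to the $(\infty,2)$-category $\mathbf{Pr}^{\mathrm{L}}_{\DM_\proet(\Spec\Z[1/\mc{P}],\Lambda)}$ and that applying the $(\infty,2)$-functor $-\otimes_{\DM_\proet(\Spec\Z[1/\mc{P}],\Lambda)}\DM^\mathrm{eff}(\Spec\Z[1/\mc{P}],\Lambda)^\blacksquare$ preserves all the internal adjunctions and exchange squares that encode the $6$-functor formalism; this is the argument I would actually write down, since it makes steps (1) and (2) uniform and reduces the whole theorem to bifunctoriality of the relative tensor product together with the already-established identifications \Cref{thm:solid_rigidity_V1} and \Cref{prop:solid_rigidity_V2}.
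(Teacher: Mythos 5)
Your concrete decomposition of $g_!$ is sound and lines up with what the paper actually does: the paper starts from ``$\rho_\blacksquare$ is a morphism of qcqs motivic coefficient systems by construction'' and then bootstraps commutation with $i_*$ (finitely presented closed immersion, via the localization triangle), then with $p_*$ for projective $p$ (ambidexterity), then with $p_*$ for proper finitely presented $p$ (refined Chow), and finally with general finitely presented $f_!$ by Zariski descent; the lift to correspondences comes from the observation that \cite[Theorem~4.35]{CLLSpan} is an equivalence of categories and hence functorial. Your two-stage factorization of $\rho_\blacksquare$ (extension of scalars followed by solidification/localization) is a legitimate reorganization of that verification, and the concrete argument you give in the last paragraph (writing $g_!=\overline{g}_*j_\sharp$, reducing proper pushforward to $\sharp$-type functors and Thom twists by ambidexterity, then Zariski descent for non-separated $g$) is essentially the paper's bootstrap.

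Where your proposal goes astray is the ``cleanest'' unification at the end. You want to argue that the whole passage from $\DM_\proet(-,\Lambda)$ to $\DM^{\mathrm{eff}}(-,\Lambda)^\blacksquare$ is the $(\infty,2)$-functor $-\otimes_{\DM_\proet(\Spec\Z[1/\mathcal{P}],\Lambda)}\DM^{\mathrm{eff}}(\Spec\Z[1/\mathcal{P}],\Lambda)^\blacksquare$ applied internally to $\mathbf{Pr}^{\mathrm{L}}_{\DM_\proet(\Spec\Z[1/\mathcal{P}],\Lambda)}$. This requires the identity
\[
  \DM^{\mathrm{eff}}(X,\Lambda)^\blacksquare \;\simeq\; \DM_\proet(X,\Lambda)\otimes_{\DM_\proet(\Spec\Z[1/\mathcal{P}],\Lambda)}\DM^{\mathrm{eff}}(\Spec\Z[1/\mathcal{P}],\Lambda)^\blacksquare
\]
in $\PrL$, which the paper does not establish and which is not obviously true. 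The localization at $\Lambda(1)\to\mu_{\infty,\mathcal{P}}\otimes\Lambda$ does have this form (this is exactly \Cref{prop:inverting_maps_in_a_6ff} and the remark before it), but the \emph{solidification} step $\underline{\DM}_\proet(X,\Lambda)\mapsto\underline{\DM}_\proet(X,\Lambda)\otimes^{\cond}_{\underline{\Mod}_\Lambda}\underline{\mathrm{Solid}}_\Lambda$ is a tensor product in $\PrL_{\cond}$, not in $\PrL$, and it is taken over $\underline{\Mod}_\Lambda$, not over the motivic base category. The whole point of \Cref{LikeFS} is that already the pointwise evaluation $\underline{\D}(X_\proet,\Lambda)^\blacksquare(*)\simeq\D(X,\Lambda)^\blacksquare$ is a nontrivial theorem, requiring a reduction via conservativity of pullbacks and an explicit identification over $0$-dimensional schemes; nothing in the paper expresses $\D(X,\Lambda)^\blacksquare$ or $\DM^{\mathrm{eff}}(X,\Lambda)^\blacksquare$ as a $\PrL$-relative tensor product over the motivic base. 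So if you want a tensoring argument for the solidification stage, it has to be the condensed $(\infty,2)$-functor $-\otimes^{\cond}\underline{\mathrm{Solid}}_\Lambda$ in $\PrL_{\cond}$ (as in the theorem following \Cref{def:cond_cat_usual}), not $-\otimes_{\DM_\proet(\Spec\Z[1/\mathcal{P}],\Lambda)}(-)$ in $\PrL$. Your fallback argument does not rely on this formula and is correct; just don't advertise the $\PrL$-relative tensoring as the clean version.
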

\begin{proof}
	By construction, the functor $\rho_\blacksquare$ is a morphism of qcqs motivic coefficient systems.
	In particular, it commutes with the functor $f^*$ for each map $f$ of schemes,  with $\otimes$, and with $f_\sharp$ if $f$ is a smooth map of schemes (in fact, it is clear that it also commutes with $j_\sharp$ for $j$ a weakly étale map).
	Using the localization triangle, we see that it also commutes with $i_*$ for $i$ a finitely presented closed immersion of qcqs schemes.
	Using ambidexterity, one then concludes that it commutes with $p_*$ for any finitely presented projective morphism of qcqs schemes.
	Using the refined Chow lemma (\cite[\href{https://stacks.math.columbia.edu/tag/01ZZ}{Lemma 2020}]{stacks-project}), this implies that it commutes with $p_*$ with $p$ any proper map of finite presentation between qcqs schemes. Thus, the functor $\rho_\blacksquare$ commutes with $f_!$ for any finitely presented map of qcqs schemes. To obtain a morphism of $6$-functor formalisms, one may use the fact that \cite[Theorem 4.35]{CLLSpan} is an equivalence of categories, in particular it is functorial, so that $\rho_\blacksquare$ lifts to a natural transformation between functors on correspondences.
\end{proof}

\begin{prop}
	Let $X$ be a qcqs scheme over which $\ell$ is invertible. Then the composition 
	\[\rho_\blacksquare\colon \mathrm{DM}_{\et,\mathrm{gm}}(X,\Z)\to \mathrm{DM}_\proet(X,\Z)\to \D(X,\Z_\ell)^\blacksquare\] lands in 
	the category $\D^b_c(X_\et,\Z_\ell)$ and coincides with the $\ell$-adic realization functor defined in \cite{MR3477640}.
\end{prop}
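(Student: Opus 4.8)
The plan is to prove the statement by first checking the claim on a set of generators and then bootstrapping to the whole category using the six functors. Recall that $\mathrm{DM}_{\et,\mathrm{gm}}(X,\Z)$ is generated under finite colimits, retracts and Tate twists by the motives $M_X(Y)$ for $f\colon Y\to X$ smooth (say affine). Since both $\rho_\blacksquare$ and the $\ell$-adic realization functor $\rho_\ell$ of \cite{MR3477640} are exact and symmetric monoidal, and since $\rho_\ell$ is known to send $\mathrm{DM}_{\et,\mathrm{gm}}(X,\Z)$ into $\D^b_c(X_\et,\Z_\ell)$, it suffices to (i) check that $\rho_\blacksquare(M_X(Y)(n)[m])$ lands in $\D^b_c(X_\et,\Z_\ell)\subseteq\D(X,\Z_\ell)^\blacksquare$ (using \Cref{prop:cons_are_solid} and \Cref{defi:HRS}), and (ii) identify the two functors compatibly. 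For the first point, one uses that $\rho_\blacksquare$ commutes with $f_\sharp$ for $f$ smooth (indeed weakly étale, and the projection $\A^n_X\to X$ reduces to $\mathbb{P}^n_X\to X$ via ambidexterity as in the proof of \Cref{prop:rig-torsion} and the previous theorem), so $\rho_\blacksquare(M_X(Y))\simeq f_\sharp f^*\un \simeq f_\sharp \un$, which is constructible because $f$ is smooth of finite presentation and the six-functor formalism on $\D(-,\Z_\ell)^\blacksquare$ preserves constructibility (this is where one invokes \Cref{prop:cons_are_solid} together with the compatibility of $f_!\simeq f_*(n)[2n]$ for smooth proper $f$ and localization for the open part).

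For the identification with $\rho_\ell$, the cleanest route is to use the universal property of $\DM_{\et}$. Both realizations factor through $\DM_\proet(X,\Z)$; on the $\ell$-adic side one has $\rho_\ell\colon \DM_\et(X,\Z)\to \D((-)_\et,\Z)^\wedge_\ell \simeq \dconset(X,\Z_\ell)\simeq \D^b_c(X_\et,\Z_\ell)$ on geometric motives, characterized (after \cite{MR3477640}, \cite{bachmannrigidity}) as the unique symmetric monoidal, colimit-preserving functor out of $\DM_\et(X,\Z)$ that on the small étale site agrees with $\ell$-completion and is compatible with $f^*$. On the solid side, by \Cref{thm:solid_rigidity_V1} and \Cref{prop:solid_rigidity_V2} we have $\DM(X,\Z_\ell)^\blacksquare\simeq\D(X,\Z_\ell)^\blacksquare$, and by \Cref{prop:torsion etale is solid derived} together with \Cref{prop:rig-torsion}, after reducing mod $\ell^n$ the functor $\rho_\blacksquare$ agrees with the torsion étale realization $\DM_\et(X,\Z/\ell^n)\to \D(X_\et,\Z/\ell^n)\hookrightarrow\D(X,\Z/\ell^n)^\blacksquare$. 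Since a constructible $\ell$-adic complex is determined by its reductions mod $\ell^n$ (the equivalence $\dconset(X,\Z_\ell)\simeq \lim_n \D^b_c(X_\et,\Z/\ell^n)$ of \cite{MR4609461}, see \Cref{defi:HRS}) and $\rho_\ell$ is by construction compatible with these reductions via the classical rigidity theorem, the two functors agree on all of $\mathrm{DM}_{\et,\mathrm{gm}}(X,\Z)$ after checking they agree on generators.

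Concretely, I would proceed in the following order. First, record that $\rho_\blacksquare$ restricted to geometric motives lands in $\D_\mathrm{cons}(X,\Z_\ell)=\D^b_c(X_\et,\Z_\ell)$, by the generation-and-stability argument above. Second, show that the square relating $\rho_\blacksquare$ and $\rho_\ell$ commutes modulo $\ell^n$ for every $n$: here both sides, when composed with $-\otimes^L_{\Z_\ell}\Z/\ell^n$, become the torsion étale realization on $\DM_{\et,\mathrm{gm}}(X,\Z/\ell^n)$, using \Cref{prop:rig-torsion}, \Cref{prop:torsion etale is solid derived}, and the definition of $\rho_\ell$ via $\ell$-completion together with the classical rigidity equivalence $\DM_\et(-,\Z/\ell^n)\simeq \D((-)_\et,\Z/\ell^n)$. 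Third, pass to the limit over $n$: both $\rho_\blacksquare(M)$ and $\rho_\ell(M)$ for $M$ geometric are derived $\ell$-complete (the first because the solid unit $\Z_\ell$ is $\ell$-complete, being $\lim_n\Z/\ell^n$ in the solid category, and $\rho_\blacksquare$ commutes with this limit; the second by construction), so the mod-$\ell^n$ agreement upgrades to an agreement of functors into $\D^b_c(X_\et,\Z_\ell)$. Finally, promote this to a natural equivalence of functors, which is automatic once one knows both functors are obtained by the same universal recipe (exact, monoidal, compatible with $f^*$) and agree on the generating objects $M_X(Y)(n)$.

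The main obstacle I anticipate is Step two: carefully matching the two a priori different constructions of the torsion realization — one coming from $\ell$-completing Voevodsky's $\DM_\et$ as in \cite{MR3477640}, the other coming from the pro-étale/solid rigidity equivalence \Cref{prop:rig-torsion} — and checking that the comparison is compatible with the transition maps $\Z/\ell^{n+1}\to\Z/\ell^n$ so that the limit argument is legitimate. This amounts to unwinding that the classical rigidity theorem (in the form used in \cite{bachmannrigidity}, \cite{MR3477640}) is literally the restriction of our \Cref{thm:embedding_rigidity}/\Cref{prop:rig-torsion} to the étale-local, torsion setting under the embedding $\SH_\et(X)\hookrightarrow\SH_\proet(X)$ of \Cref{thm:embedding_etale_motives}; once that identification of rigidity theorems is in place, the rest is formal bookkeeping with limits and the six operations.
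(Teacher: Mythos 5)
Your overall strategy is very close to the paper's: in both, the heart of the matter is the reduction modulo $\ell^n$, the use of classical \'etale rigidity to identify the mod-$\ell^n$ functor with the torsion \'etale realization, and the recovery of the integral statement from $\ell$-completeness together with the equivalence $\D^b_c(X_\et,\Z_\ell)\simeq \lim_n\D^b_c(X_\et,\Z/\ell^n)$ from \cite{MR4609461}. Your second and third steps are essentially the paper's argument.

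Where you diverge is in your step (i), establishing that $\rho_\blacksquare$ lands in $\D^b_c(X_\et,\Z_\ell)$. You propose to check this on the generators $M_X(Y)$ by observing $\rho_\blacksquare(M_X(Y))\simeq f_\sharp\un$ and invoking that ``the six-functor formalism on $\D(-,\Z_\ell)^\blacksquare$ preserves constructibility.'' This last assertion is not established anywhere in the paper --- \Cref{prop:cons_are_solid} says constructible complexes \emph{are} solid, but says nothing about stability of $\D^b_c$ under $f_\sharp$ or $f_!$ in the solid formalism, and proving such a stability result would itself essentially require re-running the paper's argument (reduce mod $\ell^n$, use classical finiteness of \'etale cohomology, and invoke $\ell$-completeness). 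The paper sidesteps this entirely: rather than checking constructibility on generators, it observes that every geometric \'etale motive $M$ becomes dualizable after passing to a suitable stratification (by \cite{DMpdf}), that dualizable objects of $\D(X,\Z_\ell)^\blacksquare$ are automatically $\ell$-complete (since the unit is), and that one then reduces to showing $\rho_\blacksquare(M)/\ell^n=\rho_\blacksquare(M/\ell^n)$ lies in $\D^b_c(X_\et,\Z/\ell^n\Z)$, which follows by Bachmann's rigidity from dualizability of $M/\ell^n$. This not only establishes constructibility but produces the mod-$\ell^n$ identification you need for the comparison with $\rho_\ell$ in the same stroke, so the paper avoids doing your step (i) and steps (ii)--(iii) separately. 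If you insisted on your route, you would need to first prove that the six functors on $\D(-,\Z_\ell)^\blacksquare$ preserve constructibility, a nontrivial piece of bookkeeping which the paper's argument renders unnecessary.
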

\begin{proof}
	By \cite[Corollary 3.7]{DMpdf}, for any object $M\in\DM_{\et,\mathrm{gm}}(X,\Z)$, there exists a stratification on which $M$ is dualizable. We claim that this implies that $\rho_\blacksquare(M)$ is $\ell$-complete. Indeed, any dualizable object of $\D(X,\Z_\ell)^\blacksquare$ is $\ell$-complete because the unit is complete, and then we can use \Cref{lem:conservativity-of-pullbacks} together with the fact that pullback functors preserves limits to conclude. Thus, to show that $\rho_\blacksquare(M)$ is an object of $\D^b_c(X_\et,\Z_\ell)$, it suffices to check that for all $n\in\N^*$, the object $\rho_\blacksquare(M)/\ell^n=\rho_\blacksquare(M/\ell^n)$ is a bounded complex of torsion constructible étale sheaves.
	This means that we reduced the question to showing that the image of the functor 
	\[\mathrm{DM}_{\et,\mathrm{gm}}(X,\Z)\xrightarrow{-/\ell^n}\Mod_{\Z/\ell^n\Z}(\mathrm{DM}_\et(X,\Z))\to \Mod_{\Z/\ell^n\Z}(\D(X,\Z_\ell)^\blacksquare)\] lands in $\D^b_c(X_\et,\Z/\ell^n\Z)$. But using rigidity for étale motives (\cite{bachmannrigidity}), the functor 
	$$\Mod_{\Z/\ell^n\Z}(\mathrm{DM}_\et(X,\Z))\to \Mod_{\Z/\ell^n\Z}(\D(X,\Z_\ell)^\blacksquare)$$ can be identified with the  functor 
	\[\D(X_\et,\Z/\ell^n\Z)\to \D(X,\Z/\ell^n\Z)^\blacksquare.\] 
	By \Cref{prop:torsion etale is solid derived}, the above functor is fully faithful after left completion, in particular, it is fully faithful on bounded objects.
	As the image of $M$ is then a dualizable object of $\D(X_\et,\Z/\ell^n\Z)$, we conclude that it is a bounded complex of constructible étale sheaves. 

	We now prove that $\rho_\blacksquare$ coincides with the functor $\rho_\ell$ of \cite{MR3477640}. Recall that the latter is the restriction to constructible motives of the $\ell$-completion functor 
	\[\mathrm{DM}_\et(X,\Z)\to \mathrm{DM}_\et(X,\Z)^\wedge_\ell\simeq \D(X_\et,\Z)^\wedge_\ell.\] Moreover, as 
	\[\D^b_c(X_\et,\Z_\ell)\simeq \lim_{n}\D^b_c(X_\et,\Z/\ell^n\Z)\] by \cite[Theorem 7.7]{MR4609461}, it suffices to prove that for each $n\in\N^*$, the two functors $\mathrm{DM}_{\et,\mathrm{gm}}(X,\Z)\to\D^b_c(X_\et,\Z/\ell^n\Z)$ are equivalent in a way compatible with $n$. This was already proven in the previous paragraph: we have shown that for $M\in\mathrm{DM}_{\et,\mathrm{gm}}(X,\Z)$, the object $\rho_\blacksquare(M)/\ell^n$ is nothing but $M/\ell^n \in\D(X_\et,\Z/\ell^n\Z)$.
\end{proof}

\begin{cor}
	\label{cor:MagicSquare}
	We have a commutative square 
	\[\begin{tikzcd}
	{\mathrm{DM}_\et(X,\Z)} & {\D(X,\Z_\ell)^\blacksquare} \\
	{\mathrm{DM}_\et(X,\Q)} & {\D(X,\Q_\ell)^\blacksquare}
	\arrow["{\rho_\blacksquare}", from=1-1, to=1-2]
	\arrow["{\otimes\Q}"', from=1-1, to=2-1]
	\arrow["{\otimes\Q}", from=1-2, to=2-2]
	\arrow["{\rho_\blacksquare}"', from=2-1, to=2-2]
\end{tikzcd}\] of symmetric monoidal left adjoints which restricts to the classical square 
\[\begin{tikzcd}
	{\mathrm{DM}_{\et,\mathrm{gm}}(X,\Z)} & {\D^b_c(X_\et,\Z_\ell)} \\
	{\mathrm{DM}_{\et,\mathrm{gm}}(X,\Q)} & {\D^b_c(X_\et,\Q_\ell)}
	\arrow["{\rho_\ell}", from=1-1, to=1-2]
	\arrow["{\otimes\Q}"', from=1-1, to=2-1]
	\arrow["{\otimes\Q}", from=1-2, to=2-2]
	\arrow["{\rho_{\Q_\ell}}"', from=2-1, to=2-2]
\end{tikzcd},\] where $\rho_\ell$ is the $\ell$-adic realization functor of \cite{MR3477640} and $\rho_{\Q_\ell}$ is the $\Q_\ell$-adic realization functor of \cite[2.1.2]{MR4061978}.
\end{cor}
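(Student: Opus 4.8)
The plan is to deduce the first square from the functoriality of $\rho_\blacksquare$ in the (discrete) coefficient ring, and then to cut it down to the classical square using the preceding Proposition for the top edge and a density argument for the bottom edge. First I would observe that for any map of commutative rings $R\to R'$ the realization $\rho_\blacksquare\colon \DM_\et(X,R)\to \D(X,R\otimes_\Z\ZhatP)^\blacksquare$ is, by its construction, the composite of the canonical functor $\DM_\et(X,R)\to \DMp(X,R\otimes_\Z\ZhatP)$ with the solidification localization $\DMp(X,R\otimes_\Z\ZhatP)\to \DM(X,R\otimes_\Z\ZhatP)^\blacksquare\simeq \D(X,R\otimes_\Z\ZhatP)^\blacksquare$, and that each stage is built from the operations $\Mod_{(-)}(-)$ and relative tensor products, which are bifunctorial in the coefficient ring spectrum. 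Hence $R\mapsto\bigl(\DM_\et(X,R)\xrightarrow{\rho_\blacksquare}\D(X,R\otimes_\Z\ZhatP)^\blacksquare\bigr)$ is a natural transformation of functors valued in $\CAlg(\PrL)$; specialising to $\mc P=\{\ell\}$ and to the morphism $\Z\to\Q$ — and using that $\Z\otimes_\Z\ZhatP=\Z_\ell$ while $\Q\otimes_\Z\ZhatP=\Q_\ell$ as condensed rings, the latter because flatness of $\Q$ over $\Z$ gives $H\Q_\ell\simeq H\Z_\ell\otimes_{H\Z}H\Q$ — produces precisely the first square, as a square of symmetric monoidal left adjoints.

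Next I would restrict to the constructible/geometric subcategories. The top horizontal edge is handled by the preceding Proposition: over a base on which $\ell$ is invertible, $\rho_\blacksquare$ sends $\DM_{\et,\mathrm{gm}}(X,\Z)$ into $\D^b_c(X_\et,\Z_\ell)=\dconset(X,\Z_\ell)\subseteq\D(X,\Z_\ell)^\blacksquare$ and there agrees with the $\ell$-adic realization $\rho_\ell$ of \cite{MR3477640}; the right vertical edge restricts to $-\otimes_{\Z_\ell}\Q_\ell$, which preserves constructibility (it is symmetric monoidal, compatible with the pullbacks cutting out the strata, and carries dualizable objects to dualizable ones), and the left vertical edge is ordinary rationalization of geometric motives. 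For the bottom edge I would use that $\DM_{\et,\mathrm{gm}}(X,\Q)$ is generated under finite colimits and retracts by the images of objects of $\DM_{\et,\mathrm{gm}}(X,\Z)$ along $-\otimes\Q$: since the first square gives $\rho_\blacksquare(M\otimes\Q)\simeq\rho_\blacksquare(M)\otimes_{\Z_\ell}\Q_\ell$, which lies in $\dconset(X,\Q_\ell)$, and $\dconset(X,\Q_\ell)$ is closed under retracts and finite colimits, one concludes that $\rho_\blacksquare$ maps all of $\DM_{\et,\mathrm{gm}}(X,\Q)$ into $\D^b_c(X_\et,\Q_\ell)\subseteq\D(X,\Q_\ell)^\blacksquare$ (\Cref{prop:cons_are_solid_rational}). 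To identify the resulting functor with $\rho_{\Q_\ell}$, I would invoke the description of the $\Q_\ell$-adic realization in \cite[2.1.2]{MR4061978}, according to which $\rho_{\Q_\ell}\circ(-\otimes\Q)\simeq(-\otimes_{\Z_\ell}\Q_\ell)\circ\rho_\ell$; combined with the first square and with $\rho_\blacksquare|_{\DM_{\et,\mathrm{gm}}(X,\Z)}\simeq\rho_\ell$, this shows that $\rho_\blacksquare$ and $\rho_{\Q_\ell}$ agree on the generators of $\DM_{\et,\mathrm{gm}}(X,\Q)$, hence on the whole category since both are exact and preserve retracts. The restricted square then commutes because it sits inside the first square as a diagram of fully faithful inclusions.

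The hard part will be the final identification $\rho_\blacksquare|_{\DM_{\et,\mathrm{gm}}(X,\Q)}\simeq\rho_{\Q_\ell}$: one has to match the normalizations used for $\rho_{\Q_\ell}$ in \cite{MR4061978} — the Tate twists, the identification $\Q(1)[1]\simeq\Gm$, and the duality conventions — with those underlying $\rho_\blacksquare$ here, namely the normalization recalled in \Cref{rem:connection_to_mu}. This is the same bookkeeping already carried out in the proof of the preceding Proposition for $\rho_\ell$, so I do not expect it to present a genuinely new difficulty; once it is in place, the corollary follows.
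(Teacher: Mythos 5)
Your proof is correct and follows essentially the same route as the paper's: the outer square exists formally from functoriality of $\rho_\blacksquare$ in the discrete coefficient ring, the top edge and its restriction to constructible objects are supplied by the preceding Proposition, and the bottom edge is identified via the fact that $\rho_{\Q_\ell}$ is by definition the rationalization of $\rho_\ell$ --- which is also why the ``hard part'' you flag about matching normalizations never actually materializes. The one small difference in bookkeeping is that the paper cites that constructible $\Q_\ell$-complexes are rationalizations of constructible $\Z_\ell$-complexes (\cite[Corollary~2.4]{zbMATH07751006}) where you run a density argument on the motivic side; these are two ways of saying the same thing.
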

\begin{proof}
	The first square exists formally. That it restricts to the second square can be seen as follows. We have already seen that the restriction of $\rho_\blacksquare$ to $\mathrm{DM}_{\et,\mathrm{gm}}(X,\Z)$ is $\rho_\ell$. The claim about the vertical functors is formal. For the low horizontal functor, this follows from the definition (as the rationalization of $\rho_\ell$) and \cite[Corollary 2.4]{zbMATH07751006}.
\end{proof}
\begin{rem}Over schemes of characteristic zero, the above \Cref{cor:MagicSquare} also holds with $\widehat{\Z}$ and the ring of finite adeles $\A_{\Q,f}$ instead of $\Z_\ell$ and $\Q_\ell$. By using right adjoints and a bit of work, this allows for another definition of the algebra $\mathscr{N}\in\mathrm{DM}^{\et}(\Spec(\Q),\Z)$ such that modules over it are Nori motives, as considered in \cite{integralNori}. In forthcoming work, we plan to study a spectral analogue of the constructions of this paper, that would lead to a definition of spectral Nori motives.
\end{rem}
\bibliographystyle{alpha}
\bibliography{BibSCNet}

\end{document}